\documentclass{amsart}    
\usepackage{amsmath} 
 \usepackage{amsthm}
\usepackage{paralist}
\usepackage{graphics} 
\usepackage{epsfig} 
\usepackage{graphicx}  \usepackage{epstopdf}  \usepackage{dsfont}
\usepackage[colorlinks=true]{hyperref}
\hypersetup{urlcolor=blue, citecolor=red}

\newtheorem{Definition}{Definition}[section] 
\newtheorem{Proposition}{Proposition}[section]
\newtheorem{Lemma}{Lemma}[section]
\newtheorem{Theorem}{Theorem}[section]

\newtheorem{Remark}{Remark}[section]
\theoremstyle{definition} 
\newtheorem{defiprevia}{Definition}[section] 
 
\theoremstyle{definition} 
\newtheorem{teoprevio}{Theorem}[section] 
 
\theoremstyle{definition} 
\newtheorem{proprevia}{Proposition}[section] 
 
\def \vf{\vec{f}} 
\def \vg{\vec{g}} 
\def \vu{\vec{u}}
\def \vv{\vec{v}}
\def \vw{\vec{w}}
\def \va{\vec{a}}
\def \vpsi{\vec{\psi}}
\def \vvarphi{\vec{\varphi}} 
\def \P{\mathbb{P}} 
\def \U{\vec{U}}

\def \R{\mathbb{R}}
\def \Rt{\mathbb{R}^3}
\def \vpsi{\vec{\psi}}
\def \vvarphi{\vec{\varphi}}
\def \T{\mathbb{T}^3}
\def \Z{\mathbb{{\bf Z}}^3 \setminus \{0\}}

\def \ds{\displaystyle}

\usepackage{amssymb}
  
\title[\bf New Navier-Stokes-alpha model with nonlinear filter] %
{ Mathematical study of a new Navier-Stokes-alpha model with  nonlinear filter equation - Regularity theory and refined time asymptotics} 
\author[ Manuel Fernando Cortez  and Oscar Jarr\'in]{}

\subjclass[2020]{Primary:  	35B65 ,  35B41, 35B40
	; Secondary:  	35J60,  35B30}
\keywords{Navier-Stokes equations; Alpha-models; Nonlinear filter equations; Regularity of the global Attractor; Finitely many determining modes property, High-order energy estimates} 

\email{oscar.jarrin@udla.edu.ec}
\email{manuel.cortez@epn.edu.ec} 

\thanks{$^*$Corresponding author: Manuel Fernando Cortez} 

\begin{document}
	\maketitle
\begin{center}
\begin{minipage}{6cm}	
	\centerline{\scshape Manuel Fernando Cortez$^*$}
	\medskip
	{\footnotesize
		\centerline{Departamento de Matem\'aticas}
		\centerline{Escuela Politécnica Nacional} 
		\centerline{Ladr\'on de Guevera E11-253, Quito, Ecuador} 
	}
\end{minipage}\hspace{1cm}
	\begin{minipage}{6cm}
		\centerline{\scshape Oscar Jarr\'in}
	\medskip
	{\footnotesize
		\centerline{Escuela de Ciencias Físicas y Matemáticas}
		\centerline{Universidad de Las Américas}
		\centerline{V\'ia a Nay\'on, C.P.170124, Quito, Ecuador}
	} 
\end{minipage} 
\end{center} 
	
	\bigskip

\begin{abstract}
This article is devoted to the mathematical analysis of a new Navier--Stokes-$\alpha$ model involving a nonlinear filter equation. This filter,  strongly depending on a prescribed indicator function $A(\cdot)$ and a parameter $\alpha>0$, was introduced by W.~Layton, L.~G.~Rebholz, and C.~Trenchea [\emph{Modular nonlinear filter stabilization of methods for higher Reynolds numbers flow}, J. Math. Fluid Mech. 14: 325--354 (2012)] to selectively damp eddies according to the local effects of the nonlinearity. It has also been used in numerical studies to analyze numerical diffusion and derive error estimates.

The resulting model is governed by a doubly nonlinear parabolic--elliptic coupled system. In our previous work [M.~F.~Cortez and O.~Jarrín, \emph{Mathematical study of a new Navier--Stokes-alpha model with nonlinear filter equation -- Part I}, J. Math. Fluid Mech. 28, no.~12 (2026)], we established the global well-posedness of weak Leray-type  solutions  and the existence of a global attractor.

\medskip

In the present work, under natural assumptions on $A(\cdot)$, we investigate several additional properties of this model, with particular emphasis on higher-order regularity and its consequences for the long-time dynamics. The regularity analysis constitutes a central and delicate issue, since the nonlinear structure of the elliptic filter and its coupling with the evolution equation preclude a direct application of the standard regularity arguments available for linearly filtered Navier--Stokes-$\alpha$ models. Overcoming this difficulty requires new higher-order estimates for the nonlinear elliptic filter equation, which may also be of independent interest.

Among the results obtained, two constitute the main contributions of the article. First, we establish uniform higher-order regularity for the global attractor. This result is then used as a fundamental ingredient in proving an exact determining-modes property for complete trajectories on the attractor: if the projections of two complete trajectories onto a sufficiently large finite-dimensional space of Stokes modes coincide at every time, then the two trajectories coincide identically. To the best of our knowledge, this is the first time that such an exact determining-modes property for complete trajectories has been explicitly established for a three-dimensional Navier--Stokes-$\alpha$ model involving a nonlinear filter equation. Although closely related asymptotic determining-observation and data-assimilation results are available for classical three-dimensional $\alpha$-models based on the linear Helmholtz filter, this stronger property does not appear to have been explicitly formulated and established even in that setting. \\[3mm]  
\end{abstract} 
{\footnotesize \tableofcontents}

\section{A rapid introduction to the model}
The alpha-models form a family of approximations to the Navier–Stokes equations, whose physical relevance has been established through both mathematical investigations and experimental evidence. Examples include the Bardina \cite{Bardina1,Bardina}, viscous Camassa-Holm \cite{Chen}, Clark-alpha \cite{Cao}, and Leray-alpha models \cite{CheskidovHolm}. The main feature of these models is that, in contrast to the original Navier-Stokes equations, the global well-posedness of finite energy weak solutions can be proved. More precisely, uniqueness and \emph{regularity} of weak finite energy solutions are among the \emph{key required properties} for both theoretical and experimental studies \cite{Bowers1,Bowers2,Cao,CheskidovHolm,Geurts0,Layton}.

\medskip

 Although these alpha-models differ in their mathematical formulation, they all share a common mechanism: for the velocity field $\vu$, the original nonlinear transport term $(\vu \cdot \vec{\nabla})\vu$ is now governed by a filtered velocity as $(\vu_\alpha \cdot \vec{\nabla})\vu$. Here, for a given filtering parameter $\alpha>0$, the filtered velocity $\vu_\alpha$ is defined through the \emph{linear} Helmholtz equation
\begin{equation}\label{Helmholtz}
-\alpha^2\Delta\vu_\alpha+\vu_\alpha=\vu,\qquad \operatorname{div}(\vu_\alpha)=0.
\end{equation}
Roughly speaking, this filtering suppresses flow structures with characteristic lengths below $\alpha$, while retaining the dominant large-scale dynamics, making it an \emph{essential ingredient} in turbulence modeling \cite{Bardina1,Bardina,Lunasin}. 

\medskip

Despite its extensive use, numerical and experimental investigations
\cite{Geurts0, Geurts1, Layton} have shown that the \emph{linear} Helmholtz
filter truncates the unresolved scales almost uniformly throughout the
computational domain. As a consequence, it may over-smooth laminar regions and
remove flow structures that are essential for accurately describing the
dynamics. To address this limitation, \cite[Definition $2.2$]{Layton2}
introduced the following \emph{nonlinear} filter:
\begin{equation}\label{Filter-Equation-Intro}
-\alpha^2 {\rm div} \left( A (\vu) (\vec{\nabla} \otimes \vu_\alpha)^T \right) + \vu_\alpha + \vec{\nabla}P_\alpha= \vu, \quad
{\rm div}(\vu_\alpha) = 0,
\end{equation}
where the main feature of this elliptic equation in divergence form is a \emph{prescribed indicator
	function}
\[A:\mathbb{R}^3 \to \mathbb{R}, \]
which determines the regions in which
filtering is applied. Additionally, $(\vec{\nabla} \otimes \vu_\alpha)^T$ denotes the transpose matrix of
$\vec{\nabla} \otimes \vu_\alpha$, while $P_\alpha$ is a pressure term related to $\vu$ and $\vu_\alpha$  by
\[
- \Delta P_\alpha = - \alpha^2 {\rm div}\Big( {\rm div} \left( A (\vu) (\vec{\nabla} \otimes \vu_\alpha)^T \right)\Big),
\]
which follows from the divergence-free condition satisfied by both $\vu$ and $\vu_\alpha$.

\medskip 

Unlike the linear filter (\ref{Helmholtz}), the nonlinear formulation given in equation (\ref{Filter-Equation-Intro}) 
adapts the amount of regularization to the local flow characteristics.
Consequently, it is capable of preserving coherent structures in nearly
laminar regions while providing stronger filtering in highly turbulent ones.
This behavior is consistent with the nonlinear energy transfer that
characterizes high Reynolds number flows. Further details and the physical
motivation behind this model can be found in \cite{Layton2}.

\medskip

The nonlinear filter equation (\ref{Filter-Equation-Intro})   was incorporated into the nonlinear convection term of the classical Navier-Stokes equations in \cite{Bowers1, Bowers2, Layton2}, and was subsequently employed in \cite{Takhirov-Trenchea} for the \emph{numerical and experimental investigation}. In this framework, if $\vu=\vu(t,x)\in \Rt$ denotes the fluid velocity and $P=P(t,x)\in \R$ the pressure, the resulting Navier-Stokes-alpha model is given by
\begin{equation*}
\begin{cases}\vspace{2mm}
\partial_t \vu - \nu \Delta \vu + (\vu_\alpha \cdot \vec{\nabla})\vu + \vec{\nabla} P = \vf, \qquad \ \nu>0,  \ \  {\rm div}(\vu)=0,\\
-\alpha^2 {\rm div}\left( A (\vu) (\vec{\nabla} \otimes \vu_\alpha)^T \right)+\vu_\alpha +\vec{\nabla}P_\alpha = \vu, \qquad \alpha>0, \ \  {\rm div}(\vu_\alpha)=0.
\end{cases}
\end{equation*}
This model consists of a coupled parabolic-elliptic system with nonlinearities in both equations. Here, $\nu$ is the kinematic viscosity, $\vf=\vf(t,x)\in \Rt$ represents a prescribed external force, and $A(\cdot)\in \R$ is the given indicator function that determines the filtering intensity.

\medskip

To the best of our knowledge, the theoretical analysis of this \emph{new Navier-Stokes-alpha model} was first undertaken in our previous work \cite{CortezJarrin2026}. Specifically, for a suitable given function $\varphi=\varphi(x)\in \R$, the following \emph{slightly modified} initial value problem was studied:
\begin{equation}\label{Alpha-model}
    \begin{cases}\vspace{2mm}
    \partial_t \vu - \nu\Delta \vu + (\vu_\alpha \cdot \vec{\nabla})\vu + \vec{\nabla} P  =  \vf, \quad \text{div}(\vu)=0, \\ \vspace{2mm}
     -\alpha^2 \text{div} \left( A (\varphi\ast\vu) (\vec{\nabla} \otimes \vu_\alpha)^T \right)+\vu_\alpha + \vec{\nabla}P_\alpha = \vu, \quad \text{div}(\vu_\alpha)=0,  \\ 
\vu(0,\cdot)=\vu_0, \quad \text{div}(\vu_0)=0,
    \end{cases}
\end{equation}
where $\vu_0=\vu_0(x)$  always denotes the initial data. 

\medskip

As explained in \cite{CortezJarrin2026}, standard \emph{a priori} energy estimates guarantee that $\vu$ satisfies the same energy bounds as Leray weak solutions of the classical Navier-Stokes equations, namely
$\vu\in L^\infty_t L^2_x \cap L^2_t \dot{H}^{1}_x$. As is well known in the theory of the Navier-Stokes equations, these estimates are not sufficient to establish the uniqueness of solutions, which, as mentioned, is one of the \emph{key required properties} of alpha models \cite{Bardina,Bowers1,Bowers2,Cao,CheskidovHolm,Geurts0,Geurts1,Layton}.

\medskip

Consequently, further regularity of the filtered velocity $\vu_\alpha$ is needed to guarantee uniqueness of solutions. This fact could be derived from the elliptic nonlinear filter equation (\ref{Filter-Equation-Intro}), provided that $A(\vu)$ together with its derivatives $\vec{\nabla}(A(\vu))$ are continuous and bounded. However, by the classical chain rule, these assumptions on $\vec{\nabla}(A(\vu))$ require control of $\vec{\nabla}\otimes \vu$ in spaces of continuous and bounded functions. Such regularity appears to be beyond what can be deduced from the energy estimates alone, since the latter only yield $\vu \in L^\infty_t L^2_x \cap L^2_t \dot{H}^{1}_x$ and, consequently, $\vec{\nabla}\otimes \vu \in L^2_t L^2_x$.
Therefore, the regularized term $A(\varphi\ast\vu)$ in the nonlinear filter equation will allow us to suitably control $\vec{\nabla}(A(\varphi\ast\vu))$. See \cite{CortezJarrin2026} for further technical details.

\medskip

Although the convolution term $\varphi$ is essentially introduced for technical reasons, in order to rigorously justify theoretical computations (see \cite[Remark 1, Remark 2]{CortezJarrin2026} for further details), it is worth mentioning that this type of regularization of the filter equation may also be necessary for practical computations \cite[Page $328$]{Layton2}.

\medskip 

In this context, the program addressed in \cite{CortezJarrin2026} on the qualitative study of the Navier-Stokes alpha model (\ref{Alpha-model}) was mainly focused on two directions:
\begin{itemize}
	\item The existence and uniqueness of  weak Leray-type solutions in the natural energy space. See Definition \ref{Def-Leray-Solutions} and Theorem \ref{Th-GWP} below.
	\item The long-term behavior of solutions using the concept of the global attractor. See Definition \ref{Def-Global-Attractor} and Theorem \ref{Th-Global-Attractor} below.
\end{itemize}

The main objective of this article is to address a more delicate and physically relevant question concerning the doubly nonlinear system (\ref{Alpha-model}), namely, the \emph{regularity issue} for both weak Leray solutions and the global attractor, together with its applications to a \emph{refined description of the long-time behavior}.

\medskip 

To this end, from a technical point of view, our main contribution is the development of higher-order regularity estimates for solutions of the \emph{nonlinear} filter equation
\begin{equation}\label{Filter-Equation}    
-\alpha^2 \text{div} \left( A (\varphi\ast\vu) (\vec{\nabla} \otimes \vu_\alpha)^T \right)+\vu_\alpha + \vec{\nabla}P_\alpha = \vu, \quad \text{div}(\vu_\alpha)=0.
\end{equation}
As explained in Section \ref{Sec:Higher regularity properties of the nonlinear filter equation} below, the nonlinear structure of this equation, together with the fact that the term in divergence form also depends on the velocity $\vu$, means that, to the best of our knowledge, existing regularity results for elliptic equations in divergence form are not directly applicable in this setting; see, for instance, \cite{Brezis}, \cite{Gilbarg}, and \cite{Ya}. Therefore, the regularity theory developed in this article for this  equation  may also be of independent interest.

\medskip 

Thereafter, building on this higher-order regularity theory, we establish the main results of this article, which can be summarized as follows.

\begin{itemize} 
	\item First, in Section \ref{Sec:Regularity of weak Leray solutions}, we show that the smoothing effects on weak Leray solutions of (\ref{Alpha-model}) are strongly determined by the prescribed regularity properties of the indicator function $A(\cdot)$ and the convolution kernel $\varphi$ appearing in equation (\ref{Filter-Equation}).
	
	\item Next, in Section \ref{Sec:Regularity of the global attractor}, using the notion of complete trajectories introduced in Definition \ref{Def-Complete-Trajectory} below, we derive new uniform higher-order regularity estimates for trajectories on the global attractor associated with system (\ref{Alpha-model}).
	
	\item Finally, in Section \ref{Sec:Refined long-time behavior}, we show that some of these new regularity results, together with the theory of determining modes introduced in Definitions \ref{def:asymptotic-determining-modes} and \ref{def:determining-modes-on-attractor}, allow us to obtain refined descriptions of the long-time behavior of both weak Leray solutions and trajectories on the global attractor.
	
\end{itemize}

To conclude, in Section \ref{Sec:Contributions}, we briefly discuss the results obtained mainly in Sections \ref{Sec:Regularity of the global attractor} and \ref{Sec:Refined long-time behavior}. 

\section{Main results}
We consider the system (\ref{Alpha-model}) under periodic boundary conditions on the torus $\T=[0, L]^3$, where $L > 0$ denotes the fixed period. We denote by $\mathcal{P}$ the set of vector-valued trigonometric polynomials satisfying periodic boundary conditions on $\T$. We then define $\mathcal{V} = \left\{ \vvarphi \in \mathcal{P} : {\rm div}(\vvarphi) = 0 \text{ and } \int_{\T} \vvarphi(x)dx = 0 \right\}$. Finally, we introduce the Hilbert space of {solenoidal, zero-mean vector fields $L^2_{\text{s}}(\T)$ as the closure of $\mathcal{V}$ in $L^2(\T)$.

\medskip

As mentioned above, the theoretical study of (\ref{Alpha-model}) is strongly driven by prescribed properties of the given indicator function $A(\cdot)$ and the convolution term $\varphi$, which we will detail in the statement of the results below. Specifically, for $\varphi$, these properties are given by the framework of Sobolev spaces. In contrast, for  
\[ A(\cdot): \Rt \to \R, \]
we will use the framework of H\"older spaces. For $m \in \mathbb{N}$ and  $0 < s \leq 1$, we recall that  the H\"older space $C^{m,s}(\mathbb{R}^3)$ consists of  $\mathcal{C}^m$-functions whose derivatives of order $m$ are $s$-H\"older continuous.  Consequently, for suitable values of $m$ and $s$, we will assume conditions of the type  
\begin{equation}\label{Reg-A-Motivation}
A(\cdot) \in \mathcal{C}^{m,s}(\Rt) \quad \text{and} \quad  \varphi \in H^m(\T).
\end{equation}

Another main property assumed for the indicator function $A(\cdot)$ is the following: for a given parameter $0 < \beta < 1$, we assume from now on that 
\begin{equation}\label{Conditions-a-existence-1}
\beta \leq A(x) \leq 1, \quad \text{for all } x \in \mathbb{R}^3.
\end{equation}
The lower bound $\beta < A(\cdot)$ is inspired by the numerical work \cite{Takhirov-Trenchea} to ensure the positivity of the indicator function. From an experimental standpoint, this positivity condition is beneficial for the numerical stabilization of the alpha-model. From a mathematical perspective, it also yields suitable control of the filtered velocity $\vu_\alpha$, as we will detail in Section \ref{Sec:Higher regularity properties of the nonlinear filter equation} below. On the other hand, the upper bound $A(\cdot) \leq 1$, introduced in \cite[Definition $2.1$]{Layton2}, guarantees that the nonlinear filter equation in (\ref{Filter-Equation}) locally exhibits the same behavior as the linear Helmholtz filter equation (\ref{Helmholtz}) in regions where $A(\cdot) = 1$.

\medskip

It is interesting to observe that these properties of the indicator function $A(\cdot)$ and the convolution term $\varphi$ naturally arise in the theoretical study of the system (\ref{Alpha-model}). Moreover, from an experimental perspective, the framework of H\"older spaces makes it entirely feasible to construct functions $A(\cdot)$ satisfying both (\ref{Reg-A-Motivation}) and (\ref{Conditions-a-existence-1}). See Appendix \ref{Sec:Appendix-Function-A} for an simple example.
\medskip

Once we have established our principal framework, we will present our main results in the following separate subsections for clarity.
 
\subsection{Regularity of weak Leray solutions}\label{Sec:Regularity of weak Leray solutions} We begin by recalling some definitions and previous results. 
\begin{defiprevia}[Weak Leray Solution, Definition $2.1$ of \cite{CortezJarrin2026}]\label{Def-Leray-Solutions} Let $\alpha>0$ be fixed. We shall say that $(\vu, \vu_\alpha, P, P_\alpha)$ is a weak Leray solution to the coupled parabolic-elliptic system (\ref{Alpha-model}) if the following conditions hold:
	\begin{enumerate}
		\item[(A)] We have 
		\[\vu \in L^{\infty}_{\rm loc}\big([0,+\infty[, L^2_{\text{s}}(\T)\big) \cap L^{2}_{\rm loc}\big( [0,+\infty[, \dot{H}^1(\T)\big), \quad \vu_\alpha \in L^{\infty}_{\rm loc}\big([0,+\infty[, H^1(\T)\big),\]
		and 
		\[ P\in L^2_{\rm loc}\big( [0,+\infty[, L^2(\T)\big), \quad P_\alpha \in L^\infty_{\rm loc}\big( [0,+\infty[, L^2(\T)\big). \]
		\item[(B)] The set $(\vu,\vu_\alpha,P, P_\alpha)$  solves the  system (\ref{Alpha-model}) in the weak sense. 
		\item[(C)] For all $t \geq 0$, the velocity $\vu$ verifies the energy control: 
		\begin{equation}\label{Energy-Control}
		\| \vu(t,\cdot)\|^{2}_{L^2} + 2 \nu \int_{0}^{t} \| \vu(s,\cdot)\|^{2}_{\dot{H}^1}ds \leq \| \vu_0 \|^{2}_{L^{2}} + 2 \int_{0}^{t} \big\langle f(s,\cdot), \vu(s,\cdot)\big\rangle_{\dot{H}^{-1}\times\dot{H}^1}\, ds,
		\end{equation}
		provided that $\vu_0 \in L^2_{\text{s}}(\T)$,  and  $\vf \in L^{2}_{\rm loc}\big([0,+\infty[, \dot{H}^{-1}(\T)\big)$ with ${\rm div}(\vf)=0$. 
	\end{enumerate}		
\end{defiprevia}	 
For simplicity, we restrict our analysis to divergence-free external forces. Nevertheless, the definitions and results presented  can be extended to the general case of arbitrary external forces, at the expense of only minor technical adjustments.

\medskip

\begin{teoprevio}[Global well-posedness of Leray solutions, Theorems $2.1$ and $2.2$ of \cite{CortezJarrin2026}]\label{Th-GWP} Let $\vu_0 \in L^2_{\text{s}}(\T)$ be an initial datum, and let 
\[ \vf \in L^{2}_{\rm loc}\big([0,+\infty[, \dot{H}^{-1}(\T)\big),  \]	
be a divergence-free and zero-mean  external force. In the nonlinear filter equation (\ref{Filter-Equation}) assume (\ref{Conditions-a-existence-1}) and that  
		\begin{equation}\label{conditions-a-varphi-GWP}
	A(\cdot) \in \mathcal{C}^{1,1}(\Rt) \quad \mbox{and} \quad \varphi \in H^1(\T).
	\end{equation}
	
Then, for every $\alpha>0$ the coupled system (\ref{Alpha-model}) has a \emph{unique}  weak Leray solution  $(\vu,\vu_\alpha,P, P_\alpha)$ in the sense of Definition \ref{Def-Leray-Solutions}.
\end{teoprevio}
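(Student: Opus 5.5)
The plan has three parts: the solution map $\vu\mapsto\vu_\alpha$ of the filter equation (\ref{Filter-Equation}), then existence via Galerkin approximation, then uniqueness via a stability estimate for the filter combined with an energy estimate for the difference of two solutions.

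\textbf{Step 1: the filter equation for frozen $\vu$.} Fix $\vu\in L^2_{\rm s}(\T)$ and set $a(x)=A(\varphi\ast\vu)(x)$. Since $\varphi\in H^1(\T)$, Young's inequality gives $\varphi\ast\vu\in H^1\cap L^\infty$ with $\|\vec\nabla(\varphi\ast\vu)\|_{L^\infty}\lesssim \|\varphi\|_{H^1}\|\vu\|_{L^2}$ (using $\vec\nabla\varphi\in L^2$ and Cauchy--Schwarz). Together with $A\in\mathcal C^{1,1}$, this shows that $a$ is Lipschitz and $\beta\le a\le 1$. Problem (\ref{Filter-Equation}) is then a linear, uniformly elliptic Stokes-type system for $\vu_\alpha$. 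Lax--Milgram on divergence-free $H^1$ fields gives a unique solution with
\[\alpha^2\beta\|\vec\nabla\otimes\vu_\alpha\|_{L^2}^2+\|\vu_\alpha\|_{L^2}^2\le\|\vu\|_{L^2}^2 .\]
The pressure $P_\alpha$ is recovered by de Rham's theorem. Because $a$ is Lipschitz, an $H^2$ estimate for divergence-form systems with Lipschitz coefficients (difference quotients) yields $\vu_\alpha\in H^2$, with a bound depending on $\|\vu\|_{L^2}$.

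\textbf{Step 2: existence.} Project (\ref{Alpha-model}) onto the first $N$ Stokes modes and solve the filter equation exactly at each step. The map $\vu_N\mapsto\vu_{N,\alpha}$ is locally Lipschitz by Step 3 below, so the Galerkin ODE is solvable. Test the equation with $\vu_N$. The transport term vanishes because ${\rm div}(\vu_{N,\alpha})=0$, which gives (\ref{Energy-Control}) uniformly in $N$. Time-derivative bounds in $L^{4/3}_t H^{-1}$ then follow, and Aubin--Lions gives strong $L^2_tL^2_x$ compactness of $\vu_N$.

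To pass to the limit in the filter, use that $\varphi\ast\vu_N\to\varphi\ast\vu$ uniformly for a.e.\ $t$. Hence $A(\varphi\ast\vu_N)\to A(\varphi\ast\vu)$ uniformly, and the stability estimate of Step 3 gives $\vu_{N,\alpha}\to\vu_\alpha$. The energy inequality is obtained by weak lower semicontinuity.

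\textbf{Step 3: uniqueness (main obstacle).} Take two solutions $\vu,\vv$ and set $\vw=\vu-\vv$. Subtracting their filter equations gives
\[-\alpha^2{\rm div}\big(a_u(\vec\nabla\otimes\vw_\alpha)^T\big)+\vw_\alpha+\vec\nabla q=\vw+\alpha^2{\rm div}\big((a_u-a_v)(\vec\nabla\otimes\vv_\alpha)^T\big).\]
Since $A$ is Lipschitz, $\|a_u-a_v\|_{L^\infty}\lesssim\|\varphi\|_{L^2}\|\vw\|_{L^2}$. Testing with $\vw_\alpha$ gives
\[\|\vw_\alpha\|_{H^1}\lesssim\big(1+\|\vv_\alpha\|_{H^1}\big)\|\vw\|_{L^2}.\]

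For the evolution equation, test the difference with $\vw$:
\[\tfrac12\tfrac{d}{dt}\|\vw\|_{L^2}^2+\nu\|\vw\|_{\dot H^1}^2=-\big\langle(\vw_\alpha\cdot\vec\nabla)\vv,\vw\big\rangle .\]
This is the delicate term. Integrate by parts to write it as $\langle(\vw_\alpha\cdot\vec\nabla)\vw,\vv\rangle$, then bound it by $\|\vw_\alpha\|_{L^6}\|\vec\nabla\vw\|_{L^2}\|\vv\|_{L^3}$. Using the filter estimate above and Young's inequality,
\[\tfrac{d}{dt}\|\vw\|_{L^2}^2\lesssim\nu^{-1}\big(1+\|\vv_\alpha\|_{H^1}\big)^2\|\vv\|_{L^3}^2\,\|\vw\|_{L^2}^2 .\]
The coefficient is integrable in time: $\|\vv\|_{L^3}^2\le\|\vv\|_{L^2}\|\vv\|_{\dot H^1}\in L^1_{\rm loc}$, and $\|\vv_\alpha\|_{H^1}$ is bounded by Step 1. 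Gronwall's inequality then gives $\vw\equiv0$.

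Justifying this testing is legitimate, since $\vw\in L^2_t\dot H^1$ and $\partial_t\vw\in L^2_tH^{-1}$. The latter holds because the filter regularity makes $\vu_\alpha\in L^\infty_tL^\infty_x$, so the transport term lies in $L^2_tH^{-1}$. This is exactly where the $H^2$ filter bound of Step 1, and hence the hypothesis $A\in\mathcal C^{1,1}$ with $\varphi\in H^1$, is needed.
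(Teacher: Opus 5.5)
Your proposal is correct, and its uniqueness step takes a genuinely different route from the one this paper relies on. The paper does not reprove the theorem; it quotes it from the authors' earlier work. However, the filter estimates it recalls and reuses show the intended mechanism, notably the $H^2$-stability bound (\ref{Control-Filter-Diff-H2}), which is applied in exactly this way in the proof of Theorem \ref{Th:Determining-Modes}. There, the bad term is bounded by $\|\vw_\alpha\|_{L^\infty}\|\vec{\nabla}\otimes\vv\|_{L^2}\|\vw\|_{L^2}$, with $\|\vw_\alpha\|_{L^\infty}\lesssim\|\vw_\alpha\|_{H^2}\lesssim(\|\vu\|_{L^2}+\|\vv\|_{L^2}+1)^2\|\vw\|_{L^2}$. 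The Gronwall weight is then the energy-controlled quantity $\|\vec{\nabla}\otimes\vv\|_{L^2}$. That $H^2$ difference estimate is where the Lipschitz continuity of $\vec{\nabla}A$, i.e.\ $A\in\mathcal{C}^{1,1}$, is used.

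You instead integrate by parts to put the derivative on $\vw$, so you need only the $H^1$-stability estimate (the analogue of (\ref{Control-Filter-Diff-H1})). Pairing $\|\vw_\alpha\|_{L^6}$ with $\|\vv\|_{L^3}$ works, since $\|\vv\|_{L^3}^2\lesssim\|\vv\|_{L^2}\|\vv\|_{\dot{H}^1}\in L^1_{\rm loc}$.

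Your route is more economical. Because $\|A(\varphi\ast\vu)-A(\varphi\ast\vv)\|_{L^\infty}\lesssim\|\varphi\|_{L^2}\|\vw\|_{L^2}$, uniqueness holds with $A$ merely Lipschitz and $\varphi\in L^2$, with no extra regularity of the filtered velocity. The paper's route, in exchange, produces the $H^2$ difference estimate, which it reuses later: in the base case of Theorem \ref{Th:Cont-Dep-Hm-Filter} and in the determining-modes arguments.

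For the same reason, your closing sentence overstates the role of the hypotheses. From $\vu_\alpha\in L^\infty_tH^1_x\subset L^\infty_tL^6_x$ and $\|\vu\|_{L^3}\lesssim\|\vu\|_{L^2}^{1/2}\|\vu\|_{H^1}^{1/2}$ you already get $\vu\otimes\vu_\alpha\in L^4_{\rm loc}L^2_x$, hence $\partial_t\vw\in L^2_{\rm loc}H^{-1}_x$. Moreover, the difference-quotient $H^2$ bound of your Step 1 needs only $A$ Lipschitz, together with $\varphi\in H^1$. So neither that $H^2$ bound nor $\mathcal{C}^{1,1}$ is actually required anywhere in your argument.
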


In this framework, under prescribed regularity assumptions on the external force $\vf$ and the functions $A(\cdot)$ and $\varphi$ in the nonlinear filter equation (\ref{Filter-Equation}), we study the gain in regularity of weak Leray solutions to the full Navier-Stokes alpha model (\ref{Alpha-model}). In addition, recall that the pressure terms $P$ and $P_\alpha$ are related to the divergence-free vector fields $\vu$ and $\vu_\alpha$, respectively, through the expressions
\begin{equation*}
P= -(-\Delta)^{-1} {\rm div} \Big( {\rm div} (\vu \otimes \vu_\alpha) \Big), \qquad P_\alpha =-(\Delta)^{-1} {\rm div} \Big( {\rm div} \Big(  A(\varphi \ast \vu) (\vec{\nabla} \otimes \vu_\alpha)^T\Big) \Big),
\end{equation*}
and therefore, for simplicity, we focus only on the unknowns $\vu$ and $\vu_\alpha$.

\begin{Theorem}\label{Th-Regularity} Let $m\in \mathbb{N}$. Within the setting  of Theorem \ref{Th-GWP}, assume that the external force verifies 
	\begin{equation}\label{Reg-force}
	\vf \in L^{2}_{loc}([0,+\infty[, H^m(\T)).   
	\end{equation}
	Moreover, in the filter equation (\ref{Filter-Equation}) assume that
	\begin{equation}\label{conditions-a-varphi-regularity}
	A(\cdot)\in \mathcal{C}^{m+1,1}(\Rt)  \quad \mbox{and} \quad \varphi \in H^{m+1}(\T). 
	\end{equation}
	Then, for any $0<T_1<T_2<+\infty$   the unique  Leray's solution to the coupled system  (\ref{Alpha-model})  verifies:
	\begin{equation}\label{Regularity-Leray-Solution} 
	\vu \in \mathcal{C}\big([T_1,T_2], H^{m+1}(\T)\big) \cap  L^2\big([T_1,T_2], \dot{H}^{m+2}(\T)\big)  \quad \text{and} \quad   \vu_\alpha \in  \mathcal{C}\big([T_1,T_2], H^{m+2}(\T)\big).
	\end{equation} 
\end{Theorem}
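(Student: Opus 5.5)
We argue by induction on $m\in\mathbb{N}$, taking as base case $m=0$, i.e.\ $\vu\in \mathcal{C}([T_1,T_2],H^1)\cap L^2([T_1,T_2],\dot H^2)$ and $\vu_\alpha\in \mathcal{C}([T_1,T_2],H^2)$. The argument has two ingredients, applied alternately. The first is an \emph{elliptic lifting estimate} for the nonlinear filter equation (\ref{Filter-Equation}). Writing $a:=A(\varphi\ast\vu)$, the bound $\beta\le a\le 1$ from (\ref{Conditions-a-existence-1}) yields the basic estimate
\[
\alpha^2\beta\|\vec{\nabla}\otimes\vu_\alpha\|_{L^2}^2+\|\vu_\alpha\|_{L^2}^2\le \|\vu\|_{L^2}^2 .
\]
Next we apply $\partial^\gamma$ with $|\gamma|=k+1$, test against $\partial^\gamma\vu_\alpha$, and use the Leibniz rule. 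The main term $\alpha^2\int a\,|\vec{\nabla}\partial^\gamma \vu_\alpha|^2$ is coercive with constant $\alpha^2\beta$. The commutator terms $\int \partial^{\gamma'}a\,\partial^{\gamma-\gamma'}\vec{\nabla}\vu_\alpha\cdot\vec{\nabla}\partial^\gamma\vu_\alpha$ are handled with Young's inequality. The decisive point is that every derivative of $a$ falls on the mollified field: by the Fa\`a di Bruno formula, $\partial^{\gamma'}a$ is a sum of products of $A^{(j)}(\varphi\ast\vu)$ with factors $\partial^{\delta}\varphi\ast\vu$. Since $\|\partial^\delta\varphi\ast\vu\|_{L^\infty}\le\|\partial^\delta\varphi\|_{L^2}\|\vu\|_{L^2}$, we get
\[
\|a\|_{W^{m+1,\infty}}\le C\big(\|A\|_{\mathcal{C}^{m+1,1}},\|\varphi\|_{H^{m+1}}\big)\,\big(1+\|\vu\|_{L^2}\big)^{m+1},
\]
and this is exactly where (\ref{conditions-a-varphi-regularity}) is used. (For the top-order derivative one may instead place $\partial^{m+1}$ on $\varphi$ in $L^2$ and use $L^\infty$ bounds on lower derivatives of $\vu_\alpha$, obtained via Sobolev embedding from the induction hypothesis.) The pressure $P_\alpha$ vanishes against divergence-free test functions. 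The conclusion is a lifting lemma:
\[
\|\vu_\alpha(t)\|_{H^{k+2}}\le C\big(\|\vu(t)\|_{L^2}\big)\,\big(\|\vu(t)\|_{H^{k}}+\|\vu_\alpha(t)\|_{H^{k+1}}\big),\qquad 0\le k\le m.
\]
It should be justified rigorously by difference quotients, or by a Galerkin approximation of the filter, since the quantity being estimated is not known a priori to be finite. Iterating it gives $\|\vu_\alpha\|_{H^{k+2}}\lesssim \|\vu\|_{H^k}$, with constants depending only on $\|\vu\|_{L^\infty_tL^2}$, which is bounded by (\ref{Energy-Control}).

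The second ingredient is a set of \emph{parabolic energy estimates} for $\vu$. We test the first equation of (\ref{Alpha-model}) with $(-\Delta)^{k+1}\vu$ for $k=0,\dots,m$, which gives
\[
\frac{d}{dt}\|\vu\|_{\dot H^{k+1}}^2+\nu\|\vu\|_{\dot H^{k+2}}^2\le C\|\vf\|_{H^k}^2+2\big|\langle (\vu_\alpha\cdot\vec{\nabla})\vu,(-\Delta)^{k+1}\vu\rangle\big|.
\]
The nonlinear term is at most $\|(\vu_\alpha\cdot\vec{\nabla})\vu\|_{H^{k}}\|\vu\|_{\dot H^{k+2}}$. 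By product estimates, and because $\vu_\alpha\in H^{k+2}\subset W^{k,\infty}$ is controlled by $\|\vu\|_{H^k}$ through the lifting lemma, it is bounded by $C\|\vu\|_{H^k}\|\vu\|_{H^{k+1}}\|\vu\|_{\dot H^{k+2}}$. For $k=0$ we use $\|\vu_\alpha\|_{L^\infty}\lesssim\|\vu_\alpha\|_{H^2}\lesssim\|\vu\|_{L^2}$, which is uniformly bounded. Absorbing terms then gives
\[
\frac{d}{dt}y_k\le C\|\vf\|_{H^k}^2+C\|\vu\|_{H^k}^2\,y_k,\qquad y_k=\|\vu\|_{\dot H^{k+1}}^2 .
\]
Since $\|\vu\|_{H^k}^2$ is integrable in time by the previous step of the induction, Gr\"onwall's inequality applies. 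To remove the dependence on initial data, we use the uniform Gr\"onwall lemma (or multiply by $(t-t_0)$), relying on $\int_{T_1/2}^{T_2}y_k<\infty$ from the previous level. This yields $\vu\in L^\infty([T_1,T_2],H^{k+1})\cap L^2([T_1,T_2],\dot H^{k+2})$, with the starting time shrinking slightly at each of the finitely many steps. These computations are carried out on Galerkin approximations and passed to the limit, using the uniqueness in Theorem \ref{Th-GWP}.

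Continuity in time follows from the Lions--Magenes lemma. The equation gives $\partial_t\vu\in L^2([T_1,T_2],H^{m})$, which together with $\vu\in L^2H^{m+2}$ yields $\vu\in\mathcal{C}([T_1,T_2],H^{m+1})$. For $\vu_\alpha$ we use the difference $\vu_\alpha(t)-\vu_\alpha(s)$. It satisfies the filter equation with right-hand side
\[
\vu(t)-\vu(s)+\alpha^2{\rm div}\big((a(t)-a(s))(\vec{\nabla}\otimes\vu_\alpha(s))^T\big).
\]
The lifting estimate, together with the continuity of $\vu$ and the Lipschitz dependence of $a$ on $\vu$ in $W^{m+1,\infty}$ (which uses the extra $\mathcal{C}^{\cdot,1}$ regularity of $A$), then gives $\vu_\alpha\in\mathcal{C}([T_1,T_2],H^{m+2})$. \textbf{The main obstacle} is the lifting lemma: at top order the commutator contains $\partial^{m+1}a\cdot\vec{\nabla}\vu_\alpha$, and it must be verified that the Fa\`a di Bruno expansion closes using only $\|\vu\|_{L^2}$ and the stated regularity of $A$ and $\varphi$, without any loss of derivatives on $\vu$.
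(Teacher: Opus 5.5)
Your proof is correct, but for the evolution equation it takes a different route from the paper.

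In the proof of Theorem \ref{Th-Regularity}, the paper performs no higher-order energy estimates on $\vu$. It bootstraps through the mild formulation (\ref{Mild-Formulation}), restarted at $T_1-\varepsilon_m$. At each level it only checks qualitatively that $(\vu_\alpha\cdot\vec{\nabla})\vu\in L^2_t\dot H^m_x$, using $\vu_\alpha\in\mathcal{C}_tH^{m+2}_x\subset\mathcal{C}_tW^{m,\infty}_x$ from Theorems \ref{Th:Hm-Filter}--\ref{Th:Cont-Dep-Hm-Filter}. The heat-semigroup bounds of Lemmas \ref{Lem-Tech-1}--\ref{Lem-Tech-2} then give $\mathcal{C}_tH^{m+1}_x\cap L^2_t\dot H^{m+2}_x$ at once, with time continuity built in. So the paper needs no Gr\"onwall argument, no approximation scheme for the coupled system and no Lions--Magenes step.

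Your $H^{k+1}$-energy inequalities are heavier to justify, but they yield explicit quantitative bounds of the same kind as (\ref{Gronwall-vn-Hm}), which the paper later needs on the attractor. Gr\"onwall is actually superfluous in your scheme. After Young's inequality the nonlinear contribution is $\frac{C}{\nu}\|\vu\|^2_{H^k}\|\vu\|^2_{H^{k+1}}$, which already lies in $L^1_t$ by the previous level. Each step is therefore a linear Stokes problem with $L^2_tH^k_x$ forcing, which is exactly why the paper's Duhamel bootstrap works. For the same reason, you can justify the computation by Fourier-truncating the equation satisfied by the given Leray solution, instead of building a Galerkin scheme for the nonlinear coupled system.

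On the elliptic side, you test $\partial^\gamma$ of the filter equation against $\partial^\gamma\vu_\alpha$ and use the Fa\`a di Bruno formula. This replaces the paper's duality argument against $\partial^{\bf a}\vw$, $\vw\in\mathcal{P}$, in Theorem \ref{Th:Hm-Filter}. Your version yields $\|\vu_\alpha\|_{H^{k+2}}\leq C(\|\vu\|_{L^2})\,\|\vu\|_{H^k}$, which is linear in $\|\vu\|_{H^k}$, rather than the polynomial bound (\ref{Control-Hm-Filter}). Your continuity argument for $\vu_\alpha$ uses the same Lipschitz mechanism as Theorem \ref{Th:Cont-Dep-Hm-Filter}.

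Finally, the ``main obstacle'' you flag is not one. Each factor $\partial^\delta\varphi\ast\vu$ with $|\delta|\leq m+1$ is bounded in $L^\infty$ by $\|\varphi\|_{H^{m+1}}\|\vu\|_{L^2}$. The derivatives of $A$ of order at most $m+1$ are evaluated on the bounded range of $\varphi\ast\vu$. Hence the expansion of $\partial^{\gamma'}A(\varphi\ast\vu)$ closes with no loss of derivatives on $\vu$, exactly as your own bound on $\|a\|_{W^{m+1,\infty}}$ already shows.
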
	

As previously mentioned, the proof of this result is based on first developing an independent regularity theory for the nonlinear filter equation (\ref{Filter-Equation}). In this analysis, assumptions (\ref{conditions-a-varphi-regularity}) play a key role in yielding a gain in the regularity of the filtered velocity $\vu_\alpha$; see Section \ref{Sec:Higher regularity properties of the nonlinear filter equation} below. Thereafter, through a rather standard bootstrap argument, we study the resulting gain in the regularity of the unfiltered velocity $\vu$ using its (mild) integral formulation:
\begin{equation*}
\vu(t,\cdot)=e^{\nu t\Delta }\vu_0 + \int_{0}^{t}e^{\nu(t-s) \Delta}\P(\vf)(s,\cdot) ds - \int_{0}^{t}e^{\nu (t-s) \Delta}\P\Big((\vu_\alpha \cdot \vec{\nabla})\vu\Big)(s,\cdot)ds,
\end{equation*} 
where the action of the operator $e^{\nu t\Delta }$ is classically represented by convolution with the heat kernel, and $\P$ denotes the Leray projector.

\medskip

Here, note that, given $\vu_0 \in L^2_{\rm s}(\T)$, the term $e^{\nu t \Delta}\vu_0$ does not gain regularity at $t=0$, and for this reason we must work on the time interval $[T_1,T_2]$ with $T_1>0$. On the other hand, for an external force satisfying (\ref{Reg-force}), the well-known \emph{maximal regularity} of the heat kernel (see Lemma \ref{Lem-Tech-2} below) yields at most
\[ \int_{0}^{t}e^{\nu(t-s) \Delta}\P(\vf)(s,\cdot) ds \in\mathcal{C}_t H^{m+1}_x \cap L^2_t \dot{H}^{m+2}_x.\]
 In this sense, the gain of regularity in $\vu$ given in (\ref{Regularity-Leray-Solution}) is \emph{optimal}.

\medskip

In the particular case when $\vec{f}\equiv 0$, one would expect a greater gain in regularity of the solution $\vu$. Indeed, this is the case, but the gain is not significantly greater than that obtained in Theorem \ref{Th-Regularity}, since it is ultimately controlled by the prescribed regularity of $A(\cdot)$ and $\varphi$.
\begin{Proposition}
	\label{Prop:Homogeneous-case} Under the same hypothesis of Theorem \ref{Th-Regularity}, assume that $\vf \equiv 0$. Then, it holds that 
	\begin{equation}\label{Regularity-Leray-Solution-improved}
		\vu \in \mathcal{C}\big([T_1,T_2], H^{m+2}(\T)\big) \cap  L^2\big([T_1,T_2], \dot{H}^{m+3}(\T)\big),
	\end{equation} 
	while the filtered velocity $\vu_\alpha$ maintains the same gain of regularity given in Theorem \ref{Th-Regularity}. 
\end{Proposition}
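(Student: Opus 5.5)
We start from the conclusion of Theorem \ref{Th-Regularity}. It already gives, on any interval $[T_1',T_2]$ with $0<T_1'<T_1$,
\[\vu \in \mathcal{C}_t H^{m+1}_x\cap L^2_t\dot H^{m+2}_x \quad\text{and}\quad \vu_\alpha\in \mathcal{C}_t H^{m+2}_x .\]
The only obstruction to more regularity of $\vu$ in that theorem was the forcing term, whose maximal-regularity gain saturates at $H^{m+1}$. With $\vf\equiv 0$ the mild formulation, restarted at the time $T_1'$, reads
\[
\vu(t)=e^{\nu (t-T_1')\Delta}\vu(T_1') - \int_{T_1'}^{t} e^{\nu(t-s)\Delta}\P\big((\vu_\alpha\cdot\vec\nabla)\vu\big)(s)\,ds .
\]
The plan is to gain exactly one more derivative for $\vu$ from this formula. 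We do not try to improve $\vu_\alpha$: the filter equation (\ref{Filter-Equation}) only yields $H^{m+2}$ under the hypothesis $A\in\mathcal C^{m+1,1}$, $\varphi\in H^{m+1}$. This is why $\vu_\alpha$ keeps the regularity of Theorem \ref{Th-Regularity}, and why the gain for $\vu$ stops at one extra derivative.

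\textbf{Linear term.} Since $\vu(T_1')\in H^{m+1}$ and $t-T_1'\geq T_1-T_1'>0$ on $[T_1,T_2]$, the heat semigroup makes $e^{\nu(t-T_1')\Delta}\vu(T_1')$ smooth there. In particular it lies in $\mathcal{C}([T_1,T_2],H^{m+2})\cap L^2([T_1,T_2],\dot H^{m+3})$.

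\textbf{Nonlinear term.} By the maximal regularity Lemma \ref{Lem-Tech-2}, it suffices to show
\[(\vu_\alpha\cdot\vec\nabla)\vu \in L^2([T_1',T_2],H^{m+1}).\]
Then the Duhamel integral belongs to $\mathcal{C}_tH^{m+2}_x\cap L^2_t\dot H^{m+3}_x$. To prove this bound, use ${\rm div}(\vu_\alpha)=0$ to write $(\vu_\alpha\cdot\vec\nabla)\vu={\rm div}(\vu_\alpha\otimes\vu)$. Apply the Leibniz rule for $D^{\gamma}$ with $|\gamma|\le m+1$, using that $H^{m+2}(\T)\subset L^\infty$ for $m\ge 1$ (it suffices that $H^2\subset L^\infty$). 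This gives the tame estimate
\[
\|(\vu_\alpha\cdot\vec\nabla)\vu\|_{H^{m+1}}\lesssim \|\vu_\alpha\|_{H^{m+2}}\,\|\vu\|_{H^{m+2}} .
\]
On the right, the factor $\|\vu_\alpha\|_{H^{m+2}}$ is bounded uniformly in time. The factor $\|\vu\|_{H^{m+2}}$ lies in $L^2_t$ by Theorem \ref{Th-Regularity}. If $m=0$ is allowed, the terms in which derivatives land on $\vu_\alpha$ are handled instead with H\"older and the Sobolev embeddings $H^1\subset L^6$, $H^{1/2}\subset L^3$. Hence the forcing of the Duhamel term is in $L^2_tH^{m+1}_x$, and (\ref{Regularity-Leray-Solution-improved}) follows on $[T_1,T_2]$, where $T_1$ is arbitrary.

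\textbf{Main obstacle.} The delicate point is the product estimate at the top order. We must check that no term requires $\vu_\alpha\in H^{m+3}$, which is unavailable. The worst terms are:
\begin{itemize}
\item $D^{m+1}\vu_\alpha\,\vec\nabla\vu$, with $D^{m+1}\vu_\alpha\in H^1$ and $\vec\nabla\vu\in H^{m+1}$;
\item $\vu_\alpha\,D^{m+2}\vu$, with $\vu_\alpha\in L^\infty$ and $D^{m+2}\vu\in L^2$.
\end{itemize}
Both are fine. If needed, one can iterate once with the intermediate exponent to avoid borderline embeddings. It must also be verified that Lemma \ref{Lem-Tech-2} applies on the shifted interval $[T_1',T_2]$; this is immediate by translation in time.
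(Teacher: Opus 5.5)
Your proposal is correct and follows the paper's argument. You restart the Duhamel formula slightly before $T_1$ from the regularity given by Theorem \ref{Th-Regularity}, observe that the linear part is smooth, and show $(\vu_\alpha\cdot\vec{\nabla})\vu\in L^2_tH^{m+1}_x$ using only $\vu_\alpha\in\mathcal{C}_tH^{m+2}_x$, so that maximal regularity (Lemma \ref{Lem-Tech-2}) yields exactly one extra derivative. The only difference is cosmetic: the paper invokes the algebra property of $H^{m+1}$, and hence needs $m\geq 1$, whereas your Leibniz/H\"older estimate also covers $m=0$.
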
 

Comparing (\ref{Regularity-Leray-Solution}) with (\ref{Regularity-Leray-Solution-improved}), it is interesting to observe that the improvement in regularity of the velocity $\vu$ does not continue indefinitely, provided that the prescribed regularity assumptions on $A(\cdot)$ and $\varphi$ given in (\ref{conditions-a-varphi-regularity}) are fixed. We refer to Remark \ref{Rmk:Reg-u-alpha} for further technical details. 

\medskip

In contrast, it is also interesting to observe that, in the \emph{linear} case of the Helmholtz equation (\ref{Helmholtz}), its smoothing effects are strong enough to indefinitely regularize the velocity $\vu$. More precisely, for the simplified Bardina model on the whole space $\Rt$:
\begin{equation*}
\partial_t \vu - \nu\Delta \vu + \left( (\vu \cdot \vec{\nabla})\vu\right)_\alpha + \vec{\nabla} P  =  0,  \quad {\rm div}(\vu)=0,
\end{equation*}
where, for a given vector field $\vec{\phi}$, the expression $(\vec{\phi})_\alpha$ denotes the solution of (\ref{Helmholtz}), the work \cite[Theorem $1.1$]{Berselli} shows that solutions \emph{remain smooth indefinitely}, in the sense that $\vu \in \mathcal{C}\big( [T_1, T_2], H^{m+n}(\Rt)\big)$ for any $n \in \mathbb{N}$.

\medskip

Returning to Theorem \ref{Th-Regularity}, since the times $0<T_1<T_2<+\infty$ in expression (\ref{Regularity-Leray-Solution}) are arbitrary, we directly deduce that, for any time $t>0$, we have $\| \vu(t,\cdot)\|_{H^{m+1}}<+\infty$. This naturally raises the question of whether, assuming that the external force $\vf$ is a time-independent function, this quantity remains uniformly bounded as the time $t$ goes to infinity.

\medskip

To the best of our knowledge, this long-time behavior of the higher-order $H^{m+1}$-norm of Leray solutions cannot be directly deduced from Theorem \ref{Th-Regularity}. Indeed, by performing a higher-order energy estimate on the velocity $\vu$ appearing in the first equation of system (\ref{Alpha-model}), together with the higher-order estimates for the filtered velocity $\vu_\alpha$ obtained in Section \ref{Sec:Higher regularity properties of the nonlinear filter equation}, we find, for any $t>0$, that
\[ \frac{d}{dt}\| \vu(t,\cdot)\|^2_{H^{m+1}} \lesssim   \| \vu_\alpha(t,\cdot) \|^2_{H^{m+2}}\, \| \vu(t,\cdot) \|^2_{H^{m+1}}+  \| \vf \|^2_{H^m}, \quad m\geq 0. \]
A closely related estimate is established in the technical Lemma \ref{Lem-Reg-u-alpha-Hm} below, whose proof also provides the main ideas leading to this high-order energy estimate.  Thus, in contrast to the well-known $L^2$ energy estimate
\[ \frac{d}{dt}\| \vu(t,\cdot)\|^2_{L^2} \lesssim  - \| \vu(t,\cdot) \|^2_{L^2}+  \| \vf \|^2_{H^{-1}}, \]
a classical Gr\"onwall-type argument does not appear to be sufficient to obtain uniform-in-time bounds on $\| \vu(t,\cdot)\|_{H^{m+1}}$ for arbitrary Leray solutions.

\medskip

In the next subsection, we show that this long-time uniform bound  can be addressed for solutions of the Navier-Stokes alpha model belonging to its global attractor.

\subsection{Regularity of the global attractor}\label{Sec:Regularity of the global attractor} 
We set our framework as follows: first, we fix a stationary (time-independent) divergence-free external force $\vf \in \dot{H}^{-1}(\T)\cap L^2_{\text{s}}(\T)$. Next, since the pressure terms $P$ and $P_\alpha$ do not play any substantial role in this study, from now on, we shall consider the system
\begin{equation}\label{Alpha-model-Proy}
\begin{cases}\vspace{2mm}
\partial_t \vu - \nu\Delta \vu + \P \big( (\vu_\alpha \cdot \vec{\nabla})\vu \big)  =  \vf, \quad {\rm div}(\vu)=0,  \\ \vspace{2mm}
-\alpha^2 \, \P \Big( {\rm div}\left( A (\varphi\ast\vu) (\vec{\nabla} \otimes \vu_\alpha)^T \right)\Big)+\vu_\alpha = \vu, \quad {\rm div}(\vu_\alpha)=0, \quad \alpha >0,  \\ 
\vu(0,\cdot)=\vu_0, \quad {\rm div}(\vu_0)=0.
\end{cases}
\end{equation}
Moreover, in the nonlinear filter equation above,  we shall assume that  the indicator function $A(\cdot)$ and the convolution term $\varphi$ satisfies the conditions (\ref{conditions-a-varphi-GWP}). Therefore, by Theorem \ref{Th-GWP} the system (\ref{Alpha-model-Proy}) defines a  continuous semi-group acting on the Hilbert space $L^2_{\text{s}}(\T)$ as follows:  for any time $t\geq 0$, we define the semi-group 
\begin{equation}\label{Semi-group}
S(t):L^2_{\text{s}}(\T)\to L^2_{\text{s}}(\T), \quad \vu_0 \mapsto S(t)\vu_0= \vu(t,\cdot),
\end{equation}
where $\vu(t,\cdot)$ denotes the \emph{unique}   Leray solution to (\ref{Alpha-model-Proy}) arising from the initial datum $\vu_0$.

\medskip

In our previous work \cite{CortezJarrin2026}, we studied the long-time behavior of the semi-group $S(t)$ through the notion of the global attractor. For completeness, we recall here some definitions and the main results obtained. 

\begin{defiprevia}[Global attractor, Definition $2.2$ of \cite{CortezJarrin2026}]\label{Def-Global-Attractor}  A global attractor for the semi-group $(S(t))_{t\geq 0}$ defined in (\ref{Semi-group}) is a set $\mathcal{A}\subset L^2_{\text{s}}(\T)$ which satisfies:
	\begin{enumerate}
		\item[(A)] The set $\mathcal{A}$ is compact in the strong topology of $L^2_s(\T)$, which is the same as the strong topology of $L^2(\T)$.
		\item[(B)] The set $\mathcal{A}$ is strictly invariant under $S(t)$: for any time $t\geq 0$, we have $S(t)\mathcal{A}=\mathcal{A}$.
		\item[(C)] $\mathcal{A}$ is an attracting set: for any bounded set $B \subset L^2_{\text{s}}(\T)$ and for any neighborhood $\mathcal{O}$ of $\mathcal{A}$, there exists a time $T=T(B,\mathcal{O})>0$ (which depends on the set $B$ and the neighborhood $\mathcal{O}$) such that for all $t>T$,   it holds $S(t)B\subset \mathcal{O}$. 
	\end{enumerate}
	\end{defiprevia} 
Here, point $(C)$ captures the essential long-time property of a global attractor. In broad terms, the asymptotic behavior of the semi-group $S(t)$ defined in (\ref{Semi-group}), and consequently that of Leray solutions to the Navier-Stokes alpha model (\ref{Alpha-model})), can be characterized in terms of the global attractor. More precisely, given any initial datum $\vu_0 \in L^2_{\rm s}(\T)$, setting $B:= \{ \vu_0\}$, and any neighborhood $\mathcal{O}$ of $\mathcal{A}$, the corresponding Leray solution $\vu(t,\cdot)$ eventually enters $\mathcal{O}$ as the time increases. Hence, every Leray solution of (\ref{Alpha-model})) approaches the global attractor $\mathcal{A}$ arbitrarily closely as $t\to +\infty$.

\medskip

With this first definition in mind, it is natural to ask about the elements that belong to a global attractor, whenever such an attractor exists. To this end, we introduce the following definition.

\begin{defiprevia}[Eternal solutions, Definition $2.3$ of \cite{CortezJarrin2026}]\label{Def-Eternal-Solutions} Let $\vf \in \dot{H}^{-1}(\T)\cap L^2_{\text{s}}(\T)$ be the stationary external force. We shall say that $\vu_{\text{e}}$  is an eternal solution to the  Navier-Stokes-alpha model associated with $\vf$ if:  
	\begin{enumerate}
		\item[(A)] We have $\ds{\vu_{\text{e}} \in L^{\infty}_{\rm loc}\big(\R, L^2_{\text{s}}(\T)\big) \cap L^{2}_{\rm loc}\big(\R,\dot{H}^1(\T)\big)}$. 
		
		\medskip 
		
		\item[(B)] The vector field $\vu_\text{e}$ solves the following   coupled system  in the weak sense:
		\begin{equation*}
		\begin{cases}\vspace{2mm}
		\partial_t \vu_\text{e} - \nu\Delta \vu_\text{e} + \P\big( (\vu_{\text{e},\alpha} \cdot \vec{\nabla})\vu_\text{e}\big)  = \vf, \quad {\rm div}(\vu_\text{e})=0, \\ 
		-\alpha^2 \, \P\Big( {\rm div} \left( A (\varphi \ast \vu_\text{e}) (\vec{\nabla} \otimes \vu_{\text{e},\alpha})^T \right)\Big)+\vu_{\text{e},\alpha} = \vu_\text{e}, \quad {\rm div}(\vu_{\text{e},\alpha})=0.
		\end{cases}
		\end{equation*} 
	\end{enumerate}
	Moreover, when $\vu_{\text{e}}\in L^{\infty}\big(\R, L^2_{\text{s}}(\T)\big)$, it is called a  \emph{bounded eternal solution}. We will denote by $\mathcal{E}(\vf)$ the set of all the bounded eternal solutions associates to the external force $\vf$.
\end{defiprevia} 
In \cite[Lemma $7.1$]{CortezJarrin2026}, we construct these bounded eternal solutions of the Navier-Stokes alpha model, thereby ensuring that $\mathcal{E}(\vf) \neq \emptyset$.

\medskip

With these definitions in place, we recall our previous result.

\begin{teoprevio}[Existence of a unique global attractor, Theorem $2.3$ of \cite{CortezJarrin2026}]\label{Th-Global-Attractor} In the nonlinear filter equation (\ref{Filter-Equation}), assume that $A(\cdot)$ and $\varphi$ satisfy (\ref{conditions-a-varphi-GWP}). Let $\vf \in \dot{H}^{-1}\cap L^2_{\text{s}}(\T)$ be the stationary external force. 
	
\medskip

Then, the semi-group $(S(t))_{t\geq 0}$ defined in (\ref{Semi-group}) has a unique \emph{global attractor} $\mathcal{A}=\mathcal{A}(\vf)$ in the sense of Definition \ref{Def-Global-Attractor}. Moreover, we have the following characterization:
	\begin{equation*}
	\mathcal{A}(\vf) = \left\{  \va \in  L^2_{\text{s}}(\T) : \ \ \va:=\vu_\text{e}(0,\cdot)  \ \ \text{where} \ \ \vu_{\text{e}} \in \mathcal{E}(\vf)  \ \right\},
	\end{equation*} 
	where the set $\mathcal{E}(\vf)$ is given in Definition \ref{Def-Eternal-Solutions}.
\end{teoprevio}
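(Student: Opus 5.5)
The plan is to apply the classical abstract theorem for dissipative semigroups: a continuous semigroup on a Hilbert space that has a \emph{compact absorbing set} admits a unique global attractor, namely the $\omega$-limit set $\mathcal{A}=\bigcap_{s\geq 0}\overline{\bigcup_{t\geq s}S(t)\mathcal{B}}$ of that set. Continuity of $S(t)$ on $L^2_{\rm s}(\T)$ for each fixed $t$ follows from the uniqueness and stability estimates behind Theorem \ref{Th-GWP}, which I take as given. The work therefore reduces to two a priori estimates and a characterization step.

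\emph{Step 1: $L^2$ absorbing ball.} I will use the energy inequality (\ref{Energy-Control}), in its differential form with time-independent $\vf$, together with the Poincar\'e inequality on zero-mean fields, $\|\vu\|_{\dot H^1}^2\geq (2\pi/L)^2\|\vu\|_{L^2}^2$. Combined with Young's inequality on $\langle \vf,\vu\rangle$, this gives
\[
\frac{d}{dt}\|\vu\|_{L^2}^2+\nu\lambda_1\|\vu\|_{L^2}^2\leq \frac{1}{\nu}\|\vf\|_{\dot H^{-1}}^2,
\qquad \lambda_1=(2\pi/L)^2 .
\]
Gr\"onwall then yields an absorbing ball $B_0$ in $L^2_{\rm s}$ of radius $\rho_0\sim \|\vf\|_{\dot H^{-1}}/(\nu\sqrt{\lambda_1})$. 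Integrating in time also bounds $\int_t^{t+1}\|\vu\|_{\dot H^1}^2\,ds$ uniformly for $t$ past the entrance time.

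\emph{Step 2: $H^1$ absorbing ball (main obstacle).} The nonlinear filter must supply enough control on $\vu_\alpha$, and this is where the convolution is used. Since $\vec\nabla(\varphi\ast\vu)=(\vec\nabla\varphi)\ast\vu$, we have $\|\vec\nabla(\varphi\ast\vu)\|_{L^\infty}\leq\|\varphi\|_{H^1}\|\vu\|_{L^2}$. With $A\in\mathcal{C}^{1,1}$ and $\beta\leq A\leq 1$, the coefficient $A(\varphi\ast\vu)$ is therefore uniformly elliptic and Lipschitz, with Lipschitz constant controlled by $\|\vu\|_{L^2}\leq\rho_0$.
\begin{itemize}
\item Testing the filter equation against $\vu_\alpha$ gives $\|\vu_\alpha\|_{H^1}\lesssim_{\alpha,\beta}\|\vu\|_{L^2}$.
\item Rewriting it as a Stokes-type problem, with the lower-order term $\vec\nabla A\cdot\vec\nabla\vu_\alpha$ moved to the right-hand side, gives
\[
\|\vu_\alpha\|_{H^2}\leq C(\alpha,\beta,\|A\|_{\mathcal{C}^{1,1}},\|\varphi\|_{H^1},\rho_0)\,\|\vu\|_{L^2}.
\]
\end{itemize}
I will then test the $\vu$-equation against $-\Delta\vu$ and bound the convection term by
\[
\|\vu_\alpha\|_{L^\infty}\|\vec\nabla\vu\|_{L^2}\|\Delta\vu\|_{L^2}\lesssim \|\vu_\alpha\|_{H^2}\|\vec\nabla\vu\|_{L^2}\|\Delta\vu\|_{L^2}.
\]
This leads to
\[
\frac{d}{dt}\|\vec\nabla\vu\|_{L^2}^2+\nu\|\Delta\vu\|_{L^2}^2\leq C\rho_0^2\|\vec\nabla\vu\|_{L^2}^2+\frac{C}{\nu}\|\vf\|_{L^2}^2 .
\]
Since the growth coefficient is bounded and Step 1 gives uniform control of the time averages of $\|\vec\nabla\vu\|_{L^2}^2$, the uniform Gr\"onwall lemma produces a ball $B_1$ in $\dot H^1$ that absorbs $B_0$. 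The computations are rigorous on Galerkin approximations and pass to the limit by uniqueness. If $\vf$ were only in $\dot H^{-1}$, I would instead pair with $\vec\nabla$ in a way that uses only $\|\vf\|_{\dot H^{-1}}$, or argue via Ball's energy-equation method. Here, however, $\vf\in L^2_{\rm s}$ is assumed, so the direct route applies.

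\emph{Step 3: conclusion and characterization.} By Rellich, $\overline{B_1}$ is compact in $L^2_{\rm s}$ and absorbing. The abstract theorem then gives a compact, strictly invariant, attracting $\mathcal{A}$, and uniqueness is automatic because two attractors attract each other. For the characterization, strict invariance lets me build, for each $\va\in\mathcal{A}$, a backward orbit inside $\mathcal{A}$. Together with the forward Leray solution this is a complete trajectory bounded by $\sup_{\mathcal{A}}\|\cdot\|_{L^2}$, and it is a weak solution on every $[-T,\infty)$, so it belongs to $\mathcal{E}(\vf)$. Conversely, if $\vu_{\rm e}\in\mathcal{E}(\vf)$, the set $B=\{\vu_{\rm e}(s):s\in\R\}$ is bounded, and $\vu_{\rm e}(0)=S(t)\vu_{\rm e}(-t)\in S(t)B$ for every $t$. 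Letting $t\to\infty$ and using the attraction property shows that $\vu_{\rm e}(0)$ lies in every neighborhood of the closed set $\mathcal{A}$, hence in $\mathcal{A}$.
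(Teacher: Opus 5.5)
Your proposal is correct, but it does not follow the route behind this statement. The paper does not reprove Theorem \ref{Th-Global-Attractor}; it quotes it from \cite{CortezJarrin2026}. There the attractor is apparently obtained at the $L^2$ level, via bounded eternal solutions (cf. Lemma 7.1 there), rather than through an $H^1$ absorbing set. Had an $H^1$ absorbing set been available, Theorem \ref{Th-Regularity-Atrtractor} of the present paper would be immediate.

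Your argument is the classical one:
\begin{enumerate}
\item an $L^2$ absorbing ball;
\item an $H^1$ absorbing ball from the $H^1$ energy inequality, whose growth coefficient is bounded after entrance into $B_0$ because $\|\vu_\alpha\|_{L^\infty}\lesssim\|\vu_\alpha\|_{H^2}\le K_2(\rho_0^2+\rho_0)$;
\item closure by the uniform Gr\"onwall lemma, Rellich, and the $\omega$-limit theorem.
\end{enumerate}
The $H^2$ filter bound in the second step is Proposition \ref{Prop:H2-Filter}, so you may quote it rather than re-derive it.

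This route proves strictly more than the statement. Since $\mathcal{A}$ is absorbed by $B_1$, it is contained in $B_1$, so the bound (\ref{Regularity-Atrractor}) of Theorem \ref{Th-Regularity-Atrtractor} follows at once under the same hypotheses. In fact, the paper's proof of Proposition \ref{Prop:Tech-Unif-Estim-vn-m-0} uses exactly your mechanism, applied to $\vv_n$ instead of directly to $\vu$: the bounded coefficient $\mathfrak{C}_0$ coming from $L_0$, the unit-interval averages of Lemma \ref{Lem:Control-vn-tau-n-H1}, and Gr\"onwall from a good time $\tau_n$.

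Your route has two costs.
\begin{itemize}
\item You need $\vf\in L^2$. This is available, since $\dot H^{-1}\cap L^2_{\rm s}(\T)=L^2_{\rm s}(\T)$.
\item You need to justify the $H^1$ identity. For $t>0$ this is supplied by Theorem \ref{Th-Regularity} with $m=0$, so the Galerkin detour is unnecessary.
\end{itemize}
An $L^2$-level eternal-solution or energy-equation argument works with $\vf\in\dot H^{-1}$ alone, but yields no $H^1$ information on $\mathcal{A}$.

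One step should be made explicit. Two of your steps rely on the same facts about weak solutions in the energy class: the differential form of the energy inequality in Step 1, and the identity $\vu_{\rm e}(0)=S(t)\vu_{\rm e}(-t)$ in your reverse inclusion. They require that such a solution is continuous in $L^2$ and satisfies the energy equality. For the second, it must also coincide on $[-t,+\infty[$ with the Leray solution issued from $\vu_{\rm e}(-t)$.

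This holds here. By Proposition \ref{Prop:H2-Filter}, $\vu_{{\rm e},\alpha}\in L^\infty H^2$, so $(\vu_{{\rm e},\alpha}\cdot\vec{\nabla})\vu_{\rm e}\in L^2_{\rm loc}L^2$ and hence $\partial_t\vu_{\rm e}\in L^2_{\rm loc}\dot H^{-1}$. The Lions--Magenes lemma then gives continuity and the energy equality, and uniqueness in Theorem \ref{Th-GWP} applies. State this explicitly rather than using it silently.
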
 



We now introduce the final definition needed for our analysis.


\begin{Definition}\label{Def-Complete-Trajectory} Let $\vf \in \dot{H}^{-1}(\T)\cap L^2_{\text{s}}(\T)$ be the stationary external force and let $\mathcal{A}(\vf)$ be the global attractor given by Definition \ref{Def-Global-Attractor} and obtained in Theorem \ref{Th-Global-Attractor}. 	
	\medskip
	
	A complete trajectory in the global attractor  is an 
	eternal solution	$\vu_{\text{e}}$   of the Navier-Stokes-alpha model, given in Definition \ref{Def-Eternal-Solutions}, which satisfies 
	\[ \vu_{\text{e}}(t,\cdot) \in \mathcal{A}(\vf), \qquad \text{for any time $t \in \R$}.   \]

	We will denote by $\mathcal{T}(\mathcal{A})$ the set of all the complete trajectories in the global attractor $\mathcal{A}(\vf)$. 
\end{Definition}	

Following the same ideas as in the proof of \cite[Lemma $7.1$]{CortezJarrin2026} and using the strict invariance property of $\mathcal{A}(\vf)$ given in point $(B)$ of Definition \ref{Def-Global-Attractor}, we readily deduce that $\mathcal{T}(\mathcal{A}) \neq \emptyset$.

\medskip

One of the main features of the global attractor $\mathcal{A}(\vf)$ is that it is contained in a precise absorbing set.  From the external force $\vf$, the viscosity constant $\nu$ and the fixed period $L>0$, define the radius
\begin{equation}\label{R}
\mathcal{R}^2:=\frac{L^2}{2\pi^2 \nu^2}\| \vf \|^2_{\dot{H}^{-1}}>0,
\end{equation}
and the closed ball
\begin{equation}\label{Absorbing-Set}
\mathcal{B}=\left\{  \vg \in L^2_{\text{s}}(\T) : \ \| \vg \|^2_{L^2} \leq \mathcal{R}^2\right\}.
\end{equation}

Then it is well-known that  we have  
\[ \mathcal{A}(\vf)\subset \mathcal{B}, \]
see for instance  \cite{Raugel,Temam}.  Using this embedding and the notion of complete trajectories introduced in Definition \ref{Def-Complete-Trajectory} above, we directly obtain the uniform bound
\begin{equation}\label{Unif-Control-L2-Intro}
\sup_{\vu_{\rm e} \in \mathcal{T}(\mathcal{A})} \left( \sup_{t \in \R} \| \vu_{\rm e} (t,\cdot)\|^2_{L^2}\right) \leq \mathcal{R}^2. 
\end{equation}

Within this context, and inspired by the previous work \cite{Paicu}, the precise goal of this subsection is to establish analogous estimates when the standard $L^2$-norm is replaced by the $H^{m+1}$-norm with the regularity parameter $m \geq 0$.

\medskip

First, we will consider the case when $m=0$, and therefore we aim to study a uniform bound in the $H^1$-norm. Interestingly, we show that  in this case the global attractor $\mathcal{A}(\vf)\in L^2_{\rm s}(\T)$, obtained in Theorem \ref{Th-Global-Attractor}, exhibits an \emph{instantaneous smoothing effect} in the $H^1$-norm \emph{without any additional assumptions} on the external force $\vf$ or on the functions $A(\cdot)$ and $\varphi$ in the filter equation (\ref{Filter-Equation}).

\medskip

For the reader's convenience, we recall below the assumptions made in Theorem \ref{Th-Global-Attractor}.

\begin{Theorem}
	\label{Th-Regularity-Atrtractor}
	Let $\mathcal{A}(\vf)$ be the global attractor given by Theorem \ref{Th-Global-Attractor}, where
	\begin{equation}\label{Reg-force-attractor-L2}
	\vf \in L^2_{\rm s}(\T),
	\end{equation}
	and, in the filter equation (\ref{Filter-Equation}),
	\begin{equation}\label{conditions-a-varphi-regularity-attractor-H1}
	A(\cdot)\in \mathcal{C}^{1,1}(\Rt)  \quad \text{and} \quad \varphi \in H^{1}(\T).
	\end{equation}
	Then, there exists a constant $\mathcal{K}_0>0$, depending on $\vf$, $A(\cdot)$, $\varphi$, as well as on the parameters $\alpha$ and $\nu$, such that the following uniform estimate holds:
	\begin{equation}
	\label{Regularity-Atrractor}
	\sup_{\vu_{\rm e} \in \mathcal{T}(\mathcal{A}) }\, \left( \sup_{t\in \R}\| \vu_{\rm e}(t,\cdot) \|^2_{H^{1}} \right) \leq \mathcal{K}_0,
	\end{equation}
	where the set $\mathcal{T}(\mathcal{A})$ is given in Definition \ref{Def-Complete-Trajectory}.
\end{Theorem}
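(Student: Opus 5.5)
We will prove (\ref{Regularity-Atrractor}) by combining an $H^1$ energy inequality for $\vu_{\rm e}$ with the uniform Leray bounds on the attractor and the uniform Gronwall lemma. The Gronwall lemma applies because complete trajectories exist for all times $t\in\R$: we can always start the estimate at $t-1$.

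\textbf{Step 1: uniform Leray bounds.} Fix $\vu_{\rm e}\in\mathcal{T}(\mathcal{A})$. By (\ref{Unif-Control-L2-Intro}), $\sup_t\|\vu_{\rm e}(t)\|_{L^2}^2\le\mathcal{R}^2$. Integrating the energy inequality (\ref{Energy-Control}) on $[t-1,t]$ (legitimate by uniqueness, Theorem \ref{Th-GWP}, and the semigroup property) gives
\[
\int_{t-1}^{t}\|\vu_{\rm e}(s)\|_{\dot H^1}^2\,ds\le C_1:=\frac{1}{\nu}\Big(\mathcal{R}^2+\frac{1}{\nu}\|\vf\|_{\dot H^{-1}}^2\Big),
\]
uniformly in $t\in\R$ and in the trajectory.

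\textbf{Step 2: control of the filter.} Test the filter equation with $\vu_\alpha$ and use $\beta\le A\le1$ from (\ref{Conditions-a-existence-1}). This yields
\[
\alpha^2\beta\|\vec\nabla\otimes\vu_\alpha\|_{L^2}^2+\|\vu_\alpha\|_{L^2}^2\le\|\vu\|_{L^2}^2,
\]
so $\vu_\alpha$ is bounded in $H^1$ uniformly, with bound $\lesssim\mathcal{R}$. In fact we will use a stronger bound, which is the first case of the higher-order filter estimates of Section \ref{Sec:Higher regularity properties of the nonlinear filter equation} (with $A\in\mathcal{C}^{1,1}$, $\varphi\in H^1$). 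Writing the filter as $-\alpha^2A\Delta\vu_\alpha-\alpha^2(\vec\nabla A(\varphi\ast\vu))\cdot\vec\nabla\vu_\alpha+\dots$, we have
\[
\|\vec\nabla A(\varphi\ast\vu)\|_{L^\infty}\le\|A\|_{C^{1}}\|\vec\nabla\varphi\ast\vu\|_{L^\infty}\lesssim\|\varphi\|_{H^1}\|\vu\|_{L^2}.
\]
Hence $\|\vu_\alpha\|_{H^2}\le C(\alpha,\beta,A,\varphi,\mathcal{R})\|\vu\|_{L^2}\le C_2$ uniformly. The argument is: absorb the term $\alpha^2\beta\|\Delta\vu_\alpha\|^2$ after testing with $-\Delta\vu_\alpha$, and control the commutator by the $H^1$ bound on $\vu_\alpha$ by interpolation.

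\textbf{Step 3: the $H^1$ energy inequality.} Take the $L^2$ product of the first equation with $-\Delta\vu_{\rm e}$. For regularity, work on the attractor, where Theorem \ref{Th-Regularity} with $m=0$ applies under (\ref{Reg-force-attractor-L2})--(\ref{conditions-a-varphi-regularity-attractor-H1}). (If those hypotheses fail, justify the computation by Galerkin truncation, or note that the $m=0$ case of the regularity theorem only needs $\vf\in L^2$.) This gives
\[
\frac12\frac{d}{dt}\|\vu\|_{\dot H^1}^2+\nu\|\Delta\vu\|_{L^2}^2\le\|\vu_\alpha\|_{L^\infty}\|\vec\nabla\vu\|_{L^2}\|\Delta\vu\|_{L^2}+\|\vf\|_{L^2}\|\Delta\vu\|_{L^2}.
\]
Using $H^2\subset L^\infty$ and Step 2, we get
\[
\frac{d}{dt}\|\vu\|_{\dot H^1}^2\le\frac{C_2^2}{\nu}\|\vu\|_{\dot H^1}^2+\frac{2}{\nu}\|\vf\|_{L^2}^2.
\]
This is of the form $y'\le ay+b$ with $a,b$ constant and uniform.

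\textbf{Step 4: uniform Gronwall.} For $s\in[t-1,t]$, integrating from $s$ to $t$ gives $y(t)\le e^{a}(y(s)+b)$. Averaging in $s$ over $[t-1,t]$ and using Step 1,
\[
\|\vu_{\rm e}(t)\|_{\dot H^1}^2\le e^{a}(C_1+b).
\]
Adding $\mathcal{R}^2$ yields $\mathcal{K}_0$.

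\textbf{Main obstacle.} The delicate point is Step 2: obtaining a uniform $L^\infty$ (or $H^2$) bound on $\vu_\alpha$ in terms of $\|\vu\|_{L^2}$ alone, despite the nonlinear coefficient $A(\varphi\ast\vu)$. This works only because the convolution transfers all derivatives onto $\varphi$. If only $H^1$ control of $\vu_\alpha$ were available, we would instead bound
\[
\|\vu_\alpha\|_{L^6}\|\vec\nabla\vu\|_{L^3}\lesssim\|\vu_\alpha\|_{H^1}\|\vec\nabla\vu\|_{L^2}^{1/2}\|\Delta\vu\|_{L^2}^{1/2}
\]
and use Young's inequality, which still gives $y'\le ay+b$, since $\|\vu_\alpha\|_{H^1}$ is uniformly bounded. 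So the fallback is robust.
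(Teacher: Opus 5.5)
Your proof is correct, and it is more direct than the paper's.

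\textbf{What the paper does.} The paper never differentiates $\|\vu_{\rm e}\|_{H^1}^2$ itself. For a fixed $t_0$ it splits $\vu_{\rm e}=\vv_n+\vw_n$ on $[-t_n,+\infty[$. Here $\vv_n$ solves the linear transport problem driven by the given $\vu_\alpha$, with force $\vf$ and zero datum at $-t_n$; $\vw_n$ solves the unforced problem with datum $\vu_{\rm e}(-t_n)$. It then runs your Steps 1--4 on $\vv_n$:
\begin{itemize}
\item a uniform $L^2$ bound;
\item a unit-window bound on $\int\|\vv_n\|_{H^1}^2$;
\item the same $H^1$ differential inequality, with $\|\vu_\alpha\|_{L^\infty}\lesssim K_2(\mathcal{R}^2+\mathcal{R})$ from \eqref{Control-H2-Filter};
\item a Gronwall step started at a time $\tau_n\in[t_0-2,t_0-1]$ chosen by a mean-value argument, which is your averaging in $s$.
\end{itemize}
Finally it shows $\vw_n(t_0)\to0$ in $L^2$ by exponential decay, and transfers the bound to $\vu_{\rm e}(t_0)$ by weak $H^1$ compactness and lower semicontinuity.

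\textbf{Why your shortcut works.} The detour is unnecessary because of your remark that, by uniqueness, $\vu_{\rm e}$ on any window is the Leray solution issued from an earlier time. So Theorem \ref{Th-Regularity} with $m=0$ gives $\vu_{\rm e}\in\mathcal{C}_{\rm loc}(\R,H^1)\cap L^2_{\rm loc}(\R,\dot H^2)$ and $\partial_t\vu_{\rm e}\in L^2_{\rm loc}L^2$. The paper in fact extracts exactly this in Lemma \ref{Lem-Reg-u-alpha-H1}, but uses it only to regularize $\vu_\alpha$. Since the uniform Gronwall lemma needs only the unit-window average, not pointwise $H^1$ data at some starting time, you can drop the auxiliary linear problems, the sequence $t_n$ and the limit passage. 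Your Step 2 can simply cite \eqref{Control-H2-Filter}.

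\textbf{What your route buys beyond this theorem.} Your scheme appears to iterate. Once $\sup_t\|\vu_{\rm e}(t)\|_{H^m}$ is uniformly bounded, the chain is:
\begin{itemize}
\item Theorem \ref{Th:Hm-Filter} bounds $\|\vu_\alpha\|_{W^{m,\infty}}$ uniformly;
\item integrating the $H^m$ energy inequality over unit windows bounds $\int_{t-1}^t\|\vu_{\rm e}\|_{H^{m+1}}^2$;
\item the $H^{m+1}$ inequality of Lemma \ref{Lem-Gronwall-vn-Hm}, which holds verbatim for $\vu_{\rm e}$, combined with the uniform Gronwall lemma, gives a uniform $H^{m+1}$ bound.
\end{itemize}
This would remove the smallness hypothesis \eqref{Assumption-unif-bound-filtered} of Proposition \ref{Prop-Hihger-Regularity-Atrtractor}, though without the explicit constant in \eqref{Regularity-Atrractor-H-m+1}. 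The paper's zero-datum decomposition is tailored to a plain decaying Gronwall from $-t_n$, and that is exactly where the smallness condition enters.
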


The proof of this theorem is based on the following idea. For any $\vu_{\rm e} \in \mathcal{T}(\mathcal{A})$, the main strategy is to decompose this complete trajectory as
\[\vu_{\rm e} = \vv_n + \vw_n,\]
where $(\vv_n)_{n \in \mathbb{N}}$ and $(\vw_n)_{n \in \mathbb{N}}$ are suitable sequences of divergence-free vector fields. Roughly speaking, we show that the functions $\vv_n$ remain uniformly bounded in $H^1(\T)$, while the functions $\vw_n$ converge to zero as $n\to \infty$ in $L^2(\T)$.

\medskip

We now explain the overall strategy of the proof in more detail. First, for a fixed time $t_0\in \R$, we construct the vector fields $\vv_n= \vv_{t_0,n}$ and $\vw_n= \vw_{t_0,n}$, which depend on $t_0$ and essentially solve the following affine equations:
\begin{equation*}
\partial_t \vv_{t_0,n}-\nu\Delta \vv_{t_0,n}+\P\big((\vu_{{\rm e},\alpha}\cdot\vec{\nabla})\vv_{t_0,n}\big)=\vf \qquad \text{and} \qquad  \partial_t \vw_{t_0,n}-\nu\Delta \vw_{t_0,n}+\P\big((\vu_{{\rm e},\alpha}\cdot\vec{\nabla})\vw_{t_0,n}\big)=0,
\end{equation*}
respectively. Here, $\vu_{{\rm e},\alpha}$ denotes the filtered trajectory obtained from $\vu_{\rm e}$ as the solution of the nonlinear filter equation (\ref{Filter-Equation}).

\medskip

For the first equation, proceeding as in the proof of estimate (\ref{Unif-Control-L2-Intro}), we can directly deduce the following uniform $L^2$-bound:
\begin{equation}\label{Unif-Control-L2-Intro-vn}
\sup_{t_0\in \R, \, n \in \mathbb{N}} \| \vw_{t_0,n}(t,\cdot)\|^2_{L^2} \lesssim \mathcal{R}^2,
\end{equation}
which holds for any time $t\in \R$, where the radius $\mathcal{R}$ is defined in (\ref{R}). Then, using the higher regularity of $\vu_{{\rm e},\alpha}$ established in Section \ref{Sec:Higher regularity properties of the nonlinear filter equation} under the assumptions (\ref{conditions-a-varphi-regularity-attractor-H1}), together with the uniform $L^2-$ bound (\ref{Unif-Control-L2-Intro-vn}) and assumption (\ref{Reg-force-attractor-L2}) on the external force $\vf$, we can derive \emph{new $H^1$- estimates} that yield the following uniform bound \emph{on the time window} $[t_0, t_0+1]$:
\begin{equation}\label{Estim-vn-Intro}
\sup_{t_0 \leq t \leq t_0+1} \| \vv_{t_0,n}(t,\cdot)\|^2_{H^1} \lesssim \mathcal{K}_0,
\end{equation}
where $\mathcal{K}_0$ is independent of $\vv_{n,t_0}$, $t_0$ and $n$. 

\medskip

For the second equation, the right-hand side vanishes. Standard $L^2$-energy estimates, together with the divergence-free properties of $\vw_{t_0,n}$ and $\vu_{{\rm e}, \alpha}$, therefore give
\begin{equation}\label{Conv-wn-Intro}
\vw_{t_0,n}(t_0,\cdot)\to 0, \qquad n \to +\infty,
\end{equation}
in the strong topology of the space $L^2(\T)$.

\medskip

Thus, the uniform estimate (\ref{Regularity-Atrractor}) follows by combining (\ref{Estim-vn-Intro}) and (\ref{Conv-wn-Intro}) and using the fact that $t_0 \in \R$ was arbitrary.

\medskip

With this program in mind, it is natural to seek uniform higher-order regularity bounds analogous to (\ref{Regularity-Atrractor}) in the $H^{m+1}$-norm for $m\geq 1$. However, a technical difficulty arises. The key estimate (\ref{Estim-vn-Intro}) relies on the preceding $t_0$-uniform estimate (\ref{Unif-Control-L2-Intro-vn}), but is itself \emph{valid only on the time window} $[t_0, t_0+1]$ for fixed $t_0$. Consequently, an $H^2$- estimate analogous to (\ref{Estim-vn-Intro}) would require a prior $H^1$- estimate that is uniform in $t_0$ (as in (\ref{Unif-Control-L2-Intro-vn})). To the best of our knowledge, such an estimate cannot be deduced from (\ref{Estim-vn-Intro}).

\medskip

To overcome this difficulty when $m\geq 1$, we follow some of the ideas in \cite{Paicu} and assume an \emph{a priori} bound on trajectories in the global attractor. Specifically, for any $\vu_{\rm e} \in \mathcal{T}(\mathcal{A})$ and any time $t\in \R$, the higher-order regularity estimate for the filter equation (\ref{Filter-Equation}) established in Theorem \ref{Th:Hm-Filter} below, together with the standard Sobolev embedding, gives
\[ \| \vu_{{\rm e}, \alpha}(t,\cdot)\|_{W^{m,\infty}} \lesssim \| \vu_{{\rm e}, \alpha}(t,\cdot)\|_{H^{m+2}} \lesssim \| \vu_{\rm e}(t,\cdot)\|_{H^m}\lesssim \| \vu_{\rm e}(t,\cdot)\|_{H^{m+1}}.  \]
This chain of estimates shows that a uniform \emph{a priori} bound on $\| \vu_{{\rm e}, \alpha}(t,\cdot)\|_{W^{m,\infty}}$ is a weaker assumption than a uniform bound on $ \| \vu_{\rm e}(t,\cdot)\|_{H^{m+1}}$. Under the stated assumptions, we use the former to establish the latter.

\begin{Proposition}
	\label{Prop-Hihger-Regularity-Atrtractor} 
	Let $m\in \mathbb{N}$ be such that $m\geq 1$. Within the setting of Theorem \ref{Th-Global-Attractor}, assume that the external force satisfies
	\begin{equation}\label{Reg-force-attractor}
	\vf \in H^m(\T).   
	\end{equation}
	Moreover, in the filter equation (\ref{Filter-Equation}), assume that
	\begin{equation}\label{conditions-a-varphi-regularity-attractor}
	A(\cdot)\in \mathcal{C}^{m+1,1}(\Rt)  \quad \text{and} \quad \varphi \in H^{m+1}(\T).
	\end{equation}
	
On the other hand, given the viscosity constant $\nu$ and the set $\mathcal{T}(\mathcal{A})$ introduced in Definition \ref{Def-Complete-Trajectory}, assume that
\begin{equation}\label{Assumption-unif-bound-filtered}
\Lambda(\alpha,m):= \sup_{\vu_{\text{e}} \in \mathcal{T}(\mathcal{A})}\left( \sup_{t \in \R} \| \vu_{\text{e},\alpha}(t,\cdot)\|_{W^{m,\infty}} \right)<\nu.
\end{equation}
	
	Then,  the following uniform estimate holds:
	\begin{equation}
	\label{Regularity-Atrractor-H-m+1}
	\sup_{\vu_{\rm e} \in \mathcal{T}(\mathcal{A}) }\, \left( \sup_{t\in \R}\| \vu_{\rm e}(t,\cdot) \|^2_{H^{m+1}} \right) \leq \frac{C}{\nu^2-\Lambda^2(\alpha,m)}\, \| \vf \|^2_{H^m},
	\end{equation} 
	where $C>0$ is a generic constant. 
\end{Proposition}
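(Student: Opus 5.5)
We will run a direct higher-order energy estimate along each complete trajectory, pass to the limit $t_0\to-\infty$, and use assumption (\ref{Assumption-unif-bound-filtered}) to absorb the nonlinear term into the dissipation. We argue by induction on $m$. The case $m=0$ is Theorem \ref{Th-Regularity-Atrtractor}, and the conclusion at level $m-1$ gives a uniform $H^{m}$ bound on $\mathcal{T}(\mathcal{A})$. Theorem \ref{Th-Regularity} (applied with the stationary force $\vf\in H^m$ on any window $[t_0,t_0+T]$, using that each $\vu_{\rm e}(t_0)\in\mathcal{A}$ is the initial datum of a Leray solution) shows that each $\vu_{\rm e}\in\mathcal{T}(\mathcal{A})$ is in $\mathcal{C}_t H^{m+1}\cap L^2_t\dot H^{m+2}$ locally in time on $\R$. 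This justifies pairing the first equation of (\ref{Alpha-model-Proy}) with $(-\Delta)^{m+1}\vu_{\rm e}$, that is, applying $\partial^\gamma$ with $|\gamma|=m+1$ and testing with $\partial^\gamma\vu_{\rm e}$. Because of zero mean, the $\dot H^{m+1}$ and $H^{m+1}$ norms are equivalent.

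We get
\[
\tfrac12\tfrac{d}{dt}\|\vu_{\rm e}\|^2_{\dot H^{m+1}}+\nu\|\vu_{\rm e}\|^2_{\dot H^{m+2}}=-\langle \partial^{\gamma'}((\vu_{{\rm e},\alpha}\cdot\vec\nabla)\vu_{\rm e}),\partial^{\gamma'}\Delta\vu_{\rm e}\rangle+\langle \vf,(-\Delta)^{m+1}\vu_{\rm e}\rangle,
\]
where we have moved one derivative to the other side, so $|\gamma'|=m$. The force term is bounded by $\|\vf\|_{H^m}\|\vu_{\rm e}\|_{\dot H^{m+2}}$. For the nonlinear term we use the Leibniz rule with exactly $m$ derivatives on the product. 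Each piece has the form $\partial^\beta\vu_{{\rm e},\alpha}\cdot\nabla\partial^{\gamma'-\beta}\vu_{\rm e}$ with $|\beta|\le m$, and is controlled in $L^2$ by
\[
\|\vu_{{\rm e},\alpha}\|_{W^{m,\infty}}\|\vu_{\rm e}\|_{H^{m+1}}\le \Lambda(\alpha,m)\,\|\vu_{\rm e}\|_{H^{m+1}}.
\]
This is why the hypothesis involves the $W^{m,\infty}$ norm: it avoids any use of Theorem \ref{Th:Hm-Filter} beyond justifying finiteness. The nonlinear term is therefore bounded by $C\Lambda\|\vu_{\rm e}\|_{H^{m+1}}\|\vu_{\rm e}\|_{\dot H^{m+2}}$. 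Using Poincaré, $\|\vu_{\rm e}\|_{H^{m+1}}\lesssim\|\vu_{\rm e}\|_{\dot H^{m+2}}$, so this is at most $C\Lambda\|\vu_{\rm e}\|^2_{\dot H^{m+2}}$.

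This absorption is the main obstacle. The constants must be tracked so that the bad term is exactly $\Lambda\|\vu_{\rm e}\|^2_{\dot H^{m+2}}$ rather than $C\Lambda\|\vu_{\rm e}\|^2_{\dot H^{m+2}}$, so that the smallness condition $\Lambda<\nu$ suffices. The $C$ can be hidden by normalising the $W^{m,\infty}$ norm with the Leibniz constants. Alternatively, Young's inequality can be applied in the form
\[
\Lambda\|\vu_{\rm e}\|_{H^{m+1}}\|\vu_{\rm e}\|_{\dot H^{m+2}}\le \tfrac{\nu}{2}\|\vu_{\rm e}\|^2_{\dot H^{m+2}}+\tfrac{\Lambda^2}{2\nu}\|\vu_{\rm e}\|^2_{H^{m+1}},
\]
together with $\tfrac{\nu}{2}\|\vu_{\rm e}\|^2_{\dot H^{m+2}}\ge c\,\nu\|\vu_{\rm e}\|^2_{H^{m+1}}$. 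This gives a net damping coefficient proportional to $(\nu^2-\Lambda^2)/\nu$, which is the origin of the factor $\nu^2-\Lambda^2$ in (\ref{Regularity-Atrractor-H-m+1}). The force term is absorbed with Young as well, producing $\tfrac{C}{\nu}\|\vf\|^2_{H^m}$. Setting $y(t)=\|\vu_{\rm e}(t)\|^2_{H^{m+1}}$, we arrive at the differential inequality
\[
y'+\delta\, y\le \tfrac{C}{\nu}\|\vf\|^2_{H^m},\qquad \delta=c\,\tfrac{\nu^2-\Lambda^2}{\nu}>0 .
\]

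Gronwall's lemma on $[t_0,t]$ gives
\[
y(t)\le e^{-\delta(t-t_0)}y(t_0)+\tfrac{C}{\nu\delta}\|\vf\|^2_{H^m}.
\]
To let $t_0\to-\infty$ we need $y(t_0)$ bounded uniformly in $t_0$. This is supplied by the induction: the level-$(m-1)$ case gives a uniform $H^m$ bound, and parabolic smoothing on unit windows $[t_0-1,t_0]$ (integrating the same energy inequality, in which $\int\|\vu_{\rm e}\|^2_{\dot H^{m+1}}$ over the window is controlled by the $H^m$ level) upgrades this to a uniform bound on $y$. Alternatively, we can use that $\mathcal{A}$ is bounded in $H^{m+1}$ by the smoothing of $S(1)$ from the compact set $\mathcal{A}$. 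Letting $t_0\to-\infty$ yields $y(t)\le \tfrac{C}{\nu\delta}\|\vf\|^2_{H^m}=\tfrac{C}{\nu^2-\Lambda^2}\|\vf\|^2_{H^m}$ for every $t\in\R$. Since the constant does not depend on the trajectory, taking the supremum over $\mathcal{T}(\mathcal{A})$ gives (\ref{Regularity-Atrractor-H-m+1}).
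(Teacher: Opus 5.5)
Your proof is correct at the paper's level of precision, but it takes a different route. The paper never runs the $H^{m+1}$ estimate on $\vu_{\rm e}$ itself. Instead it reuses the splitting $\vu_{\rm e}=\vv_n+\vw_n$ from the proof of Theorem \ref{Th-Regularity-Atrtractor} and applies (\ref{Gronwall-vn-Hm}) and $\Lambda<\nu$ only to $\vv_n$. Since $\vv_n$ solves the linear problem (\ref{Alpha-model-vn}) with \emph{zero} datum at $-t_n$, Gronwall gives $\frac{C}{\nu^2-\Lambda^2}\|\vf\|^2_{H^m}$ with no initial term. The rough part $\vw_n$ is only shown to vanish in $L^2$ at $t_0$, and weak compactness plus lower semicontinuity in $H^{m+1}$ finish the argument. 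This buys independence from any high-order information on $\vu_{\rm e}$ in the remote past: there is no induction on $m$ and no use of the $H^1$ bound of Theorem \ref{Th-Regularity-Atrtractor}.

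Your direct route must instead kill the term $e^{-\delta(t-t_0)}y(t_0)$, and your induction plus averaging over unit windows does this correctly. You only need $y(\tau_k)$ bounded along some sequence $\tau_k\to-\infty$. This is immediate from the uniform bound on $\int_{t_0-1}^{t_0}\|\vu_{\rm e}\|^2_{\dot H^{m+1}}$, which follows from the level-$m$ energy inequality and the uniform $H^m$ bound. Rely on this rather than on the $S(1)$-smoothing alternative, which needs a quantitative version of Theorem \ref{Th-Regularity} that the paper does not state.

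In exchange, your route is more classical and actually gives more. The a priori step needs only $\Lambda(\alpha,m)<\infty$, using a uniform Gronwall argument forward over windows of length $2$. That finiteness already follows from the uniform $H^m$ bound of the previous level, via Theorem \ref{Th:Hm-Filter} and the embedding $H^{m+2}\subset W^{m,\infty}$. So working directly on $\vu_{\rm e}$, whose lower-order norms are uniformly bounded on $\R$ by induction, yields uniform $H^{m+1}$ bounds on $\mathcal{A}$ even without (\ref{Assumption-unif-bound-filtered}). Smallness is needed only to obtain the explicit constant in (\ref{Regularity-Atrractor-H-m+1}).

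Finally, the constant issue you flag is shared by the paper. Strictly, both arguments need $c_*\Lambda(\alpha,m)<\nu$, with $c_*$ depending on the Leibniz and Poincar\'e constants and hence on $m$ and $L$. The paper makes the same implicit normalization when it merges $\frac{C}{\nu}\Lambda^2$ and $-C\nu$ into $C(\frac{1}{\nu}\Lambda^2-\nu)$.
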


The following comments are in order. First, the aforementioned bound on $\| \vu_{{\rm e}, \alpha}(t,\cdot)\|_{W^{m,\infty}}$ is explicitly stated in (\ref{Assumption-unif-bound-filtered}), where we assume that the $L^\infty$-norm of the $m$-th derivatives of filtered trajectories in the global attractor is controlled by the viscosity $\nu$. As mentioned above, a similar type of \emph{a priori} bound involving the $L^\infty$-norm and the viscosity $\nu$ appears in the interesting related work \cite{Paicu} on second-grade fluid equations.

\medskip

Under (\ref{Assumption-unif-bound-filtered}), together with the natural higher-order regularity assumptions (\ref{Reg-force-attractor}) and (\ref{conditions-a-varphi-regularity-attractor}) on the external force $\vf$ and the functions $A(\cdot)$ and $\varphi$ in the nonlinear filter equation (\ref{Filter-Equation}), we obtain a uniform $H^{m+1}$ bound on the trajectories in the global attractor. This gives a partial answer to the question raised above.

\medskip

In particular, the right-hand side of (\ref{Regularity-Atrractor-H-m+1}) is well-defined only when $\nu^2-\Lambda^2(\alpha,m)>0$, as required by (\ref{Assumption-unif-bound-filtered}). To the best of our knowledge, obtaining a uniform higher-order regularity bound as in (\ref{Regularity-Atrractor-H-m+1}) under assumptions (\ref{Reg-force-attractor}) and (\ref{conditions-a-varphi-regularity-attractor}) alone remains out of reach and is a possible subject for future research.

\medskip

The result of Proposition \ref{Prop-Hihger-Regularity-Atrtractor} also holds when $m=0$. In this case, however, Theorem \ref{Th-Regularity-Atrtractor} gives a stronger regularity result for the global attractor.

\medskip

Theorem \ref{Th-Regularity-Atrtractor} also provides a key tool for studying another property of the global attractor: the existence of finitely many determining modes. Defined precisely in Definitions \ref{def:asymptotic-determining-modes} and \ref{def:determining-modes-on-attractor} below, this property gives a more detailed description of the long-time behavior of the Navier-Stokes alpha model (\ref{Alpha-model}), as we explain in the final subsection.

\subsection{Refined long-time behavior}\label{Sec:Refined long-time behavior}  A central question in turbulence theory is that, despite the infinite-dimensional character of the Navier--Stokes dynamics, the long-time behavior of solutions is effectively governed by finitely many degrees of freedom. Mathematically, this leads to the concept of finitely many determining modes or, more generally, determining functionals. Roughly speaking, these concepts aim to quantify how much coarse information on a solution suffices to recover its asymptotic dynamics \cite{Cheskidov,FoiasProdi1967,FoiasTiti1991,JonesTiti1992,JonesTiti1993}.

\medskip

The notion was first introduced in a seminal work \cite{FoiasProdi1967}  for the two-dimensional Navier--Stokes equations. There, the authors proved  that there exists an integer $m$ such that, if the first $m$ Fourier modes of two solutions have the same asymptotic behavior as $t\to+\infty$, then the entire solutions synchronize asymptotically, i.e., the remaining infinitely many modes are uniquely determined in the limit $t\to+\infty$.  This result provides a rigorous formulation of finite-dimensionality in the long-time regime and has motivated a large literature devoted to explicit estimates in terms of physical and geometric parameters.

\medskip



Thereafter, the original determining-modes theory has been sharpened and generalized in many directions, including determining nodes and volume elements, and extensions to broader classes of dissipative systems; see, for instance, \cite{Chueshov1998,FoiasTiti1991,JonesTiti1992,JonesTiti1993,Kalantarov}.

\medskip

In this subsection, we derive \emph{explicit lower bounds} for the number of determining modes for the Navier--Stokes--alpha model (\ref{Alpha-model}). We begin by introducing some definitions and notation. Let $(\vw_m)_{m\ge1}$ be the divergence-free and mean-zero eigenfunctions of the Stokes operator on $\T$:
\begin{equation}\label{Stokes}
-\nu \P \Delta \vw_m =  \lambda_m \vw_m, \quad \text{div}(\vw_m)=0,
\end{equation}
associated with increasing sequence of   eigenvalues $ (\lambda_m)_{m\geq 1}$ (where $\lambda_1>0$), and let
\begin{equation}\label{span-Hm}
H_m(\T):=\operatorname{span}\{\vw_1,\cdots,\vw_m\}.
\end{equation}
We denote by $P_m: L^2_{\text{s}}(\T)\to H_m(\T)$ the standard  $L^2$--orthogonal projector, given by the expression $\ds{P_m(\vg):=\sum_{k=1}^{m} \big\langle \vg, \vw_k \big\rangle_{L^2\times L^2}\, \vw_k}$. 

\medskip

Fist, we introduce the notion of an \emph{asymptotic} set of determining modes for the Navier-Stokes alpha model (\ref{Alpha-model}) in the general setting of weak Leray solutions given by Definition \ref{Def-Leray-Solutions}.

\begin{Definition}\label{def:asymptotic-determining-modes} Let $m  \geq 1$ be fixed. Then $\{\vw_1,\cdots,\vw_m\}$ is called an \emph{asymptotic} set of determining modes for the Navier-Stokes-Alpha model (\ref{Alpha-model}) if the following property holds.
 
\medskip

For  any  initial data $\vu_{0,1}, \vu_{0,2} \in L^2_{\text{s}}(\T)$, let $\vu_1, \vu_2 \in L^\infty_{loc}\big([0,+\infty[, L^2_{\text{s}}(\T) \big)\cap L^2_{loc}\big( [0,+\infty[, \dot{H}^1(\T)\big)$  be the unique associated Leray solutions of    (\ref{Alpha-model}),  obtained in  Theorem \ref{Th-GWP}. 

\medskip

If for the orthogonal projector $P_m$ defined above we have 
\begin{equation}\label{Determining-Modes-Pm}
\lim_{t\to+\infty}\left\|P_m\Big(\vu_1(t,\cdot)-\vu_2(t,\cdot)\Big)\right\|_{L^2}=0,
\end{equation}
then 
\begin{equation}\label{Determining-Modes-solution}
	\lim_{t\to+\infty}\|\vu_1(t,\cdot)-\vu_2(t,\cdot)\|_{L^2}=0.
\end{equation}
\end{Definition}

The key features of this definition are given by (\ref{Determining-Modes-Pm}) and (\ref{Determining-Modes-solution}). It is well known that the family of eigenvectors of the Stokes operator $(\vw_m)_{m \geq 0}$ yields the decomposition
\[ \vu(t,\cdot)= \sum_{m\geq 1}  \big\langle \vu(t,\cdot) , \vw_m \big\rangle_{L^2\times L^2} \, \vw_m, \]
with convergence in the strong topology of $L^2_{\rm s}(\T)$. In this framework, (\ref{Determining-Modes-Pm}) and (\ref{Determining-Modes-solution}) show that, for fixed $m$, the long-time difference between two Leray solutions, $\vu_1(t,\cdot)-\vu_2(t,\cdot)$, can be determined from the finite-dimensional quantity $P_m\Big(\vu_1(t,\cdot)-\vu_2(t,\cdot)\Big)$. This determination of the long-time behavior of the higher modes by \emph{finitely many lower} modes is therefore useful in computational and experimental studies \cite{CockburnJonesTiti1997,FoiasTiti1991,OlsonTiti2003}, with particular applications to data assimilation algorithms \cite{Albanez,Korn}.

\medskip

Now, we can state our first result of this subsection. For this, we introduce the following function with respect to the parameter $\alpha$ in the nonlinear filter equation (\ref{Filter-Equation}):
\begin{equation}\label{Phi}
\Phi(\alpha):= \begin{cases}\vspace{2mm}
\ds{\frac{1}{\alpha^2}}, & \quad 0<\alpha < 1, \\
\alpha^4, & \quad \alpha >1.  
\end{cases}
\end{equation}

\begin{Theorem}\label{Th:Determining-Modes}
Let $m_0\geq 1$ be sufficiently large such that its associated eigenvalue of the stokes operator (\ref{Stokes}), denoted as $\lambda_{m_0}$, satisfies 
		\begin{equation}\label{eq:lambda-condition-explicit}
	\lambda_{m_0} \gtrsim \frac{\Phi^2(\alpha)}{\nu^2} \left( \frac{\| \vf \|_{\dot{H}^{-1}}}{\nu} +1\right)^4\, \left( \frac{\| \vf \|^2_{\dot{H}^{-1}}}{\nu^3}+ \frac{\| \vf \|^2_{\dot{H}^{-1}}}{\nu^2}\right).
	\end{equation}
	
	Then, for any $m\geq  m_0$,  the set $\{  \vw_1, \cdots, \vw_m \}$ is an \emph{asymptotic} set of determining modes for  (\ref{Alpha-model}), in the sense of Definition \ref{def:asymptotic-determining-modes}.
\end{Theorem}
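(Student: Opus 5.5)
We follow the classical Foias--Prodi strategy, adapted to the nonlinear filter. Let $\vu_1,\vu_2$ be the Leray solutions given by Theorem \ref{Th-GWP}, with filtered velocities $\vu_{1,\alpha},\vu_{2,\alpha}$, and set $\vw=\vu_1-\vu_2$ and $\vw_\alpha=\vu_{1,\alpha}-\vu_{2,\alpha}$. Subtracting the two equations in (\ref{Alpha-model-Proy}) gives
\[
\partial_t\vw-\nu\Delta\vw+\P\big((\vu_{1,\alpha}\cdot\vec{\nabla})\vw\big)+\P\big((\vw_\alpha\cdot\vec{\nabla})\vu_2\big)=0 .
\]
We test this against $\vw$. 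The first transport term vanishes because $\mathrm{div}(\vu_{1,\alpha})=0$, which leaves
\[
\frac12\frac{d}{dt}\|\vw\|_{L^2}^2+\nu\|\vw\|_{\dot H^1}^2\le \|\vw_\alpha\|_{L^\infty}\|\vec{\nabla}\otimes \vu_2\|_{L^2}\|\vw\|_{L^2}.
\]
We then split $\vw=P_m\vw+Q_m\vw$ and use the Poincaré-type inequality $\nu\|\vw\|^2_{\dot H^1}\ge \lambda_{m+1}\|Q_m\vw\|_{L^2}^2$ coming from (\ref{Stokes}). This yields
\[
\frac{d}{dt}\|\vw\|_{L^2}^2+\big(\lambda_{m+1}-\gamma(t)\big)\|\vw\|_{L^2}^2\lesssim \lambda_{m+1}\|P_m\vw\|_{L^2}^2,
\]
where $\gamma(t)$ collects the coefficients coming from the nonlinear term.

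The key step is a \emph{Lipschitz estimate for the nonlinear filter}. Subtracting the two filter equations gives
\[
-\alpha^2\P\,\mathrm{div}\big(A(\varphi*\vu_1)(\vec{\nabla}\otimes\vw_\alpha)^T\big)+\vw_\alpha=\vw+\alpha^2\P\,\mathrm{div}\big([A(\varphi*\vu_1)-A(\varphi*\vu_2)](\vec{\nabla}\otimes\vu_{2,\alpha})^T\big).
\]
To control $\|\vw_\alpha\|_{L^\infty}\lesssim\|\vw_\alpha\|_{H^2}$, we apply the $H^2$ regularity of the filter equation from Section \ref{Sec:Higher regularity properties of the nonlinear filter equation}, using $\beta\le A\le1$ and $A\in\mathcal C^{1,1}$. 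Since $\|A(\varphi*\vu_1)-A(\varphi*\vu_2)\|_{W^{1,\infty}}\lesssim(1+\|\vu_1\|_{L^2}+\|\vu_2\|_{L^2})\|\vw\|_{L^2}$ by the smoothing of $\varphi*$, we aim for a bound of the form
\[
\|\vw_\alpha\|_{H^2}\lesssim \Phi(\alpha)\big(1+\|\vu_1\|_{L^2}+\|\vu_2\|_{L^2}\big)^{2}\big(1+\|\vu_{2,\alpha}\|_{H^2}\big)\|\vw\|_{L^2}.
\]
The scaling factor $\Phi(\alpha)$ of (\ref{Phi}) appears from dividing by $\alpha^2$ when $\alpha<1$, and from the powers of $\alpha^2$ multiplying the coefficient terms when $\alpha>1$. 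We expect this step to be the main obstacle. The difficulty is to keep the dependence on $\vu_{2,\alpha}$ at the level of $\|\vu_2\|_{H^1}$, which is only square-integrable in time, rather than requiring a pointwise $H^2$ bound.

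With this estimate, $\gamma(t)\lesssim \nu^{-1}\Phi^2(\alpha)(1+\|\vu_1\|_{L^2}+\|\vu_2\|_{L^2})^4\|\vu_2\|_{\dot H^1}^2$, after using Young's inequality on $\|\vw\|_{\dot H^1}$ where needed. We close with a generalized Gronwall lemma of Jones--Titi type. The energy inequality (\ref{Energy-Control}), together with absorption into the ball $\mathcal B$ of (\ref{Absorbing-Set}), gives for large $t$ that $\|\vu_i\|_{L^2}\lesssim \|\vf\|_{\dot H^{-1}}/\nu$ and
\[
\limsup_{t\to\infty}\int_t^{t+1}\|\vu_2\|^2_{\dot H^1}\,ds\lesssim \frac{\|\vf\|^2_{\dot H^{-1}}}{\nu^3}+\frac{\|\vf\|^2_{\dot H^{-1}}}{\nu^2}.
\]
The Jones--Titi lemma states that if $y'+\theta(t)y\le h(t)$, with $\liminf_t\int_t^{t+1}\theta>0$, $\limsup_t\int_t^{t+1}\theta^-<\infty$ and $\int_t^{t+1}h\to0$, then $y(t)\to0$. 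We apply it with $y=\|\vw\|_{L^2}^2$, $\theta=\lambda_{m+1}-\gamma$ and $h\lesssim\lambda_{m+1}\|P_m\vw\|^2_{L^2}$, the latter tending to zero by (\ref{Determining-Modes-Pm}). Condition (\ref{eq:lambda-condition-explicit}) ensures that the time average of $\theta$ is eventually positive, and therefore $\|\vw(t)\|_{L^2}\to0$, which is (\ref{Determining-Modes-solution}). Since $\lambda_m$ is increasing, the conclusion holds for every $m\ge m_0$.
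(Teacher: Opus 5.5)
Your proposal follows the paper's proof essentially step for step. It uses the same difference equation tested against $\vw$, the bound via $\|\vw_\alpha\|_{L^\infty}\lesssim\|\vw_\alpha\|_{H^2}$, and absorbing-ball control of $\|\vu_i\|_{L^2}$. It then uses Poincar\'e plus Young to produce a coefficient $\sim\nu^{-1}\Phi^2(\alpha)(2\mathcal{R}+1)^4\|\vec{\nabla}\otimes\vu_2\|^2_{L^2}$, the splitting with $P_m$, the time-average bound on $\|\vec{\nabla}\otimes\vu_2\|^2_{L^2}$ from the energy inequality, and the Jones--Titi uniform Gr\"onwall lemma.

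The one point to correct is the step you single out as the main obstacle. It is not open: the Lipschitz estimate for the nonlinear filter is already available as (\ref{Control-Filter-Diff-H2}) in Proposition \ref{Prop:H2-Filter}, namely $\|\vw_\alpha\|_{H^2}\le K'_2(\|\vu_1\|_{L^2}+\|\vu_2\|_{L^2}+1)^2\|\vw\|_{L^2}$. The paper simply invokes it, with $K'_2\simeq\Phi(\alpha)$ read off from the constants of the earlier work (Appendix \ref{Sec:Estimate-K2-Phi}).

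That estimate contains no factor involving $\vu_{2,\alpha}$. If you instead carried your weaker target with the factor $(1+\|\vu_{2,\alpha}\|_{H^2})$, then $\gamma(t)$ would pick up $(1+\|\vu_{2,\alpha}\|_{H^2})^2$, which you silently dropped. This changes the explicit threshold (\ref{eq:lambda-condition-explicit}) (extra powers of $\|\vf\|_{\dot{H}^{-1}}/\nu$ and extra $\alpha$-dependence through $K_2$), though not the qualitative conclusion.

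Your worry about having to control $\vu_{2,\alpha}$ only through $\|\vu_2\|_{H^1}$ is also unfounded. By (\ref{Control-H2-Filter}), $\|\vu_{2,\alpha}(t)\|_{H^2}\le K_2(\|\vu_2(t)\|^2_{L^2}+\|\vu_2(t)\|_{L^2})$ pointwise in time. This is uniformly bounded once $\vu_2$ lies in the absorbing ball $\mathcal{B}$.
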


In this result, we first prove the existence of a finite set of asymptotically determining modes for the Navier-Stokes alpha model (\ref{Alpha-model}). We also derive a lower bound on the parameter $m_0$. Indeed, using $\lambda_{m_0} \simeq m^2_0$, we obtain
\begin{equation}\label{Lower-Bound-m0}
m_0  \gtrsim  \frac{\Phi(\alpha)}{\nu} \left( \frac{\| \vf \|_{\dot{H}^{-1}}}{\nu} +1\right)^2\, \left( \frac{\| \vf \|^2_{\dot{H}^{-1}}}{\nu^3}+ \frac{\| \vf \|^2_{\dot{H}^{-1}}}{\nu^2}\right)^{\frac{1}{2}}.
\end{equation}
Thus, the number of modes needed to describe the long-time asymptotic behavior of Leray solutions of (\ref{Alpha-model}) is bounded below by a quantity essentially expressed in terms of the filtering parameter $\alpha$ through expression (\ref{Phi}), the viscosity $\nu$ and the $\dot{H}^{-1}$-norm of the external force $\vf$. Additionally,  in the homogeneous case when $\vf \equiv 0$, we observe that  the lower bound on $m_0$ is essentially controlled by the factor $\ds{\frac{\Phi(\alpha)}{\nu}}$. 

\medskip

Returning to the lower bound above, it is interesting to examine how it depends on the filtering parameter $\alpha$. For simplicity, taking $\nu \simeq 1$ and $\| \vf \|_{\dot{H}^{-1}}\simeq 1$, we obtain from the first case in (\ref{Phi})
\begin{equation*}
m_0  \gtrsim  \frac{1}{\alpha^2} \quad  \text{when} \quad  0<\alpha < 1.
\end{equation*}
This bound diverges as $\alpha \to 0^{+}$, so it provides no finite estimate in that limit. Accordingly, Theorem \ref{Th:Determining-Modes} applies only when $\alpha >0$, which exhibits the strong influence of the nonlinear filter equation (\ref{Filter-Equation}) on the refined long-time asymptotic behavior of the full system (\ref{Alpha-model}).  

\medskip

In the limit as $\alpha \to 0^{+}$, it follows from \cite[Proposition $2.1$]{CortezJarrin2026} that Leray solutions of (\ref{Alpha-model}) converge in the weak-$*$ topology of the energy space to a Leray solution of the classical Navier-Stokes equations. For the latter equations, the property of \emph{asymptotic}  finitely many determining modes was studied in \cite{Cheskidov}. In contrast to Theorem \ref{Th:Determining-Modes}, the main difference lies in the fact that, roughly speaking, the parameter $m_0$ also depends on time $t$ through a technical quantity $Q(t)$. See \cite[Theorems $1.1$ and $1.2$]{Cheskidov} for further details. In this sense, the lower bound (\ref{Lower-Bound-m0}) provides a stronger result, since its right-hand side depends only on the physical parameters of the model.

\medskip

On the other hand, still taking $\nu \simeq 1$ and $\| \vf \|_{\dot{H}^{-1}}\simeq 1$, the second case in (\ref{Phi}), together with estimate (\ref{Lower-Bound-m0}), yields
\begin{equation*}
m_0  \gtrsim  \alpha^4   \quad  \text{when} \quad  \alpha >1.
\end{equation*}
This bound shows that larger values of the filtering parameter $\alpha$ lead to a larger lower bound on $m_0$. Surprisingly, this growth, which results from the \emph{nonlinear effects} of the filter equation (\ref{Filter-Equation}), differs from that obtained in previous works \cite{Holst,TheThiTran} involving the \emph{linear} Helmholtz filter (\ref{Helmholtz}), for which $m_0 \gtrsim 1$.
 
\medskip

Theorem \ref{Th:Determining-Modes} provides a general result on \emph{asymptotically} determining modes within the framework of Leray solutions of the Navier-Stokes alpha model (\ref{Alpha-model}). Nevertheless, it is interesting to observe that this asymptotic-in-time property becomes a stronger pointwise-in-time property when one considers solutions of (\ref{Alpha-model}) belonging to the global attractor.

\begin{Definition}\label{def:determining-modes-on-attractor} Let $\mathcal{A}(\vf)$ be the global attractor obtained in Theorem \ref{Th-Global-Attractor} and let $\mathcal{T}(\mathcal{A})$ be the set given in Definition \ref{Def-Complete-Trajectory}. Let $m\geq 1$ be fixed. The set $\{ \vw_1, \cdots, \vw_m \}$ is called a \emph{strong} set of determining modes on $\mathcal{A}(\vf)$ if the following property holds. 
	
\medskip

For the orthogonal projector $P_m$ and any $\vu_{\rm e,1}, \vu_{\rm e,2}\in \mathcal{T}(\mathcal{A})$, if 
\begin{equation}\label{Assumption-Determining-Modes-Attractor}
P_m \big( \vu_{\rm e,1}(t,\cdot) \big)= P_m \big( \vu_{\rm e,2}(t,\cdot) \big), \qquad \text{for all} \quad t \in \R, 
\end{equation}
then
\begin{equation}\label{Conclusion-Determining-Modes-Attractor}
\vu_{\rm e,1}(t,\cdot) = \vu_{\rm e,2}(t,\cdot), \qquad \text{for all} \quad t \in \R. 
\end{equation}
\end{Definition}

The uniform $H^1$-bound obtained in Theorem \ref{Th-Regularity-Atrtractor} is one of the key ingredients  to proof the main result of this subsection.  
\begin{Theorem}\label{Th:Determining-modes-on-attractor} Let $n_0\geq 1$ be sufficiently large such that its associated eigenvalue of the Stokes operator (\ref{Stokes}) satisfies
	\begin{equation}\label{eq:lambda-condition-explicit-attractor}
	\lambda_{n_0} \gtrsim \frac{\Phi^2(\alpha)}{\nu^2} \left(\frac{\|\vf \|_{\dot{H}^{-1}}}{\nu}+1\right)^4\,  \mathcal{K}_0,
	\end{equation}
	where $\mathcal{K}_0>0$ is the constant given in estimate (\ref{Regularity-Atrractor}), proven in Theorem \ref{Th-Regularity-Atrtractor}, and the function $\Phi(\alpha)$ is defined in expression (\ref{Phi}).  
	
	\medskip
	
	Then, for any $m\geq n_0$, the set $\{  \vw_1, \cdots, \vw_m \}$ is a \emph{strong} set of determining modes on $\mathcal{A}(\vf)$ in the sense of Definition \ref{def:determining-modes-on-attractor}.
\end{Theorem}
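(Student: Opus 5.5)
Let $\vu_{\rm e,1},\vu_{\rm e,2}\in\mathcal{T}(\mathcal{A})$ satisfy (\ref{Assumption-Determining-Modes-Attractor}). Set $\vw=\vu_{\rm e,1}-\vu_{\rm e,2}$ and $\vw_\alpha=\vu_{\rm e,1,\alpha}-\vu_{\rm e,2,\alpha}$. By hypothesis $P_m\vw(t)=0$ for every $t\in\R$, so $\vw=Q_m\vw$ with $Q_m=I-P_m$. The plan is to derive a differential inequality for $\|\vw(t)\|_{L^2}^2$ that holds on all of $\R$, and to combine it with the uniform bound (\ref{Unif-Control-L2-Intro}) and a backward-in-time argument. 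The goal is to show that $\vw$ would otherwise have to grow exponentially into the past, which contradicts boundedness.

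First, I would subtract the two equations and test with $\vw$. Writing
\[
(\vu_{\rm e,1,\alpha}\cdot\vec\nabla)\vu_{\rm e,1}-(\vu_{\rm e,2,\alpha}\cdot\vec\nabla)\vu_{\rm e,2}=(\vu_{\rm e,1,\alpha}\cdot\vec\nabla)\vw+(\vw_\alpha\cdot\vec\nabla)\vu_{\rm e,2},
\]
the first term vanishes against $\vw$ by the divergence-free property. This gives
\[
\tfrac12\tfrac{d}{dt}\|\vw\|_{L^2}^2+\nu\|\vw\|_{\dot H^1}^2\le \|\vw_\alpha\|_{L^\infty}\|\vu_{\rm e,2}\|_{\dot H^1}\|\vw\|_{L^2}.
\]
Next I need a Lipschitz estimate for the nonlinear filter. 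The stability estimate from Section \ref{Sec:Higher regularity properties of the nonlinear filter equation}, as in the uniqueness proof of \cite{CortezJarrin2026}, should give
\[
\|\vw_\alpha\|_{H^2}\lesssim \Phi(\alpha)\Big(\tfrac{\|\vf\|_{\dot H^{-1}}}{\nu}+1\Big)^2\|\vw\|_{L^2}.
\]
The $\alpha$-dependence $\Phi(\alpha)$ and the force factor come from the $H^2$ bound on $\vu_{\rm e,2,\alpha}$ in terms of $\mathcal{R}$, together with the $\mathcal{C}^{1,1}$ bound on $A$ and $\varphi\in H^1$. Using $H^2\subset L^\infty$, I obtain
\[
\tfrac{d}{dt}\|\vw\|^2_{L^2}+2\nu\|\vw\|^2_{\dot H^1}\le C_\alpha\,\|\vu_{\rm e,2}\|_{\dot H^1}\,\|\vw\|^2_{L^2},
\qquad C_\alpha:=C\,\Phi(\alpha)\Big(\tfrac{\|\vf\|_{\dot H^{-1}}}{\nu}+1\Big)^2.
\]

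Now I invoke Theorem \ref{Th-Regularity-Atrtractor}: $\|\vu_{\rm e,2}(t)\|_{\dot H^1}\le\sqrt{\mathcal{K}_0}$ uniformly in $t\in\R$. Since $\vw\in Q_mL^2_{\rm s}$, the Poincaré-type inequality $\nu\|\vw\|^2_{\dot H^1}\ge\lambda_{m+1}\|\vw\|^2_{L^2}$ holds, with the normalization of (\ref{Stokes}). Hence
\[
\tfrac{d}{dt}\|\vw\|^2_{L^2}\le -\big(2\lambda_{m+1}-C_\alpha\sqrt{\mathcal{K}_0}\big)\|\vw\|^2_{L^2}.
\]
Condition (\ref{eq:lambda-condition-explicit-attractor}) gives $\lambda_{m+1}\ge\lambda_{n_0}\gtrsim C_\alpha^2\mathcal{K}_0/\nu^2$. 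This is stronger than $\lambda_{m+1}\ge C_\alpha\sqrt{\mathcal{K}_0}$ once the constants are arranged, or alternatively one can absorb via Young's inequality, which is where the $\nu^{-2}$ and the squared form arise. In either case there is $\delta>0$ with $\frac{d}{dt}\|\vw\|^2\le-\delta\|\vw\|^2$. Integrating from $s$ to $t$ for $s<t$ gives
\[
\|\vw(t)\|^2\le e^{-\delta(t-s)}\|\vw(s)\|^2\le 4\mathcal{R}^2e^{-\delta(t-s)}
\]
by (\ref{Unif-Control-L2-Intro}). Letting $s\to-\infty$ yields $\vw(t)=0$ for every $t\in\R$.

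The main obstacle is the filter Lipschitz estimate, i.e.\ controlling $\vw_\alpha$ in $L^\infty$ (via $H^2$) by $\|\vw\|_{L^2}$ with explicit constants. Differentiating the difference of the two filter equations produces the term $\big(A(\varphi*\vu_{\rm e,1})-A(\varphi*\vu_{\rm e,2})\big)\vec\nabla\otimes\vu_{\rm e,2,\alpha}$ and its gradient. These require $\vu_{\rm e,2,\alpha}\in H^2$ (and possibly more), bounded uniformly on the attractor through $\mathcal{R}$, together with $\|\varphi*\vw\|_{W^{1,\infty}}\lesssim\|\varphi\|_{H^1}\|\vw\|_{L^2}$ and the bounds $A\ge\beta$, $A\in\mathcal{C}^{1,1}$. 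If the $L^\infty$ route fails, I would instead estimate $\|\vw_\alpha\|_{L^3}\|\vu_{\rm e,2}\|_{\dot H^1}\|\vw\|_{L^6}$ and absorb $\|\vw\|_{\dot H^1}$ by Young's inequality. Only an $H^1$ bound on $\vw_\alpha$ is needed then, and this also naturally explains the factor $\nu^{-2}$ in (\ref{eq:lambda-condition-explicit-attractor}).
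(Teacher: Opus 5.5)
Your proposal is correct and is essentially the paper's proof. It runs the energy estimate for the difference (the transport term vanishes), then the $H^2$ Lipschitz bound \eqref{Control-Filter-Diff-H2} for the filter difference, then the uniform bound $\|\vec{\nabla}\otimes\vu_{\rm e,2}(t,\cdot)\|^2_{L^2}\le\mathcal{K}_0$ from Theorem~\ref{Th-Regularity-Atrtractor}, then the spectral Poincar\'e inequality on the high modes, and finally Gr\"onwall from $s$ to $t$ with $s\to-\infty$. The filter bound is not an obstacle: it is already in Proposition~\ref{Prop:H2-Filter}, and with $K'_2\simeq\Phi(\alpha)$ and $\|\vu_{\rm e,i}(t,\cdot)\|_{L^2}\le\mathcal{R}$ it gives exactly your $C_\alpha$.

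One point to fix: your first option needs $2\lambda_{m+1}>C_\alpha\sqrt{\mathcal{K}_0}$, which is linear in $C_\alpha\sqrt{\mathcal{K}_0}$, while the stated threshold \eqref{eq:lambda-condition-explicit-attractor} is quadratic in it. So the first option does not follow from the hypothesis when $C_\alpha\sqrt{\mathcal{K}_0}$ is small relative to the viscosity. Commit to the Young absorption instead: apply Poincar\'e to one factor $\|\vw\|_{L^2}$, then bound the right-hand side by $\tfrac{\nu}{2}\|\vec{\nabla}\otimes\vw\|^2_{L^2}+\tfrac{C}{\nu}C_\alpha^2\mathcal{K}_0\|\vw\|^2_{L^2}$. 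This is exactly the paper's route, and it is what produces the squared form of the threshold.
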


To the best of our knowledge, this stronger property of finitely many determining modes for the global attractor was first proved in \cite{Ladyzhenskaya1963} for the two-dimensional Navier-Stokes equations, while its validity in the three-dimensional case remains a nontrivial open problem. In the setting of the three-dimensional Navier-Stokes alpha model (\ref{Alpha-model}), the presence of the function $\Phi(\alpha)$ on the right-hand side of estimate (\ref{eq:lambda-condition-explicit-attractor}) shows that this property holds owing to the filtering parameter $\alpha>0$ and, ultimately, to the effects of the coupled filter equation (\ref{Filter-Equation}). We provide further comments on this particular result in the final subsection.

\medskip

\subsection{Conclusion on the main contributions of this work}\label{Sec:Contributions}
The main contributions of this work are Theorem
\ref{Th-Regularity-Atrtractor} and Theorem
\ref{Th:Determining-modes-on-attractor}. The former establishes
uniform higher-order regularity for the global attractor, which is a
fundamental ingredient in the proof of the strong determining-modes
property for complete trajectories established in the latter.

\medskip 

More precisely, Theorem
\ref{Th:Determining-modes-on-attractor} shows that, whenever the
projections of two complete trajectories on the global attractor onto
a sufficiently large finite-dimensional space of Stokes modes coincide
at every time $t\in\mathbb{R}$, the two trajectories coincide identically
at every time $t\in\mathbb{R}$. This is substantially stronger than the
usual asymptotic determining-modes property, which only ensures that
the difference between two trajectories converges to zero as
$t\to+\infty$ when the difference between their low-mode projections
does so.

\medskip

To the best of our knowledge, this is the first time that such a strong determining-modes property for complete trajectories has been explicitly established for a three-dimensional Navier--Stokes-$\alpha$ model involving a nonlinear filter equation whose behavior is strongly governed by a prescribed indicator function $A(\cdot)$. Together with the uniform regularity result of Theorem \ref{Th-Regularity-Atrtractor}, this finding opens the door to a broader research program involving physically and experimentally relevant indicator functions $A(\cdot)$ in the filter equation \eqref{Filter-Equation}; see, for instance, \cite{Geurts0,Geurts1,Layton,Layton2}.

\medskip

Related results on asymptotically determining observations and continuous data assimilation are available for some three-dimensional $\alpha$-models based on the linear Helmholtz filter \eqref{Helmholtz}, most notably the LANS-$\alpha$ model \cite{Korn} and the simplified Bardina model \cite{Albanez}. These results primarily establish \emph{asymptotic} reconstruction or synchronization from finitely many observations, rather than the stronger identification of complete trajectories on the global attractor obtained in Theorem \ref{Th:Determining-modes-on-attractor}. Although some of their estimates could implicitly yield an analogous conclusion through an additional argument involving complete bounded trajectories, this stronger property does not appear to have been explicitly formulated or established in the literature. This observation suggests a natural direction for further research on three-dimensional $\alpha$-models involving linear filters.

\medskip 

\textbf{Organization of the article}. Section \ref{Sec:Higher regularity properties of the nonlinear filter equation} is devoted to the higher-order regularity theory developed for the nonlinear elliptic equation (\ref{Filter-Equation}). In Section \ref{Sec:Reg-Leray}, we prove Theorem \ref{Th-Regularity} and Proposition \ref{Prop:Homogeneous-case}. Section \ref{Sec:Reg-Attractor} is devoted to the proofs of Theorem \ref{Th-Regularity-Atrtractor} and Proposition \ref{Prop-Hihger-Regularity-Atrtractor}. Finally, in Section \ref{Sec:Determining-Modes}, we prove Theorem \ref{Th:Determining-Modes} and Theorem \ref{Th:Determining-modes-on-attractor}.

\medskip

\textbf{Preliminaries and notation}. We collect here some standard definitions and notation in the setting of periodic boundary conditions $\T:=[0,L]^3$. As usual, we assume that both the initial data $\vu_0$ and the external force $\vf$ are mean-free vector fields, \emph{i.e.}, $\int_{\T}\vu_0(x)\,dx=\int_{\T}\vf(x)\,dx=0$. This condition ensures that the velocity field $\vu(t,x)$ remains mean-free for all $t\geq 0$. For convenience, we set ${\bf Z}^3:=\frac{2\pi}{L}\mathbb{Z}^3$, and we frequently use the Fourier expansion
\begin{equation}\label{Fourier-decomposition}
\vu(t,x)=\sum_{k\in \Z} e^{ik\cdot x}\,\widehat{\vu}_k(t,\cdot), \quad  \widehat{\vu}_k(t,\cdot):=\frac{1}{L^3}\int_{\T} e^{-ik\cdot x}\vu(t,x)\,dx.
\end{equation}

We denote by $L^p(\T)$, for $1\leq p\leq+\infty$, the usual Lebesgue spaces, and $(\cdot,\cdot)_{L^2}$ denotes the $L^2$ inner product. The space $\dot{L}^2(\T)$ consists of $L^2$ functions with zero mean. By the Helmholtz decomposition (see \cite[Theorem 2.6]{Wiedemann}), we have
\[
\dot{L}^2(\T)=L^2_{\mathrm{s}}(\T)\oplus (L^2_{\mathrm{s}})^\perp(\T),
\]
where $(L^2_{\mathrm{s}})^\perp(\T)$ denotes the set of mean-free vector fields $\vg$ such that $\vg=\nabla \phi$ with $\phi\in \dot{H}^1(\T)$.

\medskip

For $s\in\R$, the homogeneous Sobolev spaces of zero-mean functions can be characterized in terms of Fourier series as follows:
\begin{equation*}
\dot{H}^s(\T)=\Big\{ g(x)=\sum_{k\in \Z} e^{ik\cdot x}\widehat{g}_k:\ 
\overline{\widehat{g}_k}=\widehat{g}_{-k},\ \widehat{g}_0=0,\ 
\text{and}\ 
\|g\|^2_{\dot{H}^s}:=\sum_{k\in \Z}|k|^{2s}|\widehat{g}_k|^2<\infty \Big\}.
\end{equation*}
The condition $\overline{\widehat{g}_k}=\widehat{g}_{-k}$ ensures that $g$ is real-valued. Note that $\dot{H}^0(\T)=\dot{L}^2(\T)$, and whenever $s_1<s_2$, the embedding $\dot{H}^{s_2}(\T)\subset \dot{H}^{s_1}(\T)$ is \emph{compact}. Similarly, the non-homogeneous Sobolev spaces are defined by
\begin{equation*}
H^s(\T)=\Big\{ g(x)=\sum_{k\in \Z} e^{ik\cdot x}\widehat{g}_k:\ 
\overline{\widehat{g}_k}=\widehat{g}_{-k},\ \widehat{g}_0=0,\ 
\text{and}\ 
\|g\|^2_{H^s}:=\sum_{k\in \Z}(1+|k|^2)^s|\widehat{g}_k|^2<\infty \Big\}.
\end{equation*}

Next, let $\P:\dot{L}^2(\T)\to L^2_{\mathrm{s}}(\T)$ denote the Leray projector, defined by
\[
\P(\vg)(x)=\sum_{k\in \Z} e^{ik\cdot x}\left(\widehat{\vg}_k-\frac{\widehat{\vg}_k\cdot k}{|k|^2}k\right).
\]
For $s\in\R$, fractional powers of the Laplacian are defined by
\begin{equation*}
(-\Delta)^{\frac{s}{2}}g(x)=\sum_{k\in \Z} e^{ik\cdot x}\,|k|^s\,\widehat{g}_k,
\end{equation*}
and the Bessel potential  defined by 
\begin{equation}\label{Bessel-Potental}
(I_d-\Delta)^{\frac{s}{2}}g(x)=\sum_{k\in \Z} e^{ik\cdot x}\,(1+|k|^2)|^{\frac{s}{2}}\,\widehat{g}_k.
\end{equation}
Finally, we recall that the  heat kernel semigroup  in the periodic setting is given by
\begin{equation}\label{Heat-Kernel}
e^{\nu t\Delta}\vg=\sum_{k \in \Z}e^{i k\cdot x}\, e^{- \nu |k|^2 t}\, \widehat{\vg}_k.
\end{equation}

On the other hand, for simplicity, given $A,B \geq 0$, we will  use the notation $A \simeq B$ to mean that $c_1A \leq B \leq c_2B$ for some constants $0<c_1 \leq c_2$ independent of $A$ and $B$. Similarly, we will write $A \lesssim B$ to mean that $A \leq c B$. In addition, we systematically use $C >0$ to denote a generic constant whose value may change from one line to the next and whose precise value is immaterial.

\section{Higher regularity properties of the nonlinear filter equation}\label{Sec:Higher regularity properties of the nonlinear filter  equation}

One of the key tools throughout this work is an \emph{independent} analysis of the regularity properties of the associated nonlinear filter equation. For the reader’s convenience, we restate here this elliptic equation which, for any fixed $\alpha>0$ and $0<\beta<1$, is given by
\begin{equation}\label{Filter-Equation-Reg}
-\alpha^2 \text{div} \left( A (\varphi\ast\vu) (\vec{\nabla} \otimes \vu_\alpha)^T \right)+\vu_\alpha + \vec{\nabla}P_\alpha = \vu, \quad \text{div}(\vu_\alpha)=0, \quad \beta \leq A(\cdot)\leq 1,
\end{equation}
with periodic boundary conditions on $\T$. Note that, for any time $t>0$, the velocity $\vu(t,\cdot)\in L^2_{\text{s}}(\T)$ is regarded as a \emph{given datum}. We begin by recalling the following result.

\begin{proprevia}[Existence and uniqueness of $H^1$-solutions: Propositions $4.1$ and $4.2$ of \cite{CortezJarrin2026}]\label{Prop:Existence-Filter}
	Let $\alpha>0$ be fixed and let $0<T<+\infty$. Assume that $\vu \in L^{\infty}([0,T],L^2_{\text{s}}(\T))$, and that the function $A(\cdot)$ and the convolution term $\varphi$ satisfy (\ref{conditions-a-varphi-GWP}). Then, there exists a \emph{unique} weak solution
	\[
	(\vu_\alpha, P_\alpha) \in L^\infty([0,T],H^1(\T)) \times L^\infty([0,T],L^2(\T)),
	\]
	to equation (\ref{Filter-Equation-Reg}). Moreover, there exist two positive constants $K_0, K_1>0$, depending only on $\alpha$, $\beta$, $A(\cdot)$, and $\varphi$, such that for all $0\leq t \leq T$, the following estimate holds:
	\begin{equation*}
	\| \vu_{\alpha}(t,\cdot)\|^{2}_{H^1} \leq  K_0\| \vu(t,\cdot)\|^{2}_{L^2},
	\end{equation*}
	and, for any $0\leq t_1, t_2 \leq T$, the following estimate holds:
	\begin{equation}\label{Control-Filter-Diff-H1}
	\| \vu_{\alpha}(t_1,\cdot) - \vu_{\alpha}(t_2,\cdot)\|_{H^1} \leq K_1\Big(\| \vu(t_2,\cdot)\|_{L^2}+1\Big) \| \vu(t_1,\cdot)-\vu(t_2,\cdot)\|_{L^2}.
	\end{equation}
\end{proprevia}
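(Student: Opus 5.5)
The plan is to argue at each fixed time $t\in[0,T]$ and treat $\vu(t,\cdot)\in L^2_{\text{s}}(\T)$ as given data. The key observation is that the coefficient $a(x):=A\big((\varphi\ast\vu)(x)\big)$ is then a fixed, smooth enough function. Indeed, Young's inequality on the torus gives
\[
\|\varphi\ast\vu\|_{L^\infty}+\|\vec{\nabla}(\varphi\ast\vu)\|_{L^\infty}\lesssim \|\varphi\|_{H^1}\|\vu\|_{L^2},
\]
using $\vec{\nabla}(\varphi\ast\vu)=(\vec{\nabla}\varphi)\ast\vu$ and $L^2\ast L^2\subset L^\infty$. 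Since $A\in\mathcal{C}^{1,1}$, the chain rule then gives $a\in W^{1,\infty}(\T)$, with $\beta\le a\le 1$ and $\|\vec{\nabla}a\|_{L^\infty}\le \|\vec{\nabla}A\|_{L^\infty}\|\varphi\|_{H^1}\|\vu\|_{L^2}$.

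For existence, I would work in the Hilbert space $V:=H^1\cap L^2_{\text{s}}(\T)$ and consider the bilinear form
\[
B(\vv,\vw):=\alpha^2\int_{\T} a\,(\vec{\nabla}\otimes\vv)^T:(\vec{\nabla}\otimes\vw)\,dx+\int_{\T}\vv\cdot\vw\,dx .
\]
This form is bounded because $a\le 1$. It is coercive because $a\ge\beta$ gives $B(\vv,\vv)\ge \min(\alpha^2\beta,1)\|\vv\|^2_{H^1}$. Lax--Milgram then yields a unique $\vu_\alpha\in V$ with $B(\vu_\alpha,\vw)=(\vu,\vw)_{L^2}$ for all $\vw\in V$. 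Testing with $\vw=\vu_\alpha$ gives $\min(\alpha^2\beta,1)\|\vu_\alpha\|^2_{H^1}\le\|\vu\|_{L^2}\|\vu_\alpha\|_{L^2}$, which is the bound $\|\vu_\alpha\|^2_{H^1}\le K_0\|\vu\|^2_{L^2}$ with $K_0=\min(\alpha^2\beta,1)^{-2}$. Uniqueness of $\vu_\alpha$ follows from coercivity.

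The pressure $P_\alpha$ is recovered by de Rham's theorem in the periodic setting. The functional $\vu-\vu_\alpha+\alpha^2{\rm div}(a(\vec{\nabla}\otimes\vu_\alpha)^T)\in H^{-1}$ vanishes on divergence-free fields, so it equals $\vec{\nabla}P_\alpha$. Equivalently, one can use the explicit formula $P_\alpha=-(\Delta)^{-1}{\rm div}\,{\rm div}(a(\vec{\nabla}\otimes\vu_\alpha)^T)$. This gives $\|P_\alpha\|_{L^2}\lesssim\alpha^2\|\vu_\alpha\|_{H^1}$, with $P_\alpha$ unique among mean-zero functions. Measurability in $t$ follows from the continuity estimate below, or from a Galerkin construction, and the $L^\infty_t$ bounds follow since $\vu\in L^\infty_tL^2_x$.

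For \eqref{Control-Filter-Diff-H1}, I would set $a_i=A(\varphi\ast\vu(t_i))$, $\vu_{\alpha,i}=\vu_\alpha(t_i)$, and $\vw=\vu_{\alpha,1}-\vu_{\alpha,2}$. Subtracting the two weak formulations gives, for every test field $\vpsi\in V$,
\[
B_{1}(\vw,\vpsi)=(\vu(t_1)-\vu(t_2),\vpsi)_{L^2}-\alpha^2\int_{\T}(a_1-a_2)(\vec{\nabla}\otimes\vu_{\alpha,2})^T:(\vec{\nabla}\otimes\vpsi)\,dx .
\]
Taking $\vpsi=\vw$ and using coercivity, everything reduces to bounding the coefficient difference. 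Here the Lipschitz bound on $A$ and Young's inequality give
\[
\|a_1-a_2\|_{L^\infty}\le\|\vec{\nabla}A\|_{L^\infty}\,\|\varphi\|_{L^2}\,\|\vu(t_1)-\vu(t_2)\|_{L^2}.
\]
Combined with $\|\vec{\nabla}\otimes\vu_{\alpha,2}\|_{L^2}\le \sqrt{K_0}\|\vu(t_2)\|_{L^2}$, this yields
\[
\|\vw\|_{H^1}\lesssim \big(1+\alpha^2\|\vec{\nabla}A\|_{L^\infty}\|\varphi\|_{L^2}\sqrt{K_0}\,\|\vu(t_2)\|_{L^2}\big)\|\vu(t_1)-\vu(t_2)\|_{L^2},
\]
which is \eqref{Control-Filter-Diff-H1}.

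The only delicate point I anticipate is this last step. It is the coefficient-difference term that forces the factor $\|\vu(t_2)\|_{L^2}+1$, and it is exactly where the regularizing convolution is needed: without $\varphi$ one would need $L^\infty$ control of $\vu(t_1)-\vu(t_2)$, which is not available. Everything else is a standard Lax--Milgram argument.
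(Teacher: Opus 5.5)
Your proof is correct and takes essentially the standard route behind this result, which the paper does not reprove here but quotes from \cite{CortezJarrin2026}: coercivity on divergence-free $H^1$ fields from $\beta\le A\le 1$ alone, recovery of $P_\alpha$ as a gradient, and a difference estimate driven by three bounds, namely the Lipschitz bound on $A$, $\|\varphi\ast g\|_{L^\infty}\le\|\varphi\|_{L^2}\|g\|_{L^2}$, and $\|\vec{\nabla}\otimes\vu_{\alpha,2}\|_{L^2}\le\sqrt{K_0}\,\|\vu(t_2)\|_{L^2}$; this is exactly the structure visible in the constant $K_1\simeq\max(K_0^{3/2}\alpha^2,K_0)$ recorded in Appendix~\ref{Sec:Estimate-K2-Phi}. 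Two minor fixes: first, the contraction in $B$ (and in your difference identity) must be $(\vec{\nabla}\otimes\vv)^T:(\vec{\nabla}\otimes\vw)^T$, since as written it equals $\sum_{i,j}a\,\partial_iv_j\,\partial_jw_i$, which is not coercive on divergence-free fields; second, absorbing $\|\vu\|_{L^2}\|\vu_\alpha\|_{L^2}\le\frac12\|\vu\|^2_{L^2}+\frac12\|\vu_\alpha\|^2_{L^2}$ gives the sharper $K_0=1/\min(2\alpha^2\beta,1)$ rather than its square, which is the $\alpha$-scaling $K_0\simeq 1/\min(\alpha^2,1)$ that the paper later relies on to obtain $\Phi(\alpha)$.
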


In this context, we investigate higher regularity properties of these solutions. We emphasize that, in the particular case where $A(\cdot)\equiv 1$, such results are well-known for the linear Helmholtz filter equation
\[-\alpha^2 \Delta \vu_\alpha + \vu_\alpha = \vu, \quad {\rm div}(\vu_\alpha)=0,\]
in bounded smooth domains $\Omega\subset \Rt$ as well as in the whole space $\Rt$. See, for instance, \cite[Chapter IX]{Brezis} and \cite[Chapter 6.3]{Evans}.

\medskip

Furthermore, these results are also well established when the indicator function $A(\cdot)$ is \emph{independent} of the datum $\vu$, and one considers the equation
\[
-\alpha^2 {\rm div}(A(x)(\nabla \otimes \vu_\alpha)^T)+\vu_\alpha+\vec{\nabla}P_\alpha=\vu,  \quad {\rm div}(\vu_\alpha)=0.
\]
provided that suitable additional regularity assumptions are imposed on $A(\cdot)$. See, for instance, \cite[Chapter 9]{Gilbarg} and \cite[Chapter 8]{Ya}.

\medskip

In our setting, however, the presence of the datum $\vu$ in the coefficient $A(\varphi\ast \vu)$ means that the aforementioned results cannot be applied directly. Consequently, the higher-order regularity estimates must instead be derived through a tailored argument.

\medskip

The $H^2$-regularity properties of solutions to equation (\ref{Filter-Equation-Reg}) were already investigated in our previous work \cite{CortezJarrin2026}. For the reader’s convenience, we recall this result below.

\begin{proprevia}[$H^2$-regularity: Propositions $4.3$ and $4.4$ of \cite{CortezJarrin2026}]\label{Prop:H2-Filter} 
	Within the framework of Proposition \ref{Prop:Existence-Filter}, the solution $\vu_\alpha \in L^\infty([0,T],H^1(\T))$ to equation (\ref{Filter-Equation-Reg}) further satisfies
	\[  \vu_\alpha \in L^\infty([0,T], H^2(\T)),\]
	and there exists a constant $K_2>0$, depending only on $\alpha$, $\beta$, $A(\cdot)$, and $\varphi$, such that for any $0<t\leq T$, the following estimate holds:
	\begin{equation}\label{Control-H2-Filter}
	\| \vu_\alpha(t,\cdot)\|_{H^2} \leq K_2 \big(\| \vu(t,\cdot)\|^2_{L^2}+\|\vu(t,\cdot)\|_{L^2}\big). 
	\end{equation} 
	
Additionally,  for $i=1,2$, let $\vu_{i,\alpha} \in L^\infty([0,T],H^2(\T)) $ be two solutions of equation (\ref{Filter-Equation-Reg}) arising from $\vu_i \in L^\infty([0,T],L^2_{\text{s}}(\T))$. Then, there exists a constant $K'_2>0$ (also only depending  on $\alpha$, $\beta$, $A(\cdot)$, and $\varphi$)	such that for any $0<t\leq T$ we have
\begin{equation}\label{Control-Filter-Diff-H2}
\| \vu_{1,\alpha}(t,\cdot)-\vu_{2,\alpha}(t,\cdot)\|_{H^2}\leq K'_2\,\Big(\| \vu_1(t,\cdot) \|_{L^2}+\| \vu_2(t,\cdot) \|_{L^2}+1\Big)^2  \| \vu_1(t,\cdot)-\vu_2(t,\cdot) \|_{L^2}. 
\end{equation}

\end{proprevia}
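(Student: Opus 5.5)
The plan is to prove the estimate (\ref{Control-H2-Filter}) first and then the difference estimate (\ref{Control-Filter-Diff-H2}), with time $t$ fixed throughout and $\vu=\vu(t,\cdot)$ treated as a datum. The tool is the standard difference-quotient (or Fourier/Galerkin) method for divergence-form elliptic systems on $\T$. Periodicity means there are no boundary terms.

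\textbf{The $H^2$ bound.} Write the equation as
\[
-\alpha^2\,\mathrm{div}\big(a\,(\vec\nabla\otimes\vu_\alpha)^T\big)+\vu_\alpha+\vec\nabla P_\alpha=\vu, \qquad a:=A(\varphi\ast\vu).
\]
Differentiate formally in $x_j$, or apply the difference quotient $D_j^h$, and test against $\partial_j\vu_\alpha$; this test function is divergence free, so the pressure drops out. The coercive term gives $\alpha^2\beta\|\vec\nabla\partial_j\vu_\alpha\|_{L^2}^2$, using $a\geq\beta$. The commutator term is
\[
\alpha^2\int \partial_j a\,(\vec\nabla\otimes\vu_\alpha)^T:\vec\nabla\partial_j\vu_\alpha.
\]
By the chain rule, $\partial_j a=A'(\varphi\ast\vu)\,(\partial_j\varphi\ast\vu)$. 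Since $A\in\mathcal C^{1,1}$, $A'$ is bounded, and Young's convolution inequality gives $\|\partial_j\varphi\ast\vu\|_{L^\infty}\le\|\partial_j\varphi\|_{L^2}\|\vu\|_{L^2}$. Hence
\[
\|\vec\nabla a\|_{L^\infty}\lesssim\|A'\|_{L^\infty}\|\varphi\|_{H^1}\|\vu\|_{L^2}.
\]
The commutator is therefore bounded by $C\|\vu\|_{L^2}\|\vu_\alpha\|_{H^1}\|\vu_\alpha\|_{H^2}$. The right-hand side term $\int\vu\cdot\partial_j\partial_j\vu_\alpha$ is at most $\|\vu\|_{L^2}\|\vu_\alpha\|_{H^2}$. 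Absorbing the $\|\vu_\alpha\|_{H^2}$ factors through Young's inequality and using $\|\vu_\alpha\|_{H^1}^2\le K_0\|\vu\|_{L^2}^2$ from Proposition \ref{Prop:Existence-Filter} yields
\[
\|\vu_\alpha\|_{H^2}\lesssim \|\vu\|_{L^2}^2+\|\vu\|_{L^2},
\]
which is (\ref{Control-H2-Filter}). To make the argument rigorous I would use difference quotients, which only require $\vu_\alpha\in H^1$, so $H^2$ membership follows from the uniform bound. The exact computation needs a little care because the stress is the transpose $(\vec\nabla\otimes\vu_\alpha)^T$. Since $\mathrm{div}\,\vu_\alpha=0$, coercivity still holds: $\int a\,(\vec\nabla\otimes\vw)^T:\vec\nabla\vw$ reduces to $\int a|\vec\nabla\vw|^2$ up to a term involving $\vec\nabla a$, which is handled like the commutator.

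\textbf{The difference bound.} Set $\vw=\vu_{1,\alpha}-\vu_{2,\alpha}$ and $a_i=A(\varphi\ast\vu_i)$. Then $\vw$ solves
\[
-\alpha^2\mathrm{div}\big(a_1(\vec\nabla\otimes\vw)^T\big)+\vw+\vec\nabla(P_{1}-P_2)=(\vu_1-\vu_2)+\alpha^2\mathrm{div}\big((a_1-a_2)(\vec\nabla\otimes\vu_{2,\alpha})^T\big).
\]
The extra forcing is expanded as
\[
(a_1-a_2)\,\Delta$-type terms in $\vu_{2,\alpha}$ plus $\vec\nabla(a_1-a_2)\cdot\vec\nabla\vu_{2,\alpha}.
\]
The Lipschitz properties of $A$ and $A'$ (here $\mathcal C^{1,1}$ is used), together with Young's inequality, give
\[
\|a_1-a_2\|_{L^\infty}+\|\vec\nabla(a_1-a_2)\|_{L^\infty}\lesssim(1+\|\vu_1\|_{L^2})\|\vu_1-\vu_2\|_{L^2}.
\]
Consequently the extra forcing has $L^2$ norm at most
\[
C(1+\|\vu_1\|_{L^2})\|\vu_1-\vu_2\|_{L^2}\|\vu_{2,\alpha}\|_{H^2}.
\]
Running the $H^2$ argument above with coefficient $a_1$ and this $L^2$ forcing, and using (\ref{Control-Filter-Diff-H1}) for the lower-order $\|\vw\|_{H^1}$ term, gives
\[
\|\vw\|_{H^2}\lesssim(1+\|\vu_1\|_{L^2})\Big[\|\vw\|_{H^1}+\|\vu_1-\vu_2\|_{L^2}\big(1+\|\vu_{2,\alpha}\|_{H^2}\big)\Big].
\]
Inserting (\ref{Control-H2-Filter}) for $\|\vu_{2,\alpha}\|_{H^2}$ would produce a cubic power of $\|\vu_i\|_{L^2}$. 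The stated estimate is only quadratic, so to get it I would bound $\vec\nabla\vu_{2,\alpha}$ only in $H^1$ and use the coercive term to absorb. Whether the exact power $2$ comes out depends on this bookkeeping; I expect this step, matching the claimed power $(\|\vu_1\|+\|\vu_2\|+1)^2$, to be the main obstacle, with the divergence-free coercivity detail as the secondary one.
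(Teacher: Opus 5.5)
Your proof of (\ref{Control-H2-Filter}) is correct. Differentiating (or taking difference quotients) and testing with the divergence-free field $\partial_j\vu_\alpha$ works. So does using $\|\vec{\nabla} a\|_{L^\infty}\le \|\vec{\nabla}A\|_{L^\infty}\|\vec{\nabla}\varphi\|_{L^2}\|\vu\|_{L^2}$ together with $\|\vu_\alpha\|^2_{H^1}\le K_0\|\vu\|^2_{L^2}$. Together these give exactly $\|\vu_\alpha\|_{H^2}\lesssim \|\vu\|^2_{L^2}+\|\vu\|_{L^2}$.

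The transpose is not an issue. The weak form used in the paper is $\int a\,(\vec{\nabla}\otimes\vu_\alpha)^T:(\vec{\nabla}\otimes\vec{\phi})^T\,dx=\sum_{i,j}\int a\,\partial_j u_{\alpha,i}\,\partial_j\phi_i\,dx$, so coercivity with constant $\alpha^2\beta$ is immediate. Your proposed reduction for the other contraction is actually false. Since ${\rm div}\,\vu_\alpha=0$, one has $\int a\,\partial_j u_{\alpha,i}\,\partial_i u_{\alpha,j}\,dx=-\int(\partial_j a)\,u_{\alpha,i}\,\partial_i u_{\alpha,j}\,dx$, which is purely lower order. That reading would give no coercivity at all; fortunately it is not the operator here.

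The genuine gap is in the difference estimate (\ref{Control-Filter-Diff-H2}); keep your notation $\vw=\vu_{1,\alpha}-\vu_{2,\alpha}$, $a_i=A(\varphi\ast\vu_i)$. As you note, your final inequality combined with (\ref{Control-H2-Filter}) only gives a cubic factor. The remedy you suggest does not help. The quantity $\|\vec{\nabla}\vu_{2,\alpha}\|_{H^1}$ can only be controlled through (\ref{Control-H2-Filter}), which is already quadratic in $\|\vu_2\|_{L^2}$. Young absorption into the coercive term does not change the powers of $\|\vu_i\|_{L^2}$ either.

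The loss occurs earlier. You bound $\|a_1-a_2\|_{L^\infty}+\|\vec{\nabla}(a_1-a_2)\|_{L^\infty}$ jointly by $(1+\|\vu_1\|_{L^2})\|\vu_1-\vu_2\|_{L^2}$ and then multiply the sum by $\|\vu_{2,\alpha}\|_{H^2}$. Instead, keep the two pieces of $\partial_j\big((a_1-a_2)(\vec{\nabla}\otimes\vu_{2,\alpha})^T\big)$ separate. The factor $\|\vu_1\|_{L^2}$ arises only when the coefficient is differentiated, through $\vec{\nabla}\varphi\ast\vu_1$.

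\begin{itemize}
\item Without a derivative, $\|a_1-a_2\|_{L^\infty}\le\|\vec{\nabla}A\|_{L^\infty}\|\varphi\|_{L^2}\|\vu_1-\vu_2\|_{L^2}$. Hence $\|(a_1-a_2)\,\vec{\nabla}\partial_j\vu_{2,\alpha}\|_{L^2}\lesssim(\|\vu_2\|^2_{L^2}+\|\vu_2\|_{L^2})\|\vu_1-\vu_2\|_{L^2}$.
\item The piece carrying $\vec{\nabla}(a_1-a_2)$ meets $\vu_{2,\alpha}$ with only one derivative. Hence $\|\partial_j(a_1-a_2)\,\vec{\nabla}\vu_{2,\alpha}\|_{L^2}\lesssim(1+\|\vu_1\|_{L^2})\sqrt{K_0}\,\|\vu_2\|_{L^2}\|\vu_1-\vu_2\|_{L^2}$.
\item By (\ref{Control-Filter-Diff-H1}), the commutator satisfies $\alpha^2\|\vec{\nabla}a_1\|_{L^\infty}\|\vw\|_{H^1}\lesssim\|\vu_1\|_{L^2}(\|\vu_2\|_{L^2}+1)\|\vu_1-\vu_2\|_{L^2}$.
\end{itemize}

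With the source term added, every contribution is at most $C(1+\|\vu_1\|_{L^2}+\|\vu_2\|_{L^2})^2\|\vu_1-\vu_2\|_{L^2}$, which is (\ref{Control-Filter-Diff-H2}).

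This separation is consistent with the form of $K'_2$ recorded in Appendix \ref{Sec:Estimate-K2-Phi}, where one contribution carries $K_2$ and another only $K_0^{3/2}$. By contrast, the paper's in-text recursive bound (Proposition \ref{Prop-Tech:Recursive-Estim-Filter-Diff-Hm} at $m=0$) is written in your lumped form. That suffices there only because a constant depending on $\sup_t\|\vu\|_{L^2}$ is all that is needed.
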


In the following technical theorems, we establish the higher-order regularity properties that, as mentioned above, constitute the core of this work.   

\begin{Theorem}[$H^m$-regularity]\label{Th:Hm-Filter} 
	Within the framework of Proposition \ref{Prop:Existence-Filter}, assume in addition that the indicator function $A(\cdot)$ and the convolution term $\varphi$ satisfy (\ref{conditions-a-varphi-regularity}) for some $m \in \mathbb{N}$: 
	\[ 	A(\cdot)\in \mathcal{C}^{m+1,1}(\Rt)  \quad \mbox{and} \quad \varphi \in H^{m+1}(\T).   \]
	Define
	\begin{equation}\label{Norm-D-m+1-A}
	\| D^{m+1} A(\cdot)\|_{L^\infty}:=\max_{1\leq k \leq m+1}\, \max_{|{\bf {a}}|\leq k} \| \partial^{\bf a} A(\cdot)\|_{L^\infty}. 
	\end{equation}
	Moreover, for fixed $0< T_1<T_2\leq T$, assume that  
	\[ \vu \in L^\infty([T_1,T_2],  H^m(\T)). \]
	
	\medskip 
	
	Then, the solution $\vu_\alpha \in L^\infty([0,T], H^1(\T))$ to equation (\ref{Filter-Equation-Reg}), obtained in Proposition \ref{Prop:Existence-Filter}, satisfies  
	\[ \vu_\alpha \in L^\infty([T_1,T_2], H^{m+2}(\T)),  \]
	and there exists a constant $K_{m+2}>0$, depending only on $\alpha$, $\beta$, $\| D^{m+1} A(\cdot)\|_{L^\infty}$, and $\| \varphi \|_{H^{m+1}}$, such that for any $T_1\leq t\leq T_2$, the following estimate holds:
	\begin{equation}\label{Control-Hm-Filter} 
	\| \vu_{\alpha}(t,\cdot)\|_{H^{m+2}}\leq   K_{m+2}\,  \sum_{k=1}^{m+2}\| \vu(t,\cdot)\|^k_{H^m}.
	\end{equation}
\end{Theorem}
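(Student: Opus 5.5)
The plan is to argue by induction on $m$, with Proposition \ref{Prop:H2-Filter} as the base case. The statement with $m=0$ is essentially (\ref{Control-H2-Filter}), since $\| \vu\|_{H^0}=\|\vu\|_{L^2}$. For the inductive step, assume the estimate holds at level $m-1$. In particular, $\vu_\alpha\in L^\infty([T_1,T_2],H^{m+1})$ with $\|\vu_\alpha\|_{H^{m+1}}\leq K_{m+1}\sum_{k=1}^{m+1}\|\vu\|^k_{H^{m-1}}$.

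At each fixed time I would write $a(x):=A(\varphi\ast\vu)(x)$ and expand the divergence term:
\[
-\alpha^2\,{\rm div}\big(a\,(\vec{\nabla}\otimes\vu_\alpha)^T\big) = -\alpha^2 a\,\Delta\vu_\alpha-\alpha^2(\vec{\nabla} a\cdot\vec{\nabla})\vu_\alpha .
\]
Dividing by $a\geq\beta$ recasts the filter equation as a Helmholtz-type equation with a variable coefficient on the lower-order terms. It is cleaner, however, to apply $\partial^{\bf a}$ with $|{\bf a}|=m$ directly to the weak formulation and test against $-\Delta\partial^{\bf a}\vu_\alpha$. This is justified by difference quotients (Nirenberg's method) on $\T$, where there are no boundary terms.

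By Leibniz, the principal part gives coercivity $\alpha^2\beta\|\Delta\partial^{\bf a}\vu_\alpha\|^2_{L^2}$. The pressure disappears after applying $\P$. The remainders are commutators of the form $\partial^{\bf b}a\;\partial^{{\bf a}-{\bf b}}\vec{\nabla}^2\vu_\alpha$ and $\partial^{\bf b}\vec{\nabla} a\;\partial^{{\bf a}-{\bf b}}\vec{\nabla}\vu_\alpha$ with $|{\bf b}|\geq1$. Young's inequality then yields
\[
\|\vu_\alpha\|_{H^{m+2}}\lesssim \|\vu\|_{H^m}+\sum \|\partial^{\bf b}a\,\partial^{{\bf c}}\vu_\alpha\|_{L^2},
\]
where $|{\bf b}|\leq m+1$, $|{\bf c}|\leq m+1$, and $|{\bf b}|+|{\bf c}|\leq m+2$.

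The key control is on the derivatives of the coefficient $a$. Since $\varphi\ast\vu$ is smoothed by a kernel in $H^{m+1}$, Young's convolution inequality on $\T$ gives $\|\partial^{\bf c}(\varphi\ast\vu)\|_{L^\infty}\leq\|\partial^{\bf c}\varphi\|_{L^2}\|\vu\|_{L^2}$ for $|{\bf c}|\leq m+1$. Faà di Bruno with $A\in\mathcal{C}^{m+1,1}$ then gives
\[
\|\partial^{\bf b}a\|_{L^\infty}\lesssim \|D^{m+1}A\|_{L^\infty}\sum_{j=1}^{|{\bf b}|}\big(\|\varphi\|_{H^{m+1}}\|\vu\|_{L^2}\big)^j, \qquad 1\leq|{\bf b}|\leq m+1.
\]
Each commutator term is therefore bounded by a polynomial in $\|\vu\|_{L^2}$ of degree at most $|{\bf b}|$, multiplied by $\|\vu_\alpha\|_{H^{|{\bf c}|}}$ with $|{\bf c}|\leq m+1$.

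The main obstacle is the top-order commutators with $|{\bf c}|=m+1$, namely $\partial a\,\partial^{m}\vec{\nabla}^2\vu_\alpha$. These involve the $H^{m+1}$ norm of $\vu_\alpha$ rather than the $H^{m+2}$ norm we are trying to bound, so they do not close against the coercive term directly. I would handle them with the induction hypothesis, which controls them by $\|\vu\|_{L^2}\sum_{k\le m+1}\|\vu\|^k_{H^{m-1}}$. If this were insufficient, I would fall back on interpolating $\|\vu_\alpha\|_{H^{m+1}}\leq\varepsilon\|\vu_\alpha\|_{H^{m+2}}+C_\varepsilon\|\vu_\alpha\|_{H^1}$ and absorbing. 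The lower-order terms are controlled by the inductive bounds in the same way.

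Combining these estimates with $\|\vu\|_{L^2},\|\vu\|_{H^{m-1}}\leq\|\vu\|_{H^m}$, the degrees add up to at most $1+(m+1)=m+2$. This gives $\|\vu_\alpha\|_{H^{m+2}}\leq K_{m+2}\sum_{k=1}^{m+2}\|\vu\|^k_{H^m}$, with $K_{m+2}$ depending only on $\alpha,\beta,\|D^{m+1}A\|_{L^\infty},\|\varphi\|_{H^{m+1}}$. Taking the essential supremum over $[T_1,T_2]$ yields the $L^\infty_tH^{m+2}_x$ membership.
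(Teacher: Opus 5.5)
Your argument is correct and shares the paper's skeleton: induction from Proposition~\ref{Prop:H2-Filter}, Leibniz after applying $\partial^{\bf a}$, the bound $\|\partial^{\bf c}(\varphi\ast\vu)\|_{L^\infty}\leq\|\partial^{\bf c}\varphi\|_{L^2}\|\vu\|_{L^2}$, and lower-order terms through the induction hypothesis. Where you differ is in how you extract the top-order term.

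You use an energy estimate: test against $-\Delta\partial^{\bf a}\vu_\alpha$, use $\alpha^2\int_{\T}a\,|\Delta\partial^{\bf a}\vu_\alpha|^2\,dx\geq\alpha^2\beta\|\Delta\partial^{\bf a}\vu_\alpha\|^2_{L^2}$, absorb by Young, and justify the test with difference quotients. The paper argues by duality. It tests the weak formulation with $\partial^{\bf a}\vw$ for trigonometric polynomials $\vw$, moves all derivatives onto $\vu_\alpha$, and bounds the remaining terms by $\|\vw\|_{L^2}$. By density it then reads off an $L^2$ bound for $A(\varphi\ast\vu)\,\partial^{\bf a}\partial^2_j\vu_\alpha$ and divides by $\beta$ pointwise.

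Duality needs no difference quotients, since $L^2$ membership comes straight out of the pairing. However, the pairing is really against $\P\vw$, so it directly controls only the Leray projection of the top-order term; the gradient part has to be handled separately. Your divergence-free test function removes $\P$ and $P_\alpha$ at once and uses $A\geq\beta$ exactly in the coercive term. This is the same mechanism the paper uses for Theorem~\ref{Th:Cont-Dep-Hm-Filter}.

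Your Fa\`a di Bruno bound is also the correct chain rule. The paper's computation writes $\partial^{\bf b}(A(\varphi\ast\vu))$ as the single product $(\partial^{\bf b}A)(\varphi\ast\vu)\,(\partial^{\bf b}\varphi\ast\vu)$, which is linear in $\vu$. The higher powers you correctly obtain cost some bookkeeping. To land exactly on $\sum_{k=1}^{m+2}\|\vu\|^k_{H^m}$, you need the inductive bound at every level: $\|\vu_\alpha\|_{H^j}$ has degree at most $j$ for $1\leq j\leq m+1$, paired with $|{\bf b}|+|{\bf c}|\leq m+2$. Bounding every $\|\vu_\alpha\|_{H^{|{\bf c}|}}$ by the level-$(m-1)$ estimate alone would give degree up to $2m+2$, and your count $1+(m+1)$ covers only the $|{\bf b}|=1$ terms.

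Finally, the terms you call the main obstacle are $(\vec{\nabla}a\cdot\vec{\nabla})\partial^{\bf a}\vu_\alpha$ and $\partial^{\bf b}a\,\Delta\partial^{{\bf a}-{\bf b}}\vu_\alpha$ with $|{\bf b}|=1$; as written, $\partial a\,\partial^{m}\vec{\nabla}^2\vu_\alpha$ has one derivative too many. These carry only $m+1$ derivatives of $\vu_\alpha$, so they are ordinary lower-order terms and no interpolation fallback is needed.
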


\begin{Remark}\label{Rmk:Reg-Filter-Constraint} We emphasize that the gain in regularity of $\vu_\alpha$ is strongly governed by the assumed regularity of the functions $A(\cdot)$ and $\varphi$. Specifically, assuming regularity of order $m+1$ for these two functions yields regularity of order $m+2$ for $\vu_\alpha$. 
\end{Remark}	
\begin{proof} 
	We apply a bootstrapping process with respect to the parameter $m$. 
	
	\medskip 
	
\emph{The initial case when $m=0$.} This case is directly given by Proposition \ref{Prop:H2-Filter}, where, from $\vu \in L^\infty_t L^2_x$ and $\vu_\alpha \in L^\infty_t H^1_x$, we obtain that $\vu_\alpha \in L^\infty_t H^2_x$. Moreover, from (\ref{Control-H2-Filter}), the estimate (\ref{Control-Hm-Filter}) holds.
	
	\medskip 
	
	\emph{The iterative process for $m\geq 1$.} Assume that $\vu \in L^\infty([T_1,T_2], H^m(\T))$, $A(\cdot)\in \mathcal{C}^{m+1,1}(\Rt)$, and $\varphi \in H^{m+1}(\T)$. Moreover, we assume that $\vu_\alpha \in L^\infty ([T_1,T_2], H^{m+1}(\T))$, and that 
	\begin{equation}\label{Control-Hm-1-Filter}
	\| \vu_\alpha(t,\cdot)\|_{H^{m+1}} \leq K_{m+1}\, \sum_{k=1}^{m+1} \| \vu(t,\cdot)\|^k_{H^{m-1}}. 
	\end{equation}
	We shall prove the gain of regularity $\vu_\alpha \in L^\infty ([T_1,T_2], H^{m+2}(\T))$ and that the estimate (\ref{Control-Hm-Filter}) holds.
	
	\medskip 
	
Applying the Leray projector $\P$ to equation (\ref{Filter-Equation-Reg}), we obtain 
\[
-\alpha^2 \P\, \mathrm{div}\Big( A(\varphi \ast \vu)(\vec{\nabla}\otimes \vu_\alpha)^T \Big)+\vu_\alpha=\vu. 
\]

After performing integration by parts in this equation, for fixed $T_1\leq t \leq T_2$ and for all $\vec{\phi} \in H^1(\T)$ (recall that $\P$ is a self-adjoint operator), we arrive at the following variational formulation:
\begin{equation*}
\begin{split}
&\alpha^2 \int_{\T} A((\varphi\ast \vu)(t,x)) (\vec{\nabla} \otimes \vu_\alpha)^T (t,x): (\vec{\nabla} \otimes \P\vec{\phi})^T (x) dx + \int_{\T} \vu_\alpha(t,x) \cdot \vec{\phi} (x) dx \\
=& \int_{\T} \vu(t,x)\cdot \vec{\phi}(x) dx, 
\end{split}
\end{equation*}
where, for two matrices $A$ and $B$, we denote $A:B= \mathrm{tr}(A^T B)$. For simplicity, we omit the time dependence of the functions. 
	
	\medskip 
	
	For a multi-index $|{\bf a}|=m$ and $\vw \in \mathcal{P}$, where $\mathcal{P}$ denotes the set of periodic vector-valued trigonometric polynomials on $\T$, we set the test function $\vec{\phi} := \partial^{\bf a} \vw$. Then, in the identity above, we obtain
	\begin{equation*}
	\alpha^2 \int_{\T} A(\varphi\ast \vu) (\vec{\nabla} \otimes \vu_\alpha)^T : (\vec{\nabla} \otimes \partial^{{\bf a}} \P \vw)^T dx + \int_{\T} \vu_\alpha \cdot \partial^{{\bf a}} \vw dx = \int_{\T} \vu \cdot \partial^{{\bf a}} \vw dx.
	\end{equation*}
	
To simplify the notation, we write $\P\vw := \vec{\omega}=(\omega_i)_{1\leq i \leq 3}$. Hence, we have 
\begin{equation*}
\alpha^2 \sum_{i,j=1}^{3} \int_{\T} A(\varphi\ast \vu) (\partial_j u_{\alpha,i})(\partial_j\partial^{{\bf a}} \omega_i) dx + \sum_{i=1}^{3}\int_{\T} u_{\alpha,i} \, \partial^{{\bf a}} w_i dx = \sum_{i=1}^{3}\int_{\T} u_i \, \partial^{{\bf a}} w_i dx.
\end{equation*}
Additionally, to make the presentation simpler, it suffices to consider the generic expression:
\begin{equation*}
\alpha^2 \int_{\T} A(\varphi\ast \vu) (\partial_j u_{\alpha,i})(\partial_j\partial^{{\bf a}} \omega_i) dx + \int_{\T} u_{\alpha,i} \, \partial^{{\bf a}} w_i dx = \int_{\T} u_i \, \partial^{{\bf a}} w_i dx.
\end{equation*}

In each term, we integrate by parts to transfer the derivative $\partial^{{\bf a}}$ to the left. Moreover, in the first term, we also use the Leibniz rule. Therefore, for constants $C_{{\bf a},{\bf b}}>0$ depending on the multi-index ${\bf a}$ and ${\bf b}$ (with $|{\bf b}| \leq |{\bf a}|$), we write
	\begin{equation*}
	\begin{split}
	&(-1)^{m}\alpha^2 \int_{\T} \sum_{|{\bf b}|\leq |{\bf a}|} C_{{\bf a},{\bf b}}\,\big(\partial^{{\bf b}} A (\varphi \ast \vu)\big)\big(\partial^{{\bf b}}\varphi \ast \vu\big) \big(\partial^{{\bf a}-{\bf b}} \partial_j u_{\alpha,i}\big)\, \partial_j \omega_i dx+ (-1)^m \int_{\T}\partial^{{\bf a}} u_{\alpha,i}\, w_i dx \\
	=& (-1)^m \int_{\T} \partial^{{\bf a}} u_{i}\, w_i dx.
	\end{split}
	\end{equation*}
	
Then, we integrate by parts once more to transfer the derivative $\partial_j$ to the left. We obtain
	\begin{equation*}
	\begin{split}
	&(-1)^{m+1}\alpha^2 \int_{\T} \sum_{|{\bf b}|\leq |{\bf a}|} C_{{\bf a},{\bf b}}\, \partial_j \Big( \big(\partial^{{\bf b}} A (\varphi \ast \vu)\big)\big(\partial^{{\bf b}}\varphi \ast \vu\big) \Big)\big(\partial^{{\bf a}-{\bf b}} \partial_j u_{\alpha,i}\big)\, \omega_i \, dx \\
	&\quad + (-1)^{m+1}\alpha^2 \int_{\T} \sum_{|{\bf b}|\leq |{\bf a}|} C_{{\bf a},{\bf b}}\,\big(\partial^{{\bf b}} A (\varphi \ast \vu)\big) \big(\partial^{{\bf b}}\varphi \ast \vu\big) \Big( \partial^{{\bf a}-{\bf b}} \partial^2_j u_{\alpha,i}\Big)\, \omega_i \, dx \\
	&\quad + (-1)^m \int_{\T}\partial^{{\bf a}} u_{\alpha,i}\, w_i dx 
	= (-1)^m \int_{\T} \partial^{{\bf a}} u_{i}\, w_i dx.
	\end{split}
	\end{equation*}
	
	Dividing each term by $(-1)^{m+1}$, we obtain
	\begin{equation*}
	\begin{split}
	&\alpha^2 \int_{\T} \sum_{|{\bf b}|\leq |{\bf a}|} C_{{\bf a},{\bf b}}\, \partial_j \Big( \big(\partial^{{\bf b}} A (\varphi \ast \vu)\big) (\partial^{{\bf b}}\varphi \ast \vu) \Big)\big(\partial^{{\bf a}-{\bf b}} \partial_j u_{\alpha,i}\big)\, \omega_i \,  dx \\
	&\quad + \alpha^2 \int_{\T} \sum_{|{\bf b}|\leq |{\bf a}|} C_{{\bf a},{\bf b}}\, \big(\partial^{{\bf b}} A (\varphi \ast \vu)\big)(\partial^{{\bf b}}\varphi \ast \vu) \Big( \partial^{{\bf a}-{\bf b}} \partial^2_j u_{\alpha,i}\Big)\, \omega_i \, dx \\
	&\quad - \int_{\T}\partial^{{\bf a}} u_{\alpha,i}\, w_i dx 
	= - \int_{\T} \partial^{{\bf a}} u_{i}\, w_i dx.
	\end{split}
	\end{equation*}
	
	Now, splitting the second term above, we write
	\begin{equation*}
	\begin{split}
	&\alpha^2 \int_{\T} \sum_{|{\bf b}|\leq |{\bf a}|} C_{{\bf a},{\bf b}}\, \big(\partial^{{\bf b}} A (\varphi \ast \vu)\big)(\partial^{{\bf b}}\varphi \ast \vu) \Big( \partial^{{\bf a}-{\bf b}} \partial^2_j u_{\alpha,i}\Big)\, \omega_i \,  dx= \alpha^2 \int_{\T} A (\varphi \ast \vu)\, \big(\partial^{{\bf a}} \partial^2_j u_{\alpha,i}\big)\, \omega_i \, dx \\
	&\quad + \alpha^2 \int_{\T} \sum_{1\leq|{\bf b}|\leq |{\bf a}|} C_{{\bf a},{\bf b}}\,\big( \partial^{{\bf b}} A (\varphi \ast \vu)\big)(\partial^{{\bf b}}\varphi \ast \vu) \Big( \partial^{{\bf a}-{\bf b}} \partial^2_j u_{\alpha,i}\Big)\, \omega_i \, dx,
	\end{split}
	\end{equation*}
and rearranging terms, we obtain
	\begin{equation*}
	\begin{split}
&\, 	\alpha^2 \int_{\T} A(\varphi \ast \vu) \big( \partial^{{\bf a}}\partial^2_j u_{\alpha,i}\big)\, \omega_i\,  dx =  -\alpha^2 \int_{\T} \sum_{1\leq |{\bf b}|\leq |{\bf a}|} C_{{\bf a},{\bf b}}\,\big(\partial^{{\bf b}} A (\varphi \ast \vu)\big)(\partial^{{\bf b}}\varphi \ast \vu)\big( \partial^{{\bf a}-{\bf b}} \partial^2_j u_{\alpha,i}\big)\, \omega_i\, dx \\
	& -\alpha^2 \int_{\T} \sum_{|{\bf b}|\leq |{\bf a}|} C_{{\bf a},{\bf b}}\, \partial_j \Big( \big(\partial^{{\bf b}} A (\varphi \ast \vu)\big)(\partial^{{\bf b}}\varphi \ast \vu) \Big)\big(\partial^{{\bf a}-{\bf b}} \partial_j u_{\alpha,i}\big)\, \omega_i \, dx + \int_{\T}\partial^{{\bf a}} u_{\alpha,i}\, w_i dx -\int_{\T} \partial^{{\bf a}} u_{i}\, w_i dx.
	\end{split}
	\end{equation*}
	
In each term on the right-hand side, we apply the Cauchy--Schwarz and Young inequalities to obtain the following estimates.  

\medskip

For the first term, as $|{\bf b}| \leq |{\bf a}| \leq m$ and $|{\bf a} - {\bf b}| \leq m-1$ (since $1\leq |{\bf b}|$), we obtain 
\begin{equation*}
\begin{split}
&\, \alpha^2 \int_{\T} \sum_{1\leq |{\bf b}|\leq |{\bf a}|} C_{{\bf a},{\bf b}}\,\left| \big(\partial^{{\bf b}} A (\varphi \ast \vu)\big)(\partial^{{\bf b}}\varphi \ast \vu)\big( \partial^{{\bf a}-{\bf b}} \partial^2_j u_{\alpha,i}\big)\, \omega_i \right|\, dx\\
\leq &\, \alpha^2\, \| D^{m}A(\cdot)\|_{L^\infty}\, \| \partial^{{\bf b}} \varphi \ast \vu \|_{L^\infty}\, \| \partial^{{\bf a}-{\bf b}} \partial^2_j u_{\alpha,i}\|_{L^2}\, \| \omega_i \|_{L^2} \\
\leq &\, \alpha^2\, \| D^{m}A(\cdot)\|_{L^\infty}\, \| \partial^{{\bf b}} \varphi  \|_{L^2}\, \| \vu \|_{L^2}\, \| \vu_\alpha \|_{H^{m+1}}\, \| \omega_i \|_{L^2}\\
\leq &\, \alpha^2\, \| D^{m}A(\cdot)\|_{L^\infty}\, \|  \varphi  \|_{H^m}\, \| \vu \|_{H^m}\, \| \vu_\alpha \|_{H^{m+1}}\, \| \vec{\omega} \|_{L^2}.
\end{split}
\end{equation*}

For the second term, using the same arguments, we obtain
\begin{equation*}
\begin{split}
&\, \alpha^2 \int_{\T} \sum_{|{\bf b}|\leq |{\bf a}|} C_{{\bf a},{\bf b}}\, \left|\partial_j \Big( \big(\partial^{{\bf b}} A (\varphi \ast \vu)\big)(\partial^{{\bf b}}\varphi \ast \vu) \Big)\big(\partial^{{\bf a}-{\bf b}} \partial_j u_{\alpha,i}\big)\, \omega_i \right| \, dx\\
\leq &\, \alpha^2\, \| D^{m+1}A(\cdot)\|_{L^\infty}\, \|  \varphi  \|_{H^{m+1}}\, \| \vu \|_{H^m}\, \| \vu_\alpha \|_{H^{m+1}}\, \| \vec{\omega} \|_{L^2}.
\end{split}
\end{equation*}

The third and fourth terms are straightforward to estimate. 

\medskip

Gathering these estimates, we have

	\begin{equation*}
	\begin{split}
	\alpha^2 \left| \int_{\T} A(\varphi \ast \vu) \partial^{{\bf a}}\partial^2_j u_{\alpha,i}\, w_i dx \right| \leq & \alpha^2\, \| D^{m+1}A(\cdot)\|_{L^\infty}\, \|\varphi \|_{H^{m+1}}\, \| \vu \|_{H^{m}}\| \vu_\alpha \|_{H^{m+1}}\, \| \vec{\omega} \|_{L^2} \\
	& + \| \vu_\alpha \|_{H^{m+1}} \| \vw \|_{L^2}+ \| \vu \|_{H^{m}} \| \vw \|_{L^2}.
	\end{split}
	\end{equation*}
	
		Since $\vec{\omega}:=\P \vw$, using standard properties of $\P$ and defining
	\[ L_{m+2}:= \max\left(  \alpha^2\, \| D^{m+1}A(\cdot)\|_{L^\infty}\, \|\varphi \|_{H^{m+1}}, 1 \right), \]
	we obtain
	\begin{equation*}
	\alpha^2 \left| \int_{\T} A(\varphi \ast \vu) \partial^{{\bf a}}\partial^2_j u_{\alpha,i}\, w_i \, dx \right| 
	\leq L_{m+2} \Big( \| \vu_\alpha\|_{H^{m+1}}\, \| \vu \|_{H^m}+  \| \vu_\alpha\|_{H^{m+1}}+ \| \vu \|_{H^m}\Big)\| \vw \|_{L^2}. 
	\end{equation*}
	
	By density of $\mathcal{P}$ in $L^2(\T)$, this estimate extends to all $\vw \in L^2(\T)$, yielding
	\[ \|A(\varphi \ast \vu) \,\partial^{\bf a} \partial^2_j \vu_\alpha\|_{L^2} \leq \frac{L_{m+2}}{\alpha^2} \Big( \| \vu_\alpha\|_{H^{m+1}}\, \| \vu \|_{H^m}+  \| \vu_\alpha\|_{H^{m+1}}+ \| \vu \|_{H^m}\Big).\]
	
	Using the lower bound in (\ref{Conditions-a-existence-1}), we also have
	\[ \beta \| \partial^{\bf a} \partial^2_j  \vu_\alpha \|_{L^2}\leq \|A(\varphi \ast \vu) \,\partial^{\bf a} \partial^2_j \vu_\alpha\|_{L^2},\]
	and therefore
	\begin{equation*}
	\begin{split}
	\| \partial^{\bf a} \partial^2_j  \vu_\alpha \|_{L^2} 
	\leq &\,  \frac{L_{m+2}}{\alpha^2\,\beta} \Big( \| \vu_\alpha\|_{H^{m+1}}\, \| \vu \|_{H^m}+  \| \vu_\alpha\|_{H^{m+1}}+ \| \vu \|_{H^m}\Big)\\
	=:&\, K_{m+2} \Big( \| \vu_\alpha\|_{H^{m+1}}\, \| \vu \|_{H^m}+  \| \vu_\alpha\|_{H^{m+1}}+ \| \vu \|_{H^m}\Big).
	\end{split}
	\end{equation*}
	As the multi-index $|{\bf a|}=m$ was arbitrary, we finally obtain that 
	\begin{equation}\label{Control-Hm-Filter-prevous}
	\| \vu_\alpha(t,\cdot)\|_{H^{m+2}} \leq K_{m+2} \Big( \| \vu_\alpha\|_{H^{m+1}}\, \| \vu \|_{H^m}+  \| \vu_\alpha\|_{H^{m+1}}+ \| \vu \|_{H^m}\Big).
	\end{equation}
	
	Since $\vu \in L^\infty([T_1, T_2], H^m(\T))$ and $\vu_\alpha \in L^\infty([T_1, T_2], H^{m+1}(\T))$, we conclude that 
	\[ \vu_\alpha \in L^\infty([T_1, T_2], H^{m+2}(\T)). \]
	Moreover, the wished estimate (\ref{Control-Hm-Filter}) follows directly from the estimates (\ref{Control-Hm-1-Filter}) and  (\ref{Control-Hm-Filter-prevous}).  This completes the proof of Theorem \ref{Th:Hm-Filter}.  
\end{proof}

\begin{Theorem}[$H^m$-continuous dependence in time]\label{Th:Cont-Dep-Hm-Filter} 
	Under the same hypothesis of Theorem \ref{Th:Hm-Filter}, denote 
	\[ \sup_{T_1\leq t \leq T_2}\| \vu(t,\cdot)\|_{H^m}:= \| \vu \|_{L^\infty_t H^m_x}. \]
	
	Then, there exists a quantity $K'_{m+2}>0$, which depends on $\| \vu \|_{L^\infty_t H^m_x}$ together with $\alpha$, $\beta$, $\| D^{m+1} A(\cdot)\|_{L^\infty}$, and $\| \varphi \|_{H^{m+1}}$, such that for any $T_1\leq t_1, t_2 \leq T_2$, the following estimate holds:
	\begin{equation}\label{Control-Filter-Diff-Hm}
	\| \vu_{\alpha}(t_1,\cdot)-\vu_{\alpha}(t_2,\cdot)\|_{H^{m+2}}\leq K'_{m+2}\,  \| \vu(t_1,\cdot)-\vu(t_2,\cdot) \|_{H^m}.
	\end{equation}  
\end{Theorem}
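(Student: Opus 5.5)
We argue by induction on $m$, in the same way as in the proof of Theorem \ref{Th:Hm-Filter}. The base case $m=0$ is estimate (\ref{Control-Filter-Diff-H2}) of Proposition \ref{Prop:H2-Filter}. There we take $\vu_1=\vu(t_1,\cdot)$ and $\vu_2=\vu(t_2,\cdot)$, and bound the prefactor by $K_2'(2\|\vu\|_{L^\infty_tL^2_x}+1)^2$. Now fix $m\geq 1$ and assume the estimate holds at order $m+1$:
\[
\|\vu_\alpha(t_1)-\vu_\alpha(t_2)\|_{H^{m+1}}\leq K'_{m+1}\|\vu(t_1)-\vu(t_2)\|_{H^{m-1}}.
\]
Set $\delta\vu:=\vu(t_1)-\vu(t_2)$, $\delta\vu_\alpha:=\vu_\alpha(t_1)-\vu_\alpha(t_2)$, and $A_k:=A(\varphi\ast\vu(t_k))$ for $k=1,2$. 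Subtracting the two projected filter equations gives
\[
-\alpha^2\P\,{\rm div}\big(A_1(\vec\nabla\otimes\delta\vu_\alpha)^T\big)+\delta\vu_\alpha=\delta\vu+\alpha^2\P\,{\rm div}\big((A_1-A_2)(\vec\nabla\otimes\vu_\alpha(t_2))^T\big).
\]
This is a linear equation for $\delta\vu_\alpha$ with coefficient $A_1$, which satisfies $\beta\leq A_1\leq1$, plus an extra source term. It has the same structure as the equation handled in Theorem \ref{Th:Hm-Filter}.

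Next we repeat the duality argument of that proof. Test with $\partial^{\bf a}\vw$, where $|{\bf a}|=m$ and $\vw\in\mathcal P$. Then move $\partial^{\bf a}$ and $\partial_j$ onto the unknown using the Leibniz rule, and isolate $\alpha^2\int A_1\,\partial^{\bf a}\partial_j^2\delta u_{\alpha,i}\,\omega_i\,dx$. The lower-order commutator terms are controlled exactly as before by
\[
C\|D^{m+1}A\|_{L^\infty}\|\varphi\|_{H^{m+1}}\|\vu(t_1)\|_{H^m}\|\delta\vu_\alpha\|_{H^{m+1}}\|\vw\|_{L^2}.
\]
By the induction hypothesis, $\|\delta\vu_\alpha\|_{H^{m+1}}\lesssim\|\delta\vu\|_{H^{m-1}}\leq\|\delta\vu\|_{H^m}$. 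The terms $\int\partial^{\bf a}\delta u_{\alpha,i}\,w_i$ and $\int\partial^{\bf a}\delta u_i\,w_i$ are bounded directly by the same quantities. Finally we use density of $\mathcal P$ and the lower bound $\beta$ to pass from the weighted $L^2$ norm to $\|\partial^{\bf a}\partial_j^2\delta\vu_\alpha\|_{L^2}$.

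The main obstacle is the new source term $\alpha^2\P\,{\rm div}\big((A_1-A_2)(\vec\nabla\otimes\vu_\alpha(t_2))^T\big)$, after $\partial^{\bf a}$ has been applied to it, which means $m+1$ derivatives in total. We need to bound $\partial^{\bf b}\partial_j(A_1-A_2)$ for $|{\bf b}|\leq m$. The chain rule gives sums of products of $(D^kA)(\varphi\ast\vu(t_1))-(D^kA)(\varphi\ast\vu(t_2))$ with products of derivatives $\partial^{\bf c}\varphi\ast\vu(t_k)$, together with products in which one factor is $\partial^{\bf c}\varphi\ast\delta\vu$. For the first kind we use the fact that $D^kA$ is Lipschitz for $k\leq m+1$; this is exactly where the hypothesis $A\in\mathcal C^{m+1,1}$ is needed. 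It gives
\[
|D^kA(\varphi\ast\vu(t_1))-D^kA(\varphi\ast\vu(t_2))|\leq\|D^{k+1}A\|_{L^\infty}\|\varphi\ast\delta\vu\|_{L^\infty}.
\]
For all convolution factors, Young's inequality on $\T$ gives $\|\partial^{\bf c}\varphi\ast g\|_{L^\infty}\leq\|\partial^{\bf c}\varphi\|_{L^2}\|g\|_{L^2}$ for $|{\bf c}|\leq m+1$. Hence every such factor is bounded by $\|\varphi\|_{H^{m+1}}$ times $\|\vu\|_{L^\infty_tL^2_x}$ or $\|\delta\vu\|_{L^2}$. This yields $\|\partial^{\bf b}\partial_j(A_1-A_2)\|_{L^\infty}\leq C(\|\vu\|_{L^\infty_tH^m_x})\|\delta\vu\|_{L^2}$. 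The remaining factor $\partial^{{\bf a}-{\bf b}}\partial_j\vu_\alpha(t_2)$, together with the one extra derivative from the divergence, is handled by integrating that derivative onto $\vw$ as in the main computation. It is then bounded by $\|\vu_\alpha(t_2)\|_{H^{m+2}}$, and by (\ref{Control-Hm-Filter}) this is at most $K_{m+2}\sum_{k=1}^{m+2}\|\vu\|^k_{L^\infty_tH^m_x}$.

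Collecting all the bounds gives $\|\delta\vu_\alpha\|_{H^{m+2}}\leq K'_{m+2}\|\delta\vu\|_{H^m}$. Here $K'_{m+2}$ is a polynomial in $\|\vu\|_{L^\infty_tH^m_x}$ whose coefficients depend on $\alpha$, $\beta$, $\|D^{m+1}A\|_{L^\infty}$ (together with the Lipschitz constant of $D^{m+1}A$), and $\|\varphi\|_{H^{m+1}}$. This closes the induction.
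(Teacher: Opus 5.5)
Your argument is correct in substance and follows the paper's skeleton:
\begin{itemize}
\item induction on $m$ with the same hypothesis $\|\delta\vu_\alpha\|_{H^{m+1}}\le K'_{m+1}\|\delta\vu\|_{H^{m-1}}$;
\item the same difference equation, with coefficient $A_1$ and source $(A_1-A_2)(\vec{\nabla}\otimes\vu_\alpha(t_2))^T$;
\item the Lipschitz bound on $D^kA$ combined with $\|\partial^{\bf c}\varphi\ast g\|_{L^\infty}\le\|\partial^{\bf c}\varphi\|_{L^2}\|g\|_{L^2}$;
\item closure via (\ref{Control-Hm-Filter}).
\end{itemize}
The genuine difference is how the top-order norm is extracted. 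The paper does not reuse the duality argument of Theorem \ref{Th:Hm-Filter}. It runs an energy estimate: it applies $\partial^{\bf a}$ to the difference equation and pairs with $\partial_k^2\partial^{\bf a}\delta\vu_\alpha$, which is legitimate because Theorem \ref{Th:Hm-Filter} already gives $\delta\vu_\alpha\in H^{m+2}$. Since this test function is divergence-free, $\P$ drops out. The bound $A_1\geq\beta$ then gives the coercive term $\alpha^2\beta\|\vec{\nabla}\otimes\partial_k\partial^{\bf a}\delta\vu_\alpha\|_{L^2}^2+\|\partial_k\partial^{\bf a}\delta\vu_\alpha\|^2_{L^2}$ directly. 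This yields the recursive inequality (\ref{Recursive-Control-Diff-Filter-Hm}), which the paper uses even for $m=0$, together with (\ref{Control-Filter-Diff-H1}) and (\ref{Control-H2-Filter}). Your base case, citing (\ref{Control-Filter-Diff-H2}) for the two time-frozen data $\vu(t_1),\vu(t_2)$, is a legitimate shortcut.

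Your route does cost you one step that you state too quickly.
\begin{itemize}
\item In the duality argument the principal part cannot be isolated for a single pair $(i,j)$. It is $\int A_1\,\partial^{\bf a}\Delta\delta\vu_\alpha\cdot\P\vw\,dx$.
\item So density of $\mathcal{P}$ only bounds $\|\P(A_1 g)\|_{L^2}$ with $g:=\partial^{\bf a}\Delta\delta\vu_\alpha$, and $\beta$ cannot be applied pointwise to that quantity.
\item The fix is the coercivity $\beta\|g\|^2_{L^2}\leq\int A_1|g|^2\,dx=\int\P(A_1g)\cdot g\,dx$ for divergence-free $g$. This needs $g\in L^2$ a priori, which Theorem \ref{Th:Hm-Filter} supplies here. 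The paper's energy test function has this built in.
\end{itemize}
Also, in the source term all $m+1$ derivatives must be moved \emph{off} the test function, not onto it. The bound $\|\vu_\alpha(t_2)\|_{H^{m+2}}\|\vw\|_{L^2}$ that you state is what moving them off produces.

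On the other hand, your chain-rule bookkeeping is more accurate than the paper's. Fa\`a di Bruno makes the constant polynomial in $\|\vu\|_{L^\infty_tH^m_x}$, whereas the paper's recursive inequality is only linear in $\|\vu(t_1)\|_{L^2}$. This is harmless for the theorem, since $K'_{m+2}$ may depend on $\|\vu\|_{L^\infty_tH^m_x}$. You also correctly note that the Lipschitz constant of $D^{m+1}A$ enters $K'_{m+2}$, although it is not among the dependencies listed in the statement.
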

\begin{proof} It follows from the following recursive estimate.
\begin{Proposition}\label{Prop-Tech:Recursive-Estim-Filter-Diff-Hm}  
	Let $m\in \mathbb{N}$. There exists a quantity $L'_{m+2}>0$, which depends on $\alpha$, $\beta$, $\| D^{m+1} A(\cdot)\|_{L^\infty}$, and $\| \varphi \|_{H^{m+1}}$, such that for any $T_1\leq t_1, t_2 \leq T_2$, the following estimate holds:
	\begin{equation}\label{Recursive-Control-Diff-Filter-Hm}
	\begin{split}
	&\,\|  \vu_{\alpha}(t_1,\cdot)-\vu_{\alpha}(t_2,\cdot) \|_{H^{m+2}} \\
	\leq  & \,  L'_{m+2}\, \| \vu(t_1,\cdot) \|_{L^2}\, \| \vu_{\alpha}(t_1,\cdot)-\vu_{\alpha}(t_2,\cdot)  \|_{H^{m+1}}\ \\
	&\, +  L'_{m+2} \Big(\,\Big(\| \vu(t_1,\cdot) \|_{L^2}+1\Big)  \| \vu_{\alpha}(t_2,\cdot)\|_{H^{m+2}}  + 1\Big)\,  \| \vu(t_1,\cdot)-\vu(t_2,\cdot) \|_{H^m}.
	\end{split}
	\end{equation}
\end{Proposition}
\begin{proof} For simplicity, denote 
	\[ \vu(t_1, \cdot):= \vu_1, \quad \vu(t_2,\cdot):= \vu_2, \quad \vu_{\alpha}(t_1,\cdot):=\vu_{1,\alpha} \quad \text{and}\quad \vu_{\alpha}(t_2,\cdot):=\vu_{2,\alpha},    \]
	and define
	\[ \vw:=\vu_1-\vu_2, \qquad \vw_\alpha := \vu_{1,\alpha}- \vu_{2,\alpha}. \]
	Note that $\vw_\alpha$ satisfies the equation  
	\begin{equation*}\label{PDE-Filter-difference}
	-\alpha^2 \P\, \text{div}\Big(A(\varphi \ast \vu_1) (\vec{\nabla}\otimes \vw_\alpha  )^T +\big(A(\varphi\ast \vu_1)-A(\varphi \ast \vu_2)\big)(\vec{\nabla}\otimes \vu_{2,\alpha})^T\Big) + \vw_\alpha= \vw.
	\end{equation*}
For any  multi-index $|{\bf a }|=m$, applying the derivative $\partial^{{\bf a}}$ to each term above and using the Leibniz rule in the first term on the left-hand side, we write
	\begin{equation*}
	\begin{split}\vspace{2mm}
	&-\alpha^2 \P \, \text{div} \left( A(\varphi \ast \vu_1)(\vec{\nabla} \otimes \partial^{{\bf a}}\vw_\alpha)^T  \right) -\alpha^2   \, \sum_{1\leq |{\bf b}| \leq |{\bf a}|} C_{{\bf a}, {\bf b}} \left( \P\, \text{div}\left(  \partial^{{\bf b}} \big(A(\varphi \ast \vu_1)\big)  (\vec{\nabla}\otimes \partial^{{\bf a}-{\bf b}} \vw_\alpha  )^T \right)\right) \\
	&-\alpha^2  \, \sum_{|{\bf b}| \leq |{\bf a}|} C_{{\bf a}, {\bf b}} \left( \P\, \text{div}\left(   \partial^{{\bf b}}\Big(A(\varphi\ast \vu_1)-A(\varphi \ast \vu_2)\Big)(\vec{\nabla}\otimes \partial^{{\bf a}-{\bf b}} \vu_{2,\alpha})^T\right)  \right)+ \partial^{{\bf a}} \vw_\alpha=  \partial^{{\bf a}}\vw.
 \end{split}
\end{equation*}

As $\vu_{1,\alpha}, \vu_{2,\alpha}, \vw_\alpha \in H^{m+2}(\T)$ and $\vu_1, \vu_2, \vw \in H^{m}(\T)$, note that each term belongs to $L^2(\T)$. Then, for any $k=1,2,3$, we can multiply each term by $\partial^2_k \partial^{\bf{a}}\vw_\alpha \in L^2(\T)$ and integrate by parts. Here, we first move the Leray projector $\P$ and the divergence operator to the right, and then we move $\partial_k$ to the left. By the divergence-free property of $\vw_\alpha$, we thus find:
\begin{equation*}
\begin{split}
&\,-\alpha^2 \int_{\T} \partial_k \left(A(\varphi \ast \vu_1)(\vec{\nabla} \otimes \partial^{{\bf a}}  \vw_\alpha)^T\right): (\vec{\nabla} \otimes \partial_k \partial^{\bf{a}} \vw_\alpha)^T \, dx \\
&\, -\alpha^2\,  \sum_{1\leq |{\bf b}| \leq |{\bf a}|}  C_{{\bf a}, {\bf b}} \int_{\T} \partial_k \left(   \partial^{{\bf b}} \big(A(\varphi \ast \vu_1)\big)  (\vec{\nabla}\otimes \partial^{{\bf a}-{\bf b}} \vw_\alpha  )^T\right) :  (\vec{\nabla} \otimes \partial_k \partial^{\bf{a}} \vw_{\alpha})^T \, dx\\
&\, -\alpha^2 \,   \sum_{|{\bf b}| \leq |{\bf a}|}  C_{{\bf a}, {\bf b}} \int_{\T}  \partial_k \left(   \partial^{{\bf b}}\Big(A(\varphi\ast \vu_1)-A(\varphi \ast \vu_2)\Big)(\vec{\nabla}\otimes \partial^{{\bf a}-{\bf b}} \vu_{2,\alpha})^T\right) :   (\vec{\nabla} \otimes \partial_k \partial^{\bf{a}} \vw_{\alpha})^T \, dx\\ 
&\,- \| \partial_k \partial^{\bf{a}} \vw_\alpha\|^2_{L^2}= \int_{\T} \vw \cdot \partial^2_k  \partial^{\bf{a}} \vw_\alpha\, dx.
\end{split}
\end{equation*}	 

We multiply by $-1$ each term. Additionally, in the first term on the left-hand side, we develop the derivative of the product, to obtain

\begin{equation*}
\begin{split}
&\, \alpha^2 \int_{\T}  A(\varphi \ast \vu_1) | \vec{\nabla} \otimes \partial_k \partial^{{\bf a}} \vw_\alpha |^2\, dx \\
+&\,\alpha^2 \int_{\T} \partial_k \left(A(\varphi \ast \vu_1)\right)(\vec{\nabla} \otimes \partial^{{\bf a}}  \vw_\alpha): (\vec{\nabla} \otimes \partial_k \partial^{\bf{a}} \vw_\alpha)^T \, dx \\
+&\, \alpha^2\,  \sum_{1\leq |{\bf b}| \leq |{\bf a}|}  C_{{\bf a}, {\bf b}} \int_{\T} \partial_k \left(   \partial^{{\bf b}} \big(A(\varphi \ast \vu_1)\big)  (\vec{\nabla}\otimes \partial^{{\bf a}-{\bf b}} \vw_\alpha  )^T\right) :  (\vec{\nabla} \otimes \partial_k \partial^{\bf{a}} \vw_{\alpha})^T \, dx\\
+&\, \alpha^2 \,   \sum_{|{\bf b}| \leq |{\bf a}|}  C_{{\bf a}, {\bf b}} \int_{\T}  \partial_k \left(   \partial^{{\bf b}}\Big(A(\varphi\ast \vu_1)-A(\varphi \ast \vu_2)\Big)(\vec{\nabla}\otimes \partial^{{\bf a}-{\bf b}} \vu_{2,\alpha})^T\right) :   (\vec{\nabla} \otimes \partial_k \partial^{\bf{a}} \vw_{\alpha})^T \, dx\\ 
+&\,\| \partial_k \partial^{\bf{a}} \vw_\alpha\|^2_{L^2}= -\int_{\T} \vw \cdot \partial^2_k  \partial^{\bf{a}} \vw_\alpha\, dx.
\end{split}
\end{equation*}

According to the lower bound in (\ref{Conditions-a-existence-1}), the first term above is estimated from below as:
\[ \alpha^2 \beta \| \vec{\nabla}\otimes \partial_k \partial^{{\bf a}} \vw_\alpha \|^{2}_{L^2}\leq  \alpha^2 \int_{\T}  A(\varphi \ast \vu_1) | \vec{\nabla} \otimes \partial_k  \partial^{{\bf a}} \vw_\alpha |^2 \, dx.\]
Moreover, setting the  constant $C_{\alpha,\beta}=\min(\alpha^2\beta,1)>0$,  we write
\[ C_{\alpha,\beta} \| \partial_{k} \partial^{{\bf a}}  \vw_\alpha \|^{2}_{H^1} \leq \alpha^2 \beta  \| \vec{\nabla}\otimes \partial_k \partial^{{\bf a}} \vw_\alpha \|^{2}_{L^2} +  \| \partial_k \partial^{{\bf a}}  \vw_\alpha \|^{2}_{L^2}.\] 

Thus, rearranging terms, we have
\begin{equation}\label{Estim-Tech-Filter-0}
\begin{split}
&\, C_{\alpha,\beta} \| \partial_{k} \partial^{{\bf a}}  \vw_\alpha \|^{2}_{H^1} \\
\leq  &\, \alpha^2 \int_{\T} \left| \partial_k \left(A(\varphi \ast \vu_1)\right)(\vec{\nabla} \otimes \partial^{{\bf a}}  \vw_\alpha): (\vec{\nabla} \otimes \partial_k \partial^{\bf{a}} \vw_\alpha)^T \right| \, dx \\
&\, + \alpha^2\,  \sum_{1\leq |{\bf b}| \leq |{\bf a}|}  C_{{\bf a}, {\bf b}} \int_{\T} \left| \partial_k \left(   \partial^{{\bf b}} \big(A(\varphi \ast \vu_1)\big)  (\vec{\nabla}\otimes \partial^{{\bf a}-{\bf b}} \vw_\alpha  )^T\right) :  (\vec{\nabla} \otimes \partial_k \partial^{\bf{a}} \vw_{\alpha})^T \right| \, dx\\
&\,+ \alpha^2 \,   \sum_{|{\bf b}| \leq |{\bf a}|}  C_{{\bf a}, {\bf b}} \int_{\T} \left| \partial_k \left(   \partial^{{\bf b}}\Big(A(\varphi\ast \vu_1)-A(\varphi \ast \vu_2)\Big)(\vec{\nabla}\otimes \partial^{{\bf a}-{\bf b}} \vu_{2,\alpha})^T\right) :   (\vec{\nabla} \otimes \partial_k \partial^{\bf{a}} \vw_{\alpha})^T \right| \, dx\\ 
&\,+ \int_{\T} |\vw \cdot \partial^2_k  \partial^{\bf{a}} \vw_\alpha|\, dx.
\end{split}
\end{equation}

We need to estimate each term on the right-hand side. For clarity, we will prove each estimate in the following technical lemmas. Additionally, in order to simplify our writing, we will denote by $L'_{m+2}>0$ a generic constant, which may change from one line to the next and depends only on $\alpha$, $\beta$, $\| D^{m+1} A(\cdot)\|_{L^\infty}$, and $\| \varphi \|_{H^{m+1}}$.

\begin{Lemma} For the first and the second term, we have
	\begin{equation}\label{Estim-Tech-Filter-1-2}
	\begin{split}
	&\, \alpha^2 \int_{\T} \left| \partial_k \left(A(\varphi \ast \vu_1)\right)(\vec{\nabla} \otimes \partial^{{\bf a}}  \vw_\alpha): (\vec{\nabla} \otimes \partial_k \partial^{\bf{a}} \vw_\alpha)^T \right| \, dx  \\
+	&\, \alpha^2\,  \sum_{1\leq |{\bf b}| \leq |{\bf a}|}  C_{{\bf a}, {\bf b}} \int_{\T} \left| \partial_k \left(   \partial^{{\bf b}} \big(A(\varphi \ast \vu_1)\big)  (\vec{\nabla}\otimes \partial^{{\bf a}-{\bf b}} \vw_\alpha  )^T\right) :  (\vec{\nabla} \otimes \partial_k \partial^{\bf{a}} \vw_{\alpha})^T \right| \, dx\\
	\leq &\,  L'_{m+2}\, \| \vu_1 \|_{L^2}\, \| \vw_\alpha \|_{H^{m+1}}\,  \| \partial_k \partial^{{\bf a}} \vw_\alpha \|_{H^1}.
	\end{split}
	\end{equation}
\end{Lemma}
\begin{proof} 
For the first term, using the Cauchy--Schwarz, H\"older, and Young inequalities, and as $|{\bf a}|=m$, we write
	\begin{equation*}
	\begin{split}
		&\, \alpha^2 \int_{\T} \left| \partial_k \left(A(\varphi \ast \vu_1)\right)(\vec{\nabla} \otimes \partial^{{\bf a}}  \vw_\alpha): (\vec{\nabla} \otimes \partial_k \partial^{\bf{a}} \vw_\alpha)^T \right| \, dx \\
		\leq &\, \alpha^2\, \| \partial_k \left(A(\varphi \ast \vu_1)\right)\|_{L^\infty}\, \| \partial^{{\bf a}} \vw_\alpha \|_{H^1}\, \| \partial_k \partial^{{\bf a}} \vw_\alpha \|_{H^1}\\
		\leq &\, \alpha^2 \| \partial_ k A(\varphi\ast \vu_1) (\partial_k \varphi \ast \vu_1)\|_{L^\infty}\, \| \vw_\alpha \|_{H^{m+1}}\, \| \partial_k \partial^{{\bf a}} \vw_\alpha \|_{H^1}\\
		\leq &\,  \alpha^2  \| \vec{\nabla} A(\cdot)\|_{L^\infty}\, \| \varphi \|_{H^1}\, \| \vu_1 \|_{L^2}\, \| \vw_\alpha \|_{H^{m+1}}\,  \| \partial_k \partial^{{\bf a}} \vw_\alpha \|_{H^1}\\
		\leq &\, L'_{m+2}\, \| \vu_1 \|_{L^2}\, \| \vw_\alpha \|_{H^{m+1}}\,  \| \partial_k \partial^{{\bf a}} \vw_\alpha \|_{H^1}. 
	\end{split}
	\end{equation*}

The estimate for the second term follows from the same arguments, and we also have
\begin{equation*}
	\begin{split}
	&\, \alpha^2\,  \sum_{1\leq |{\bf b}| \leq |{\bf a}|}  C_{{\bf a}, {\bf b}} \int_{\T} \left| \partial_k \left(   \partial^{{\bf b}} \big(A(\varphi \ast \vu_1)\big)  (\vec{\nabla}\otimes \partial^{{\bf a}-{\bf b}} \vw_\alpha  )^T\right) :  (\vec{\nabla} \otimes \partial_k \partial^{\bf{a}} \vw_{\alpha})^T \right| \, dx\\
	\leq &\, L'_{m+2}\, \| \vu_1 \|_{L^2}\, \| \vw_\alpha \|_{H^{m+1}}\,  \| \partial_k \partial^{{\bf a}} \vw_\alpha \|_{H^1}.
	\end{split}
	\end{equation*}
\end{proof}	

\begin{Lemma} For the third term, we have
	\begin{equation}\label{Estim-Tech-Filter-3}
	\begin{split}
	 &\,\alpha^2 \,   \sum_{|{\bf b}| \leq |{\bf a}|}  C_{{\bf a}, {\bf b}} \int_{\T} \left| \partial_k \left(   \partial^{{\bf b}}\big(A(\varphi\ast \vu_1)-A(\varphi \ast \vu_2)\big)(\vec{\nabla}\otimes \partial^{{\bf a}-{\bf b}} \vu_{2,\alpha})^T\right) :   (\vec{\nabla} \otimes \partial_k \partial^{\bf{a}} \vw_{\alpha})^T \right| \, dx \\
\leq &\,  L'_{m+2} \Big(\| \vu_1 \|_{L^2}+1\Big) \| \vw \|_{L^2}\, \| \vu_{2,\alpha}\|_{H^{m+2}}\, \| \partial_k \partial^{{\bf a}} \vw_\alpha \|_{H^1}.
	\end{split}
	\end{equation}
\end{Lemma}	
\begin{proof} For simplicity, it suffices to consider the generic term
	\[ I:=\int_{\T} \left| \partial_k \left(   \partial^{{\bf b}}\big(A(\varphi\ast \vu_1)-A(\varphi \ast \vu_2)\big)(\vec{\nabla}\otimes \partial^{{\bf a}-{\bf b}} \vu_{2,\alpha})^T\right) :   (\vec{\nabla} \otimes \partial_k \partial^{\bf{a}} \vw_{\alpha})^T \right| \, dx. \]
We write
	\begin{equation*}
	\begin{split}
I\leq &\,\int_{\T} \left| \partial_k \partial^{{\bf b}}    \Big(A(\varphi\ast \vu_1)-A(\varphi \ast \vu_2)\Big)(\vec{\nabla}\otimes \partial^{{\bf a}-{\bf b}} \vu_{2,\alpha})^T :   (\vec{\nabla} \otimes \partial_k \partial^{\bf{a}} \vw_{\alpha})^T \right| \, dx \\
&\, + \int_{\T} \left|  \partial^{{\bf b}}   \Big(A(\varphi\ast \vu_1)-A(\varphi \ast \vu_2)\Big)(\vec{\nabla}\otimes \partial^{{\bf a}-{\bf b}} \partial_k  \vu_{2,\alpha})^T :   (\vec{\nabla} \otimes \partial_k \partial^{\bf{a}} \vw_{\alpha})^T \right| \, dx =: I_1+ I_2,
	\end{split}
	\end{equation*}
where we must estimate each term separately. 

\medskip

For $I_1$, we carry out the following pointwise computations
\begin{equation*}
\begin{split}\vspace{2mm}
&\, \left|\partial_k \partial^{{\bf b}}\Big(A(\varphi\ast \vu_1)-A(\varphi \ast \vu_2)\Big) \right| \\ \vspace{2mm}
=&\, \Big| \partial_k \partial^{{\bf b}} A (\varphi\ast \vu_1)\big((\partial_k \partial^{{\bf b}} \varphi)\ast \vu_1\big) -  \partial_k\partial^{{\bf b}} A (\varphi\ast \vu_2)\big((\partial_k \partial^{{\bf b}}\varphi)\ast \vu_2\big)\Big|\\ \vspace{2mm}
=&\, \left| \Big( \partial_k\partial^{{\bf b}} A (\varphi\ast \vu_1) - \partial_k\partial^{{\bf b}} A (\varphi\ast \vu_2) \Big)\big( (\partial_k\partial^{{\bf b}} \varphi)\ast \vu_1\big) + \partial_k\partial^{{\bf b}} A (\varphi \ast \vu_2) (\partial_k\partial^{{\bf b}} \varphi)\ast (\vu_1-\vu_2)\right| \\
\leq &\, \Big|  \partial_k\partial^{{\bf b}} A (\varphi\ast \vu_1) - \partial_k\partial^{{\bf b}} A (\varphi\ast \vu_2)\Big|\, \Big| (\partial_k\partial^{{\bf b}} \varphi)\ast \vu_1\Big| + \Big|\partial_k\partial^{{\bf b}} A (\varphi \ast \vu_2) \Big|\, \Big|(\partial_k\partial^{{\bf b}} \varphi)\ast (\vu_1-\vu_2)\Big|:=(a).
\end{split}
\end{equation*}		

To estimate the term $\Big|  \partial_k\partial^{{\bf b}} A (\varphi\ast \vu_1) - \partial_k\partial^{{\bf b}} A (\varphi\ast \vu_2)\Big|$, note that, from the assumption (\ref{conditions-a-varphi-regularity}) and the fact that $|{\bf b}|\leq m$, the function $\partial_k \partial^{{\bf b}}A(\cdot)$ is Lipschitz continuous, and there exists a constant $C_{A}>0$ such that 
\[ \Big|  \partial_k\partial^{{\bf b}} A (\varphi\ast \vu_1) - \partial_k\partial^{{\bf b}} A (\varphi\ast \vu_2)\Big| \leq C_A  C_{A} | \varphi \ast (\vu_1-\vu_2)|. \]

Therefore, we write
\begin{equation*}
\begin{split}
(a)\leq &\, C_{A} \Big| \varphi \ast (\vu_1-\vu_2)\Big|\, \Big| (\partial_k\partial^{{\bf b}} \varphi)\ast \vu_1\Big|+\Big|\partial_k\partial^{{\bf b}} A (\varphi \ast \vu_2) \Big|\, \Big|(\partial_k\partial^{{\bf b}} \varphi)\ast (\vu_1-\vu_2)\Big|\\
=&\, C_{A}  \Big| \varphi \ast \vw\Big|\, \Big| (\partial_k\partial^{{\bf b}} \varphi)\ast \vu_1\Big|+\Big|\partial_k\partial^{{\bf b}} A (\varphi \ast \vu_2)\Big|\,  \Big|(\partial_k\partial^{{\bf b}} \varphi)\ast \vw\Big|.
\end{split}
\end{equation*}

With this pointwise estimate, using the H\"older and Young inequalities, we obtain
\begin{equation*}
\begin{split}
I_1 \leq &\,  \Big( C_A \| ( \varphi \ast \vw)  \, ((\partial_k\partial^{{\bf b}} \varphi)\ast \vu_1) \|_{L^\infty} + \| \partial_k\partial^{{\bf b}} A (\varphi \ast \vu_2)  ((\partial_k\partial^{{\bf b}} \varphi)\ast \vw) \|_{L^\infty} \Big) \times \\
&\, \times  \left\|  (\vec{\nabla}\otimes \partial^{{\bf a}-{\bf b}} \vu_{2,\alpha})^T :   (\vec{\nabla} \otimes \partial_k \partial^{\bf{a}} \vw_{\alpha})^T \right\|_{L^1}\\
\leq &\, \Big( C_A \| \varphi \|_{L^2}\, \| \vw \|_{L^2} \, \| \partial_k \partial^{{\bf b}} \varphi \|_{L^2}\, \| \vu_1 \|_{L^2}   +  \| \partial_k \partial^{{\bf b}} A(\cdot)\|_{L^\infty} \, \| \partial_k \partial^{{\bf b}} \varphi \|_{L^2}\, \| \vw \|_{L^2}   \Big) \times \\
&\, \times  \left\|  \vec{\nabla}\otimes \partial^{{\bf a}-{\bf b}} \vu_{2,\alpha} \right\|_{L^2} \, \left\|  \vec{\nabla} \otimes \partial_k \partial^{\bf{a}} \vw_{\alpha} \right\|_{L^2}\\
\leq &\, L'_{m+2} \Big(\| \vu_1 \|_{L^2}+1\Big) \| \vw \|_{L^2}\, \| \vu_{2,\alpha}\|_{H^{m+1}}\, \| \partial_k \partial^{{\bf a}} \vw_\alpha \|_{H^1}. 
\end{split}
\end{equation*}	

Finally, note that the term $I_2$ follows from the same arguments. We thus have
\[ I_2 \leq  L'_{m+2} (\| \vu_1 \|_{L^2}+1) \| \vw \|_{L^2}\, \| \vu_{2,\alpha}\|_{H^{m+2}}\, \| \partial_k \partial^{{\bf a}} \vw_\alpha \|_{H^1},\]
where the only difference is the term $ \| \vu_{2,\alpha}\|_{H^{m+2}}$ instead of the term $ \| \vu_{2,\alpha}\|_{H^{m+1}}$ in $I_1$. Therefore, to obtain a uniform bound for $I_1$ and $I_2$, we just write
\[ I_1 \leq  L'_{m+2} (\| \vu_1 \|_{L^2}+1) \| \vw \|_{L^2}\, \| \vu_{2,\alpha}\|_{H^{m+2}}\, \| \partial_k \partial^{{\bf a}} \vw_\alpha \|_{H^1},\] 
from which follows the desired estimate.
\end{proof}	

For the fourth term, using the Cauchy--Schwarz inequality, we find that 
\begin{equation}\label{Estim-Tech-Filter-4}
\int_{\T} | \vw \cdot \partial^2_k \partial^{\bf{a}} \vw_\alpha | \, dx \leq \| \vw \|_{L^2}\, \| \partial_k (\partial_k \partial^{{\bf a}} \vw_\alpha) \|_{L^2}\leq \| \vw \|_{L^2}\, \| \partial_k \partial^{{\bf a}} \vw_\alpha\|_{H^1}. 
\end{equation}

Gathering estimates (\ref{Estim-Tech-Filter-1-2}), (\ref{Estim-Tech-Filter-3}), and (\ref{Estim-Tech-Filter-4}) into estimate (\ref{Estim-Tech-Filter-0}), we obtain
\begin{equation*}
\begin{split}
 C_{\alpha,\beta} \| \partial_{k} \partial^{{\bf a}}  \vw_\alpha \|^{2}_{H^1}  \leq  &\,  L'_{m+2}\, \| \vu_1 \|_{L^2}\, \| \vw_\alpha \|_{H^{m+1}}\,  \| \partial_k \partial^{{\bf a}} \vw_\alpha \|_{H^1} \\
&\, +  L'_{m+2} \Big(\| \vu_1 \|_{L^2}+1\Big) \| \vw \|_{L^2}\, \| \vu_{2,\alpha}\|_{H^{m+2}}\, \| \partial_k \partial^{{\bf a}} \vw_\alpha \|_{H^1} \\
&\, +   \| \vw \|_{L^2}\, \| \partial_k \partial^{{\bf a}} \vw_\alpha\|_{H^1}. 
\end{split}
\end{equation*}

Then, for a larger generic constant $L'_{m+2}\geq 1$, we write
\begin{equation*}
\begin{split}
C_{\alpha,\beta} \| \partial_{k} \partial^{{\bf a}}  \vw_\alpha \|_{H^1}  \leq  &\,  L'_{m+2}\, \| \vu_1 \|_{L^2}\, \| \vw_\alpha \|_{H^{m+1}}\ \\
&\, +  L'_{m+2} \Big(\| \vu_1 \|_{L^2}+1\Big) \| \vw \|_{L^2}\, \| \vu_{2,\alpha}\|_{H^{m+2}}\\
&\, + L'_{m+1}   \| \vw \|_{L^2}.
\end{split}
\end{equation*}

Using the fact that the index $k$ and the multi-index ${\bf a}$ are arbitrary, and writing $L'_{m+2}$ instead of $ \frac{L'_{m+2}}{C_{\alpha,\beta}}$, we have
\begin{equation*}
\begin{split}
\|  \vw_\alpha \|_{H^{m+2}}  \leq  &\,  L'_{m+2}\, \| \vu_1 \|_{L^2}\, \| \vw_\alpha \|_{H^{m+1}}\ \\
&\, +  L'_{m+2} \Big(\,\Big(\| \vu_1 \|_{L^2}+1\Big)  \| \vu_{2,\alpha}\|_{H^{m+2}}  + 1\Big)\,  \| \vw \|_{L^2} \\
\leq  &\,  L'_{m+2}\, \| \vu_1 \|_{L^2}\, \| \vw_\alpha \|_{H^{m+1}}\ \\
&\, +  L'_{m+2} \Big(\,\Big(\| \vu_1 \|_{L^2}+1\Big)  \| \vu_{2,\alpha}\|_{H^{m+2}}  + 1\Big)\,  \| \vw \|_{H^m}. 
\end{split}
\end{equation*}
Hence, we obtain the desired estimate (\ref{Recursive-Control-Diff-Filter-Hm}). Proposition \ref{Prop-Tech:Recursive-Estim-Filter-Diff-Hm} is proven.
\end{proof} 

With the estimate (\ref{Recursive-Control-Diff-Filter-Hm}) at our disposal, the desired estimate (\ref{Control-Filter-Diff-Hm}) can be obtained through the following iterative process.

\medskip

\emph{The initial case when $m=0$}.  By (\ref{Recursive-Control-Diff-Filter-Hm})  we have
\begin{equation*}
\begin{split}
&\, \|  \vu_{\alpha}(t_1,\cdot)-\vu_{\alpha}(t_2,\cdot) \|_{H^{2}} \\
 \leq & \,  L'_{2}\, \| \vu(t_1,\cdot) \|_{L^2}\, \| \vu_{\alpha}(t_1,\cdot)-\vu_{\alpha}(t_2,\cdot)\|_{H^{1}}\ \\
&\, +  L'_{2} \Big(\,\Big(\| \vu(t_1,\cdot) \|_{L^2}+1\Big)  \| \vu_{\alpha}(t_2,\cdot)\|_{H^{2}}  + 1\Big)\,  \| \vu(t_1,\cdot)-\vu(t_2,\cdot)\|_{L^2}.
\end{split}
\end{equation*}
The term $\|  \vu_{\alpha}(t_1,\cdot)-\vu_{\alpha}(t_2,\cdot) \|_{H^{2}}$ was estimated in (\ref{Control-Filter-Diff-H1}), and the term $\| \vu_{\alpha}(t_2,\cdot)\|_{H^{2}}$ was estimated in (\ref{Control-H2-Filter}). Using these estimates and denoting $\ds{\| g \|_{L^\infty_t H^k_x}:= \sup_{T_1\leq t \leq T_2}\| g(t,\cdot)\|_{H^k}}$ (with $k\geq 0$), we write
\begin{equation*}
\begin{split}
&\|  \vu_{\alpha}(t_1,\cdot)-\vu_{\alpha}(t_2,\cdot) \|_{H^{2}} \\
 \leq & \,  L'_{2}\, \| \vu \|_{L^\infty_t L^2_x}\,  K_1\Big(\| \vu\|_{L^\infty_t L^2_x}+1\Big) \|  \vu(t_1,\cdot)-\vu(t_2,\cdot)\|_{L^2} \\
&\, +  L'_{2} \left(\,(\| \vu \|_{L^\infty_tL^2_x}+1)   K_2\left( \| \vu \|^2_{L^\infty_t L^2_x }+ \| \vu \|_{L^\infty_t L^2_x}\right)    + 1\right)\,  \| \vu(t_1,\cdot)-\vu(t_2,\cdot)\|_{L^2}\\
=:&\, K'_2\, \| \vu(t_1,\cdot)-\vu(t_2,\cdot)\|_{L^2}. 
\end{split}
\end{equation*}

\emph{The iterative process with $m\geq 1$}. Assume that 
\[ \| \vu_{\alpha}(t_1,\cdot)-\vu_{\alpha}(t_2,\cdot)\|_{H^{m+1}}\leq K'_{m+1}\| \vu(t_1,\cdot)-\vu(t_2,\cdot)\|_{H^{m-1}}. \]
With this estimate, returning to (\ref{Recursive-Control-Diff-Filter-Hm}) and noticing that the expression $\| \vu_{\alpha}(t_2,\cdot)\|_{H^{m+2}}$ was estimated in (\ref{Control-Hm-Filter}), we directly write
		\begin{equation*}
\begin{split}
&\, 	\|  \vu_{\alpha}(t_1,\cdot)-\vu_{\alpha}(t_2,\cdot) \|_{H^{m+2}} \\ \leq  & \,  L'_{m+2}\, \| \vu(t_1,\cdot) \|_{L^2}\, \|  \vu_{\alpha}(t_1,\cdot)-\vu_{\alpha}(t_2,\cdot)  \|_{H^{m+1}}\ \\
&\, +  L'_{m+2} \left(\,\big(\| \vu(t_1,\cdot) \|_{L^2}+1\big)  K_{m+2}\sum_{k=0}^{m+2}\| \vu(t_2,\cdot)\|^k_{H^m}\,  + 1\right)\,  \| \vu(t_1,\cdot)-\vu(t_2,\cdot) \|_{H^m}\\
\leq &\,  L'_{m+2}\, \| \vu \|_{L^\infty_t L^2_x}\,  K'_{m+1}\| \vu(t_1,\cdot)-\vu(t_2,\cdot)\|_{H^{m}}\\
&\, +  L'_{m+2} \left(\,\big(\| \vu \|_{L^\infty_t L^2_x}+1\big)  K_{m+2}\sum_{k=0}^{m+2}\| \vu\|^k_{L^\infty_t H^m_x}\,  + 1\right)\,  \| \vu(t_1,\cdot)-\vu(t_2,\cdot) \|_{H^m}\\
=:&\, K'_{m+2}\, \| \vu(t_1,\cdot)-\vu(t_2,\cdot) \|_{H^m}.
\end{split}
\end{equation*}
This concludes the proof of Theorem \ref{Th:Cont-Dep-Hm-Filter}. \end{proof}	

\section{Theory of regularity for the weak Leray  solutions}\label{Sec:Reg-Leray}

\subsection{Proof of Theorem \ref{Th-Regularity}} 

Before proceeding with the proof, we state some well-known lemmas that will be useful in what follows.

\begin{Lemma}\label{Lem-Tech-1} 
	Let $0\leq \sigma_1, \sigma_2$ be two parameters, and let $e^{\nu t \Delta}(\cdot)$ denote the heat semigroup defined in (\ref{Heat-Kernel}). 
	\begin{enumerate}
		\item For any $t>0$, we have $\| e^{\nu t\Delta} g \|_{\dot{H}^{\sigma_1+\sigma_2}} \leq C  t^{-\frac{\sigma_2}{2}}\| g \|_{\dot{H}^{\sigma_1}}$, for some constant $C>0$ depending on $\sigma_2$ and $\nu$. 
		\item For any $\varepsilon>0$ and for any $\varepsilon<t_1,t_2$, we have $\|e^{\nu t_1 \Delta} g - e^{\nu t_2 \Delta} g\|_{H^{\sigma_1+\sigma_2}} \leq C |t_1 - t_2|^{1/2}\, \| g \|_{H^{\sigma_1}}$, for some constant $C>0$ depending on $\sigma_2$, $\varepsilon$, and $\nu$.
	\end{enumerate} 
\end{Lemma}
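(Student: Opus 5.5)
Both estimates will be proved directly on the Fourier side, using the representation (\ref{Heat-Kernel}) $e^{\nu t\Delta}g=\sum_{k\in\Z}e^{ik\cdot x}e^{-\nu|k|^2t}\widehat{g}_k$ and Plancherel. The whole argument reduces to bounding elementary scalar multipliers uniformly in the frequency $k$. There is no real obstacle here. The only points requiring care are the use of the zero-mean condition $k\neq 0$ (so that $|k|\geq 2\pi/L$) and, in part (2), the lower bound $t_1,t_2>\varepsilon$ to absorb the extra derivatives.

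For part (1), I will write
\[
\| e^{\nu t\Delta} g \|^2_{\dot{H}^{\sigma_1+\sigma_2}}=\sum_{k\in\Z}|k|^{2\sigma_1}\big(|k|^{2\sigma_2}e^{-2\nu|k|^2t}\big)|\widehat{g}_k|^2 .
\]
I will then bound the bracket by $\sup_{r>0} r^{\sigma_2}e^{-2\nu r t}$ with $r=|k|^2$. Setting $s=\nu r t$, this supremum equals $(\nu t)^{-\sigma_2}\sup_{s>0}s^{\sigma_2}e^{-2s}=C(\sigma_2)\,\nu^{-\sigma_2}t^{-\sigma_2}$, which is finite since $\sigma_2\geq 0$. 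Taking square roots gives $\|e^{\nu t\Delta}g\|_{\dot{H}^{\sigma_1+\sigma_2}}\leq C t^{-\sigma_2/2}\|g\|_{\dot{H}^{\sigma_1}}$.

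For part (2), assume without loss of generality that $t_1\le t_2$. The multiplier for the difference satisfies
\[
|e^{-\nu|k|^2t_1}-e^{-\nu|k|^2t_2}| = e^{-\nu|k|^2t_1}\big(1-e^{-\nu|k|^2(t_2-t_1)}\big)\leq e^{-\nu|k|^2\varepsilon}\min\big(1,\nu|k|^2|t_1-t_2|\big).
\]
Using $\min(1,x)\leq x^{1/2}$, this is at most $\nu^{1/2}|k|\,|t_1-t_2|^{1/2}e^{-\nu|k|^2\varepsilon}$. Hence
\[
(1+|k|^2)^{\sigma_2}|e^{-\nu|k|^2t_1}-e^{-\nu|k|^2t_2}|^2\leq \nu|t_1-t_2|\,\sup_{k\neq0}(1+|k|^2)^{\sigma_2}|k|^2e^{-2\nu|k|^2\varepsilon}.
\]
The supremum is a finite constant $C(\sigma_2,\varepsilon,\nu)$, by the same scaling argument as in part (1) together with $1+|k|^2\lesssim |k|^2$ for $k\neq 0$. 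Multiplying by $(1+|k|^2)^{\sigma_1}|\widehat g_k|^2$, summing over $k$ and taking square roots yields $\|e^{\nu t_1\Delta}g-e^{\nu t_2\Delta}g\|_{H^{\sigma_1+\sigma_2}}\leq C|t_1-t_2|^{1/2}\|g\|_{H^{\sigma_1}}$, as claimed.
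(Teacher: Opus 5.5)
Your proposal is correct and takes essentially the same Fourier-multiplier route as the paper. In (1) you bound $|k|^{2\sigma_2}e^{-2\nu|k|^2t}$ by its supremum over frequencies; in (2) you factor out the exponential at the smaller time, use $(1-e^{-x})^2\leq x$, and absorb the factor $(1+|k|^2)^{\sigma_2}|k|^2$ with $e^{-2\nu|k|^2\varepsilon}$. Your chain $1-e^{-x}\leq\min(1,x)\leq x^{1/2}$ is a slightly cleaner version of the paper's mean-value step, and the zero-mean condition $|k|\geq 2\pi/L$ is not actually needed, since $(1+r)^{\sigma_2}r\,e^{-2\nu r\varepsilon}$ is already bounded on $[0,\infty)$.
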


\begin{proof} 
	To prove the first point, we write
	\begin{equation*}
	\begin{split}
	\left\| e^{\nu t \Delta} g \right\|^2_{\dot{H}^{\sigma_1+\sigma_2}}= &\,  \sum_{k \in \Z} |k|^{2(\sigma_1+\sigma_2)} e^{-2\nu |k|^2 t}|\widehat{g}_k|^2 \\
	\leq&\,  \left( \sup_{\xi \in \Rt} |\xi|^{2\sigma_2} e^{-2\nu |\xi|^2 t} \right)\, \sum_{k\in \Z} |k|^{2\sigma_1} |\widehat{g}_k|^2 \leq   C t^{-\sigma_2}\, \| g \|^2_{\dot{H}^{\sigma_1}}. 
	\end{split}
	\end{equation*}
	
	To verify the second point, we write	
	\begin{equation*}
	\begin{split}
	\left\| e^{\nu \Delta t_1} g -  e^{\nu \Delta t_2} g \right\|^2_{H^{\sigma_1+\sigma_2}}= &\,  \sum_{k \in \Z} (1+|k|^2)^{\sigma_1+\sigma_2} \left(e^{-\nu |k|^2 t_1}- e^{-\nu |k|^2 t_2}\right)^2|\widehat{g}_k|^2 \\
	=&\, \sum_{k \in \Z} (1+|k|^2)^{\sigma_2} e^{-2\nu |k|^2 t_2} \left( e^{-\nu |k|^2 (t_1-t_2)}-1 \right)^2 (1+|k|^2)^{\sigma_1}|\widehat{g}_k|^2.  
	\end{split}
	\end{equation*}
	
	In the term $\left( e^{- \nu |k|^2 (t_1-t_2)}-1 \right)^2$, without loss of generality, we may assume that $t_1>t_2$. Hence, we obtain 
	\[ \left( e^{-\nu |k|^2 (t_1-t_2)}-1 \right)^2 = \left( e^{-\nu |k|^2 (t_1-t_2)}-1 \right) \left( e^{-\nu |k|^2 (t_1-t_2)}-1 \right) \leq 2 \left( e^{- \nu|k|^2 (t_1-t_2)}-1 \right). \] 
	
	Moreover, applying the mean value theorem with respect to the time variable, we find that 
	\[ 2 \left( e^{- \nu |k|^2 (t_1-t_2)}-1 \right) \leq C |k|^2 |t_1-t_2|.  \]
	
	With this estimate, returning to the previous identity, and since $t_2>\varepsilon$, we have
	\begin{equation*}
	\begin{split}
	\left\| e^{\nu t \Delta} g \right\|^2_{\dot{H}^{\sigma_1+\sigma_2}}\leq &\, C |t_1-t_2|\, \sum_{k \in \Z}(1+|k|^2)^{\sigma_2}|k|^2\, e^{-2\nu |k|^2 t_2} (1+|k|^2)^{\sigma_1} |\widehat{g}_k|^2 \\
	\leq  &\, C |t_1-t_2|\, \sum_{k \in \Z}(1+|k|^2)^{\sigma_2+1}\, e^{-2\nu |k|^2 \varepsilon} (1+|k|^2)^{\sigma_1} |\widehat{g}_k|^2\\
	\leq &\,  C |t_1-t_2| \left( \sup_{\xi \in \Rt} (1+|\xi|^2)^{\sigma_2+1} e^{-2\nu |\xi|^2 \varepsilon} \right)\, \sum_{k\in \Z} (1+|k|^2)^{\sigma_1}|\widehat{g}_k|^2 \\
	\leq &\,  C |t_1-t_2|\, \| g \|^2_{H^{\sigma_1}}. 
	\end{split}
	\end{equation*}
\end{proof}

\begin{Lemma}\label{Lem-Tech-2} 
	Let $\sigma \geq 0$ and let $g \in L^{2}([0,T], \dot{H}^{\sigma}(\T))$. Define $\ds{G(t,x)= \int_{0}^{t} e^{\nu (t-s)\Delta} g(s,x)ds}$. Then, there exists a constant $C>0$, depending only on $\nu>0$, such that the following properties hold:   
	\begin{enumerate} 
		\item We have $G \in \mathcal{C}([0,T],H^{\sigma+1}(\T))$ and $\ds{\|G\|_{L^{\infty}_{t}H^{\sigma+1}_{x}} \leq C\,  \|g\|_{L^{2}_{t} \dot{H}^{\sigma}_{x}}}$.
		\item We have $G\in L^2([0,T],\dot{H}^{\sigma+2}(\T))$ and $\ds{\|G\|_{L^{2}_{t}\dot{H}^{\sigma+2}_{x}} \leq C\,  \|g\|_{L^{2}_{t} \dot{H}^{\sigma}_{x}}}$.   
	\end{enumerate}   
\end{Lemma}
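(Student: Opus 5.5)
The plan is to work entirely on the Fourier side, as in the proof of Lemma \ref{Lem-Tech-1}. Since $g(s,\cdot)$ has zero mean, write $g(s,x)=\sum_{k\in\Z} e^{ik\cdot x}\widehat{g}_k(s)$. Then
\[
\widehat{G}_k(t)=\int_0^t e^{-\nu|k|^2(t-s)}\widehat{g}_k(s)\,ds .
\]
Both estimates reduce to one-dimensional bounds, uniform in $k$, for this scalar convolution in time. We then sum against the weights $|k|^{2\sigma}$ and use that $|k|\geq 2\pi/L$ on $\Z$, so that $1+|k|^2\lesssim |k|^2$. This last fact lets us pass from homogeneous to non-homogeneous norms at the cost of an $L$-dependent constant; the paper absorbs this into $C$.

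For point (1), apply Cauchy--Schwarz in $s$:
\[
|\widehat{G}_k(t)|^2\leq \Big(\int_0^t e^{-2\nu|k|^2(t-s)}ds\Big)\Big(\int_0^t|\widehat{g}_k(s)|^2ds\Big)\leq \frac{1}{2\nu|k|^2}\int_0^T|\widehat{g}_k(s)|^2ds .
\]
Multiplying by $|k|^{2(\sigma+1)}$ and summing over $k$ gives
\[
\|G(t)\|^2_{\dot H^{\sigma+1}}\leq \frac{1}{2\nu}\|g\|^2_{L^2_t\dot H^\sigma_x}
\]
uniformly in $t\in[0,T]$, and the inhomogeneous bound follows by the remark above. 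For continuity in time, each truncated sum $G_N=\sum_{|k|\leq N}$ is continuous in $t$ with values in $H^{\sigma+1}$, because each $\widehat G_k$ is continuous by dominated convergence. The same Cauchy--Schwarz bound, applied to the tail $|k|>N$, gives
\[
\sup_t\|G-G_N\|^2_{H^{\sigma+1}}\lesssim \frac{1}{\nu}\int_0^T\sum_{|k|>N}|k|^{2\sigma}|\widehat g_k(s)|^2\,ds\longrightarrow 0
\]
by dominated convergence. Thus $G$ is a uniform limit of continuous functions, hence $G\in\mathcal{C}([0,T],H^{\sigma+1})$.

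For point (2), fix $k$ and view $\widehat G_k=\rho_k\ast(\widehat g_k\mathds{1}_{[0,T]})$ on $\R$, with kernel $\rho_k(\tau)=e^{-\nu|k|^2\tau}\mathds{1}_{\tau>0}$. Young's convolution inequality in time gives
\[
\|\widehat G_k\|_{L^2(0,T)}\leq\|\rho_k\|_{L^1}\|\widehat g_k\|_{L^2(0,T)}=\frac{1}{\nu|k|^2}\|\widehat g_k\|_{L^2(0,T)} .
\]
Multiplying by $|k|^{2(\sigma+2)}$, using $|k|^{2(\sigma+2)}\cdot|k|^{-4}=|k|^{2\sigma}$, and summing yields
\[
\|G\|_{L^2_t\dot H^{\sigma+2}_x}\leq \frac{1}{\nu}\|g\|_{L^2_t\dot H^\sigma_x}.
\]
This is the maximal-regularity step. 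It is the only point needing care, since a naive Cauchy--Schwarz as in point (1) would lose a factor; using the $L^1$ norm of the kernel avoids that loss. Measurability of $G$ as an $\dot H^{\sigma+2}$-valued map follows from the Fourier-wise definition, so the proof is complete.
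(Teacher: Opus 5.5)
Your proof is correct. The paper gives no argument of its own for this lemma. It cites \cite[Lemma 7.2]{PLe} for $\sigma=0$ and asserts that the case $\sigma\geq 0$ is a straightforward extension.

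Your mode-by-mode argument supplies that extension directly, for all $\sigma$ at once, in the same Fourier style as the paper's proof of Lemma~\ref{Lem-Tech-1}:
\begin{itemize}
\item Cauchy--Schwarz in time gives the pointwise-in-time bound.
\item Uniform smallness of the Fourier tail gives continuity in $H^{\sigma+1}$.
\item Young's inequality with $\|\rho_k\|_{L^1}=1/(\nu|k|^2)$ gives the $L^2_t\dot H^{\sigma+2}_x$ bound without the derivative loss that plain Cauchy--Schwarz would incur.
\end{itemize}

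Your caveat about the constants is well-founded and in fact unavoidable. The homogeneous bounds hold with $C=(2\nu)^{-1/2}$ and $C=\nu^{-1}$. The non-homogeneous $H^{\sigma+1}$ bound in point (1), however, cannot hold with a constant depending on $\nu$ alone. To see this, take $g$ supported on a single lowest mode $\pm k_0$ with $|k_0|=2\pi/L$, constant in time, and set $T=1/(\nu|k_0|^2)$. Then the ratio $\|G(T)\|^2_{H^{\sigma+1}}/\|g\|^2_{L^2_t\dot H^\sigma_x}$ equals $(1-e^{-1})^2(1+|k_0|^2)^{\sigma+1}/(\nu|k_0|^{2\sigma+2})$, which blows up as $L\to\infty$. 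So the lemma is correct only with $C$ also depending on $L$ and $\sigma$. This is exactly what your conversion $1+|k|^2\leq (1+L^2/(4\pi^2))\,|k|^2$ produces.
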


For a proof, see \cite[Lemma $7.2$]{PLe} in the case $\sigma=0$; the general case $\sigma \geq 0$ follows by a straightforward extension.

\begin{Lemma}\label{Lem-Tech-3}  Let $\sigma >0$ and $(I_d - \Delta)^{\frac{\sigma}{2}}$ be the Bessel potential defined in (\ref{Bessel-Potental}). Then, the following estimate holds
	\[ \left\|  (I_d - \Delta)^{\frac{\sigma}{2}} ( g\, h )\right\|_{L^2} \leq C \Big( \left\| (I_d - \Delta)^{\frac{\sigma}{2}} g \right\|_{L^2}\, \| h \|_{L^\infty}  + \| g \|_{L^2}\, \left\| (I_d - \Delta)^{\frac{\sigma}{2}} h \right\|_{L^\infty}\Big).\]
\end{Lemma}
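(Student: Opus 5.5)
The plan is a Bony paraproduct decomposition on $\T$. Since the statement is a Kato--Ponce type commutator-free product estimate, I could also simply invoke the known periodic Kato--Ponce inequality with exponent pairs $(2,\infty)$ and $(2,\infty)$. Here I sketch a self-contained route. Throughout, write $J^\sigma:=(I_d-\Delta)^{\frac{\sigma}{2}}$.

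\emph{Setup and decomposition.} Fix a periodic Littlewood--Paley decomposition $\Delta_j$, $j\geq -1$, defined by smooth Fourier multipliers supported in dyadic shells $|k|\simeq 2^j$; the block $\Delta_{-1}$ carries the low frequencies, including the zero mode. Set $S_j=\sum_{l<j}\Delta_l$. Then
\[ g\,h = T_h g + T_g h + R(g,h),\]
where
\[ T_h g=\sum_j S_{j-1}h\,\Delta_j g,\qquad T_g h=\sum_j S_{j-1}g\,\Delta_j h,\qquad R(g,h)=\sum_{|j-j'|\leq 1}\Delta_j g\,\Delta_{j'}h.\]
I use two standard facts. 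First, $\|J^\sigma F\|_{L^2}^2\simeq\sum_j 2^{2j\sigma}\|\Delta_j F\|_{L^2}^2$ (Plancherel). Second, the $j$-th summand of each paraproduct has Fourier support in an annulus $\simeq 2^j$.

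\emph{Term $T_h g$.} This term is easy. We have $\|S_{j-1}h\|_{L^\infty}\lesssim\|h\|_{L^\infty}$, because the kernels of $S_j$ are uniformly bounded in $L^1$. Hence
\[ \|J^\sigma T_h g\|_{L^2}^2\lesssim\sum_j 2^{2j\sigma}\|h\|_{L^\infty}^2\|\Delta_j g\|_{L^2}^2\lesssim \|h\|_{L^\infty}^2\|J^\sigma g\|_{L^2}^2.\]

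\emph{Term $R(g,h)$.} This term uses $\sigma>0$. The block $\Delta_l R$ only sees pairs with $j\geq l-2$. Moreover $\|\Delta_{j'}h\|_{L^\infty}\lesssim 2^{-j\sigma}\|J^\sigma h\|_{L^\infty}$, by Bernstein together with boundedness of $\Delta_{j'}J^{-\sigma}2^{j\sigma}$ on $L^\infty$. This gives
\[ 2^{l\sigma}\|\Delta_l R\|_{L^2}\lesssim \sum_{j\geq l-2}2^{(l-j)\sigma}\|\Delta_j g\|_{L^2}\,\|J^\sigma h\|_{L^\infty}.\]
Young's inequality for the sequence convolution with $(2^{-n\sigma})_{n\geq -2}\in\ell^1$ then yields $\|J^\sigma R\|_{L^2}\lesssim \|g\|_{L^2}\|J^\sigma h\|_{L^\infty}$.

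\emph{Term $T_g h$: the main obstacle.} Writing $F:=J^\sigma h$, one has $2^{j\sigma}\Delta_j h=\tilde\Delta_j F$ for a modified block $\tilde\Delta_j$ that is still frequency localized. Therefore
\[ \|J^\sigma T_g h\|_{L^2}^2\lesssim \int_{\T}\sum_j |S_{j-1}g|^2\,|\tilde\Delta_j F|^2\,dx.\]
A naive bound, taking $\sup_j|S_{j-1}g|\leq Mg$ and the square function of $F$ in $L^\infty$, fails because the square function is not bounded on $L^\infty$. To get around this I will use the Carleson measure characterization of BMO. For $F\in L^\infty\subset BMO(\T)$, the measure
\[ d\mu=\sum_j|\tilde\Delta_jF(x)|^2\,dx\,\delta_{2^{-j}}(t)\]
on $\T\times(0,1]$ is a Carleson measure with norm $\lesssim\|F\|_{BMO}^2$. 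Carleson's embedding theorem then gives
\[ \int\sum_j|S_{j-1}g|^2|\tilde\Delta_jF|^2\,dx\lesssim \|F\|_{BMO}^2\,\|N g\|_{L^2}^2,\]
where $Ng$ is the nontangential maximal function of the smooth extension $(x,2^{-j})\mapsto S_{j-1}g(x)$. Since $Ng\lesssim Mg$, we get $\|Ng\|_{L^2}\lesssim\|g\|_{L^2}$, and hence
\[ \|J^\sigma T_g h\|_{L^2}\lesssim \|g\|_{L^2}\|J^\sigma h\|_{L^\infty}.\]
Adding the three bounds gives the lemma. The delicate point I expect is exactly this endpoint step: checking the Carleson bound in the periodic discrete-scale setting. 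If that becomes cumbersome, the fallback is to cite the endpoint Kato--Ponce inequality of Bourgain--Li / Grafakos--Oh, transferred to $\T$.
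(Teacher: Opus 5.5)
Your proof is correct, but it takes a genuinely different route from the paper. The paper gives no argument at all: it simply quotes the periodic fractional Leibniz rule of B\'enyi--Oh \cite[Proposition 1]{Benyi}, which is your ``simply invoke'' option.

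Your paraproduct argument is the standard self-contained route, and each piece checks out.
\begin{itemize}
\item The term $T_h g$ needs only the uniform $L^\infty$ bound on $S_{j-1}$ and the almost orthogonality of the annular pieces.
\item The resonant term is exactly where $\sigma>0$ is used, through the summability of $(2^{-n\sigma})_{n\geq -2}$.
\item You correctly locate the real endpoint difficulty in $T_g h$. There the $L^\infty$ norm sits on the differentiated factor, so square-function bounds are unavailable.
\end{itemize}
Your fix for $T_g h$ is sound. Combining the Carleson-measure characterization of $BMO$ with the Carleson embedding and $Ng\lesssim Mg$ is precisely the Coifman--Meyer proof that paraproducts with $BMO$ symbol are bounded on $L^2$. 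Its periodic, discrete-scale version is routine: one works with cubes of side at most $L$, handles the largest ones by $L^2$ orthogonality, and uses that the blocks $\tilde\Delta_j$ with $j\geq 0$ annihilate constants.

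The citation buys brevity. Your argument buys an actual proof of the endpoint exponent pair $(2,\infty)$ appearing in the statement, and shows where $\sigma>0$ and the endpoint structure enter. This matters because periodic formulations of the fractional Leibniz rule are often stated only for finite exponents. In that case the citation would have to be supplemented by an endpoint result such as Grafakos--Oh or Bourgain--Li transferred to $\mathbb{T}^3$, exactly as in your fallback.

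One harmless convention point: the paper's $(I_d-\Delta)^{\sigma/2}$ in (\ref{Bessel-Potental}) omits the zero mode. For mean-zero $g,h$, which is the paper's setting, your full-symbol estimate implies the paper's version at once, since removing the mean of $gh$ only decreases the left-hand side.
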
	
This estimate is a particular case of the so-called fractional Leibniz rule on the torus. For a proof, refer to \cite[Proposition $1$]{Benyi}. 

\medskip 

We are now in a position to prove Theorem \ref{Th-Regularity}. Let $(\vu, \vu_\alpha, P, P_\alpha)$ be the unique weak Leray solution to the system (\ref{Alpha-model}) constructed in Theorem \ref{Th-GWP}. Moreover, let $0<T_1<T_2<+\infty$ be fixed. We prove the gain of regularity (\ref{Regularity-Leray-Solution}) by induction with respect to the regularity parameter $m$.
\medskip

Our general strategy is to study the velocity $\vu$ first. Once a gain in regularity for $\vu$ has been established, the remaining unknown $\vu_\alpha$ can be analyzed using the results proved in Section \ref{Sec:Higher regularity properties of the nonlinear filter  equation}.

\medskip

\emph{The initial case when $m=0$.} Assume that $\vf \in L^2_{\rm loc}([0,+\infty[, L^2(\T))$, $A(\cdot)\in \mathcal{C}^{1,1}(\Rt)$, and $\varphi \in H^1(\T)$. On the other hand, let $0<\varepsilon_0<1$ be sufficiently small such that $T_1-\varepsilon_0 >0$. We prove that $\vu \in \mathcal{C}([T_1-\varepsilon_0,T_2],H^1(\T))\cap L^2([T_1-\varepsilon_0,T_2],\dot{H}^2(\T))$. To this end, we use the (equivalent) coupled integral-elliptic formulation of system (\ref{Alpha-model}), which, for the reader's convenience, we rewrite here:
\begin{equation}\label{Mild-Formulation}
\begin{cases}\vspace{2mm}
\ds{\vu(t,\cdot)=e^{\nu t\Delta }\vu_0 + \int_{0}^{t}e^{\nu(t-s) \Delta}\vf(s,\cdot) ds - \int_{0}^{t}e^{\nu (t-s) \Delta}\P\Big((\vu_\alpha \cdot \vec{\nabla})\vu\Big)(s,\cdot)ds}, \quad {\rm div}(\vu)=0,\\
\vu_\alpha(t,\cdot) = \alpha^2 \P\, {\rm div}\Big(A(\varphi\ast \vu(t,\cdot)) (\vec{\nabla}\otimes \vu_\alpha(t,\cdot))^T\Big)+ \vu(t,\cdot), \quad {\rm div}(\vu_\alpha)=0.
\end{cases}
\end{equation}
Specifically, we study each term on the right-hand side of the first equation above.

\begin{Remark}
In (\ref{Mild-Formulation}), the gain of regularity for the initial data term $e^{\nu t \Delta}\vu_0$ is given by Lemma \ref{Lem-Tech-1} only for times $t>0$. We thus consider the time interval $[T_1-\varepsilon_0,T_2]$ with $T_1-\varepsilon_0>0$. Here, the parameter $\varepsilon_0>0$ is introduced to ensure a gain of regularity even slightly before time $T_1$.
\end{Remark}

For the first term, as $\vu_0 \in L^2_{\text{s}}(\T)$, using Lemma \ref{Lem-Tech-1}, first with $\sigma_1=\sigma_2=0$, we obtain that $e^{\nu t \Delta}\vu_0 \in \mathcal{C}([T_1-\varepsilon_0, T_2], L^2(\T))$. The same argument with $\sigma_1=0$ and $\sigma_2=1$,  since $t\geq T_1-\varepsilon_0$ yields that $e^{\nu t \Delta}\vu_0 \in \mathcal{C}([T_1-\varepsilon_0, T_2], \dot{H}^1(\T))$. On the other hand, setting now $\sigma_1=0$ and $\sigma_2=0$, and noting again that $t\geq T_1-\varepsilon_0$, we obtain that $e^{\nu t \Delta}\vu_0 \in L^2([T_1-\varepsilon_0,T_2], \dot{H}^2(\T))$. We thus have
\begin{equation}\label{Reg-Data-0}
e^{\nu t\Delta }\vu_0  \in   \mathcal{C}\big([T_1-\varepsilon_0,T_2],H^1(\T)\big)\cap L^2\big([T_1-\varepsilon_0,T_2],\dot{H}^2(\T)\big). 
\end{equation}

For the second term, as $\vf \in L^2([0,T_2], L^2(\T))$, by Lemma \ref{Lem-Tech-2} (with $\sigma=0$) we obtain
\begin{equation}\label{Reg-Force-0}
\int_{0}^{t}e^{\nu(t-s) \Delta}\P(\vf)(s,\cdot) ds \in   \mathcal{C}([0,T_2],H^1(\T))\cap L^2([0,T_2],\dot{H}^2(\T)). 
\end{equation}

For the third term, we first verify that $(\vu_\alpha \cdot \vec{\nabla})\vu \in L^{2}([0,T_2], L^2(\T))$. Indeed, from the assumptions that $A(\cdot)\in \mathcal{C}^{1,1}(\Rt)$, $\varphi \in H^1(\T)$, and the fact that $\vu \in L^{\infty}([0,T_2],L^2_{\text{s}} (\T))$, by Proposition \ref{Prop:H2-Filter} it follows that $\vu_\alpha \in L^\infty_t H^2_x$. Additionally, recalling that $\vu \in L^2([0,T_2], \dot{H}^1(\T))$, using H\"older inequalities and Sobolev embeddings we write
\[
\|  (\vu_\alpha \cdot \vec{\nabla})\vu  \|_{L^2_t L^2_x} \leq \| \vu_\alpha \|_{L^\infty_t L^{\infty}_x} \, \| \vec{\nabla}\otimes \vu \|_{L^2_t L^2_x} \leq C\, \| \vu_\alpha\|_{L^\infty_t H^2_x}\, \| \vu \|_{L^2_t \dot{H}^1_x}<+\infty.
\]

Then, by Lemma \ref{Lem-Tech-2} (with $\sigma=0$), we obtain
\begin{equation}\label{Reg-Nonlinear-0}
\int_{0}^{t}e^{\nu(t-s) \Delta}\P((\vu_\alpha \cdot \vec{\nabla})\vu )(s,\cdot) ds \in   \mathcal{C}([0,T_2],H^1(\T))\cap L^2([0,T_2],\dot{H}^2(\T)).
\end{equation}

Gathering the information from (\ref{Reg-Data-0}), (\ref{Reg-Force-0}), and (\ref{Reg-Nonlinear-0}) in expression (\ref{Mild-Formulation}), we conclude that
\[
\vu \in \mathcal{C}([T_1-\varepsilon_0,T_2],H^1(\T))\cap L^2([T_1-\varepsilon_0,T_2],\dot{H}^2(\T)).
\]

With this fact, we can now deduce a gain of regularity for the remaining unknowns $\vu_\alpha$, $P$, and $P_\alpha$ as follows. For the filtered velocity $\vu_\alpha$, as $\vu \in \mathcal{C}([T_1-\varepsilon_0,T_2],H^1(\T))\subset \mathcal{C}([T_1-\varepsilon_0,T_2],L^2(\T))$, $A(\cdot)\in \mathcal{C}^{1,1}(\Rt)$  and $\varphi \in H^1(\T)$, by Theorems \ref{Th:Hm-Filter} and \ref{Th:Cont-Dep-Hm-Filter}  we have 
\[ \vu_\alpha \in \mathcal{C}([T_1-\varepsilon_0, T_2], H^{2}(\T)). \]
\begin{Remark} Note that, under these precise regularity assumptions on $A(\cdot)$ and $\varphi$, the fact that $\vu \in \mathcal{C}_t H^1_x$ does not imply that $\vu_\alpha \in \mathcal{C}_t H^3_x$. See Remark \ref{Rmk:Reg-Filter-Constraint} above.
\end{Remark}

\emph{The iterative process with $m \geq 1$.} Assume that $\vf \in L^2_{loc}([0,+\infty[, H^m(\T))$, $A(\cdot)\in \mathcal{C}^{m+1,1}(\Rt)$, and $\varphi \in H^{m+1}(\T)$. Moreover, for $0<\varepsilon_m <\varepsilon_0$ such that $T_1-\varepsilon_m>0$, assume that we have 
\[ \vu \in \mathcal{C}([T_1-\varepsilon_m,T_2], H^{m}(\T))\cap L^2([T_1-\varepsilon_m,T_2], \dot{H}^{m+1}(\T)). \] 
We prove that this also holds for $m+1$.
 
\medskip

Once again, we return to equation (\ref{Mild-Formulation}), which for any time $t \geq T_1 -\varepsilon_m$ reads as follows:
\begin{equation}\label{Mild-Formilation-2}
\begin{split}
\vu(t,\cdot)=&\, e^{(t-(T_1 -\varepsilon_m))\Delta}\vu(T_1-\varepsilon_m,\cdot)+\int_{T_1-\varepsilon_m}^{t}e^{(t-s)\Delta}\P(\vf)(s,\cdot)ds \\
&\, - \int_{T_1-\varepsilon_m}^{t}e^{(t-s)\Delta}\P((\vu_\alpha \cdot \vec{\nabla})\vu)(s,\cdot)ds,
\end{split}
\end{equation}
where we must estimate each term on the right-hand side.

\medskip

For the first term, we now set $0<\varepsilon_{m+1}<\varepsilon_{m}$ such that $T_1-\varepsilon_{m+1}>0$. As  $\vu \in \mathcal{C}([T_1-\varepsilon_{m}, T_2], H^m(\T))$,  applying Lemma \ref{Lem-Tech-1} we have 
\begin{equation}\label{Reg-Data-m}
e^{(t-(T_1 -\varepsilon_m))\Delta}\vu(T_1-\varepsilon_m,\cdot) \in \mathcal{C}([T_1-\varepsilon_{m+1}, T_2], H^{m+1}(\T))\cap L^2([T_1-\varepsilon_{m+1}, T_2], \dot{H}^{m+2}(\T)).
\end{equation}

For the second term, applying Lemma \ref{Lem-Tech-2} to the function $\mathds{1}_{[T_1-\varepsilon_m,T_2]}(t)\vf$, we have
\begin{equation}\label{Reg-Force-m}
\int_{T_1-\varepsilon_m}^{t}e^{(t-s)\Delta}\P(\vf)(s,\cdot)ds \in \mathcal{C}([T_1-\varepsilon_m,T_2],H^{m+1}(\T))\cap L^2([T_1 - \varepsilon_m,T_2],\dot{H}^{m+1}(\T)).
\end{equation}

Finally, for the third term, we start by verifying that $(\vu_\alpha \cdot \vec{\nabla})\vu\in L^2([T_1-\varepsilon_{m},T_2],\dot{H}^m(\T))$. In fact, as $A(\cdot) \in \mathcal{C}^{m+1,1}(\Rt)$, $\varphi \in H^{m+1}(\T)$, and $\vu \in \mathcal{C}([T_1-\varepsilon_m, T_2], H^m(\T))$, by Theorems \ref{Th:Hm-Filter} and \ref{Th:Cont-Dep-Hm-Filter} we have $\vu_\alpha \in \mathcal{C}([T_1-\varepsilon_m, T_2], H^{m+2}(\T))$. Then, for any multi-index $|{\bf b}|\leq m$, by Sobolev embeddings we have 
\[ \partial^{\bf b} \vu_\alpha  \in \mathcal{C}([T_1-\varepsilon_m, T_2],H^2(\T)) \subset \mathcal{C}([T_1-\varepsilon_m, T_2], L^\infty(\T)).  \]
With this information, and recalling that we have assumed $\vu \in L^2([T_1-\varepsilon_m, T_2], \dot{H}^{m+1}(\T))$, using the Leibniz rule we can write 
\begin{equation*}
\begin{split}
&\left\| (\vu_\alpha \cdot \vec{\nabla})\vu \right\|_{L^2_t \dot{H}^m_x}
=\, \sum_{|{\bf a}|\leq m} \left\| \partial^{{\bf a}} \Big((\vu_\alpha \cdot \vec{\nabla}) \vu \Big)\right\|_{L^2_t L^2_x}\\
\leq &\, \sum_{|{\bf a}|\leq m} \sum_{|{\bf b}|\leq |{\bf a}|} C_{{\bf a}, {\bf b}}\left\| \partial^{\bf b} ( \vu_\alpha) \right\|_{L^\infty_t L^\infty_x}\, \left\| \partial^{{\bf a}-{\bf b}}\Big( \vec{\nabla}\otimes \vu \Big) \right\|_{L^2_t L^2_x}<+\infty,
\end{split}
\end{equation*}
with a constant $C_{{\bf a}, {\bf b}}>0$. Once we have $(\vu_\alpha \cdot \vec{\nabla})\vu\in L^2_t \dot{H}^m_x$, by Lemma \ref{Lem-Tech-2} we obtain
\begin{equation}\label{Reg-Nonlinear-m}
\int_{T_1-\varepsilon_m}^{t}e^{(t-s)\Delta}\P((\vu_\alpha \cdot \vec{\nabla})\vu)(s,\cdot)ds  \in \mathcal{C}([T_1-\varepsilon_m,T_2],H^{m+1}(\T))\cap L^2([T_1 - \varepsilon_m,T_2],\dot{H}^{m+2}(\T)).    
\end{equation}

As before, we gather the information (\ref{Reg-Data-m}), (\ref{Reg-Force-m}), and (\ref{Reg-Nonlinear-m}) in equation (\ref{Mild-Formilation-2}) to obtain that 
\begin{equation}\label{Th-Reg-Nonlinear}
\vu \in \mathcal{C}([T_1-\varepsilon_{m+1},T_2], H^{m+1}(\T))\cap L^2([T_1-\varepsilon_{m+1},T_2], \dot{H}^{m+2}(\T)). 
\end{equation}

By this iterative process, and since we have $0<\varepsilon_{m+1}<\varepsilon_{m}<\cdots <\varepsilon_0$, we conclude that 
\[ \vu \in \mathcal{C}([T_1,T_2], H^{m+1}(\T))\cap L^2([T_1,T_2], \dot{H}^{m+2}(\T)).  \]

With this information, the regularity of the remaining unknown $\vu_\alpha$ follows from the same arguments used in the case $m=0$. This concludes the proof of Theorem \ref{Th-Regularity}.

\subsection{Proof of Proposition \ref{Prop:Homogeneous-case}} 
We follow the same ideas as in the proof of Theorem \ref{Th-Regularity}. Returning to the iterative process detailed above with $m\geq 1$, recall that by assuming 
\[ \vu \in \mathcal{C}([T_1-\varepsilon_m,T_2], H^{m}(\T))\cap L^2([T_1-\varepsilon_m,T_2], \dot{H}^{m+1}(\T)), \]
we obtained  
\[ \vu \in \mathcal{C}([T_1-\varepsilon_{m+1},T_2], H^{m+1}(\T))\cap L^2([T_1-\varepsilon_{m+1},T_2], \dot{H}^{m+2}(\T)), \]
for $0<\varepsilon_{m+1}<\varepsilon_{m}$ such that $T_1-\varepsilon_{m+1}>0$. 

\medskip

As $\vf \equiv 0$, we now prove that 
\begin{equation}\label{Information-Reg-u}
\vu \in \mathcal{C}([T_1-\varepsilon_{m+2},T_2],H^{m+2}(\T)) \cap L^2([T_1-\varepsilon_{m+2},T_2],\dot{H}^{m+3}(\T)),   
\end{equation}
with $0<\varepsilon_{m+2}<\varepsilon_{m+1}$ satisfying $T_1-\varepsilon_{m+2}>0$.

\medskip

For any time $t\geq T_1-\varepsilon_{m+1}$, we now write
\[ \vu(t,\cdot)=\, e^{(t-(T_1 -\varepsilon_{m+1}))\Delta}\vu(T_1-\varepsilon_{m+1},\cdot) - \int_{T_1-\varepsilon_{m+1}}^{t}e^{(t-s)\Delta}\P((\vu_\alpha \cdot \vec{\nabla})\vu)(s,\cdot)ds.\]
For the first term, as $\vu \in \mathcal{C}([T_1-\varepsilon_{m+1},T_2], H^{m+1}(\T))$, applying Lemma \ref{Lem-Tech-1}, we have 
\begin{equation}\label{Cor-Reg-Data}
e^{(t-(T_1 -\varepsilon_{m+1}))\Delta}\vu(T_1-\varepsilon_{m+1},\cdot) \in \mathcal{C}([T_1-\varepsilon_{m+2},T_2],H^{m+2}(\T)) \cap L^2([T_1-\varepsilon_{m+2},T_2],\dot{H}^{m+3}(\T)).
\end{equation}

For the second term, we start by verifying that $(\vu_\alpha \cdot \vec{\nabla})\vu\in L^2([T_1-\varepsilon_{m+1},T_2], \dot{H}^{m+1}(\T))$. Indeed, on the one hand, from (\ref{Th-Reg-Nonlinear}), as $\vu\in L^2([T_1-\varepsilon_{m+1},T_2], \dot{H}^{m+2}(\T))$, we directly obtain that $\vec{\nabla} \otimes \vu \in L^2([T_1-\varepsilon_{m+1}, T_2], \dot{H}^{m+1}(\T))$. Additionally, as $\vu\in \mathcal{C}([T_1-\varepsilon_{m+1},T_2], {H}^{m+1}(\T))$, we have $\vec{\nabla} \otimes \vu\in \mathcal{C}([T_1-\varepsilon_{m+1},T_2], {H}^{m}(\T)) \subset L^2([T_1-\varepsilon_{m+1}, T_2], L^2(\T))$. Then, it holds that 
$\vec{\nabla} \otimes \vu \in L^2([T_1-\varepsilon_{m+1}, T_2], H^{m+1}(\T))$. 

\medskip

On the other hand, also from (\ref{Th-Reg-Nonlinear}), as $\vu\in \mathcal{C}([T_1-\varepsilon_{m+1},T_2], {H}^{m+1}(\T)) \subset \mathcal{C}([T_1-\varepsilon_{m+1},T_2], {H}^{m}(\T))$, by Theorems \ref{Th:Hm-Filter} and \ref{Th:Cont-Dep-Hm-Filter} we obtain that $\vu_\alpha \in \mathcal{C}([T_1-\varepsilon_{m+1},T_2], H^{m+2}(\T))\subset \mathcal{C}([T_1-\varepsilon_{m+1},T_2], H^{m+1}(\T))$. 

\begin{Remark}\label{Rmk:Reg-u-alpha} 
As before,	even though we have $\vu \in \mathcal{C}_t H^{m+1}_x$, Theorems \ref{Th:Hm-Filter} and \ref{Th:Cont-Dep-Hm-Filter} do not allow us to conclude that $\vu_{\alpha} \in \mathcal{C}_t H^{m+3}_x$, since this would require the higher regularity assumptions 
	\[
	A(\cdot)\in \mathcal{C}^{m+2,1}(\Rt) \quad \text{and} \quad \varphi \in H^{m+2}(\T).
	\]
	Therefore, under the assumptions $A(\cdot)\in \mathcal{C}^{m+1,1}(\Rt)$ and $\varphi\in H^{m+1}(\T)$, the gain of regularity for $\vu_\alpha$ is restricted to $\vu_\alpha \in \mathcal{C}_t H^{m+2}_x$.
\end{Remark}

\medskip

Then, as $m+1>3/2$ (recall that $m\geq 1$), we can apply product laws in non-homogeneous Sobolev spaces to get $(\vu_\alpha \cdot \vec{\nabla})\vu\in L^{2}([T_1-\varepsilon_{m+1},T_2], H^{m+1}(\T))\subset L^{2}([T_1-\varepsilon_{m+1},T_2], \dot{H}^{m+1}(\T))$. 

\medskip

Therefore, by Lemma \ref{Lem-Tech-1}, we obtain 
\begin{equation}\label{Cor-Reg-Nonlinear}
\int_{T_1-\varepsilon_{m+1}}^{t}e^{(t-s)\Delta}\P((\vu_\alpha \cdot \vec{\nabla})\vu)(s,\cdot)ds  \in \mathcal{C}([T_1-\varepsilon_{m+1},T_2],H^{m+2}(\T))\cap L^2([T_1 - \varepsilon_{m+1},T_2],\dot{H}^{m+3}(\T)).
\end{equation}

In this manner, gathering the information in (\ref{Cor-Reg-Data}) and (\ref{Cor-Reg-Nonlinear}), we obtain (\ref{Information-Reg-u}). 

\medskip

Once we have (\ref{Information-Reg-u}), and recalling that $\vu_\alpha \in \mathcal{C}([T_1-\varepsilon_{m+1},T_2], H^{m+2}(\T))$, we can repeat the process detailed above to obtain
\begin{equation*}
\vu \in \mathcal{C}([T_1-\varepsilon_{m+3},T_2],H^{m+3}(\T)) \cap L^2([T_1-\varepsilon_{m+3},T_2],\dot{H}^{m+3}(\T)),   
\end{equation*}
with $0<\varepsilon_{m+3}<\varepsilon_{m+2}$ satisfying $T_1-\varepsilon_{m+3}>0$. We thus obtain (\ref{Regularity-Leray-Solution-improved}), which concludes the proof of Proposition \ref{Prop:Homogeneous-case}.

\section{Theory of regularity for the global attractor}\label{Sec:Reg-Attractor}
\subsection{Proof of Theorem \ref{Th-Regularity-Atrtractor}}
Let $\vu_{\rm e} \in \mathcal{T}(\mathcal{A})$, where the set $\mathcal{T}(\mathcal{A})$ is given in Definition \ref{Def-Complete-Trajectory}.   To simplify the notation, from now on we will write $\vu$ instead of $\vu_{\text{e}}$. In this context, recall that for any $t \in \R$ and $x\in \T$ the vector field $\vu$ solves the coupled system 
		\begin{equation}\label{Alpha-model-eternal-attractor} 
\begin{cases}\vspace{2mm}
\partial_t \vu - \nu\Delta \vu + \P\big( (\vu_{\alpha} \cdot \vec{\nabla})\vu\big)  = \vf, \qquad {\rm div}(\vu)=0, \\ 
-\alpha^2 \, \P\Big( {\rm div} \left( A (\varphi \ast \vu) (\vec{\nabla} \otimes \vu_{\alpha})^T \right)\Big)+\vu_{\alpha} = \vu, \qquad {\rm div}(\vu_{\alpha})=0.
\end{cases} 
\end{equation}

Note that   as $A(\cdot)$  and $\varphi$ satisfy the assumption (\ref{conditions-a-varphi-regularity-attractor-H1}), and as $\vu \in \mathcal{T}(\mathcal{A})\subset  L^\infty\big(\R, L^2_{\text{s}}(\T)\big)$, by Propositions \ref{Prop:Existence-Filter} and \ref{Prop:H2-Filter} we have that the eternal filtered velocity $\vu_\alpha$ satisfies  $\vu_\alpha \in L^\infty\big(\R, H^2(\T)\big)$ and $\text{div}(\vu_\alpha)=0$. 

\medskip

The main strategy in the  proof of  (\ref{Regularity-Atrractor}) is to decompose the complete trajectory   $\vu$ as the sum of solutions of two suitable affine systems related to (\ref{Alpha-model-eternal-attractor}). Then, we will exploit some useful properties of each system.  In the following technical propositions, we detail each step of the proof. 

\medskip

\emph{Step 1: Decomposition of the complete trajectory}. 

\begin{Proposition}\label{Prop1:Decomposition-Eternal-Sol}  Let  $(\vu, \vu_\alpha) \in \mathcal{T}(\mathcal{A}) \times L^\infty\big(\R, H^2(\T)\big)$ be the solution of the system (\ref{Alpha-model-eternal-attractor}). On the other hand, let $t_0 \in \R$ be a fixed time. Then, let $(t_n)_{n\in \mathbb{N}}$ be a sequence of times such that $t_n>2-t_0$, $t_n>0$ and 
$t_n \to +\infty$ as $n\to +\infty$.
	
\medskip

\begin{enumerate}
	\item For the given  function $\vu_\alpha$ and  for  any $t_n>0$ fixed, consider the initial value problems:
	\begin{equation}\label{Alpha-model-vn}
	\begin{cases}\vspace{2mm}
	\partial_t \vv_n-\nu\Delta \vv_n+\P\big((\vu_\alpha\cdot\vec{\nabla})\vv_n\big)=\vf, \quad {\rm div}(\vv_n)=0,\\
	\vv_n(-t_n,\cdot)=0,
	\end{cases}
	\end{equation}
	and 
	\begin{equation}\label{Alpha-model-wn}
	\begin{cases}\vspace{2mm}
	\partial_t \vw_n-\nu\Delta \vw_n+\P\big((\vu_\alpha\cdot\vec{\nabla})\vw_n\big)=0, \quad {\rm div}(\vw_n)=0,\\
	\vw_n(-t_n,\cdot)=\vu(-t_n),
	\end{cases}
	\end{equation}
	
	Then, each equation respectively  admits a unique solution  
	\[ \vv_n, \vw_n\in L^{\infty}_{\rm loc}\big( [-t_n, +\infty[, L^2_{\text{s}}(\T) \big) \cap L^2_{\rm loc}\big( [-t_n, +\infty[, \dot{H}^1(\T) \big).\]

	\medskip

	\item  For any $t\geq - t_n$, the following identity  holds:
	\begin{equation}\label{Decomposition}
	\vu(t,\cdot)=\vv_n(t,\cdot)+ \vw_n(t,\cdot). 
	\end{equation}
	\end{enumerate}		
	 \end{Proposition}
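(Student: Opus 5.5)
The plan is to treat \eqref{Alpha-model-vn} and \eqref{Alpha-model-wn} as \emph{linear} (affine) transport–diffusion problems in which the drift $\vu_\alpha$ is a given coefficient. The standing hypotheses give
\[ \vu_\alpha\in L^\infty(\R,H^2(\T)) \subset L^\infty(\R,L^\infty(\T)), \qquad \operatorname{div}(\vu_\alpha)=0. \]
For existence I will use a Faedo--Galerkin scheme on the Stokes eigenfunctions $(\vw_k)$ from \eqref{Stokes}, posed on $[-t_n,T]$ for arbitrary $T>-t_n$. The projected ODE systems are linear with coefficients in $L^\infty_t$, so they have global solutions.

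The key a priori estimate comes from testing with the solution itself. Since $\vu_\alpha$ is divergence free, the convection term cancels:
\[ \big\langle \P((\vu_\alpha\cdot\vec{\nabla})\vv),\vv\big\rangle=\int_{\T}(\vu_\alpha\cdot\vec{\nabla})\vv\cdot\vv\,dx=0 . \]
For \eqref{Alpha-model-vn} this gives
\[ \tfrac{d}{dt}\|\vv_n\|^2_{L^2}+\nu\|\vv_n\|^2_{\dot H^1}\le \nu^{-1}\|\vf\|^2_{\dot H^{-1}} . \]
For \eqref{Alpha-model-wn} it gives
\[ \tfrac{d}{dt}\|\vw_n\|^2_{L^2}+2\nu\|\vw_n\|^2_{\dot H^1}= 0 , \]
with $\|\vu(-t_n)\|_{L^2}\le\mathcal R$ by \eqref{Unif-Control-L2-Intro}. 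Hence the Galerkin approximations are bounded in $L^\infty_tL^2_x\cap L^2_t\dot H^1_x$ on every bounded window.

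A bound on the time derivative then gives compactness. Using $\|(\vu_\alpha\cdot\vec\nabla)\vv\|_{L^2}\le\|\vu_\alpha\|_{L^\infty}\|\vv\|_{\dot H^1}$, the derivative $\partial_t\vv$ is bounded in $L^2_t\dot H^{-1}_x$. Aubin--Lions then allows passage to the limit, and this is immediate because the only nonlinearity is a product with the fixed coefficient $\vu_\alpha$. The limit is a weak solution in the stated class. It also has $\partial_t\vv\in L^2_t\dot H^{-1}_x$, so by Lions--Magenes it lies in $\mathcal C_tL^2_x$ and the initial data are attained.

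Uniqueness follows by linearity. The difference of two solutions solves the homogeneous problem with zero initial datum. The same energy identity, which is now rigorous because the solution lies in $L^2_t\dot H^1_x$ with $\partial_t\in L^2_t\dot H^{-1}_x$ and the convection pairing is well defined since $\vu_\alpha\in L^\infty$, forces the difference to vanish.

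For the decomposition \eqref{Decomposition}, set $\vec z:=\vv_n+\vw_n$. By linearity, $\vec z$ solves
\[ \partial_t\vec z-\nu\Delta\vec z+\P((\vu_\alpha\cdot\vec\nabla)\vec z)=\vf, \qquad \vec z(-t_n)=\vu(-t_n). \]
The complete trajectory $\vu$ solves the same linear problem, with the same coefficient $\vu_\alpha$ (which is the filter of $\vu$), on $[-t_n,+\infty[$. By Definition \ref{Def-Eternal-Solutions}, $\vu\in L^\infty_tL^2_x\cap L^2_{\rm loc}\dot H^1_x$, and hence also $\partial_t\vu\in L^2_{\rm loc}\dot H^{-1}_x$. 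Uniqueness for this linear problem then gives $\vu=\vec z$ for all $t\ge -t_n$.

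The point I expect to need the most care is placing $\vu$ in the uniqueness class. One must check that the weak formulation satisfied by the eternal solution $\vu$ coincides with that of the linear problem, and that $\vu$ is continuous in time with values in $L^2$ so that $\vu(-t_n)$ makes sense. Both follow from the bounds above, via Lions--Magenes, once $\partial_t\vu\in L^2_{\rm loc}\dot H^{-1}$ is established.
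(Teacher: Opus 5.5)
Your argument is correct. The uniqueness step and the decomposition step match the paper's; the existence step takes a different route.

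\textbf{Existence.} The paper builds the solution of the generic affine problem by a Banach fixed point on the Duhamel formulation. It works in $E_T=L^\infty_tL^2_x\cap L^2_t\dot{H}^1_x$ over a short window $[-t_n,-t_n+T]$. The two ingredients are the heat-semigroup maximal-regularity lemma (with $\sigma=-1$) and the bound
$\|{\rm div}(\vpsi\otimes\vu_\alpha)\|_{L^2_t\dot{H}^{-1}_x}\le C\,T^{1/2}\|\vpsi\|_{E_T}\|\vu_\alpha\|_{L^\infty_tH^2_x}$.
The solution is then extended to $[-t_n,+\infty[$ through the energy estimate. You instead use Faedo--Galerkin, the energy bounds, a bound on $\partial_t$ in $L^2_t\dot{H}^{-1}_x$, and Lions--Magenes.

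\textbf{What each route buys.} The fixed point reuses heat-kernel machinery the paper already has, and it gives a mild solution with an explicit local existence time. Your construction is the more standard one for a linear parabolic problem. It only needs a divergence-free drift in $L^\infty_tL^\infty_x$, and it yields the energy identity and $\mathcal{C}_tL^2_x$ continuity directly. Two small remarks on your version:
\begin{itemize}
\item Since the problem is linear, weak limits already suffice, so Aubin--Lions is not really needed.
\item You should say that the solutions on the windows $[-t_n,T]$ glue together by uniqueness.
\end{itemize}

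\textbf{Decomposition.} Here your justification is actually the more accurate one. The paper invokes uniqueness of Leray solutions of the coupled system. What is really needed is uniqueness for the affine problem with the frozen coefficient $\vu_\alpha$. One must also check, as you flag, that $\vu$ lies in that uniqueness class: $\partial_t\vu\in L^2_{\rm loc}\dot{H}^{-1}_x$, hence $\vu\in\mathcal{C}_tL^2_x$, so $\vu(-t_n)$ is meaningful.
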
	
 \begin{Remark} Note that the sequence $(t_n)_{n \in \mathbb{N}}$ depends on the fixed time $t_0\in \R$, since we assume that $t_n>2-t_0$. This latter technical constraint will be used later; see Remark \ref{Rmk-tn-t0} below. 	
 \end{Remark}	
\begin{proof}
	\emph{First point}. As both equations (\ref{Alpha-model-vn}) and (\ref{Alpha-model-wn}) essentially have the same structure, the existence and uniqueness of solutions can be proven for a unified generic system.
	\begin{Lemma}\label{Lem:GWP-Unified-Equation}Let $\vu_\alpha \in L^\infty\big(\R, H^2(\T)\big)$, with $\text{div}(\vu_\alpha)=0$, and let $t_n> \max\{2-t_0, 0\}$ be fixed. Consider the initial value problem
		\begin{equation}\label{Alpha-model-q}
		\begin{cases}\vspace{2mm}
		\partial_t \vpsi-\nu\Delta \vpsi+\P\left(\big(\vu_\alpha\cdot\vec{\nabla}\big)\vpsi\,\right)=\vg, \quad {\rm div}(\vpsi)=0,\\
		\vpsi(-t_n,\cdot)=\vpsi_0. 
		\end{cases}
		\end{equation}
		
		Assume that $\vpsi_0 \in L^2_{\text{s}}(\T)$ and  $\vg \in L^2_{\text{s}}(\T)$. Then, there exists a unique solution of (\ref{Alpha-model-q}):
		\[ \vpsi \in L^\infty_{\rm loc}\big( [-t_n, +\infty[, L^2_{\text{s}}(\T)\big) \cap L^2_{\rm loc}\big( [-t_n,+\infty[, \dot{H}^1(\T)\big), \]	
		which, for any $t\geq -t_n$, satisfies the following energy estimate 
		\begin{equation}\label{Energy-estimate-psi}
		\frac{d}{dt}\| \vpsi(t,\cdot)\|^2_{L^2}+ 2\nu\| \vpsi(t,\cdot)\|^2_{\dot{H}^{1}} \leq 2\int_{\T} \vg(t,x)\cdot \vpsi(t,x)dx. 
		\end{equation}
	\end{Lemma}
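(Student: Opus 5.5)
The equation (\ref{Alpha-model-q}) is linear in $\vpsi$, with a transport coefficient $\vu_\alpha$ that is fixed, divergence free and bounded in time with values in $H^2(\T)\subset L^\infty(\T)$. I would therefore use a standard Faedo--Galerkin scheme on the Stokes eigenfunctions $(\vw_k)_{k\geq 1}$ introduced in (\ref{Stokes}). For $N\geq 1$, seek $\vpsi_N(t)=\sum_{k\le N} c_k(t)\vw_k$ solving
\[
\partial_t \vpsi_N - \nu\Delta\vpsi_N + P_N\P\big((\vu_\alpha\cdot\vec{\nabla})\vpsi_N\big) = P_N\vg,
\qquad \vpsi_N(-t_n)=P_N\vpsi_0 .
\]
This is a linear ODE system for $(c_k)$ with coefficients in $L^\infty_{\rm loc}$, since $\vu_\alpha\in L^\infty(\R,H^2)$. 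Carathéodory's theorem therefore gives a unique absolutely continuous solution on $[-t_n,+\infty[$, with no blow-up because the system is linear.

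\textbf{Uniform bounds.} Taking the $L^2$ product with $\vpsi_N$, the transport term vanishes:
\[
\int_{\T}(\vu_\alpha\cdot\vec{\nabla})\vpsi_N\cdot\vpsi_N\,dx = -\tfrac12\int_{\T}{\rm div}(\vu_\alpha)\,|\vpsi_N|^2\,dx=0,
\]
because ${\rm div}(\vu_\alpha)=0$ and $\P$, $P_N$ are self-adjoint and act as the identity on $\vpsi_N$. This yields (\ref{Energy-estimate-psi}) for $\vpsi_N$. Bounding the forcing term by $2\|\vg\|_{\dot H^{-1}}\|\vpsi_N\|_{\dot H^1}\le \nu\|\vpsi_N\|^2_{\dot H^1}+\nu^{-1}\|\vg\|^2_{\dot H^{-1}}$ gives bounds on $\vpsi_N$ in $L^\infty_{\rm loc}L^2\cap L^2_{\rm loc}\dot H^1$, uniform in $N$. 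For the time derivative, $\|(\vu_\alpha\cdot\vec\nabla)\vpsi_N\|_{L^2}\le C\|\vu_\alpha\|_{L^\infty_tH^2}\|\vpsi_N\|_{\dot H^1}$, so $\partial_t\vpsi_N$ is bounded in $L^2_{\rm loc}\dot H^{-1}$.

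\textbf{Passage to the limit.} By the Aubin--Lions lemma we extract a subsequence converging strongly in $L^2_{\rm loc}L^2$ and weakly in the energy spaces. Passing to the limit in the weak formulation is straightforward, since the equation is linear in $\vpsi$ and $\vu_\alpha$ is fixed. The limit belongs to $\mathcal C_{\rm loc}L^2$ because $\vpsi\in L^2\dot H^1$ and $\partial_t\vpsi\in L^2\dot H^{-1}$, so the initial datum is attained.

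\textbf{Energy identity and uniqueness.} The limit is regular enough that $\vpsi$ itself is an admissible test function: the transport term lies in $L^2_tL^2_x$, so $\partial_t\vpsi\in L^2\dot H^{-1}$ pairs with $\vpsi\in L^2\dot H^1$. The Lions--Magenes lemma then gives (\ref{Energy-estimate-psi}) for $\vpsi$ itself, with equality in fact, and in particular without relying on weak lower semicontinuity. Uniqueness follows by applying this identity to the difference of two solutions, which has $\vg=0$ and zero initial datum; the $L^2$ norm of the difference is then nonincreasing and hence vanishes. I expect the only delicate point to be the rigorous justification of this cancellation and of testing with $\vpsi$. This is handled by the fact that $\vu_\alpha\in L^\infty_tL^\infty_x$, which places the transport term in $L^2_tL^2_x$.
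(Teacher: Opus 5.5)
Your argument is correct, but it gets existence by a different route from the paper.

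The paper runs a Banach fixed point on the Duhamel formulation in $E_T=L^\infty([-t_n,-t_n+T],L^2_{\rm s}(\T))\cap L^2([-t_n,-t_n+T],\dot{H}^1(\T))$. It writes the transport term as ${\rm div}(\vpsi\otimes\vu_\alpha)$ and uses the heat-kernel maximal regularity of Lemma \ref{Lem-Tech-2} (with $\sigma=-1$). This bounds the nonlinear Duhamel term by $C\,T^{1/2}\|\vpsi\|_{E_T}\|\vu_\alpha\|_{L^\infty_tH^2_x}$, which gives a contraction on a ball for $T$ small. Uniqueness in all of $E_T$ then comes from the same energy cancellation you use: the transport term lies in $L^2_tL^{6/5}_x$ and is paired with the difference in $L^2_tL^6_x$. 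Finally, the energy estimate is used to continue the solution to $[-t_n,+\infty[$.

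Your Faedo--Galerkin scheme replaces the semigroup estimates and the local-in-time iteration by an a priori bound plus a limit. What it buys:
\begin{itemize}
\item Linearity of the Galerkin system gives existence on the whole half-line at once.
\item You obtain the energy equality, not just the inequality.
\item You make explicit, via Lions--Magenes, why the equation may be tested with the solution itself; the paper leaves this as ``standard computations''.
\end{itemize}
Two small remarks. The Lions--Magenes justification holds for every solution in the stated class, because the equation itself puts $\partial_t\vpsi$ in $L^2_{\rm loc}\dot{H}^{-1}$; this is exactly what your uniqueness step needs, so it is worth saying so. Also, since the problem is linear, weak convergence already suffices to pass to the limit, so Aubin--Lions is more than you need.

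In return, the paper's construction produces the solution directly in mild form. That is the formulation used later (Remark \ref{Rmk-Reg-vn-H1} and Lemma \ref{Lem-Reg-u-alpha-Hm}) to bootstrap the regularity of $\vv_n$ as in the proof of Theorem \ref{Th-Regularity}. With your construction you would add the routine observation that a weak solution with $\partial_t\vpsi\in L^2_{\rm loc}\dot{H}^{-1}$ coincides with its Duhamel representation.
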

\begin{proof} It follows from standard arguments, so we only detail the main steps. First, for a time $0<T\leq 1$, we consider the Banach space $E_T := L^\infty\big( [-t_n,-t_n+T], L^2_{\text{s}}(\T)\big) \cap L^2 \big([-t_n,-t_n+T], \dot{H}^{1}(\T)\big)$ endowed with the natural norm $\| \cdot \|_{E_T}:= \| \cdot \|_{L^\infty_t L^2_x}+ \sqrt{\nu} \| \cdot \|_{L^2_t \dot{H}^1_x}$. Thereafter, we consider the fixed-point problem for equation (\ref{Alpha-model-q}):
	\begin{equation*}
	\begin{split}
	\vpsi(t,\cdot)=&\, e^{\nu \Delta (t+t_{n})} \vpsi_0 + \int_{-t_n}^{t} e^{\nu \Delta(t-s)} \vg \, ds - \int_{-t_n}^{t}e^{\nu \Delta(t-s)}\P \left(\big(\vu_\alpha\cdot\vec{\nabla}\big)\vpsi\,\right)(s,\cdot)ds \\
	=:&\, \U_0-  \int_{-t_n}^{t}e^{\nu \Delta(t-s)}\P \left(\big(\vu_\alpha\cdot\vec{\nabla}\big)\vpsi\,\right)(s,\cdot)ds=:\mathcal{T}(\vpsi).
	\end{split}
	\end{equation*}
	
	As $\vpsi_0 \in L^2_{\text{s}}(\T)$ and $\vg \in L^2_{\text{s}}(\T)$, the first term on the right-hand side is easy to estimate by using Lemmas \ref{Lem-Tech-1} and \ref{Lem-Tech-2}. We thus have 
	\begin{equation*}
	\| \U_0 \|_{E_T} \leq C\big( \| \vpsi_0 \|_{L^2}+ \| \vg \|_{L^2} \big). 
	\end{equation*}
	Then, for a fixed  $R>0$, we define the closed ball 
	\[ B:= \left\{  \vpsi \in E_T:\, \| \vpsi - \U_0\|_{E_T} \leq R  \right\}, \]
	and we will prove that for sufficiently small $T$ the operator $\mathcal{T}(\cdot)$ defined above is contractive on $B$. 
	
	\medskip
	
	We start by verifying that $\mathcal{T}(\cdot)$ maps $B$ into $B$. Let $\vpsi \in B$. Then, using the first and the second part of Lema \ref{Lem-Tech-2} (with $\sigma=-1$), and as  $\text{div}(\vu_\alpha)=\text{div}(\vpsi)=0$, we obtain
	\begin{equation*}
	\begin{split}
	\| \mathcal{T}(\vpsi)- U_0 \|_{E_T}=\,  \left\| \int_{-t_n}^{t}e^{\nu \Delta(t-s)}\P \left(\big(\vu_\alpha\cdot\vec{\nabla}\big)\vpsi\,\right)(s,\cdot)ds \right\|_{E_T} \leq C\,  \left\| \big(\vu_\alpha\cdot\vec{\nabla}\big)\vpsi \right\|_{L^2_t \dot{H}^{-1}}
	=\, C\,   \left\| \text{div} \big( \vpsi \otimes \vu_\alpha \big) \right\|_{L^2_t \dot{H}^{-1}}.
	\end{split}
	\end{equation*}
	
	By Sobolev embeddings and denoting $\ds{\| \vu_\alpha \|_{L^\infty_t H^2_x}:= \sup_{ t \in  \R}\| \vu_\alpha(t,\cdot)\|_{H^2}}$, we have
	\begin{equation*}
	\begin{split}
	C\,   \left\| \text{div} \big( \vpsi \otimes \vu_\alpha \big) \right\|_{L^2_t \dot{H}^{-1}} \leq  &\,  C\,  \left\|  \vpsi \otimes \vu_\alpha \right\|_{L^2_t L^2} \leq C\, \| \vpsi \|_{L^2_t L^2_x}\, \| \vu_\alpha \|_{L^\infty_t L^\infty_x} \leq C\, T^{\frac{1}{2}}\, \| \vpsi \|_{L^\infty_t L^2_x}\, \| \vu_\alpha \|_{L^\infty_t H^2_x}\\
	\leq &\, C\, T^{\frac{1}{2}}\| \vpsi \|_{E_T} \, \| \vu_\alpha \|_{L^\infty_t H^2_x}. 
	\end{split}
	\end{equation*}
	
	In the last term above, setting $0<T \leq 1$ sufficiently small such that
	\[ T^{\frac{1}{2}} 
	\leq \frac{R}{C\, \left( R+ \|\vpsi \|_{L^2}+\| \vg \|_{L^2}\right)\,  \| \vu_\alpha \|_{L^\infty_t H^2_x}},  \]
	we write
	\begin{equation*}
	\begin{split}
	C\, T^{\frac{1}{2}}\| \vpsi \|_{E_T} \, \| \vu_\alpha \|_{L^\infty_t H^2_x} \leq  &\, C\,\, T^{\frac{1}{2}} \left( \| \vpsi - \U_0 \|_{E_T}+ \| \U_0 \|_{E_T}\right)\, \| \vu_\alpha \|_{L^\infty_t H^2_x} \\
	\leq &\, C\,  \, T^{\frac{1}{2}}\, \left( R+ \|\vpsi \|_{L^2}+\| \vg \|_{L^2}\right)\,  \| \vu_\alpha \|_{L^\infty_t H^2_x}\leq R. 
	\end{split}
	\end{equation*}
	
	Setting again $T$ sufficiently small such that 
	\[  T^{\frac{1}{2}} <\frac{1}{C\,\| \vu_\alpha \|_{L^\infty_t H^2_x}}, \]
	we obtain that $\mathcal{T}(\cdot): B \to B$ is contractive.  In fact, for $\vpsi_1, \vpsi_2 \in B$, following the same arguments as above, we directly write
	\begin{equation*}
	\begin{split}
	\left\| \mathcal{T}(\vpsi_1)- \mathcal{T}(\vpsi_2) \right\|_{E_T} =&\, \left\| \int_{0}^{t}  e^{\nu \Delta(t-s)}\P \left(\big(\vu_\alpha\cdot\vec{\nabla}\big)(\vpsi_1-\vpsi_2)\,\right)(s,\cdot)ds \right\|_{E_T}\\
	\leq &\, C\, T^{\frac{1}{2}}\, \left\| \vpsi_1 - \vpsi_2 \right\|_{E_T}\, \| \vu_\alpha \|_{L^\infty_t H^2_x}. 
	\end{split}
	\end{equation*}
	
	Consequently, from the Banach's contraction principle  there exists a unique (local in time) solution $\vpsi \in B \subset E_T$ for equation (\ref{Alpha-model-q}). 
	
	\medskip
	
	Now, we will prove that this solution is the unique one in the full space $E_T$. As usual, assuming that $\vpsi_1, \vpsi_2 \in E_T$ are two solutions of (\ref{Alpha-model-q}) associated with the same data $\vpsi_0$ and $\vg$, defining $\vec{\Psi}:= \vpsi_1-\vpsi_2$ we find that it solves the equation 
	\begin{equation*}
	\partial_t \vec{\Psi}-\nu\Delta \vec{\Psi}+\P\left(\big(\vu_\alpha\cdot\vec{\nabla}\big)\vec{\Psi}\,\right)=0, \quad \text{div}(\vec{\Psi})=0, \quad \vec{\Psi}(-t_n,\cdot)=0.
	\end{equation*}
	
	For the transport term,  we have  $\P\left(\big(\vu_\alpha\cdot\vec{\nabla}\big)\vec{\Psi}\,\right) \in L^2\big( [-t_n, -t_n+T], L^{\frac{6}{5}}(\T)\big)$. Indeed, using the information $\vu_\alpha \in L^\infty\big( [-t_n, -t_n+T], H^2(\T)\big)$ and $\Psi \in L^2\big([-t_n, -t_n+T], \dot{H}^{1}(\T)\big)$, applying  H\"older inequalities together with  Sobolev embeddings and Lebesgue embeddings (as we work in the torus $\T$), we write 
	\begin{equation}\label{Estim-Nonlinear}
	\left\|  \P\left(\big(\vu_\alpha\cdot\vec{\nabla}\big)\vec{\Psi}\,\right)  \right\|_{L^2_t L^{\frac{6}{5}}_x} \leq \| \vu_\alpha \|_{L^\infty_t L^\infty_x}\, \left\| \vec{\nabla} \otimes \vec{\Psi} \right\|_{L^2_t L^{\frac{6}{5}}_x}\leq C\, \| \vu_\alpha \|_{L^\infty_t H^2_x}\, \left\| \vec{\nabla} \otimes \vec{\Psi} \right\|_{L^2_t L^2_x}<+\infty. 
	\end{equation}

	With this fact and as $ \vec{\Psi} \in L^2\big([-t_n, -t_n+T], \dot{H}^1(\T)\big)\subset L^2\big([-t_n, -t_n+T], L^6(\T)\big)$, observe that  the integral $\ds{\int_{\T} \P\left(\big(\vu_\alpha\cdot\vec{\nabla}\big)\vec{\Psi}\,\right) \cdot \Psi\, dx} $ is well-defined. Moreover, from the divergence-free of $\vu_\alpha$ and $\Psi$, we obtain
	\[ \int_{\T} \P\left(\big(\vu_\alpha\cdot\vec{\nabla}\big)\vec{\Psi}\,\right) \cdot \vec{\Psi}\, dx=0. \]
	
	Therefore, through standard computations the following estimate holds:
	\begin{equation*}
	\frac{d}{dt}\| \vec{\Psi}(t,\cdot)\|^2_{L^2}+2\nu \| \vec{\Psi}(t,\cdot)\|^2_{\dot{H}^1}\leq 0,
	\end{equation*}
	hence $\vec{\Psi}=0$. 
	
	\medskip
	
	To finish the proof, note that this same arguments allows us to easily derive the desired energy control (\ref{Energy-estimate-psi}). In particular, this control enables to extend the unique solution $\vpsi$ to the entire time interval $[-t_n, +\infty[$. Lemma \ref{Lem:GWP-Unified-Equation} is now proven.  \end{proof}	

Within the framework of Lemma \ref{Lem:GWP-Unified-Equation}, for the  first system (\ref{Alpha-model-vn}), by setting $\vpsi_0=0$ and $\vg = \vf$ we obtain  a unique solution 
\[ \vv_n \in  (L^\infty_{t})_{\rm loc} (L^2_{\text{s}})_x  \cap (L^2_{t})_{\rm loc} \dot{H}^{1}_x \big( [-t_n, +\infty[ \times \T\big). \]
Similarly, for the second system (\ref{Alpha-model-wn}), setting now $\vpsi_0=\vu(-t_n, \cdot)$ and $\vg =0$, there exists a unique solution 
\[ \vw_n \in   (L^\infty_{t})_{\rm loc} (L^2_{\text{s}})_x  \cap (L^2_{t})_{\rm loc} \dot{H}^{1}_x \big( [-t_n, +\infty[ \times \T\big). \]

\emph{Second point}.   We have that $\vv_n + \vw_n$ solves the following system
\begin{equation*}
\begin{cases}\vspace{2mm}
\partial_t \big( \vv_n + \vw_n \big) -\nu\Delta \big( \vv_n + \vw_n \big)  +\P\left(\big(\vu_\alpha\cdot\vec{\nabla}\big)\big( \vv_n + \vw_n \big) \,\right)=\vf, \quad {\rm div}\left(\big( \vv_n + \vw_n \big) \right)=0, \\
\big( \vv_n + \vw_n \big) (-t_n,\cdot)=\vu(-t_n, \cdot). 
\end{cases}
\end{equation*}

On the other hand, from the uniqueness of weak Leray solutions for the following initial value problem (see Theorem \ref{Th-GWP}):
\begin{equation*}
\begin{cases}\vspace{2mm}
\partial_t \vu - \nu\Delta \vu + \P\big( (\vu_{\alpha} \cdot \vec{\nabla})\vu\big)  = \vf, \qquad {\rm div}(\vu)=0, \\ 
\vu(-t_n, \cdot)=\vu(-t_n,\cdot),
\end{cases} 
\end{equation*} 
we conclude the identity   (\ref{Decomposition}). Proposition \ref{Prop1:Decomposition-Eternal-Sol} is proven. 
\end{proof}	

\emph{Step 2: Uniform  $H^{1}-$estimate  for $\vv_n$}. Regarding the equation (\ref{Alpha-model-vn}), as the initial datum is equal to zero, we are able to prove the next uniform regularity estimate.

\begin{Proposition}\label{Prop:Tech-Unif-Estim-vn-m-0}  For any $n\in \mathbb{N}$, let $\vv_n \in  (L^\infty_{t})_{loc} (L^2_{\text{s}})_x  \cap (L^2_{t})_{loc} \dot{H}^{1}_x \big( [-t_n, +\infty[ \times \T\big)$ be the unique solution of equation (\ref{Alpha-model-vn}), obtained in the first point of Proposition \ref{Prop1:Decomposition-Eternal-Sol}. Additionally, let $t_0 \in \R$ be the fixed but arbitrary time also given in the statement of Proposition \ref{Prop1:Decomposition-Eternal-Sol}. 
	
\medskip

There exists a constant $\mathcal{K}_0>0$, depending on  $ \vf$,   $A(\cdot)$, $\varphi$, $\alpha$ and $\nu$, and which does not depend on $t_0$, such that the following uniform estimate holds:
\begin{equation}\label{Uniform-Estimate-vn-H1}
\sup_{n\in \mathbb{N}}\left(\, \sup_{t \in [t_0-1, t_0]} \| \vv_n(t,\cdot)\|^2_{H^{1}}\right) \leq \mathcal{K}_0.  
\end{equation} 
\end{Proposition}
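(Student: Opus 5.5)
The plan is an $L^2$ energy estimate for $\vv_n$ followed by an $H^1$ estimate closed with the \emph{uniform Gr\"onwall lemma} on a window of length two ending at $t_0$. The key input is that $\vu_\alpha$ is bounded in $H^2$ uniformly in time and in the trajectory. Since $\vu(t,\cdot)\in\mathcal{A}(\vf)\subset\mathcal{B}$ for all $t\in\R$, estimate (\ref{Control-H2-Filter}) of Proposition \ref{Prop:H2-Filter} gives
\[
\sup_{t\in\R}\|\vu_\alpha(t,\cdot)\|_{H^2}\leq K_2(\mathcal{R}^2+\mathcal{R})=:M ,
\]
which depends only on $\vf,A(\cdot),\varphi,\alpha,\beta,\nu$ and not on $t_0$ or $n$. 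By the Sobolev embedding this also gives $\|\vu_\alpha\|_{L^\infty_tL^\infty_x}\lesssim M$.

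\emph{$L^2$ bounds.} Apply the energy inequality (\ref{Energy-estimate-psi}) of Lemma \ref{Lem:GWP-Unified-Equation} with $\vg=\vf$ and $\vv_n(-t_n)=0$. Young's inequality in $\dot H^{-1}\times\dot H^1$ gives $\frac{d}{dt}\|\vv_n\|^2_{L^2}+\nu\|\vv_n\|^2_{\dot H^1}\leq \nu^{-1}\|\vf\|^2_{\dot H^{-1}}$. Using the Poincar\'e inequality $\|\vv_n\|_{\dot H^1}\geq \frac{2\pi}{L}\|\vv_n\|_{L^2}$ and Gr\"onwall from the zero datum yields $\sup_{t\geq -t_n}\|\vv_n(t)\|^2_{L^2}\lesssim \mathcal{R}^2$, as in (\ref{Unif-Control-L2-Intro-vn}). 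Integrating the same inequality over $[s-1,s]$ with $s-1\geq -t_n$ then gives
\[
\int_{s-1}^{s}\|\vv_n\|^2_{\dot H^1}\,d\tau\lesssim \nu^{-1}\mathcal{R}^2+\nu^{-2}\|\vf\|^2_{\dot H^{-1}}=:a_1 ,
\]
uniformly in $s,n,t_0$.

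\emph{$H^1$ differential inequality.} Take the $L^2$ product of (\ref{Alpha-model-vn}) with $-\Delta\vv_n$. Since $\vv_n$ is divergence free, $\P$ drops out against $\Delta\vv_n$. By Cauchy--Schwarz and Young's inequality,
\[
\frac{d}{dt}\|\vv_n\|^2_{\dot H^1}+2\nu\|\vv_n\|^2_{\dot H^2}\leq 2\|\vf\|_{L^2}\|\vv_n\|_{\dot H^2}+2\|\vu_\alpha\|_{L^\infty}\|\vv_n\|_{\dot H^1}\|\vv_n\|_{\dot H^2},
\]
hence
\[
\frac{d}{dt}\|\vv_n\|^2_{\dot H^1}\leq \frac{2}{\nu}\|\vf\|^2_{L^2}+\frac{CM^2}{\nu}\|\vv_n\|^2_{\dot H^1}.
\]
This uses (\ref{Reg-force-attractor-L2}), i.e.\ $\vf\in L^2$. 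To make the computation rigorous I would either run it on Galerkin approximations of (\ref{Alpha-model-vn}) (a linear problem, so passing to the limit is routine) or first observe, exactly as in the $m=0$ step of the proof of Theorem \ref{Th-Regularity} via the mild formulation and Lemma \ref{Lem-Tech-2}, that $\vv_n\in\mathcal{C}_tH^1_x\cap L^2_t\dot H^2_x$ for $t>-t_n$. The time derivative is then justified by the Lions--Magenes lemma.

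\emph{Uniform Gr\"onwall and conclusion.} Set $y=\|\vv_n\|^2_{\dot H^1}$, $g=CM^2/\nu$, $h=2\nu^{-1}\|\vf\|^2_{L^2}$. For $t\in[t_0-1,t_0]$ the window $[t-1,t]$ lies inside $[t_0-2,t_0]\subset[-t_n,+\infty[$, precisely because $t_n>2-t_0$. The uniform Gr\"onwall lemma (Temam) then gives
\[
y(t)\leq (a_1+h)\,e^{g},
\]
with no dependence on $y(-t_n)$. Adding the uniform $L^2$ bound yields (\ref{Uniform-Estimate-vn-H1}) with $\mathcal{K}_0:=C\mathcal{R}^2+(a_1+h)e^{CM^2/\nu}$. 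This constant depends only on $\vf,A(\cdot),\varphi,\alpha,\nu$ (and $\beta$), not on $t_0$ or $n$.

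The main obstacle I expect is the justification step. One must verify enough regularity of $\vv_n$ on $]-t_n,+\infty[$ to test against $-\Delta\vv_n$, while checking that none of the constants inherit a dependence on $t_n$. This is ensured by using only the uniform bound $M$ on $\vu_\alpha$ and the uniform window integral $a_1$.
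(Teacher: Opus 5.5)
Your proposal is correct and follows essentially the paper's route. Both arguments use three ingredients: the uniform bound $\|\vu_\alpha\|_{L^\infty}\lesssim K_2(\mathcal{R}^2+\mathcal{R})$ coming from (\ref{Control-H2-Filter}) and $\mathcal{A}(\vf)\subset\mathcal{B}$, the window bound on $\int\|\vv_n\|^2_{\dot{H}^1}$ from the $L^2$ energy inequality, and an $H^1$ differential inequality with constant growth rate. The only real difference is that you cite Temam's uniform Gr\"onwall lemma, whereas the paper re-derives it by hand: it picks $\tau_n\in[t_0-2,t_0-1]$ with $\|\vv_n(\tau_n,\cdot)\|^2_{H^1}\leq M_0$ by averaging, then runs Gr\"onwall forward from $\tau_n$.
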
	
\begin{proof} As equation (\ref{Alpha-model-vn}) contains the term $\vu_\alpha$ in its transport term, we begin by studying higher-order regularity properties of $\vu_\alpha$. 
	
\begin{Lemma}\label{Lem-Reg-u-alpha-H1} For any $t_n>\max\{2-t_0,0\}$ we have  $\ds{\vu_\alpha \in \mathcal{C}\big( [-t_n, +\infty[, H^{3}(\T) \big)}$. 
\end{Lemma}		
\begin{proof} Let $t_n$ be fixed. Moreover, let $\varepsilon_n>0$ be fixed. Then, for a generic unknown $\vpsi$, consider the initial value problem of the coupled system:
	\begin{equation*}
	\begin{cases}\vspace{2mm}
	\partial_t \vpsi - \nu\Delta \vpsi + \P\big( (\vpsi_{\alpha} \cdot \vec{\nabla})\vpsi\big)  = \vf, \qquad {\rm div}(\vpsi)=0, \\  \vspace{2mm}
	-\alpha^2 \, \P\Big( {\rm div} \left( A (\varphi \ast \vpsi) (\vec{\nabla} \otimes \vpsi_\alpha)^T \right)\Big)+\vpsi_{\alpha} = \vpsi, \qquad {\rm div}(\vpsi_{\alpha})=0, \\
	\vpsi(-t_n-\varepsilon_n)= \vu(-t_n-\varepsilon_n),
	\end{cases} 
	\end{equation*}	
	where $\vu$ is the bounded eternal solution of (\ref{Alpha-model-eternal-attractor}). 
	
	\medskip	
	
	Applying Theorem \ref{Th-GWP} to this system we obtain a unique solution 
	\[  \vpsi \in (L^\infty_t)_{loc}(L^2_{\text{s}})_x \cap (L^2_t)_{loc}\dot{H}^1_x \big( [-t_n-\varepsilon_n, +\infty[ \times \T \big) \quad \text{and} \quad \vpsi_{\alpha} \in (L^\infty_t)_{loc}H^1_x \big(  [-t_n-\varepsilon_n, +\infty[ \times \T\big). \]
	Thereafter, as $\vf, A(\cdot)$ and $\varphi$ satisfy (\ref{Reg-force-attractor-L2}) and (\ref{conditions-a-varphi-regularity-attractor-H1}), applying now Theorem \ref{Th-Regularity} (with $m=0$,  $T_1=-t_n>-t_n-\varepsilon_n$ and any $T_2>T_1$) we have that 
	\[ \vpsi \in \mathcal{C}\big( [-t_n, +\infty[, H^{1}(\T) \big) \quad \text{and} \quad \vpsi_\alpha \in \mathcal{C}\big( [-t_n, +\infty[, H^{3}(\T) \big).  \]

	On the other hand, in the interval of time $[-t_n-\varepsilon_n, +\infty[$, observe that $\vu$ and $\vu_\alpha$ also constitute a solution of the system above with the same initial datum $\vu(-t_n-\varepsilon_n)$. Consequently, by uniqueness of solutions given again by Theorem \ref{Th-GWP}, we conclude that $\vu_\alpha = \vpsi_{\alpha}$ on $[-t_n-\varepsilon_n, +\infty[$, which implies the desired gain of regularity for $\vu_{\alpha}$.  \end{proof}

\begin{Remark}\label{Rmk-Reg-vn-H1} Once we have that $\vu_\alpha \in \mathcal{C}\big( [-t_n, +\infty[, H^{3}(\T) \big)$, and since $\vv_n(-t_n, \cdot)=0$, applying the same arguments as in the proof of Theorem \ref{Th-Regularity} (with $m=0$) to the system (\ref{Alpha-model-vn}), we obtain 
	\[ \vv_n \in \mathcal{C}\big( [-t_n, +\infty[, H^{1}(\T)\big) \cap L^2_{loc} \big( [-t_n,+\infty[, \dot{H}^{2}(\T) \big). \]
\end{Remark}

With this information at our disposal, we can now deduce the uniform bound (\ref{Uniform-Estimate-vn-H1}) as follows. Returning to equation (\ref{Alpha-model-vn}), we write
\begin{equation*}
\partial_t \vv_n  =  \nu \Delta \vv_n  - \P \big( \text{div}\big( \vv_n \otimes \vu_\alpha \big) \big)+ \vf, 
\end{equation*}
where we can verify that each term on the right-hand side belongs to the space $(L^2_t)_{\rm loc}L^2_x$.

\medskip

In fact, for the first term, as  we have  $\vv_n \in \mathcal{C}_t H^{1}_x\cap (L^2_t) \dot{H}^{2}_x$ we easily deduce that  $ \nu \Delta \vv_n  \in (L^2_t)_{\rm loc} L^2_x$. 

\medskip

For the second term, as  $\vu_\alpha \in \mathcal{C}_t H^{2}_x$ and $\vv_n \in \mathcal{C}_t H^{1}_x\cap (L^2_t) \dot{H}^{2}_x\subset (L^2_t)_{\rm loc}H^{2}_x$,  from   the product laws in Sobolev space we obtain that 
\begin{equation*}
\begin{split}
 \left\| \P\big( {\rm div}\big( \vv_n \otimes \vu_\alpha \big)\big) \right\|_{(L^2_t)_{\rm loc} H^{1}_x}\leq C\, \left\|  \vv_n \otimes \vu_\alpha  \right\|_{(L^2_t)_{\rm loc} H^{2}_x}
\leq\,  C\, \| \vu_\alpha \|_{(L^\infty_t)_{\rm loc}H^{2}_x}\, \| \vv_n\|_{(L^2_t)_{\rm loc}H^{2}_x} <+\infty. 
\end{split}
\end{equation*}

Finally, for the third term,  by assumption (\ref{Reg-force-attractor}) we have that $\vf  \in (L^2_t)_{\rm loc} L^2_x $.

\medskip

On the other hand,  as $\vv_n \in \mathcal{C}_t H^{1}_x \cap (L^2_t)_{loc} \dot{H}^{2}_x \subset (L^2_t)_{loc}H^{m+2}_x$ we have $\mathcal{J}^{2} \vv_n \in  (L^2_t)_{loc}L^2_x$. 

\medskip

Thereafter, for the Bessel potential $\mathcal{J}^2:=I_d -\Delta$ we can write
\begin{equation*}
 \int_{\T}  \left(  \partial_t \vv_n -  \nu \Delta \vv_n  + \P \big( \text{div}\big( \vv_n \otimes \vu_\alpha \big) \big)- \vf \right)\cdot \mathcal{J}^{2} \vv_n\, dx=0.
\end{equation*}
Integrating by parts the first and second term on the left-hand side, 
we obtain 
\begin{equation}\label{Estim-Der-Time-H1}
\begin{split}
&\, \frac{1}{2}\frac{d}{dt} \| \vv_n(t,\cdot)\|^2_{H^{1}}+\nu \| \vec{\nabla}\otimes \vv_n (t,\cdot)\|^2_{H^{1}}\\
=  &\, -   \int_{\T}   \P\big((\vu_\alpha\cdot\vec{\nabla})\vv_n\big) \cdot  \mathcal{J}^{2}\vv_n \, dx +  \int_{\T} \vf \cdot \mathcal{J}^{2}\vv_n \, dx,
\end{split}
\end{equation}
where we must estimate each term on the right-hand side.  

\medskip

For the first term, applying the Cauchy-Schwarz and H\"older inequality, we have
\begin{equation*}
\begin{split}
&\, \left| \int_{\T}   \P\big((\vu_\alpha\cdot\vec{\nabla})\vv_n\big) \cdot  \mathcal{J}^{2}\vv_n \, dx  \right| \leq  C\, \|  (\vu_\alpha\cdot\vec{\nabla})\vv_n\big \|_{L^2}\, \| \vv_n \|_{H^{2}}  
\leq \, C\, \| \vu_\alpha \, (\vec{\nabla} \otimes \vv_n) \|_{L^2}\, \| \vv_n \|_{H^{2}} \\
\leq C &\, \| \vu_\alpha \|_{L^\infty}\, \|  \vec{\nabla} \otimes \vv_n \|_{L^2}\, \| \vv_n \|_{H^{2}} \leq \, C\,  \| \vu_\alpha \|_{L^\infty}\, \|   \vv_n \|_{H^{1}}\, \| \vv_n \|_{H^{2}},
\end{split}
\end{equation*}
where we must control the terms $\| \vu_\alpha \|_{L^\infty}$ and $\| \vv_n \|_{H^{2}}$. 

\medskip

For $\| \vu_\alpha \|_{L^\infty}$,  from Sobolev embeddings and estimate (\ref{Control-H2-Filter}), for any time $t \in \R$ we have
\begin{equation*}
\| \vu_\alpha(t, \cdot)\|_{L^\infty} \leq C\, \| \vu_\alpha(t,\cdot)\|_{H^2} \leq K_2 \big( \| \vu(t,\cdot)\|^2_{L^1}+\| \vu(t,\cdot)\|_{L^2}\big),
\end{equation*}
with a constant $K_2>0$ depending on $A(\cdot), \varphi$ and $\alpha$.  Additionally, since $\vu \in \mathcal{T}(\mathcal{A})$ (see Definition \ref{Def-Complete-Trajectory}) and the global attractor $\mathcal{A}(\vf)$ is contained in the absorbing ball defined in (\ref{Absorbing-Set}), from the radius  $\mathcal{R}>0$ given in expression (\ref{R}) we obtain the uniform-in-time bound:
\begin{equation*}
\| \vu_\alpha(t, \cdot)\|_{L^\infty}  \leq K_2 \big(  \mathcal{R}^2+\mathcal{R}\big)=:L_0.  
\end{equation*}

 For $\| \vv_n \|_{H^{2}}$, using the Poincar\'e's inequality   we have
\begin{equation*}
\| \vv_n \|_{H^{2}} = \, \| \vv_n \|_{L^2}+\|  \vv_n  \|_{\dot{H}^{2}}\leq \frac{L}{2\pi} \| \vec{\nabla} \otimes \vv_n \|_{L^2}+ \| \vec{\nabla} \otimes \vv_n \|_{\dot{H}^{1}}  \leq \, C\, \| \vec{\nabla} \otimes \vv_n \|_{H^{1}}. 
\end{equation*}

Gathering these estimates, and using the discrete Young inequalities (with $1=\frac{1}{2}+\frac{1}{2}$), we obtain
\begin{equation}\label{Estim-Der-Time-H1-1}
\begin{split}
&\left| \int_{\T}   \P\big((\vu_\alpha\cdot\vec{\nabla})\vv_n\big) \cdot  \mathcal{J}^{2}\vv_n \, dx  \right|  \leq  \, C\,L_0 \| \vv_n \|_{H^1}\, \| \vec{\nabla}\otimes \vv_n \|_{H^1}  \\
\leq  &\, C\,\frac{L^2_0}{\nu}\, \| \vv_n \|^2_{H^{1}}+ \frac{\nu}{4}\, \| \vec{\nabla}\otimes \vv_n \|^2_{H^{1}} =: \mathfrak{C}_0\,  \| \vv_n \|^2_{H^{1}}+ \frac{\nu}{4}\, \| \vec{\nabla}\otimes \vv_n \|^2_{H^{1}}.
\end{split}.
\end{equation}

\medskip

For the second term, applying the Cauchy-Schwarz inequality together with the estimate  above on $\| \vv_n \|_{H^{2}}$ and the same discrete young inequalities, we write
\begin{equation}\label{Estim-Der-Time-H1-2}
\begin{split}
 \left| \int_{\T} \vf \cdot \mathcal{J}^{2} \vv_n\, dx \right| \leq &\, \| \vf \|_{L^2}\, \| \vv_n \|_{H^{2}} \leq C\, \| \vf \|_{L^2}\, \| \vec{\nabla}\otimes \vv_n \|_{H^{1}} \\
 \leq &\,C \frac{\| \vf \|^2_{L^2}}{\nu}  + \frac{\nu}{4}\, \| \vec{\nabla} \otimes \vv_n \|_{H^{1}}. 
\end{split}
\end{equation}

Collecting estimates (\ref{Estim-Der-Time-H1-1}) and (\ref{Estim-Der-Time-H1-2}) into (\ref{Estim-Der-Time-H1}), we obtain
\begin{equation*}
\frac{1}{2} \frac{d}{dt}\| \vv_n(t,\cdot)\|^2_{H^{1}} + \frac{\nu}{2}\| \vec{\nabla} \otimes \vv_n \|^2_{H^{1}} \leq  \mathfrak{C}_0\,  \| \vv_n \|^2_{H^{1}}+ C\frac{\| \vf \|_{L^2}}{\nu},
\end{equation*}
hence,
\begin{equation}\label{Gronwall-vn-H1}
\frac{d}{dt}\| \vv_n(t,\cdot)\|^2_{H^{1}} \leq \mathfrak{C}_0\,  \| \vv_n \|^2_{H^{1}}+ C\frac{\| \vf \|_{L^2}}{\nu}. 
\end{equation}

At this point, we need the following technical lemmas.
\begin{Lemma}\label{Lem:Control-vn-tau-n-H1} There exists a constant $M_0>0$, which only depends on $\|\vf\|_{\dot{H}^{-1}}$  and $\nu$,  such that for $T>-t_n$ and any $n \in \mathbb{N}$ the following uniform control holds:
\begin{equation}\label{Control-vn-tau-n-H1}
\int_{T}^{T+1}\| \vv_n(t,\cdot)\|^2_{H^1}\, dt \leq M_0. 
\end{equation}	
\end{Lemma}	
\begin{proof} First, we will verify the following uniform bound:
\begin{equation}\label{Uniform-Boun-vn-L2}
\sup_{n\in \mathbb{N}} \left( \sup_{t \geq - t_n} \| \vv_n(t,\cdot)\|^2_{L^2} \right) \leq C\frac{ \| \vf \|^2_{\dot{H}^{-1}}}{\nu^2}. 
\end{equation}

\medskip 
	
In fact, from the energy estimate (\ref{Energy-estimate-psi}) (with $\vec{\Psi}=\vv_n$ and $\vec{g}=\vf$),  we can write
\begin{equation*}
\frac{d}{dt}\| \vv(t,\cdot)\|^2_{L^2}+2\nu \| \vv_n(t,\cdot)\|^2_{\dot{H}^1} \leq 2 \int_{\T} \vf \cdot \vv_n(t,\cdot) dx,
\end{equation*}	
where the term on the right-hand side is estimated as
\begin{equation*}
\left| 2 \int_{\T} \vf \cdot \vv_n(t,\cdot) dx \right| \leq C \| \vf \|_{\dot{H}^{-1}}\, \| \vv_n(t,\cdot)\|_{\dot{H}^1} \leq C \frac{\| \vf \|^2_{\dot{H}^{-1}}}{\nu}+ \nu \| \vv_n(t,\cdot)\|^2_{\dot{H}^1}.
\end{equation*}

Then, we obtain
\begin{equation}\label{Gronwall-vn-L2}
\frac{d}{dt}\| \vv(t,\cdot)\|^2_{L^2}+\nu \| \vv_n(t,\cdot)\|^2_{\dot{H}^1} \leq C\frac{\| \vf \|^2_{\dot{H}^{-1}}}{\nu}.
\end{equation}
Using the Poincar\'e's inequality, we have
\[  \| \vv_n \|_{L^2} \leq \frac{L}{2\pi}\, \|  \vv_n \|_{\dot{H}^1}, \]
hence,
\begin{equation*}
-\nu \| \vv_n \|^2_{\dot{H}^1} \leq  -\frac{L^2\, \nu}{4\pi^2}\, \| \vv_n \|^2_{L^2}=:- C\nu \, \| \vv_n \|^2_{L^2}.
\end{equation*}

With this estimate, returning to the  differential inequality (\ref{Gronwall-vn-L2}) it follows that
\begin{equation*}
\frac{d}{dt}\| \vv(t,\cdot)\|^2_{L^2} \leq - C \nu  \, \| \vv_n \|^2_{L^2}+C \frac{\| \vf \|^2_{\dot{H}^{-1}}}{\nu}.
\end{equation*}
Applying the Gr\"onwall lemma and the fact that $\vv_n(-t_n, \cdot)=0$, for any $t\geq -t_n$ we have
\begin{equation*}
\| \vv_n(t,\cdot)\|^2_{L^2} \leq C\,\frac{\| \vf \|^2_{\dot{H}^{-1}}}{\nu^2 }  \left(1-e^{-C\nu (t+t_n)}\right) \leq  C\frac{\| \vf \|^2_{\dot{H}^{-1}}}{\nu^2} .
\end{equation*}
Finally, since $\ds{C\,\frac{\| \vf \|^2_{\dot{H}^{-1}}}{\nu^2} }$ does not depend on $n$, we obtain the uniform bound (\ref{Uniform-Boun-vn-L2}). 

\medskip

With estimate (\ref{Uniform-Boun-vn-L2}) at our disposal, we can now verify (\ref{Control-vn-tau-n-H1}) as follows.  Returning to the differential inequality (\ref{Gronwall-vn-L2}), we integrate over the interval $[T,T+1]$ to obtain
\begin{equation*}
\| \vv_n(T+1,\cdot)\|^2_{L^2} - \| \vv_n(T,\cdot)\|^2_{L^2} + \nu \int_{T}^{T+1}\| \vv_n(t,\cdot)\|^2_{\dot{H}^1} dt \leq C \frac{\| \vf \|^2_{\dot{H}^{-1}}}{\nu}.
\end{equation*}
Since $\| \vv_n(T+1,\cdot)\|^2_{L^2} \geq 0$ and using (\ref{Uniform-Boun-vn-L2})  (where $T\geq -t_n$), we have
\begin{equation*}
\nu \int_{T}^{T+1}\| \vv_n(t,\cdot)\|^2_{\dot{H}^1} dt  \leq \| \vv_n(T,\cdot)\|^2_{L^2} +  C\frac{\| \vf \|^2_{\dot{H}^{-1}} }{\nu} \leq C\frac{\| \vf \|^2_{\dot{H}^{-1}}}{\nu^2} + C \frac{\| \vf \|^2_{\dot{H}^{-1}}}{\nu}= C \left(\frac{1}{\nu^2}+\frac{1}{\nu}\right)\| \vf \|^2_{\dot{H}^{-1}},
\end{equation*}
hence, 
\begin{equation*}
\int_{T}^{T+1}\| \vv_n(t,\cdot)\|^2_{\dot{H}^1} \leq C \left(\frac{1}{\nu^3}+\frac{1}{\nu^2}\right)\| \vf \|^2_{\dot{H}^{-1}}.  
\end{equation*}

On the other hand, always from (\ref{Uniform-Boun-vn-L2}) we have 
\[  \int_{T}^{T+1} \| \vv_n(t,\cdot)\|^2_{L^2} dt \leq C \frac{\| \vf \|^2_{\dot{H}^{-1}}}{\nu^2}. \]

Gathering these estimates, we finally write
\[ \int_{T}^{T+1} \| \vv_n(t,\cdot)\|^2_{H^1} dt \leq  \int_{T}^{T+1} \| \vv_n(t,\cdot)\|^2_{L^2} dt  + \int_{T}^{T+1} \| \vv_n(t,\cdot)\|^2_{\dot{H}^1} dt \leq  C \left(\frac{1}{\nu^3}+\frac{1}{\nu^2}\right)\| \vf \|^2_{\dot{H}^{-1}}=: M_0, \]
which is the desired estimate (\ref{Control-vn-tau-n-H1}). 
\end{proof}	

Estimate (\ref{Control-vn-tau-n-H1}) allows us to verify the following key fact.
\begin{Lemma} Let $t_0 \in \R$ be the fixed but arbitrary time given in the statement of Proposition \ref{Prop1:Decomposition-Eternal-Sol}. Additionally, let $M_0>0$ be given in Lemma \ref{Lem:Control-vn-tau-n-H1}. Then, for any $n\in \mathbb{N}$ there exists a time $\tau_n \in [t_0-2, t_0-1]$ such that 
\begin{equation}\label{Control-vn-tau-pointwise}
\| \vv_n(\tau_n, \cdot)\|^2_{H^1} \leq M_0. 
\end{equation}
\end{Lemma}	 
\begin{Remark}\label{Rmk-tn-t0} Recall that we have assumed $t_n>2-t_0$. Thus, $-t_n<t_0-2\leq \tau_n$. Since $\vv_n$ is defined on the time interval $[-t_n,+\infty[$, the expression $\vv_n(\tau_n,\cdot)$ is therefore well defined.
\end{Remark}	
\begin{proof} By contradiction, assume that there exists $n \in \mathbb{N}$ such that for any $\tau \in [t_0-2, t_0-1]$ we have 
	$\| \vv_n(\tau, \cdot)\|^2_{H^1} > M_0$. Then, within the framework of Lemma \ref{Lem:Control-vn-tau-n-H1} we set  $T:=t_0-2$. Then we can write
	\[ \int_{t_0-2}^{t_0-1} \| \vv_n(\tau, \cdot)\|^2_{H^1} d \tau > \int_{t_0-2}^{t_0-1} M_0\, d \tau =M_0, \]
	which directly contradicts the estimate (\ref{Control-vn-tau-n-H1})   \end{proof}

With estimate (\ref{Control-vn-tau-pointwise}), we deduce now the uniform bound (\ref{Uniform-Estimate-vn-H1}) as follows.  Returning to the differential inequality (\ref{Gronwall-vn-H1}), we apply the Gr\"onwall lemma over the interval of time $[\tau_n, t_0+\varepsilon]$ for some $\varepsilon>0$. Here, recall that we have $\tau_n\leq t_0-1<t_0+\varepsilon$. Then,  for any $t \in [\tau_n, t_0+\varepsilon]$ we obtain
\begin{equation*}
\| \vv_n(t,\cdot)\|^2_{H^1} \leq \| \vv_n(\tau_n, \cdot)\|^2_{H^1}\, e^{\mathfrak{C}_0(t-\tau_n)}+ C\frac{\| \vf \|^2_{L^2}}{\mathfrak{C}_0\,\nu}  \left( 1- e^{\mathfrak{C}_0 (t-\tau_n)} \right) \leq \| \vv_n(\tau_n, \cdot)\|^2_{H^1}\, e^{\mathfrak{C}_0(t-\tau_n)}+ C\frac{\| \vf \|^2_{L^2}}{\mathfrak{C}_0\,\nu}. 
\end{equation*}

To control we first term on the right-hand side, on the one hand recall that the term $\| \vv_n(\tau_n, \cdot)\|^2_{H^1}$ is uniform controlled in (\ref{Control-vn-tau-pointwise}). On the other hand, as $t_0-2 \leq  \tau_n$ and $t\leq t_0+\varepsilon$ we have $0\leq t-\tau_n \leq 2+\varepsilon$, where $\varepsilon$ is independent of $n$ and $t_0$. 

\medskip

We thus obtain
\begin{equation}\label{K0}
\| \vv_n(t,\cdot)\|^2_{H^1} \leq M_0 \, e^{\mathfrak{C}_0(2+\varepsilon)}+C\frac{\| \vf \|^2_{L^2}}{\mathfrak{C}_0\,\nu}=: \mathcal{K}_0. 
\end{equation}

Finally, note that  we have $[t_0-1, t_0] \subset [\tau_n, t_0+\varepsilon]$. Additionally, note that the constant $\mathcal{K}_0$ defined above is also  independent of $n$ and $t_0$.  Then, we can write
\[ \sup_{n \in \mathbb{N}} \left( \sup_{t \in [t_0-1, t_0]} \| \vv_n(t,\cdot)\|^2_{H^1} \right) \leq \mathcal{K}_0, \]
which is the desired estimate (\ref{Uniform-Estimate-vn-H1}). Proposition \ref{Prop:Tech-Unif-Estim-vn-m-0} is thus proven. 
\end{proof}	

\emph{Step 3: Convergence to zero for $\vw_n$}. Regarding the equation (\ref{Alpha-model-wn}), as the external force is equal to zero, we have the following convergence.

\begin{Proposition}\label{Prop2:Vanishing-wn} Let $t_0 \in \R$ be the fixed but arbitrary time given in the statement of Proposition \ref{Prop:Tech-Unif-Estim-vn-m-0}. Additionally, for any $n\in \mathbb{N}$, let $\vw_n \in  (L^\infty_{t})_{\rm loc} (L^2_{\text{s}})_x  \cap (L^2_{t})_{\rm loc} \dot{H}^{1}_x \big( [-t_n, +\infty[ \times \T\big)$ be the unique solution of equation (\ref{Alpha-model-wn}), obtained in the first point of Proposition \ref{Prop1:Decomposition-Eternal-Sol}.
	
\medskip
	
Consider the family of functions $(\vw_n)_{n \in \mathbb{N}}$.  Then,  we have that $\| \vw_n(t_0,\cdot)\|_{L^2}\to 0$ as $n\to +\infty$.
 \end{Proposition}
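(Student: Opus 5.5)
We plan to use the energy estimate (\ref{Energy-estimate-psi}) of Lemma \ref{Lem:GWP-Unified-Equation} applied to (\ref{Alpha-model-wn}), with $\vpsi=\vw_n$ and $\vg=0$. Since $\vu_\alpha$ and $\vw_n$ are divergence-free, the transport term gives no contribution, so for every $t\geq -t_n$
\[
\frac{d}{dt}\| \vw_n(t,\cdot)\|^2_{L^2}+ 2\nu\| \vw_n(t,\cdot)\|^2_{\dot{H}^{1}} \leq 0 .
\]
Because $\vw_n$ is mean-free, we apply the Poincar\'e inequality $\| \vw_n \|_{L^2}\leq \frac{L}{2\pi}\| \vw_n\|_{\dot{H}^1}$, exactly as in the proof of Lemma \ref{Lem:Control-vn-tau-n-H1}. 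This yields the linear differential inequality
\[
\frac{d}{dt}\| \vw_n(t,\cdot)\|^2_{L^2}\leq - c\,\nu\, \| \vw_n(t,\cdot)\|^2_{L^2}, \qquad c=\frac{8\pi^2}{L^2}.
\]

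Next, Gr\"onwall's lemma on $[-t_n,t_0]$ is available since $t_n>2-t_0$ ensures $-t_n<t_0$. Using the initial condition $\vw_n(-t_n,\cdot)=\vu(-t_n,\cdot)$, it gives
\[
\| \vw_n(t_0,\cdot)\|^2_{L^2}\leq e^{-c\nu (t_0+t_n)}\, \| \vu(-t_n,\cdot)\|^2_{L^2}.
\]
The key point is that $\vu$ is a complete trajectory in $\mathcal{A}(\vf)\subset \mathcal{B}$. By the uniform bound (\ref{Unif-Control-L2-Intro}), we therefore have $\| \vu(-t_n,\cdot)\|^2_{L^2}\leq \mathcal{R}^2$ independently of $n$. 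Hence
\[
\| \vw_n(t_0,\cdot)\|^2_{L^2}\leq \mathcal{R}^2\, e^{-c\nu (t_0+t_n)} \longrightarrow 0 \quad (n\to+\infty),
\]
because $t_n\to+\infty$ while $t_0$ is fixed.

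The only delicate step is justifying the energy inequality, i.e.\ the cancellation $\int_{\T}\P((\vu_\alpha\cdot\vec{\nabla})\vw_n)\cdot \vw_n\,dx=0$, at the level of regularity of $\vw_n$. This was already handled in the proof of Lemma \ref{Lem:GWP-Unified-Equation}. There, the bound (\ref{Estim-Nonlinear}) shows the nonlinear term lies in $L^2_tL^{6/5}_x$ and pairs with $\vw_n\in L^2_tL^6_x$. The cancellation itself follows from $\operatorname{div}(\vu_\alpha)=0$ and $\vu_\alpha\in L^\infty(\R,H^2(\T))$. So we can simply invoke (\ref{Energy-estimate-psi}), and no further obstacle is expected.

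Combined with Proposition \ref{Prop:Tech-Unif-Estim-vn-m-0} and the decomposition (\ref{Decomposition}), this convergence will give the bound $\|\vu(t_0)\|_{H^1}^2\leq\mathcal{K}_0$. The idea is that $\vv_n(t_0)=\vu(t_0)-\vw_n(t_0)\to\vu(t_0)$ in $L^2$ while remaining bounded in $H^1$, so weak lower semicontinuity of the $H^1$-norm applies.
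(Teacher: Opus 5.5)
Your proposal is correct and matches the paper's proof. Both use the energy inequality (\ref{Energy-estimate-psi}) with $\vg=0$, the Poincar\'e inequality with rate $8\pi^2\nu/L^2$, Gr\"onwall from $-t_n$ with $\vw_n(-t_n,\cdot)=\vu(-t_n,\cdot)$, and a uniform $L^2$ bound on $\vu$ before letting $t_n\to+\infty$. The only cosmetic differences are that you bound $\|\vu(-t_n,\cdot)\|^2_{L^2}$ by $\mathcal{R}^2$ via $\mathcal{A}(\vf)\subset\mathcal{B}$ where the paper uses $\sup_{t\in\R}\|\vu(t,\cdot)\|_{L^2}$, and that you correctly keep the squares that the paper's displays drop.
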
	
\begin{proof} In  the energy estimate (\ref{Energy-estimate-psi}), we setting  $\vec{\Psi}=\vw_n$ and $\vg=0$, for any $n\in \mathbb{N}$ and $t\geq -t_n$ we obtain that 
	\begin{equation*}
	\frac{d}{dt} \| \vw_n(t,\cdot)\|^2_{L^2}+ 2\nu \| \vw_n(t,\cdot)\|_{\dot{H}^1}\leq 0. 
	 \end{equation*}

From the Poincar\'e's inequality, we have 
\[ \| \vu(t,\cdot)\|_{L^2}\leq \frac{L}{2\pi}\| \vu(t,\cdot)\|_{\dot{H}^1},\]
and setting $\eta:= \frac{8\pi^2 \nu}{L^2}>0$   we obtain 
\begin{equation*}
	\frac{d}{dt} \| \vw_n(t,\cdot)\|^2_{L^2} \leq - \eta \| \vw_n(t,\cdot)\|^2_{L^2}.
\end{equation*} 
Then, for any $t\geq -t_n$,   Gr\"onwall's lemma and the fact that $\vw_n(-t_n,\cdot)=\vu(-t_n,\cdot)$ gives:
\begin{equation*}
\| \vw_n (t,\cdot)\|^2_{L^2} \leq \| \vu(-t_n,\cdot)\|^2_{L^2}\, e^{-\eta(t+t_n)}. 
\end{equation*}

Finally, as $\vu \in L^\infty\big( \R, L^2_{\text{s}}(\T)\big)$ we denote $\| \vu \|_{L^\infty_t L^2_x}:=\ds{\sup_{t\in \R} \| \vu(t,\cdot)\|_{L^2}}$. Then  we can write
\begin{equation*}
\| \vw_n (t,\cdot)\|^2_{L^2} \leq \| \vu \|_{L^\infty_t L^2_x}\, e^{-\eta(t+t_n)},
\end{equation*}
from which the desired convergence follows by setting $t=t_0$ and recalling that $t_n \to +\infty$ and $n\to +\infty$. Proposition \ref{Prop2:Vanishing-wn} is proven.  \end{proof}		

\emph{Step 4: En of the proof of Theorem \ref{Th-Regularity-Atrtractor}}. On the one hand,  from the uniform estimate (\ref{Uniform-Estimate-vn-H1}) obtained in Proposition \ref{Prop:Tech-Unif-Estim-vn-m-0},  	by setting $t=t_0$ we obtain  
\[  \sup_{n\in \mathbb{N}} \| \vv_n(t_0,\cdot) \|^2_{H^{1}} \leq \mathcal{K}_0. \]
Then, there exists $\vv \in H^{1}(\T)$ such that $\vv_n(t_0,\cdot)\to \vv$, as $n\to +\infty$, in the weak topology of the space $H^{1}(\T)$. 

\medskip

On the other hand,  in the identity (\ref{Decomposition}) proven in Proposition \ref{Prop1:Decomposition-Eternal-Sol}, by setting $t=t_0$  for any $n\in \mathbb{N}$ we write
\[ \vu(t_0,\cdot)-\vv_n(t_0,\cdot)= \vw_n(t_0,\cdot). \]
Therefore, from the convergence property obtained in Proposition \ref{Prop2:Vanishing-wn} we obtain that $\vv_n(t_0,\cdot)\to \vu(t_0,\cdot)$, as $n\to +\infty$, in the strong topology of the space $L^2(\T)$. 

\medskip

We thus conclude that $\vu(t_0,\cdot)=\vv$. Moreover, from the convergence in the weak topology of $H^{1}(\T)$, we obtain that 
\begin{equation}\label{Uniform-Control-u-t0}
\| \vu(t_0,\cdot)\|^2_{H^{1}} \leq \liminf_{n\to +\infty}\| \vv_n(t_0,\cdot)\|^2_{H^{1}} \leq \mathcal{K}_0. 
\end{equation}

Finally, since $\mathcal{K}_0$ (defined in expression (\ref{K0})) does not depend on $t_0$, and since $\vu \in \mathcal{T}(\mathcal{A})$ and $t_0\in \R$ were arbitrary,  we obtain the desired uniform estimate (\ref{Regularity-Atrractor}). Theorem \ref{Th-Regularity-Atrtractor} is proven.

\subsection{Proof of Proposition \ref{Prop-Hihger-Regularity-Atrtractor}}  For a fixed $m\geq 1$, assume that the external force $\vf$ satisfies (\ref{Reg-force-attractor}) and that the functions $A(\cdot)$ and $\varphi$ in the filter equation (\ref{Filter-Equation}) satisfy (\ref{conditions-a-varphi-regularity-attractor}).

\medskip

Let $(\vu, \vu_\alpha) \in \mathcal{T}(\mathcal{A}) \times L^\infty\big( \R, H^2(\T)\big)$ be an arbitrary solution of the coupled system (\ref{Alpha-model-eternal-attractor}). The proof of estimate (\ref{Regularity-Atrractor-H-m+1}) follows from the same four steps detailed in the proof of Theorem \ref{Th-Regularity-Atrtractor}.

\medskip

Therefore, we focus solely on the second step, in which we obtain uniform higher-order regularity estimates for solutions $\vv_n$ of equation (\ref{Alpha-model-vn}), as stated in Lemma \ref{Lem:Tech-Unif-Estim-vn-H-m+1} below. 

\medskip

To obtain these uniform higher-order regularity estimates for $\vv_n$, we first need some regularity properties of $\vu_\alpha$ and $\vv_n$ in equation (\ref{Alpha-model-vn}).

\medskip
 
\begin{Lemma}\label{Lem-Reg-u-alpha-Hm} For $m \geq 1$ assume (\ref{Reg-force-attractor}) and (\ref{conditions-a-varphi-regularity-attractor}). Let $t_0 \in \R$ be the fixed time given in the statement of Proposition \ref{Prop1:Decomposition-Eternal-Sol} and, for any $n\in \mathbb{N}$, let $t_n>\max{ 2-t_0, 0 }$ be the sequence of times also given in the statement of Proposition \ref{Prop1:Decomposition-Eternal-Sol}. The following statements hold.
	\begin{enumerate}
		\item For any $n\in \mathbb{N}$, we have 
		\[\vu_\alpha \in \mathcal{C}\big( [-t_n, +\infty[, H^{m+3}(\T) \big).\]
		
		\item For any $n\in \mathbb{N}$, let
		\[\vv_n \in  L^\infty_{\rm loc} \big( [-t_n, +\infty], L^2_{\rm s}(\T)\big) \cap L^2_{\rm loc} \big([-t_n, +\infty], \dot{H}^1(\T)\big),\]
		be the unique solution of equation (\ref{Alpha-model-vn}), obtained in the first point of Proposition \ref{Prop1:Decomposition-Eternal-Sol}. Then, we have
		\[ \vv_n \in \mathcal{C}\big( [-t_n, +\infty[, H^{m+1}(\T)\big) \cap L^2_{loc} \big( [-t_n,+\infty[, \dot{H}^{m+2}(\T) \big).  \]
	\end{enumerate}
\end{Lemma}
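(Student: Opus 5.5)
The plan follows the scheme of the proof of Lemma \ref{Lem-Reg-u-alpha-H1}. The only new ingredient is an extra elliptic step for the filter, which gains one derivative beyond what Theorem \ref{Th:Hm-Filter} gives at level $m$.

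\emph{Step 1 (regularity of $\vu$).} Fix $n$ and $\varepsilon_n>0$. As in Lemma \ref{Lem-Reg-u-alpha-H1}, restart the coupled system at time $-t_n-\varepsilon_n$ from the datum $\vu(-t_n-\varepsilon_n)$. By uniqueness (Theorem \ref{Th-GWP}), the resulting Leray solution coincides with $(\vu,\vu_\alpha)$. By (\ref{Reg-force-attractor}) and (\ref{conditions-a-varphi-regularity-attractor}), Theorem \ref{Th-Regularity} with $T_1=-t_n$ then gives
\[ \vu\in \mathcal{C}\big([-t_n,T_2],H^{m+1}(\T)\big)\cap L^2\big([-t_n,T_2],\dot H^{m+2}(\T)\big) \quad\text{and}\quad \vu_\alpha\in \mathcal{C}\big([-t_n,T_2],H^{m+2}(\T)\big) \]
for every $T_2$.

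\emph{Step 2 (one more derivative for $\vu_\alpha$).} I will rerun the iterative step of the proof of Theorem \ref{Th:Hm-Filter} at level $m+1$. The input is $\vu\in L^\infty_tH^{m+1}_x$ and $\vu_\alpha\in L^\infty_tH^{m+2}_x$, and we test with $\partial^{\bf a}\vw$, $|{\bf a}|=m+1$. The coefficients to control are $\partial^{\bf b}\big(A(\varphi\ast\vu)\big)$ with $|{\bf b}|\le m+2$, and we need them in $L^\infty$. Two observations should avoid requiring $A\in\mathcal{C}^{m+2,1}$:
\begin{itemize}
\item[(i)] Derivatives of $\varphi\ast\vu$ can be split as $(\partial^{\bf c'}\varphi)\ast(\partial^{\bf c''}\vu)$ with $|{\bf c'}|\le m+1$ and $|{\bf c''}|\le m+1$. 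Hence all derivatives of $\varphi\ast\vu$ up to order $2m+2\ge m+2$ are bounded in $L^\infty$ by $\|\varphi\|_{H^{m+1}}\|\vu\|_{H^{m+1}}$, by Young's inequality $L^2\ast L^2\subset L^\infty$.
\item[(ii)] In the Fa\`a di Bruno expansion, the only term involving $m+2$ derivatives of $A$ is $\partial_j\big(D^{m+1}A(\varphi\ast\vu)\big)\cdot\prod(\cdots)$. Since $D^{m+1}A$ is Lipschitz and $\varphi\ast\vu$ is Lipschitz in $x$ by (i), the composition $D^{m+1}A(\varphi\ast\vu)$ is Lipschitz. By Rademacher's theorem its gradient lies in $L^\infty$, with bound $\mathrm{Lip}(D^{m+1}A)\,\|\vec\nabla(\varphi\ast\vu)\|_{L^\infty}$.
\end{itemize}
With these $L^\infty$ coefficient bounds, the same Cauchy--Schwarz, duality and lower-bound-$\beta$ argument leading to (\ref{Control-Hm-Filter-prevous}) yields
\[\|\vu_\alpha\|_{H^{m+3}}\lesssim \|\vu_\alpha\|_{H^{m+2}}\big(1+\|\vu\|_{H^{m+1}}^{m+2}\big)+\|\vu\|_{H^{m+1}},\]
so that $\vu_\alpha\in L^\infty([-t_n,T_2],H^{m+3})$.

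\emph{Step 3 (time continuity).} Continuity in time follows the scheme of Proposition \ref{Prop-Tech:Recursive-Estim-Filter-Diff-Hm} at level $m+1$. The only delicate term is the coefficient difference $\partial^{\bf b}\big(A(\varphi\ast\vu_1)-A(\varphi\ast\vu_2)\big)$ with $|{\bf b}|=m+2$. Here I plan to use the Lipschitz bound for $D^{m+1}A$ only at order $m+1$, and to keep the highest derivative on the convolution factor $\varphi\ast\vw$. If the top-order difference cannot be bounded pointwise, I would fall back on a weaker route: interpolate the $L^\infty_tH^{m+3}$ bound of Step 2 against the $\mathcal{C}_tH^{m+2}$ continuity of Step 1. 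This gives continuity in $H^{m+3-\delta}$; strong continuity in $H^{m+3}$ itself would then follow if Step 2 can be upgraded to an a priori estimate for $\vu_\alpha(t_1)-\vu_\alpha(t_2)$. Since $-t_n$ and $T_2$ are arbitrary, this yields the first point.

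\emph{Step 4 (second point).} Given $\vu_\alpha\in\mathcal{C}_tH^{m+3}_x\subset\mathcal{C}_tW^{m+1,\infty}_x$, I apply the bootstrap of the proof of Theorem \ref{Th-Regularity} to the linear system (\ref{Alpha-model-vn}), starting from $\vv_n(-t_n)=0$. This requires no smoothing at the initial time, so the conclusion holds from $-t_n$ on. At each stage, $(\vu_\alpha\cdot\vec\nabla)\vv_n\in L^2_t\dot H^k_x$ by the Leibniz rule, and Lemma \ref{Lem-Tech-2} with $\vf\in H^m$ raises $\vv_n$ to $\mathcal{C}_tH^{k+1}\cap L^2_t\dot H^{k+2}$, up to $k=m$.

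The main obstacle is Step 3, the top-order coefficient difference in the time-continuity estimate.
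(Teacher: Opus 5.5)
\textbf{Steps 1 and 4 match the paper; Step 3 is a genuine gap.} Your Steps 1 and 4 are exactly the paper's proof. You restart at $-t_n-\varepsilon_n$ from $\vu(-t_n-\varepsilon_n)$, identify the two solutions by the uniqueness in Theorem \ref{Th-GWP}, and apply Theorem \ref{Th-Regularity} with $T_1=-t_n$. Then you run the mild-formulation bootstrap on the linear problem (\ref{Alpha-model-vn}) with zero datum.

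The gap is Step 3, which you leave open, and it cannot be closed along the lines you suggest. Running Proposition \ref{Prop-Tech:Recursive-Estim-Filter-Diff-Hm} at level $m+1$ requires control of $m+2$ derivatives of $A(\varphi\ast\vu_1)-A(\varphi\ast\vu_2)$. In the chain rule, the term that resists is not one with many derivatives on $\varphi\ast\vw$. It is, schematically,
\[
\big(D^{m+2}A(\varphi\ast\vu_1)-D^{m+2}A(\varphi\ast\vu_2)\big)\big(\vec{\nabla}(\varphi\ast\vu_2)\big)^{\otimes(m+2)},
\]
and here no derivative can be shifted. Bounding it by $|\varphi\ast\vw|$ needs $D^{m+2}A$ Lipschitz, i.e.\ $A\in\mathcal{C}^{m+2,1}$. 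For $A\in\mathcal{C}^{m+1,1}$, $D^{m+2}A$ is only an $L^\infty$ function defined a.e. So $t\mapsto \partial^{m+2}\big(A(\varphi\ast\vu(t))\big)$ is bounded in $L^\infty$ but in general only weak-$*$ continuous: composing an $L^\infty$ function with a moving, possibly rank-deficient, smooth map need not be continuous in $L^2$. This factor, multiplied by $\vec{\nabla}\otimes\vu_{2,\alpha}$, is precisely the top-order forcing in the equation for $\partial^{\bf a}(\vu_{1,\alpha}-\vu_{2,\alpha})$ with $|{\bf a}|=m+1$. The ``upgrade'' you mention is therefore exactly this blocked estimate. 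Steps 1--2 give only weak continuity in $H^{m+3}$ and strong continuity in $H^{m+3-\delta}$.

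Step 2 is sound in substance: (i) and (ii) do give $A(\varphi\ast\vu)\in W^{m+2,\infty}$, hence $\vu_\alpha\in L^\infty_{\rm loc}H^{m+3}$. Carry it out with divergence-free test functions as in Proposition \ref{Prop-Tech:Recursive-Estim-Filter-Diff-Hm}, because testing against $\P\vw$ as in Theorem \ref{Th:Hm-Filter} only controls the Leray projection of $A(\varphi\ast\vu)\,\partial^{\bf a}\Delta\vu_\alpha$. Even so, Step 2 does not give point (1) as stated.

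\textbf{The paper does not prove $H^{m+3}$ either.} Its proof of point (1) quotes Theorem \ref{Th-Regularity}, which delivers only $\vu_\alpha\in\mathcal{C}_tH^{m+2}_x$. Remark \ref{Rmk:Reg-u-alpha} says explicitly that $\mathcal{C}_tH^{m+3}_x$ would require $A\in\mathcal{C}^{m+2,1}$ and $\varphi\in H^{m+2}$. The paper's second point opens with ``once we have $\vu_\alpha\in\mathcal{C}([-t_n,+\infty[,H^{m+2})$'', and Lemma \ref{Lem-Gronwall-vn-Hm} uses only $\mathcal{C}_tH^{m+2}_x\subset\mathcal{C}_tW^{m,\infty}_x$. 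So the exponent $m+3$ in point (1) is an overstatement. What is actually established, and all that is used later, is $\vu_\alpha\in\mathcal{C}([-t_n,+\infty[,H^{m+2}(\T))$, which your Step 1 already gives.

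Your Step 4 does not need $H^{m+3}$ either. The Leibniz bound placing $(\vu_\alpha\cdot\vec{\nabla})\vv_n$ in $L^2_t\dot H^k_x$ for $k\le m$ only uses $\vu_\alpha\in W^{m,\infty}$. With $H^{m+2}$ in point (1), your Steps 1 and 4 prove the lemma exactly as the paper does. Steps 2--3 aim at a stronger claim that neither you nor the paper establishes.
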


\begin{proof} \emph{First point}. Let $t_n$ be fixed. Moreover, let $\varepsilon_n>0$ be fixed. Then, for a generic unknown $\vpsi$, consider the initial value problem for the coupled system:
	\begin{equation*}
	\begin{cases}\vspace{2mm}
	\partial_t \vpsi - \nu\Delta \vpsi + \P\big( (\vpsi_{\alpha} \cdot \vec{\nabla})\vpsi\big)  = \vf, \qquad {\rm div}(\vpsi)=0, \\  \vspace{2mm}
	-\alpha^2 , \P\Big( {\rm div} \left( A (\varphi \ast \vpsi) (\vec{\nabla} \otimes \vpsi_\alpha)^T \right)\Big)+\vpsi_{\alpha} = \vpsi, \qquad {\rm div}(\vpsi_{\alpha})=0, \\
	\vpsi(-t_n-\varepsilon_n)= \vu(-t_n-\varepsilon_n),
	\end{cases}
	\end{equation*}
	where $\vu \in \mathcal{T}(\mathcal{A})$ is the bounded eternal solution of (\ref{Alpha-model-eternal-attractor}).

	\medskip	
	
	Applying Theorem \ref{Th-GWP} to this system we obtain a unique solution 
	\[  \vpsi \in (L^\infty_t)_{\rm loc}(L^2_{\text{s}})_x \cap (L^2_t)_{\rm loc}\dot{H}^1_x \big( [-t_n-\varepsilon_n, +\infty[ \times \T \big) \quad \text{and} \quad \vpsi_{\alpha} \in (L^\infty_t)_{\rm loc}H^1_x \big(  [-t_n-\varepsilon_n, +\infty[ \times \T\big). \]
	Thereafter, as $\vf, A(\cdot)$ and $\varphi$ satisfy (\ref{Reg-force-attractor}) and (\ref{conditions-a-varphi-regularity-attractor}), applying now Theorem \ref{Th-Regularity} (with $T_1=-t_n>-t_n-\varepsilon_n$ and any $T_2>T_1$) we have that 
	\[ \vpsi \in \mathcal{C}\big( [-t_n, +\infty[, H^{m+1}(\T) \big) \quad \text{and} \quad \vpsi_\alpha \in \mathcal{C}\big( [-t_n, +\infty[, H^{m+3}(\T) \big).  \]

	On the other hand, in the interval of time $[-t_n-\varepsilon_n, +\infty[$, observe that $\vu$ and $\vu_\alpha$ also constitute a solution of the system above with the same initial datum $\vu(-t_n-\varepsilon_n)$. Consequently, by uniqueness of solutions given again by Theorem \ref{Th-GWP}, we conclude that $\vu_\alpha = \vpsi_{\alpha}$ on $[-t_n-\varepsilon_n, +\infty[$, which implies the desired gain of regularity for $\vu_{\alpha}$. 
	
	\medskip
	
\emph{Second point}. 	Once we have that $\vu_\alpha \in \mathcal{C}\big( [-t_n, +\infty[, H^{m+2}(\T) \big)$, and since $\vv_n(-t_n, \cdot)=0$, applying the same arguments as in the proof of Theorem \ref{Th-Regularity} to the system (\ref{Alpha-model-vn}), we obtain 
	\[ \vv_n \in \mathcal{C}\big( [-t_n, +\infty[, H^{m+1}(\T)\big) \cap L^2_{\rm loc} \big( [-t_n,+\infty[, \dot{H}^{m+2}(\T) \big). \]
Lemma \ref{Lem-Reg-u-alpha-Hm} is proven. 	
\end{proof}

These regularity properties of $\vu_\alpha$ and $\vv_n$ now allow us to verify the following $H^{m+1}-$energy estimate. In particular, note that, since $\vu_\alpha \in  \mathcal{C}_t H^{m+2}_x $ by Sobolev embeddings, we have  $\vu_\alpha \in \mathcal{C}_t W^{m,\infty}_x$.

\begin{Lemma}\label{Lem-Gronwall-vn-Hm}  Under the same framework as in Lemma \ref{Lem-Reg-u-alpha-Hm}, for any $t \geq -t_n$  the following estimate holds:
	\begin{equation}\label{Gronwall-vn-Hm}
	\frac{d}{dt}\| \vv_n(t,\cdot)\|^2_{H^{m+1}} + \nu\| \vec{\nabla} \otimes \vv_n(t,\cdot) \|^2_{H^{m+1}} \leq  \frac{C}{\nu} \| \vu_\alpha(t,\cdot) \|^2_{W^{m,\infty}}\, \| \vv_n(t,\cdot) \|^2_{H^{m+1}}+ C \frac{\| \vf \|^2_{H^m}}{\nu}. 
	\end{equation}
\end{Lemma}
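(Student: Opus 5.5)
We derive the estimate (\ref{Gronwall-vn-Hm}) by a higher-order energy method, in the same way as the $H^1$-estimate (\ref{Estim-Der-Time-H1}), now using the Bessel potential $\mathcal{J}^{2(m+1)}:=(I_d-\Delta)^{m+1}$. By the second point of Lemma \ref{Lem-Reg-u-alpha-Hm}, $\vv_n \in \mathcal{C}_t H^{m+1}_x\cap (L^2_t)_{\rm loc}\dot{H}^{m+2}_x$. To avoid pairing $\partial_t \vv_n$ with a function of unknown regularity, we apply $\mathcal{J}^{m+1}$ to equation (\ref{Alpha-model-vn}) and test against $\mathcal{J}^{m+1}\vv_n$. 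Here $\mathcal{J}^{m+1}\partial_t\vv_n=\partial_t \mathcal{J}^{m+1}\vv_n\in (L^2_t)_{\rm loc} \dot{H}^{-1}_x$, and $\mathcal{J}^{m+1}\vv_n\in (L^2_t)_{\rm loc}\dot H^1_x$. The Lions--Magenes lemma then justifies
\[
\frac12\frac{d}{dt}\|\vv_n\|_{H^{m+1}}^2+\nu\|\vec{\nabla}\otimes\vv_n\|_{H^{m+1}}^2=-\int_{\T}\mathcal{J}^{m}\P\big((\vu_\alpha\cdot\vec{\nabla})\vv_n\big)\cdot\mathcal{J}^{m+2}\vv_n\,dx+\int_{\T}\mathcal{J}^{m}\vf\cdot\mathcal{J}^{m+2}\vv_n\,dx .
\]
In the two integrals we have moved one Bessel derivative onto the second factor, which is why the test function becomes $\mathcal{J}^{m+2}\vv_n$. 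This is legitimate because $\mathcal{J}^{m+2}\vv_n\in (L^2_t)_{\rm loc}L^2_x$.

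For the forcing term, Cauchy--Schwarz and Young give
\[
\|\vf\|_{H^m}\,\|\vv_n\|_{H^{m+2}}\le C\frac{\|\vf\|_{H^m}^2}{\nu}+\frac{\nu}{4}\|\vec{\nabla}\otimes\vv_n\|_{H^{m+1}}^2,
\]
where we used the Poincar\'e-type bound $\|\vv_n\|_{H^{m+2}}\le C\|\vec{\nabla}\otimes\vv_n\|_{H^{m+1}}$ for mean-free fields, exactly as in the $H^1$ case. For the transport term, $\P$ is bounded on $H^m$, so it is bounded by
\[
\|(\vu_\alpha\cdot\vec{\nabla})\vv_n\|_{H^m}\,\|\vv_n\|_{H^{m+2}} .
\]

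The key step is a product estimate with the $W^{m,\infty}$-norm of $\vu_\alpha$:
\[
\|(\vu_\alpha\cdot\vec{\nabla})\vv_n\|_{H^m}\le C\|\vu_\alpha\|_{W^{m,\infty}}\|\vec{\nabla}\otimes\vv_n\|_{H^m}\le C\|\vu_\alpha\|_{W^{m,\infty}}\|\vv_n\|_{H^{m+1}}.
\]
Since $m$ is an integer, this follows from the Leibniz rule. For $|{\bf a}|\le m$ we expand $\partial^{\bf a}(u_{\alpha,j}\partial_j \vv_n)=\sum_{{\bf b}\le{\bf a}} C_{{\bf a},{\bf b}}\,\partial^{\bf b}u_{\alpha,j}\,\partial^{{\bf a}-{\bf b}}\partial_j\vv_n$. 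Each summand is bounded by $\|\partial^{\bf b}\vu_\alpha\|_{L^\infty}\|\partial^{{\bf a}-{\bf b}}\partial_j\vv_n\|_{L^2}$. This is the same computation as in the proof of Theorem \ref{Th-Regularity}, and it avoids the fractional Leibniz rule of Lemma \ref{Lem-Tech-3}. The right-hand side is finite because $\vu_\alpha\in\mathcal{C}_tH^{m+2}_x\subset\mathcal{C}_tW^{m,\infty}_x$, by the first point of Lemma \ref{Lem-Reg-u-alpha-Hm}.

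Young's inequality then gives, for the transport term, the bound
\[
\frac{C}{\nu}\|\vu_\alpha\|_{W^{m,\infty}}^2\|\vv_n\|_{H^{m+1}}^2+\frac{\nu}{4}\|\vec{\nabla}\otimes\vv_n\|_{H^{m+1}}^2 .
\]
We absorb both $\frac{\nu}{4}$ terms into the left-hand side and multiply the result by $2$, which yields (\ref{Gronwall-vn-Hm}). We deliberately do not use the cancellation $\int(\vu_\alpha\cdot\vec{\nabla})\vw\cdot\vw=0$: the crude bound already has exactly the stated form, with the factor $\|\vu_\alpha\|_{W^{m,\infty}}^2/\nu$.

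The main obstacle is rigour rather than the estimates themselves. First, the identity for $\frac{d}{dt}\|\vv_n\|_{H^{m+1}}^2$ must be justified with only the available regularity; this is handled by the Lions--Magenes duality argument above, or alternatively by a Galerkin/Fourier truncation $P_N\vv_n$ followed by the limit $N\to\infty$. Second, one must check that the constant $C$ depends only on $m$ and $L$, so that it is independent of $n$, $t_0$ and $t$.
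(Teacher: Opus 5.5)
Your proposal is correct and follows essentially the same route as the paper. Both arguments use a Bessel-potential energy estimate: the paper applies $\mathcal{J}^m$ and tests with $\mathcal{J}^{m+2}\vv_n$, which is your $\mathcal{J}^{m+1}$-pairing with one derivative shifted. Both then use the Moser-type bound $\|(\vu_\alpha\cdot\vec{\nabla})\vv_n\|_{H^m}\leq C\|\vu_\alpha\|_{W^{m,\infty}}\|\vv_n\|_{H^{m+1}}$, which you justify via the Leibniz rule where the paper simply cites it, followed by Poincar\'e and Young.

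The one step the paper makes explicit that you leave implicit is the check $\partial_t\vv_n\in(L^2_t)_{\rm loc}H^m_x$. Your Lions--Magenes step needs this, and it follows from that same product estimate.
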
	
\begin{proof} As $\vv_n$ satisfies the equation 
	\begin{equation*}
	\partial_t \vv_n  =  \nu \Delta \vv_n  - \P \big( \text{div}\big( \vv_n \otimes \vu_\alpha \big) \big)+ \vf, 
	\end{equation*}
	we begin by verifying that each term on the right-hand side belongs to $(L^2_t)_{loc} H^m_x$. The first and third terms can be estimated directly.  For the second term, as  $\vu_\alpha \in  (L^\infty_t)_{\rm loc}H^{m+2}_x$ and $\vv_n \in \mathcal{C}_t H^{m+1}_x\cap (L^2_t) \dot{H}^{m+2}_x\subset (L^2_t)_{\rm loc}H^{m+2}_x$, since $m+2> \frac{3}{2}$, from   the product laws in Sobolev spaces we obtain that 
	\begin{equation*}
	\begin{split}
	\left\| \P\big( \text{div}\big( \vv_n \otimes \vu_\alpha \big)\big) \right\|_{(L^2_t)_{loc} H^{m}_x} \leq &\, \left\| \P\big( \text{div}\big( \vv_n \otimes \vu_\alpha \big)\big) \right\|_{(L^2_t)_{loc} H^{m+1}_x}\leq \,  C\,  \left\|  \vv_n \otimes \vu_\alpha  \right\|_{(L^2_t)_{loc} H^{m+2}_x} \\
	\leq &\,  C\, \| \vu_\alpha \|_{(L^\infty_t)_{loc}H^{m+2}_x}\, \| \vv_n\|_{(L^2_t)_{loc}H^{m+2}_x} <+\infty. 
	\end{split}
	\end{equation*}
	
With this information, returning to the last equation and applying the Bessel potential $ \mathcal{J}^m:=(I_d-\Delta)^{\frac{m}{2}}$ (given by expression (\ref{Bessel-Potental})) to each term, we write
	\begin{equation}\label{Identity-Tech-Jm}
	\partial_t \mathcal{J}^m \vv_n  =  \nu \Delta \mathcal{J}^m\vv_n  - \mathcal{J}^m \P \big( \text{div}\big( \vv_n \otimes \vu_\alpha \big) \big)+\mathcal{J}^m\vf, 
	\end{equation}
	where each term belongs to $(L^2_t)_{loc}L^2_x$. 
	
	\medskip
	
	On the other hand,  as $\vv_n \in \mathcal{C}_t H^{m+1}_x \cap (L^2_t)_{loc} \dot{H}^{m+2}_x \subset (L^2_t)_{loc}H^{m+2}_x$ we have $\mathcal{J}^{m+2} \vv_n \in  (L^2_t)_{loc}L^2_x$. Thereafter, we can write
	\begin{equation*}
	\int_{\T}  \left(  \partial_t \mathcal{J}^m \vv_n -  \nu \Delta \mathcal{J}^m\vv_n  + \mathcal{J}^m\P \big( \text{div}\big( \vv_n \otimes \vu_\alpha \big) \big)-\mathcal{J}^m \vf \right)\cdot \mathcal{J}^{m+2} \vv_n\, dx=0.
	\end{equation*}
	Integrating by parts the first and second term on the left-hand side, 
	we obtain 
	\begin{equation}\label{Estim-Der-Time-Hm}
	\begin{split}
	&\, \frac{1}{2}\frac{d}{dt} \| \vv_n(t,\cdot)\|^2_{H^{m+1}}+\nu \| \vec{\nabla}\otimes \vv_n (t,\cdot)\|^2_{H^{m+1}}\\
	=  &\, -   \int_{\T}  \mathcal{J}^m \P\big((\vu_\alpha\cdot\vec{\nabla})\vv_n\big) \cdot  \mathcal{J}^{m+2}\vv_n \, dx +  \int_{\T} \mathcal{J}^m\vf \cdot \mathcal{J}^{m+2}\vv_n \, dx,
	\end{split}
	\end{equation}
	where we must estimate each term on the right-hand side.  
	
	\medskip
	
For the first term, applying the Cauchy-Schwarz inequality and the Moser-type inequality\footnote{For any $m\in \mathbb{N}$ we have $\| fg \|_{H^m}\leq C \| f \|_{W^{m,\infty}}\, \| g \|_{H^m}$.} together with Sobolev embeddings, we have
	\begin{equation*}
	\begin{split}
	&\, \left| \int_{\T}  \mathcal{J}^m \P\big((\vu_\alpha\cdot\vec{\nabla})\vv_n\big) \cdot  \mathcal{J}^{m+2}\vv_n \, dx  \right| \leq  C\, \|  (\vu_\alpha\cdot\vec{\nabla})\vv_n\big \|_{H^m}\, \| \vv_n \|_{H^{m+2}}  \\
	\leq &\, C\, \| \vu_\alpha \, (\vec{\nabla} \otimes \vv_n) \|_{H^m}\, \| \vv_n \|_{H^{m+2}} \leq C\, \| \vu_\alpha \|_{W^{m,\infty}}\, \|  \vec{\nabla} \otimes \vv_n \|_{H^m}\, \| \vv_n \|_{H^{m+2}}\\
	\leq &\, C\,  \| \vu_\alpha \|_{W^{m,\infty}}\, \|   \vv_n \|_{H^{m+1}}\, \| \vv_n \|_{H^{m+2}}.
	\end{split}
	\end{equation*}
	
	Here, for the  term $\| \vv_n \|_{H^{m+2}}$,   using the Poincar\'e's inequality    we obtain
	\begin{equation*}
	\| \vv_n \|_{H^{m+2}} = \, \| \vv_n \|_{L^2}+\|  \vv_n  \|_{\dot{H}^{m+2}}\leq \frac{L}{2\pi} \| \vec{\nabla} \otimes \vv_n \|_{L^2}+ \| \vec{\nabla} \otimes \vv_n \|_{\dot{H}^{m+1}}  \leq \, C\, \| \vec{\nabla} \otimes \vv_n \|_{H^{m+1}}. 
	\end{equation*}
	
	Gathering these estimates and	applying the discrete Young inequalities (with $1=\frac{1}{2}+\frac{1}{2}$), we write
	\begin{equation}\label{Estim-Der-Time-Hm-1}
	\begin{split}
	&\left| \int_{\T}  \mathcal{J}^m \P\big((\vu_\alpha\cdot\vec{\nabla})\vv_n\big) \cdot  \mathcal{J}^{m+2}\vv_n \, dx  \right|  \leq  \, C \| \vu_\alpha \|_{H^{m+2}}\, \|   \vv_n \|_{H^{m+1}} \| \vec{\nabla}\otimes \vv_n \|_{H^{m+1}}  \\
	\leq  &\, \frac{C}{\nu} \| \vu_\alpha \|^2_{W^{m,\infty}}\, \| \vv_n \|^2_{H^{m+1}}+ \frac{\nu}{4}\, \| \vec{\nabla}\otimes \vv_n \|^2_{H^{m+1}}. 
	\end{split}
	\end{equation}
	
For the second term, applying the Cauchy-Schwarz inequality together with the above estimate for $\| \vv_n \|_{H^{m+2}}$ and the same discrete Young inequalities, we write
	\begin{equation}\label{Estim-Der-Time-Hm-2}
	\begin{split}
	\left| \int_{\T} \mathcal{J}^m \vf \cdot \mathcal{J}^{m+2} \vv_n\, dx \right| \leq &\, \| \vf \|_{H^m}\, \| \vv_n \|_{H^{m+2}} \leq C\, \| \vf \|_{H^m}\, \| \vec{\nabla}\otimes \vv_n \|_{H^{m+1}} \\
	\leq &\, \frac{C}{\nu} \| \vf \|^2_{H^m} + \frac{\nu}{4}\, \| \vec{\nabla} \otimes \vv_n \|^2_{H^{m+1}}. 
	\end{split}
	\end{equation}
	
	Collecting estimates (\ref{Estim-Der-Time-Hm-1}) and (\ref{Estim-Der-Time-Hm-2}) in (\ref{Estim-Der-Time-Hm}), we obtain the desired estimate (\ref{Gronwall-vn-Hm}).  Lemma \ref{Lem-Gronwall-vn-Hm} is now proven. 
\end{proof}	

Once we have estimate (\ref{Gronwall-vn-Hm}) at our disposal, we are able to prove the uniform estimate (\ref{Regularity-Atrractor-H-m+1}). As mentioned, we follow the same steps as in the proof of Theorem \ref{Th-Regularity-Atrtractor}. Consequently, for simplicity, it is sufficient to focus on the following technical lemma, which is analogous to Proposition \ref{Prop:Tech-Unif-Estim-vn-m-0} when $m=0$, and this is the only step that differs from the corresponding argument in the proof of Theorem \ref{Th-Regularity-Atrtractor}.

\begin{Lemma}\label{Lem:Tech-Unif-Estim-vn-H-m+1} Under the same framework as in Lemma \ref{Lem-Reg-u-alpha-Hm}, assume the uniform control for the filtered trajectory $\vu_\alpha$   given in (\ref{Assumption-unif-bound-filtered}). 
	
	\medskip
	
	Then,  the following uniform estimate holds:
	\begin{equation}\label{Uniform-Estimate-vn-H-m+1}
	\sup_{n\in \mathbb{N}}\left(\, \sup_{t  \geq -t_n } \| \vv_n(t,\cdot)\|^2_{H^{m+1}}\right) \leq  \frac{C}{\nu^2 - \Lambda^2(\alpha, m)}\, \| \vf \|^2_{H^m}.   
	\end{equation} 
\end{Lemma}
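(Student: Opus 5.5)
The plan is to start from the differential inequality (\ref{Gronwall-vn-Hm}) of Lemma \ref{Lem-Gronwall-vn-Hm} and turn it into a Gr\"onwall inequality with a strictly negative linear coefficient. Uniqueness of complete trajectories is not needed here: $\vu_\alpha$ is the filtered trajectory of some $\vu\in\mathcal{T}(\mathcal{A})$. The a priori assumption (\ref{Assumption-unif-bound-filtered}) therefore gives $\| \vu_\alpha(t,\cdot)\|_{W^{m,\infty}}\leq \Lambda(\alpha,m)$ for every $t\in\R$. The right-hand side of (\ref{Gronwall-vn-Hm}) is then bounded by $\frac{C}{\nu}\Lambda^2\|\vv_n\|^2_{H^{m+1}}+\frac{C}{\nu}\|\vf\|^2_{H^m}$.

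For the dissipative term on the left, use the Poincar\'e inequality as in the proof of Lemma \ref{Lem-Gronwall-vn-Hm}. Since $\vv_n$ has zero mean, $\|\vv_n\|_{H^{m+1}}\leq C\|\vec{\nabla}\otimes\vv_n\|_{H^{m}}\leq C\|\vec{\nabla}\otimes \vv_n\|_{H^{m+1}}$, which gives $\nu\|\vec{\nabla}\otimes\vv_n\|^2_{H^{m+1}}\geq c\,\nu\|\vv_n\|^2_{H^{m+1}}$. To get the exact form $\nu^2-\Lambda^2$, normalize constants so that the absorption coefficient is $c\nu$ and the transport coefficient is $c\Lambda^2/\nu$; this is done by choosing the Young splitting in (\ref{Estim-Der-Time-Hm-1}) appropriately, with the generic constant $C$ absorbing $L$-dependent factors. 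This yields
\[
\frac{d}{dt}\|\vv_n(t,\cdot)\|^2_{H^{m+1}}\leq -\frac{c}{\nu}\big(\nu^2-\Lambda^2(\alpha,m)\big)\|\vv_n(t,\cdot)\|^2_{H^{m+1}}+\frac{C}{\nu}\|\vf\|^2_{H^m},
\]
and the coefficient $\delta:=\frac{c}{\nu}(\nu^2-\Lambda^2)$ is positive precisely because of (\ref{Assumption-unif-bound-filtered}).

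Since $\vv_n(-t_n,\cdot)=0$ and $\vv_n\in\mathcal{C}([-t_n,+\infty[,H^{m+1})$ by Lemma \ref{Lem-Reg-u-alpha-Hm}, Gr\"onwall's lemma on $[-t_n,t]$ gives
\[
\|\vv_n(t,\cdot)\|^2_{H^{m+1}}\leq \frac{C\|\vf\|^2_{H^m}}{\nu\,\delta}\big(1-e^{-\delta(t+t_n)}\big)\leq \frac{C}{\nu^2-\Lambda^2(\alpha,m)}\|\vf\|^2_{H^m}.
\]
This bound is uniform in $t\geq -t_n$ and in $n$, which is (\ref{Uniform-Estimate-vn-H-m+1}). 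Unlike the case $m=0$, no time window or choice of $\tau_n$ is needed, because the damping is global in time.

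The main obstacle is the bookkeeping of constants, so that the absorption really produces $\nu^2-\Lambda^2$ rather than $\nu^2-C\Lambda^2$ with an uncontrolled $C$. If the constants do not match exactly, I would state the smallness condition with the generic constant included, consistent with the paper's use of a generic $C$. One must also justify the differential inequality rigorously. This holds because every term in (\ref{Identity-Tech-Jm}) lies in $(L^2_t)_{\rm loc}L^2_x$ and $\mathcal{J}^{m+2}\vv_n\in (L^2_t)_{\rm loc}L^2_x$, so $t\mapsto\|\vv_n(t)\|^2_{H^{m+1}}$ is absolutely continuous.
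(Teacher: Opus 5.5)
This is the paper's own argument, and the parts you justify are sound. You insert (\ref{Assumption-unif-bound-filtered}) into (\ref{Gronwall-vn-Hm}), absorb with Poincar\'e, and run Gr\"onwall from $\vv_n(-t_n,\cdot)=0$. That gives a bound uniform in $t\geq -t_n$ and in $n$ with no time-window argument. Your justification of the absolute continuity of $t\mapsto\|\vv_n(t,\cdot)\|^2_{H^{m+1}}$ through (\ref{Identity-Tech-Jm}) is fine.

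The obstacle you flag is real, but the fix you first propose does not work. A Young splitting in (\ref{Estim-Der-Time-Hm-1}) cannot normalize the constants so that the damping coefficient is a multiple of $\nu^2-\Lambda^2(\alpha,m)$.

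To see why, write $X=\|\vv_n\|_{H^{m+1}}$ and $Y=\|\vec{\nabla}\otimes\vv_n\|_{H^{m+1}}$. Let $C_1\Lambda XY$ be the bound on the transport term, and $X\leq C_P Y$ with $C_P=L/(2\pi)$.

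Young's inequality only moves weight between $Y^2$ and $X^2$. Even without it, these two inequalities give at best $\nu Y^2-C_1\Lambda XY\geq(\nu-C_1C_P\Lambda)Y^2$. So the linear coefficient is negative only when $\Lambda<\nu/(C_1C_P)$. With the paper's $\nu/4$ splitting one needs $2C_1^2C_P^2\Lambda^2<\nu^2$, and the bound becomes $C\|\vf\|^2_{H^m}/(\nu^2-2C_1^2C_P^2\Lambda^2)$.

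Since $C_P$ depends on $L$ and $C_1$ on the product-law constant, no splitting removes this factor. The literal hypothesis $\Lambda<\nu$ therefore does not by itself make your $\delta$ positive.

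Your fallback is the correct repair: require $\Lambda(\alpha,m)<c_*\nu$ with $c_*>0$ depending on $m$ and $L$, and conclude with $C/(\nu^2-c_*^{-2}\Lambda^2)$.

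The paper's proof has the same hidden issue. It writes the coefficient as $C(\frac{1}{\nu}\Lambda^2-\nu)$ with a single generic $C$, merging the transport constant with the Poincar\'e constant. That Poincar\'e constant is also inverted in (\ref{Control-from-Poincare}); it should read $4\pi^2\nu/L^2$.
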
	
\begin{proof} 
From the differential inequality (\ref{Gronwall-vn-Hm}) and the control (\ref{Assumption-unif-bound-filtered}),  for any $t\geq -t_n$ we write
\begin{equation*}
\frac{d}{dt}\| \vv_n(t,\cdot)\|^2_{H^{m+1}} + \nu\| \vec{\nabla} \otimes \vv_n(t,\cdot) \|^2_{H^{m+1}} \leq  \frac{C}{\nu} \Lambda^2(\alpha,m)\, \| \vv_n(t,\cdot) \|^2_{H^{m+1}}+ C \frac{\| \vf \|^2_{H^m}}{\nu},
\end{equation*}
hence, 
\begin{equation*}
\frac{d}{dt}\| \vv_n(t,\cdot)\|^2_{H^{m+1}} \leq  \frac{C}{\nu} \Lambda^2(\alpha,m)\, \| \vv_n(t,\cdot) \|^2_{H^{m+1}} - \nu\| \vec{\nabla} \otimes \vv_n(t,\cdot) \|^2_{H^{m+1}} + C \frac{\| \vf \|^2_{H^m}}{\nu}.
\end{equation*}

Using again the Poincar\'e's inequality, we have 
\[  \| \vv_n \|_{H^m} \leq \frac{L}{2\pi}\, \| \vec{\nabla} \otimes \vv_n \|_{H^{m+1}}, \]
which implies  
\begin{equation}\label{Control-from-Poincare}
-\nu \| \vec{\nabla} \otimes \vv_n \|^2_{H^{m+1}} \leq  -\frac{L^2\, \nu}{4\pi^2}\, \| \vv_n \|^2_{H^{m+1}}:=-C\,\nu \, \| \vv_n \|^2_{H^{m+1}}.
\end{equation}

Returning to the last estimate, we find that 
\begin{equation*}
\frac{d}{dt}\| \vv_n(t,\cdot)\|^2_{H^{m+1}} \leq  C\, \left( \frac{1}{\nu}\Lambda^2(\alpha,m)- \nu\right) \|\vv_n(t,\cdot) \|^2_{H^{m+1}} + C\frac{\| \vf \|^2_{H^m}}{\nu},
\end{equation*} 

Now, observe that from assumption (\ref{Assumption-unif-bound-filtered}) we have 
\[ \frac{1}{\nu}\Lambda^2(\alpha,m)- \nu <0.  \]
 Then applying the Gr\"onwall  lemma, and the fact that $\vv_n(-t_n,\cdot)=0$, for any $t\geq -t_n$ we can write
\begin{equation*}
\| \vv_n(t,\cdot) \|^2_{H^{m+1}} \leq  \frac{C}{\ds{ - \left( \frac{1}{\nu}\Lambda^2(\alpha,m)- \nu\right)}}\times \frac{\| \vf \|^2_{H^m}}{\nu}  =  \frac{C}{\nu^2-\Lambda^2(\alpha,m)}\,  \| \vf \|^2_{H^m}.
\end{equation*}
which implies the desired uniform estimate  (\ref{Uniform-Estimate-vn-H-m+1}). 
\end{proof}	

\emph{End of the proof of Proposition \ref{Prop-Hihger-Regularity-Atrtractor}}. From the uniform estimate (\ref{Uniform-Estimate-vn-H-m+1}), by setting $t=t_0$ (recall that we have  $t_n>2-t_0$ and then $t_0>2-t_n>-t_n$), we obtain
\[  \sup_{n\in \mathbb{N}} \| \vv_n(t_0,\cdot) \|^2_{H^{m+1}} \leq \frac{C}{\nu^2-\Lambda^2(\alpha,m)}\,  \| \vf \|^2_{H^m}. \]

Then, following the same arguments as in estimate (\ref{Uniform-Control-u-t0}), it follows that 
\[ \| \vu(t_0,\cdot)\|^2_{H^{m+1}} \leq \liminf_{n\to +\infty}\| \vv_n(t_0,\cdot)\|^2_{H^{m+1}} \leq  \frac{C}{\nu^2-\Lambda^2(\alpha,m)}\,  \| \vf \|^2_{H^m}.\]

Finally, using again the fact that $\vu \in \mathcal{T}(\mathcal{A})$ and $t_0 \in \R$ were arbitrary, we obtain the desired estimate (\ref{Regularity-Atrractor-H-m+1}). Proposition \ref{Prop-Hihger-Regularity-Atrtractor} is thus proven.

\section{Theory of determining modes}\label{Sec:Determining-Modes}
\subsection{Proof of Theorem \ref{Th:Determining-Modes}} 
 Let $\vu_{0,1}, \vu_{0,2} \in L^2_{\text{s}}(\T)$ be two initial data let $\vu_1, \vu_2 \in L^\infty_{\rm loc}\big([0,+\infty[, L^2_{\text{s}}(\T) \big)\cap L^2_{\rm loc}\big( [0,+\infty[, \dot{H}^1(\T)\big)$  be the unique  associated Leray solutions of  (\ref{Alpha-model}), obtained by Theorem \ref{Th-GWP}.

\medskip

Define $\vv:= \vu_1-\vv_2$. Then $\vv$  satisfies the following homogeneous  initial value problem:
\begin{equation}\label{eq:v-eq}
\begin{cases}\vspace{2mm}
\partial_t \vv-\nu\Delta \vv+\P\Big((\vu_{1,\alpha}\cdot\vec{\nabla})\vv+\big((\vu_{1,\alpha}-\vu_{2,\alpha})\cdot\vec{\nabla}\big)\vu_2\Big)=0,
\qquad {\rm div}(\vv)=0, \\
\vv(0,\cdot)= \vu_{0,1}- \vu_{0,2}.
\end{cases}
\end{equation}
Here,  for $i=1,2$,  the filtered velocities $\vu_{i,\alpha}$  satisfy the nonlinear filter equation:
\[  -\alpha^2 \, \P \Big( \mbox{div}\left( A (\varphi\ast\vu_i) (\vec{\nabla} \otimes \vu_{i,\alpha})^T \right)\Big)+\vu_{i,\alpha} = \vu_i, \quad \mbox{div}(\vu_{i,\alpha})=0, \]
and from  Propositions \ref{Prop:Existence-Filter} and \ref{Prop:H2-Filter} we have that $\vu_{i,\alpha}\in L^\infty_{\rm loc}\big( [0,+\infty[, H^2(\T) \big)$.

\medskip

The key idea of the proof of Theorem \ref{Th:Determining-Modes} is to sharply estimate the expression $\| \vv(t,\cdot)\|^2_{L^2}$. To this end, first  recall that from estimate (\ref{Estim-Nonlinear}) we have that
\[   \P\Big((\vu_{1,\alpha}\cdot\vec{\nabla})\vv\Big), \, \P\Big(\big((\vu_{1,\alpha}-\vu_{2,\alpha})\cdot\vec{\nabla}\big)\vu_2\Big)  \in L^2_{\rm loc}\big([0,+\infty[, L^\frac{6}{5}(\T)\big). \]
On the other hand, by Sobolev embeddings we have $\vv \in L^2_{\rm loc}\big([0,+\infty[, \dot{H}^{1}(\T)\big)\subset  L^2_{\rm loc}\big([0,+\infty[, L^6(\T)\big)$. 

\medspace

Therefore, in the equation above, we can take the $L^2$-inner product with $\vv$. After integration by parts and  using the divergence-free property of $\vv$, we obtain

\begin{equation}\label{Iden-vv}
\frac{1}{2}\frac{d}{dt}\| \vv(t,\cdot)\|^2_{L^2}+\nu \| \vec{\nabla} \otimes \vv(t,\cdot)\|^2_{L^2} = - \int_{\T}  \big( \big((\vu_{1,\alpha}-\vu_{2,\alpha})\cdot\vec{\nabla}\big)\vu_2\big)\cdot \vw \, dx, 
\end{equation}
where we must estimate the term on the right-hand side. 

\medskip

Applying Cauchy-Schwarz inequalities,  H\"older inequalities and Sobolev embeddings, we write
\begin{equation*}
\begin{split}
&\, \left| \int_{\T}  \big( \big((\vu_{1,\alpha}-\vu_{2,\alpha})\cdot\vec{\nabla}\big)\vu_2\big)\cdot \vw \, dx\right|  \leq \| \big((\vu_{1,\alpha}-\vu_{2,\alpha})\cdot\vec{\nabla}\big)\vu_2 \|_{L^2}\,  \| \vw \|_{L^2}  \\
\leq &\, \| \vu_{1,\alpha}-\vu_{2,\alpha} \|_{L^\infty}\, \| \vec{\nabla} \otimes \vu_2\|_{L^2}\, \| \vv \|_{L^2}\leq C\, \| \vu_{1,\alpha}-\vu_{2,\alpha} \|_{H^2}\, \| \vec{\nabla} \otimes \vu_2\|_{L^2}\, \| \vv \|_{L^2}. 
\end{split}
\end{equation*}

Recall that the term $\| \vu_{1,\alpha}-\vu_{2,\alpha} \|_{H^2}$ was estimated in (\ref{Control-Filter-Diff-H2}). On the other hand, without loss of generality, we can assume that $\vu_1$ and $\vu_2$ belong to the absorbing set $\mathcal{B}$ defined in (\ref{Absorbing-Set}). Consequently, for the radius $\mathcal{R}>0$ defined in (\ref{R}) we have $\| \vu_1(t,\cdot)\|_{L^2}\leq \mathcal{R}$ and $\| \vu_2(t,\cdot) \|_{L^2} \leq \mathcal{R}$. Then, we obtain
\begin{equation*}
\begin{split}
\left| \int_{\T}  \big( \big((\vu_{1,\alpha}-\vu_{2,\alpha})\cdot\vec{\nabla}\big)\vu_2\big)\cdot \vw \, dx\right| \leq  &\, C\, K'_2 \, \big( \| \vu_1\|_{L^2}+\| \vu_2 \|_{L^2}+1 \big)^2\, \| \vv \|_{L^2}\,  \| \vec{\nabla} \otimes \vu_2\|_{L^2}\, \| \vv \|_{L^2} \\
\leq  & \, C\, K'_2 \, \big( 2\mathcal{R} +1 \big)^2\, \| \vv \|_{L^2}\,  \| \vec{\nabla} \otimes \vu_2\|_{L^2}\, \| \vv \|_{L^2}.
\end{split}
\end{equation*}
Additionally, from the Poincaré's inequality we have 
\[ \| \vv \|_{L^2} \leq \frac{L}{2\pi}\, \| \vec{\nabla} \otimes \vv \|_{L^2}, \]
hence, 
\[  \left| \int_{\T}  \big( \big((\vu_{1,\alpha}-\vu_{2,\alpha})\cdot\vec{\nabla}\big)\vu_2\big)\cdot \vw \, dx\right| \leq  \, \frac{C\,K'_2 L}{2\pi}  \big( 2\mathcal{R} +1 \big)^2\, \| \vec{\nabla} \otimes  \vv \|_{L^2}\,  \| \vec{\nabla} \otimes \vu_2\|_{L^2}\, \| \vv \|_{L^2}.  \]
Finally,  applying the discrete Young inequality (with $1=\frac{1}{2}+\frac{1}{2}$), we write
\begin{equation*}
\left| \int_{\T}  \big( \big((\vu_{1,\alpha}-\vu_{2,\alpha})\cdot\vec{\nabla}\big)\vu_2\big)\cdot \vw \, dx\right|  \leq \, \frac{\nu}{2}\, \| \vec{\nabla} \otimes \vv \|^2_{L^2}+ \frac{(C\, K'_2\,L)^2}{4\pi^2\, \nu}\, \big( 2\mathcal{R} +1 \big)^4\,  \| \vec{\nabla} \otimes \vu_2\|^2_{L^2}\, \| \vv \|^2_{L^2}.
\end{equation*}

By substituting this estimate on the right-hand side of identity (\ref{Iden-vv}), we find that 
\begin{equation}\label{Estim-Der-vv}
\frac{d}{dt}\| \vv(t,\cdot)\|^2_{L^2}+\nu \| \vec{\nabla} \otimes \vv(t,\cdot)\|^2_{L^2} \leq \frac{(C\, K'_2\,L)^2}{4\pi^2\, \nu}\, \big( 2\mathcal{R} +1 \big)^4\,  \| \vec{\nabla} \otimes \vu_2(t,\cdot)\|^2_{L^2}\, \| \vv(t,\cdot) \|^2_{L^2}.
\end{equation}

In the following technical lemma, we study the term $\nu \| \vec{\nabla} \otimes \vv(t,\cdot)\|^2_{L^2}$.

\begin{Lemma} Let $m\geq 1$  and $P_m$ be the orthogonal projector defined through (\ref{Stokes}) and (\ref{span-Hm}). The following lower bound holds: 
	\begin{equation}\label{Lower-Bound}
\frac{\nu\, \lambda_{m+1}}{C}	\Big( \| \vv \|^2_{L^2}- 2\| P_m(\vv)\|^2_{L^2} \Big) \leq  \nu \| \vec{\nabla} \otimes \vv \|^2_{L^2},
	\end{equation}
	for some generic constant $C>0$ independent of $m$. 
\end{Lemma}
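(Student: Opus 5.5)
The plan is to work entirely in the spectral basis of the Stokes operator (\ref{Stokes}). Since $\vv$ is divergence-free and mean-zero, we expand
\[ \vv=\sum_{k\geq 1}\langle \vv,\vw_k\rangle_{L^2\times L^2}\,\vw_k, \]
and take the $\vw_k$ orthonormal in $L^2$. Integrating by parts, and using $\P\vv=\vv$, the self-adjointness of $\P$ and ${\rm div}(\vv)=0$, we get
\[ \nu\|\vec{\nabla}\otimes\vv\|^2_{L^2}=\langle -\nu\P\Delta \vv,\vv\rangle=\sum_{k\geq1}\lambda_k|\langle\vv,\vw_k\rangle|^2 . \]
This Parseval-type identity is justified first for $\vv\in\dot H^1$ via density of $\mathcal{V}$, which is available since $\vv\in L^2_{\rm loc}\dot H^1$. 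Up to the generic constant $C$, which absorbs the normalization of the $\vw_k$ and possibly the factor $\nu$ hidden in the definition of $\lambda_k$, this is the key identity.

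Next we split the sum into low modes $k\leq m$ and high modes $k\geq m+1$. For the high modes, the monotonicity of $(\lambda_k)$ gives
\[ \sum_{k\geq m+1}\lambda_k|\langle\vv,\vw_k\rangle|^2\geq \lambda_{m+1}\sum_{k\geq m+1}|\langle\vv,\vw_k\rangle|^2=\lambda_{m+1}\big(\|\vv\|^2_{L^2}-\|P_m\vv\|^2_{L^2}\big). \]
The low-mode contribution is nonnegative, so we simply drop it. This already yields
\[ \nu\|\vec{\nabla}\otimes\vv\|^2_{L^2}\geq \lambda_{m+1}\big(\|\vv\|^2_{L^2}-\|P_m\vv\|^2_{L^2}\big)\geq \lambda_{m+1}\big(\|\vv\|^2_{L^2}-2\|P_m\vv\|^2_{L^2}\big), \]
which is stronger than (\ref{Lower-Bound}) once the normalization constant $C$ is inserted. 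The weaker factor $2$ in the statement presumably comes from splitting $\vv=P_m\vv+(I-P_m)\vv$ and applying $\|a+b\|^2\leq 2\|a\|^2+2\|b\|^2$; either route works, and we would use the direct orthogonality one.

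The only delicate point, and the main obstacle, is bookkeeping the constant relating $\nu\|\vec{\nabla}\otimes\vv\|^2_{L^2}$ to $\sum_k\lambda_k|\langle\vv,\vw_k\rangle|^2$. This depends on the convention in (\ref{Stokes}), where $\nu$ is included in the eigenvalue, and on whether the $\vw_k$ are normalized. We would fix orthonormal $\vw_k$ and note that on the torus $\P\Delta=\Delta$ on solenoidal fields, so $\lambda_k=\nu|\xi_k|^2$ for the corresponding frequency $\xi_k$. Then $\nu\|\vec{\nabla}\otimes\vv\|^2_{L^2}=\sum_k\lambda_k|\langle\vv,\vw_k\rangle|^2$ exactly; the extra factor $\nu$ on the left of (\ref{Lower-Bound}) is harmless since it can be absorbed into $C$ as stated, with $C$ depending on $\nu$ but not on $m$. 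This gives the claimed bound with a constant independent of $m$.
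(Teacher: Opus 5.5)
Your argument is correct and is essentially the paper's: the paper bounds $\|\vv\|^2_{L^2}\leq 2\|P_m(\vv)\|^2_{L^2}+2\|\vv-P_m(\vv)\|^2_{L^2}$ and then invokes, without proof, the high-mode Poincar\'e inequality $\|\vv-P_m(\vv)\|^2_{L^2}\lesssim \lambda_{m+1}^{-1}\|\vec{\nabla}\otimes\vv\|^2_{L^2}$, which your spectral identity proves directly (and using Pythagoras instead of that splitting spares you the factor $2$). Your handling of the constant is also right: since $\lambda_k=\nu|\xi_k|^2$, testing with $\vv=\vw_{m+1}$ shows that one needs $C\geq\nu$, and the paper's implicit constant depends on $\nu$ in the same way.
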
	
\begin{proof}
	We write 
	\[ \| \vv \|^2_{L^2} \leq 2\| P_m(\vv) \|^2_{L^2}+ 2\| \vv - P_m(\vv)\|^2_{L^2}. \]
	
	For the second term on the right-hand side, we have the estimate 
\begin{equation*}
2\| \vv - P_m(\vv)\|^2_{L^2} \leq \frac{C}{\lambda_{m+1}}\, \| \vec{\nabla} \otimes \vv \|_{L^2},
\end{equation*}
for some generic constant $C>0$ independent of $m$. The proof of this estimate is fairly standard, so we will omit the details. 
	
	\medskip
	
	Returning to the previous estimate, we find that 
	\[ \| \vv \|^2_{L^2} \leq 2\| P_m(\vv) \|^2_{L^2} +  \frac{C}{\lambda_{m+1}}\, \| \vec{\nabla} \otimes \vv \|^2_{L^2}, \]
	hence we deduce the desired estimate (\ref{Lower-Bound}). 
\end{proof} 	

Once we have estimate (\ref{Lower-Bound}) at our disposal, returning to (\ref{Estim-Der-vv}) and after rearranging terms, we obtain 

\begin{equation}\label{Estim-Der-vv-2}
\begin{split}
&\, \frac{d}{dt}\| \vv(t,\cdot)\|^2_{L^2} + \, \left( \frac{\nu\, \lambda_{m+1}}{C}-  \frac{(C\, K'_2\,L)^2}{4\pi^2\, \nu}\, \big( 2\mathcal{R} +1 \big)^4 \,  \| \vec{\nabla} \otimes \vu_2(t,\cdot)\|^2_{L^2}\right)\, \| \vv(t,\cdot)\|^2_{L^2} \\
\leq  &\, \frac{2\nu\, \lambda_{m+1}}{C}\, \| P_m(\vv)(t,\cdot)\|^2_{L^2}.
\end{split} 
\end{equation} 

From this estimate, we will deduce now the asymptotic determining modes property  stated in Definition \ref{def:asymptotic-determining-modes}. Roughly speaking, we will prove that for $m$ sufficiently large the assumption that 
\[ \lim_{t\to \infty}  \| P_m(\vv)(t,\cdot)\|^2_{L^2}=0, \]
implies that 
\[ \lim_{t\to \infty} \| \vv(t,\cdot)\|^2_{L^2}=0. \] 

To this end,  we will use  the  following Gr\"onwall-type estimate. For a proof of this result, please refer to  \cite{FoiasManleyRosaTemam2001} and \cite{JonesTiti1992} 

\begin{Lemma}[Uniform Gr\"onwall lemma]\label{lem:gronwall-uniform}  Let 	 $a(\cdot),b(\cdot)\in L^1_{\rm loc}\big([0,+\infty[\big)$. Additionally, let $\phi $ be nonnegative and absolutely continuous function such that,   for \emph{a.e.,}  $t>0$, satisfies: 
\begin{equation}\label{ODI}
	\phi'(t)+a(t)\phi(t)\le b(t).
\end{equation}

For some fixed $T>0$, assume that the functions $a(\cdot)$ and $b(\cdot)$ satisfy:
\begin{equation}\label{Conditions-a-Gronwall}
\liminf_{t\to\infty}\frac1T\int_t^{t+T}a(\tau)\,d\tau >0,\qquad
\limsup_{t\to\infty}\frac1T\int_t^{t+T}a^-(\tau)\,d\tau<+\infty,
\end{equation}	
and
\begin{equation}\label{Conditions-b-Gronwall}
	\liminf_{t\to\infty}\frac1T\int_t^{t+T}b^+(\tau)\,d\tau=0,
\end{equation}
respectively, where we denote  $a^-(t):=\max\big\{-a(t),0\big\}$ and $b^+(t):=\max\big\{b(t),0\big\}$. Then, we have 
\[  \lim_{t\to +\infty}\phi(t)=0. \]
\end{Lemma}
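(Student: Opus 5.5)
The plan is the classical uniform Gr\"onwall argument: integrate \eqref{ODI} against the integrating factor $e^{\int a}$, use the first condition in \eqref{Conditions-a-Gronwall} to obtain exponential decay of that factor over long intervals, use the second condition to keep the decay from degrading on windows of length $T$, and use \eqref{Conditions-b-Gronwall} to make the forcing contribution small. I first fix $t_*$ large and constants $\gamma>0$, $M<\infty$ such that $\int_s^{s+T}a\,d\tau\ge \gamma T$ and $\int_s^{s+T}a^-\,d\tau\le MT$ for all $s\ge t_*$. For $s\le t$ with $s,t\ge t_*$, splitting $[s,t]$ into windows of length $T$ plus a remainder gives
\[ \int_s^t a(\tau)\,d\tau\;\ge\; \gamma (t-s) - C_0, \qquad C_0:=(\gamma+M)T. \]

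Multiplying \eqref{ODI} by $e^{\int_{t_*}^t a}$ and integrating over $[t_1,t]$, I would then use this lower bound to get
\[ \phi(t)\;\le\; e^{C_0}e^{-\gamma(t-t_1)}\phi(t_1)+e^{C_0}\int_{t_1}^t e^{-\gamma(t-s)}b^+(s)\,ds . \]
Cutting the last integral into windows of length $T$ going backwards from $t$, it is bounded by $e^{C_0}\sum_{k\ge0}e^{-\gamma kT}\int_{t-(k+1)T}^{t-kT}b^+\,ds$. The first term tends to $0$ as $t\to\infty$ for each fixed $t_1$. So the conclusion $\phi(t)\to0$ reduces to this geometric series of window averages of $b^+$ being small for every large $t$.

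This last step is where I expect the real obstacle, and I do not think it can be closed under the condition as literally worded. The geometric sum is small for all large $t$ exactly when $\frac1T\int_t^{t+T}b^+\,d\tau\to0$, i.e. when the $\limsup$ vanishes. The stated $\liminf$ condition only provides a sequence of good windows, and in general it does not suffice. A counterexample: take $a\equiv1$, $\phi(0)=0$, and let $b$ equal $1$ on $[2k,2k+1]$ and $0$ elsewhere. Then $\liminf \frac1T\int_t^{t+T}b^+=0$ for $T<1$, yet $\phi$ returns to a fixed positive level on each $[2k,2k+1]$.

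My plan is therefore to prove the lemma with \eqref{Conditions-b-Gronwall} read as a genuine limit, $\lim_{t\to\infty}\frac1T\int_t^{t+T}b^+=0$. This is the form found in \cite{JonesTiti1992,FoiasManleyRosaTemam2001}, and it is also the form needed in the application, where $b(t)=\frac{2\nu\lambda_{m+1}}{C}\|P_m(\vv)(t,\cdot)\|^2_{L^2}\to0$ pointwise, so that its window averages tend to $0$ and the $\liminf$ and $\limsup$ coincide. Under that reading the proof finishes as follows. Given $\varepsilon>0$, I choose $t_1\ge t_*$ so that every window average of $b^+$ beyond $t_1-T$ is at most $\varepsilon$. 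The tail sum is then at most $e^{C_0}\varepsilon T/(1-e^{-\gamma T})$, and letting $t\to\infty$ gives $\limsup\phi\le C\varepsilon$. I would state the counterexample above in a remark to justify the reformulation.
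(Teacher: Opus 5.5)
Your proof is correct. It is the standard argument from \cite{JonesTiti1992,FoiasManleyRosaTemam2001}, which the paper only cites and does not prove: an integrating factor, the window lower bound $\int_s^t a\,d\tau\ge\gamma(t-s)-(\gamma+M)T$, and a geometric sum of $b^+$ over backward windows of length $T$. You are also right that (\ref{Conditions-b-Gronwall}) as printed, with $\liminf$, is false and must be read as $\lim_{t\to\infty}\frac1T\int_t^{t+T}b^+\,d\tau=0$, which is the form in \cite{JonesTiti1992}. Your example works: with $a\equiv1$, $T<1$, and $b=1$ on $\bigcup_k[2k,2k+1]$, all printed hypotheses hold, yet $\phi(2k+1)\ge1-e^{-1}$. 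Spacing the pulses farther apart defeats any fixed $T$. The paper's application is unaffected, because under (\ref{Determining-Modes-Pm}) one has $b_m(t)\to0$ pointwise, so the corrected hypothesis holds.
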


Within the framework of Lemma \ref{lem:gronwall-uniform}, we set
\begin{equation}\label{Functions}
\begin{cases}\vspace{2mm}
\phi(t):= \| \vv(t,\cdot)\|^2_{L^2}, \qquad   \ds{a_m(t):= \left( \frac{\nu\, \lambda_{m+1}}{C}-  \frac{(C\, K'_2\,L)^2}{4\pi^2\, \nu}\, \big( 2\mathcal{R} +1 \big)^4 \,  \| \vec{\nabla} \otimes \vu_2(t,\cdot)\|^2_{L^2}\right)}, \\ 
\ds{b_m(t):= \frac{2\nu\, \lambda_{m+1}}{C}\, \| P_m(\vv)(t,\cdot)\|^2_{L^2}},
\end{cases}
\end{equation}
where we must verify all the assumptions above. For clarity, in the following technical lemmas we study the functions $\phi(t)$,  $a(t)$ separately.

\begin{Lemma}\label{Lem-Tech-phi(t)} The function $t\to \phi(t)$ defined in (\ref{Functions}) is absolutely continuous and it satisfies the inequality (\ref{ODI}).
\end{Lemma}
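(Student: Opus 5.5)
The plan is to show that $\vv=\vu_1-\vu_2$ has the regularity $\vv\in L^2_{\rm loc}([0,+\infty[,\dot{H}^1(\T))$ and $\partial_t\vv\in L^2_{\rm loc}([0,+\infty[,\dot{H}^{-1}(\T))$. Then the Lions--Magenes lemma gives absolute continuity of $\phi$, and the inequality follows from the energy identity (\ref{Iden-vv}) combined with the lower bound (\ref{Lower-Bound}).

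\emph{Regularity of $\vv$.} Since $\vu_1,\vu_2$ are Leray solutions given by Theorem \ref{Th-GWP}, we have $\vv\in L^\infty_{\rm loc}([0,+\infty[,L^2_{\rm s}(\T))\cap L^2_{\rm loc}([0,+\infty[,\dot H^1(\T))$. From equation (\ref{eq:v-eq}) we write
\[
\partial_t\vv=\nu\Delta\vv-\P\,{\rm div}(\vv\otimes\vu_{1,\alpha})-\P\,{\rm div}\big(\vu_2\otimes(\vu_{1,\alpha}-\vu_{2,\alpha})\big),
\]
using that $\vu_{1,\alpha}$ and $\vu_{1,\alpha}-\vu_{2,\alpha}$ are divergence-free. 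The first term lies in $L^2_t\dot H^{-1}_x$. For the other two, Proposition \ref{Prop:H2-Filter} gives $\vu_{i,\alpha}\in L^\infty_{\rm loc}H^2\subset L^\infty_{\rm loc}L^\infty$. Hence the tensors $\vv\otimes\vu_{1,\alpha}$ and $\vu_2\otimes(\vu_{1,\alpha}-\vu_{2,\alpha})$ belong to $L^\infty_{\rm loc}L^2_x$, and their divergences lie in $L^2_{\rm loc}\dot H^{-1}_x$. Therefore $\partial_t\vv\in L^2_{\rm loc}\dot H^{-1}$, the function $t\mapsto\|\vv(t)\|_{L^2}^2$ is absolutely continuous, and
\[
\frac{d}{dt}\|\vv\|_{L^2}^2=2\langle\partial_t\vv,\vv\rangle_{\dot H^{-1}\times\dot H^1}
\]
for a.e.\ $t$. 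This justifies identity (\ref{Iden-vv}) rigorously. The term $\langle(\vu_{1,\alpha}\cdot\vec\nabla)\vv,\vv\rangle$ vanishes by the divergence-free property, which is legitimate by (\ref{Estim-Nonlinear}) and the embedding $\dot H^1\subset L^6$.

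\emph{The inequality (\ref{ODI}).} This step repeats the computation already carried out. We bound the remaining trilinear term via the Sobolev embedding $H^2\subset L^\infty$, the difference estimate (\ref{Control-Filter-Diff-H2}), the absorbing bound $\|\vu_i\|_{L^2}\le\mathcal R$, Poincaré's inequality, and Young's inequality, which yields (\ref{Estim-Der-vv}). Inserting (\ref{Lower-Bound}) on the left then gives (\ref{Estim-Der-vv-2}), which is exactly $\phi'+a_m\phi\le b_m$ with the functions defined in (\ref{Functions}).

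\emph{Main obstacle.} The delicate point is the reduction to $\vu_i\in\mathcal B$, that is, working with $\|\vu_i(t)\|_{L^2}\le\mathcal R$. This bound holds only after an absorption time $t_*$. I would therefore state (\ref{ODI}) for a.e.\ $t>t_*$, which is harmless since Lemma \ref{lem:gronwall-uniform} only involves $t\to\infty$. Alternatively, one can keep $\sup_{[0,t_*]}\|\vu_i\|_{L^2}$ in the constant on the bounded initial interval. The coefficient $a_m$ is also only in $L^1_{\rm loc}$, since $\|\vec\nabla\otimes\vu_2\|_{L^2}^2\in L^1_{\rm loc}$ by the energy inequality (\ref{Energy-Control}). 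This is enough, because (\ref{ODI}) is only required almost everywhere.
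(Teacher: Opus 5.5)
Your proposal is correct and follows essentially the paper's argument: you show $\partial_t\vv\in L^2_{\rm loc}\dot H^{-1}$ through the same splitting $\nu\Delta\vv-\P\,{\rm div}\big(\vv\otimes\vu_{1,\alpha}+\vu_2\otimes(\vu_{1,\alpha}-\vu_{2,\alpha})\big)$, deduce absolute continuity from $\vv\in L^2_{\rm loc}\dot H^1$, and read (\ref{ODI}) off (\ref{Estim-Der-vv-2}). Your explicit treatment of the absorption time, stating the inequality only for a.e.\ $t>t_*$ (which suffices for Lemma \ref{lem:gronwall-uniform}), is a useful sharpening of the paper's ``without loss of generality'' reduction to $\vu_i\in\mathcal B$.
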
	
\begin{proof} Note that inequality (\ref{ODI}) directly follows from (\ref{Estim-Der-vv-2}). Thus, it remains to  verify the absolute continuity of $\phi(t)$.  
	
\medskip 

To do this, we begin by  verifying that $\partial_t \vv \in (L^2_t)_{\rm loc}\dot{H}^{-1}_x$. From equation (\ref{eq:v-eq}), using the divergence-free property of $\vu_{1,\alpha}$ and $\vu_{1,\alpha}-\vu_{2,\alpha}$,   we write
\[ \partial_t \vv=\nu\Delta \vv-\P \text{div}\Big( \vv \otimes \vu_{1,\alpha} + \vu_2 \otimes (\vu_{1,\alpha}-\vu_{2,\alpha}) \Big).  \]

For the first term on the right-hand side, as $\vv \in  (L^2_t)_{\rm loc}\dot{H}^{1}_x$, we directly have that 
\[ \nu\Delta \vv  \in  (L^2_t)_{\rm loc}\dot{H}^{-1}_x. \]

For the second term, from Proposition \ref{Prop:H2-Filter} and Sobolev embeddings we have $\vu_{1,\alpha}, \,  \vu_{1,\alpha}-\vu_{2,\alpha} \in (L^\infty_t)_{\rm loc} H^2_x \subset (L^\infty_t)_{\rm loc} L^\infty_x$. Therefore, as $\vv, \vu_2 \in (L^\infty_t)_{\rm loc} L^2_x$, we obtain that $\vv \otimes \vu_{1,\alpha}\, \vu_2 \otimes (\vu_{1,\alpha}-\vu_{2,\alpha})  \in (L^\infty_t)_{\rm loc} L^2_x \subset (L^2_t)_{\rm loc} L^2_x$, which implies that 
\[\P \text{div}\Big( \vv \otimes \vu_{1,\alpha} + \vu_2 \otimes (\vu_{1,\alpha}-\vu_{2,\alpha}) \Big) \in (L^2_t)_{\rm loc} \dot{H}^{-1}_x.  \]

Once we have that $\partial_t \vv \in (L^2_t)_{\rm loc} \dot{H}^{-1}_x$, using again the fact that $\vv \in (L^2_t)_{\rm loc}\dot{H}^1_x$, for any $T>0$ and any  $0<t<T$ we can  write
\[ \| \vv(t,\cdot) \|^2_{L^2}= \| \vv(0,\cdot)\|^2_{L^2}+ 2 \int_{0}^{t} \Big\langle \partial_\tau \vv(\tau, \cdot), \vv(\tau, \cdot)\Big\rangle_{\dot{H}^{-1}\times \dot{H}^1}\, d \tau.  \]
Since $\tau \mapsto \Big\langle \partial_\tau \vv(\tau, \cdot), \vv(\tau, \cdot)\Big\rangle_{\dot{H}^{-1}\times \dot{H}^1}$  belongs to $L^1([0,T])$, we conclude that $t \mapsto \phi(t):= \| \vv(t,\cdot)\|^2_{L^2}$ is  absolutely continuous. Lemma \ref{Lem-Tech-phi(t)} is now proven. 
\end{proof}	

\begin{Lemma}\label{Lem-Tech-am(t)} Let $m_0 \geq 1$ be sufficiently large such that (\ref{eq:lambda-condition-explicit}) holds. Then, for $m\geq m_0$ the function $t \mapsto a_m(t)$ defined in (\ref{Functions}) satisfies the conditions given in (\ref{Conditions-a-Gronwall}). 
\end{Lemma}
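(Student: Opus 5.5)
The goal is to check the two conditions in (\ref{Conditions-a-Gronwall}) for
\[
a_m(t)=\frac{\nu\lambda_{m+1}}{C}-\Theta\,\|\vec{\nabla}\otimes\vu_2(t,\cdot)\|^2_{L^2},
\qquad \Theta:=\frac{(C K'_2 L)^2}{4\pi^2\nu}(2\mathcal{R}+1)^4 .
\]
The only information needed on $\vu_2$ is a uniform bound on its time-averaged enstrophy over windows of length $T$, which I fix as $T=1$. The route is to redo for $\vu_2$ the computation of Lemma \ref{Lem:Control-vn-tau-n-H1}, which was carried out there for $\vv_n$.

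\textbf{Step 1.} Use the energy inequality (\ref{Energy-Control}) for the Leray solution $\vu_2$ on $[t,t+1]$. The convective term vanishes, and Young's inequality gives $2\langle \vf,\vu_2\rangle\le \nu^{-1}\|\vf\|^2_{\dot H^{-1}}+\nu\|\vu_2\|^2_{\dot H^1}$. Hence
\[
\nu\int_t^{t+1}\|\vec{\nabla}\otimes\vu_2\|^2_{L^2}\,d\tau\le \|\vu_2(t)\|^2_{L^2}+\frac{\|\vf\|^2_{\dot H^{-1}}}{\nu}.
\]
For $t$ large, $\vu_2(t)$ lies in the absorbing ball $\mathcal{B}$, so $\|\vu_2(t)\|^2_{L^2}\le \mathcal{R}^2\simeq \|\vf\|^2_{\dot H^{-1}}/\nu^2$. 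This is the same reduction to $\mathcal{B}$ already used in the proof. It yields
\[
\limsup_{t\to\infty}\int_t^{t+1}\|\vec{\nabla}\otimes\vu_2\|^2_{L^2}\,d\tau\le M_0
:=C\Big(\frac{1}{\nu^3}+\frac{1}{\nu^2}\Big)\|\vf\|^2_{\dot H^{-1}} .
\]
One technical point: Definition \ref{Def-Leray-Solutions} states the energy inequality only from time $0$, not on arbitrary windows $[t,t+1]$. Since the solution is unique with $\vu_{i,\alpha}\in L^\infty_tH^2_x$, the nonlinear term lies in $L^2_tL^{6/5}_x$ by (\ref{Estim-Nonlinear}). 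So the energy equality holds on every interval, by the same Lions–Magenes argument as in Lemma \ref{Lem-Tech-phi(t)}.

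\textbf{Step 2 (second condition).} Since $a_m^-\le \Theta\|\vec{\nabla}\otimes\vu_2\|^2_{L^2}$, Step 1 gives $\limsup_t\int_t^{t+1}a_m^-\le \Theta M_0<\infty$.

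\textbf{Step 3 (first condition).} From Step 1,
\[
\liminf_{t\to\infty}\int_t^{t+1}a_m\,d\tau\ \ge\ \frac{\nu\lambda_{m+1}}{C}-\Theta M_0 .
\]
This is positive as soon as $\lambda_{m+1}>C\Theta M_0/\nu$. It remains to read this threshold in terms of the parameters. Track $K_2'$ in (\ref{Control-Filter-Diff-H2}): from the filter estimates of Section \ref{Sec:Higher regularity properties of the nonlinear filter equation} its $\alpha$-dependence is of order $\Phi(\alpha)$, namely $1/\alpha^2$ from $C_{\alpha,\beta}=\min(\alpha^2\beta,1)$ for small $\alpha$, and $\alpha^4$ from the $\alpha^2$ factors for large $\alpha$. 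Also $(2\mathcal{R}+1)^4\lesssim(\|\vf\|_{\dot H^{-1}}/\nu+1)^4$. Substituting these, $C\Theta M_0/\nu$ is dominated by the right-hand side of (\ref{eq:lambda-condition-explicit}), with the implicit constant chosen large. Since $\lambda_{m+1}\ge\lambda_{m_0}$ for $m\ge m_0$, the first condition holds.

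\textbf{Main obstacle.} I expect two points to need care. The first is making the energy inequality valid on windows $[t,t+1]$ and not just from time $0$, as discussed in Step 1. The second is the bookkeeping showing that $K_2'$ scales like $\Phi(\alpha)$. If that scaling cannot be fully tracked, the fallback is to state the threshold with $K_2'^2$ in place of $\Phi^2(\alpha)$; the conclusion of the lemma is unaffected.
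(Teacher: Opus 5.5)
Your proposal is correct and follows the paper's proof essentially step by step. The paper likewise controls $\frac1T\int_t^{t+T}\|\vec{\nabla}\otimes\vu_2\|_{L^2}^2\,d\tau$ by $\frac{1}{\nu}\|\vu_2(t)\|_{L^2}^2+\frac{\|\vf\|_{\dot{H}^{-1}}^2}{\nu^2}$ through the energy inequality on a window and Young's inequality, deduces both conditions of (\ref{Conditions-a-Gronwall}), and reads off the threshold via $K_2'\simeq\Phi(\alpha)$.

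The differences are minor. The paper allows any window $T\geq 1$ and bounds $\|\vu_2(t)\|_{L^2}^2\leq e^{-\eta t}\|\vu_{0,2}\|_{L^2}^2+\mathcal{R}^2/2$ by Gr\"onwall instead of invoking the absorbing ball. It takes the scaling of $K_2'$ from Appendix \ref{Sec:Estimate-K2-Phi}, using the constants of the earlier paper, rather than from the Section 3 estimates. Your Lions--Magenes justification of the energy inequality on windows $[t,t+1]$ supplies a step that the paper simply asserts.
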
	
\begin{proof} Let $T>0$ be fixed. Without loss of generality, assume that $T\geq 1$. We begin by studying the long-time behavior of the expression $\| \vec{\nabla} \otimes \vu_2(t,\cdot)\|^2_{L^2}$. Specifically, for the quantity $\ds{\eta:=\frac{4\pi^2 \nu}{L^2}>0}$ we will prove the following estimate
\begin{equation}\label{Estim-u2-H1}
\frac{1}{T} \int_{t}^{t+T} \| \vec{\nabla} \otimes \vu_2(\tau, \cdot)\|^2_{L^2}\, d \tau \leq  \frac{ e^{-\eta t}}{\nu}\,\| \vu_{0,2} \|^2_{L^2}+ \frac{\mathcal{R}^2}{\nu} + \frac{\| \vf \|^2_{\dot{H}^{-1}}}{\nu^2}. 
\end{equation}	

Indeed, since $\vu_2$ satisfies the energy inequality (\ref{Energy-Control}), replacing $0$ by $t$ and $t$ by $t+T$, we can write
\begin{equation*}
\| \vu_2(t+T, \cdot)\|^2_{L^2} + 2 \nu \int_{t}^{t+T}\| \vec{\nabla} \otimes \vu_2(\tau, \cdot)\|^2_{L^2} \leq \| \vu_2(t,\cdot)\|^2_{L^2}+ 2 \int_{t}^{t+T} \left\langle \vf, \, \vu_2(\tau,\cdot)\right\rangle_{\dot{H}^{-1}\times \dot{H}^1}\, d \tau, 
\end{equation*}
hence, as $\| \vu_2(t+T, \cdot)\|^2_{L^2} $ is positive and using the discrete Young inequalities (with $1=\frac{1}{2}+\frac{1}{2}$), we obtain 
\begin{equation*}
\begin{split}
 2 \nu \int_{t}^{t+T}\| \vec{\nabla} \otimes \vu_2(\tau, \cdot)\|^2_{L^2} \leq  &\, \| \vu_2(t,\cdot)\|^2_{L^2}+ 2 \int_{t}^{t+T} \left\langle \vf, \, \vu_2(\tau,\cdot)\right\rangle_{\dot{H}^{-1}\times \dot{H}^1}\, d \tau\\
 \leq &\, \| \vu_2(t,\cdot)\|^2_{L^2} + 2 \int_{t}^{t+T} \| \vf \|_{\dot{H}^{-1}}\, \| \vu_2(\tau,\cdot)\|_{\dot{H}^1}\, d \tau\\
 \leq &\, \| \vu_2(t,\cdot)\|^2_{L^2} +  \int_{t}^{t+T} \left( \frac{\| \vf \|_{\dot{H}^{-1}}^2}{\nu} + \nu  \| \vec{\nabla} \otimes \vu_2(\tau,\cdot)\|^2_{L^2}  \right) \, d \tau.
 \end{split}
\end{equation*}
As $\vf$ is a time-independent function, we have
\begin{equation*}
\nu \int_{t}^{t+T}\| \vec{\nabla} \otimes \vu_2(\tau, \cdot)\|^2_{L^2}\, d \tau  \leq \| \vu_2(t,\cdot)\|^2_{L^2}+ \frac{T}{\nu}\, \| \vf \|^2_{\dot{H}^{-1}},
\end{equation*}
hence
\begin{equation}\label{Estim-Tech-nabla-u2}
\frac{1}{T} \int_{t}^{t+T}\| \vec{\nabla} \otimes \vu_2(\tau, \cdot)\|^2_{L^2}\, d \tau  \leq  \frac{1}{\nu T} \,\| \vu_2(t,\cdot)\|^2_{L^2}+ \frac{1}{\nu^2}\, \| \vf \|^2_{\dot{H}^{-1}}. 
\end{equation}

Here, we still need to study the term $ \ds{\frac{1}{\nu\, T}\| \vu_2(t,\cdot)\|^2_{L^2}}$. First, note that $\vu_2$  also satisfies the energy identity
\begin{equation}\label{Energy-Control-Derivative}
\frac{d}{dt}\| \vu_2(t,\cdot)\|^2_{L^2}+2\nu \| \vu_2(t,\cdot)\|^2_{\dot{H}^1}=\left\langle \vf , \vu_2(t,\cdot)\right\rangle_{\dot{H}^{-1}\times \dot{H}^1} \leq \frac{1}{\nu}\| \vf \|^{2}_{\dot{H}^{-1}}+\nu \|\vu_2(t,\cdot)\|^2_{\dot{H}^1}, 
\end{equation}
hence we can write 
\begin{equation*}
\frac{d}{dt}\| \vu_2(t,\cdot)\|^2_{L^2}  \leq - \nu  \| \vu_2(t,\cdot)\|^2_{\dot{H}^1} + \frac{1}{\nu}\| \vf \|^{2}_{\dot{H}^{-1}}.   
\end{equation*}

By the Poincaré's inequality, we get 
\[ \| \vu_2(t,\cdot)\|_{L^2}\leq \frac{L}{2\pi}\| \vu_2(t,\cdot)\|_{\dot{H}^1},\]
and setting  $\ds{\eta:=\frac{4\pi^2 \nu}{L^2}>0}$, we obtain
\begin{equation*}
\frac{d}{dt}\| \vu_2(t,\cdot)\|^2_{L^2}  \leq - \eta \| \vu_2(t,\cdot)\|^2_{L^2} + \frac{1}{\nu}\| \vf \|^{2}_{\dot{H}^{-1}}.   
\end{equation*}
Then, applying the Gr\"onwall lemma and using the radius $\mathcal{R}$ given in (\ref{R}), we find
\begin{equation*}
\|\vu_2(t,\cdot)\|^2_{L^2} \leq  e^{-\eta t}\| \vu_{0,2} \|^2_{L^2}+ \frac{\mathcal{R}^2}{2}.
\end{equation*}
Then, as $T\geq 1$, we have
\begin{equation*}
\frac{1}{\nu T}\|\vu_2(t,\cdot)\|^2_{L^2} \leq  \frac{ e^{-\eta t}}{\nu T} \,\| \vu_{0,2} \|^2_{L^2}+\frac{\mathcal{R}^2}{2\nu T}  \leq  \frac{ e^{-\eta t}}{\nu}\,\| \vu_{0,2} \|^2_{L^2}+ \frac{\mathcal{R}^2}{\nu}. 
\end{equation*}

With this inequality, returning to (\ref{Estim-Tech-nabla-u2}) we obtain the desired estimate (\ref{Estim-u2-H1}). 

\medskip

Once we have (\ref{Estim-u2-H1}), we can now verify each condition stated in (\ref{Conditions-a-Gronwall}). For the first condition, from the definition of the function $a_m(t)$ (given in expression (\ref{Functions})) and using (\ref{Estim-u2-H1}), we write

\begin{equation*}
\begin{split}
\frac{1}{T}\int_{t}^{t+T} a(\tau)\, d \tau  =&\,  \frac{\nu \lambda_{m+1}}{C}-  \frac{(C\, K'_2\,L)^2}{4\pi^2\, \nu}\, \big( 2\mathcal{R} +1 \big)^4\,  \frac{1}{T}\int_{t}^{t+T} \| \vec{\nabla} \otimes \vu_2(\tau,\cdot)\|^2_{L^2}\, d \tau \\
\geq &\,  \frac{\nu \lambda_{m+1}}{C} - \frac{(C\, K'_2\,L)^2}{4\pi^2\, \nu}\, \big( 2\mathcal{R} +1 \big)^4\, \left( \frac{ e^{-\eta t}}{\nu T}\,\| \vu_{0,2} \|^2_{L^2}+ \frac{\mathcal{R}^2}{\nu} + \frac{\| \vf \|^2_{\dot{H}^{-1}}}{\nu^2} \right).
\end{split}
\end{equation*}
Then we have
\begin{equation*}
\liminf_{t\to\infty}  \frac{1}{T}\int_{t}^{t+T} a(\tau)\, d \tau \geq  \frac{\nu \lambda_{m+1}}{C} - \frac{(C\, K'_2\,L)^2}{4\pi^2\, \nu}\, \big( 2\mathcal{R} +1 \big)^4\, \left( \frac{\mathcal{R}^2}{\nu} + \frac{\| \vf \|^2_{\dot{H}^{-1}}}{\nu^2}\right).
\end{equation*}

At this point, note that the expression on the right-hand side is positive as long as
\begin{equation*}
\lambda_{m+1} > \frac{C(C\, K'_2\,L)^2}{4\pi^2\, \nu^2}\, \big( 2\mathcal{R} +1 \big)^4\, \left( \frac{\mathcal{R}^2}{\nu} + \frac{\| \vf \|^2_{\dot{H}^{-1}}}{\nu^2}\right)
\end{equation*}
Additionally, recalling the definition of the radius $\mathcal{R}$ given in (\ref{R}), we write
\begin{equation*}
\lambda_{m+1} > \frac{C(C\, K'_2\,L)^2}{4\pi^2\, \nu^2}\, \left( \frac{2L}{\sqrt{2}\pi \nu}\| \vf \|_{\dot{H}^{-1}} +1 \right)^4\, \left( \frac{L^2}{2 \pi^2 \nu^3}\| \vf \|^2_{\dot{H}^{-1}} + \frac{\| \vf \|^2_{\dot{H}^{-1}}}{\nu^2}\right). 
\end{equation*}

In order to simplify the notation, we omit all the constants which do not involve $\nu$ and $\| \vf \|_{\dot{H}^{-1}}$. Additionally, we will use the following simpler estimate of the constant $K'_2$, which only considers the filtering parameter $\alpha$:
\begin{equation}\label{Estim-K2-Phi}
K'_2 \simeq \Phi(\alpha),
\end{equation} 
where the function $\Phi(\alpha)$ is defined in expression (\ref{Phi}).  For the reader's convenience, we will give a proof of (\ref{Estim-K2-Phi}) in Appendix \ref{Sec:Estimate-K2-Phi}.

\medskip

In this context, since $(\lambda_m)_{m\geq 1}$ (the eigenvalues of the Stokes operator) is an increasing sequence, we can define $m_0 \geq 1$ as the smallest parameter such that $\lambda_{m+1}:= \lambda_{m_0}$ satisfies the inequality above. We thus obtain
\begin{equation*}
\lambda_{m_0} \gtrsim \frac{\Phi^2(\alpha)}{\nu^2} \left( \frac{\| \vf \|_{\dot{H}^{-1}}}{\nu} +1\right)^4\, \left( \frac{\| \vf \|^2_{\dot{H}^{-1}}}{\nu^3}+ \frac{\| \vf \|^2_{\dot{H}^{-1}}}{\nu^2}\right). 
\end{equation*}
which is the condition  (\ref{eq:lambda-condition-explicit}). 

\medskip

Consequently, assuming (\ref{eq:lambda-condition-explicit}) and for any $m\geq m_0$, we obtain that $\ds{\liminf_{t\to\infty}  \frac{1}{T}\int_{t}^{t+T} a_m(\tau)\, d \tau>0}$, thus verifying the first condition in (\ref{Conditions-a-Gronwall}).

\medskip

The second condition in (\ref{Conditions-a-Gronwall}) is directly satisfied by the estimate (\ref{Estim-u2-H1}). In fact, using again the definition of $a_m(t)$ (given in (\ref{Functions})) and (\ref{Estim-u2-H1}), we write

\begin{equation*}
\begin{split}
&\, \limsup_{t\to\infty} \frac{1}{T} \int_{t}^{t+T} a^{-}_{m}(\tau)\, d \tau \leq \, \limsup_{t\to\infty} \frac{1}{T} \int_{t}^{t+T} |a_m(\tau)|\, d \tau \\
\leq &\,  \frac{\nu\, \lambda_{m+1}}{C} + \frac{(C\, K'_2 L)^2}{4 \pi^2 \nu} (2R+1)^4 \left( \limsup_{t\to\infty} \frac{1}{T}\int_{t}^{t+T} \| \vec{\nabla}\otimes \vu_2(\tau,\cdot)\|^2_{L^2}\, d \tau \right)\\
\leq &\, \frac{\nu\, \lambda_{m+1}}{C} + \frac{(C\, K'_2 L)^2}{4 \pi^2 \nu} (2\mathcal{R}+1)^4 \left( \frac{\mathcal{R}^2}{\nu}+ \frac{\| \vf \|^2_{\dot{H}^{-1}}}{\nu^2}\right)<+\infty. 
\end{split} 
\end{equation*}
Lemma \ref{Lem-Tech-am(t)} is proven. 
\end{proof}

Finally, for the function $t \mapsto b_m(t)$ defined in (\ref{Functions}), since we assume (\ref{Determining-Modes-Pm}) and have $b_m(t)\geq 0$, the condition (\ref{Conditions-b-Gronwall}) is directly satisfied for any $m\geq 1$.

\medskip

From Lemmas \ref{Lem-Tech-am(t)}, \ref{Lem-Tech-phi(t)} and the assumption (\ref{Determining-Modes-Pm}), we have verified all the assumptions of Lemma \ref{lem:gronwall-uniform}. Thus, we have
\[ \lim_{t\to +\infty} \| \vv(t,\cdot)\|^2_{L^2}=0, \]
which concludes the proof of Theorem \ref{Th:Determining-Modes}. 

\subsection{Proof of Theorem \ref{Th:Determining-modes-on-attractor}}  Let $\vu_{\rm e,1}, \vu_{\rm e,2} \in \mathcal{T}(\mathcal{A})$. To simplify the notation, we will denote these functions as $\vu_1$ and $\vu_2$ respectively. 

\medskip

Following the same lines as in the proof of Theorem \ref{Th:Determining-Modes}, we define $\vv:= \vu_1-\vu_2$ which, for any time $t\in \R$, solves the first equation in (\ref{eq:v-eq}); hence, we finally arrive at estimate (\ref{Estim-Der-vv-2}).

\medskip

By assumption (\ref{Assumption-Determining-Modes-Attractor}) we have $\| P_m(\vv(t,\cdot))\|_{L^2}=0$ for any $t \in \R$. Therefore, from (\ref{Estim-Der-vv-2}) we can write
\begin{equation}\label{Estim-Der-vv-2-Corollary}
\frac{d}{dt}\| \vv(t,\cdot)\|^2_{L^2} +  \left( \frac{\nu\, \lambda_{m+1}}{C}-  \frac{(C\, K'_2\,L)^2}{4\pi^2\, \nu}\, \big( 2 \mathcal{R} +1 \big)^4 \,  \| \vec{\nabla} \otimes \vu_2(t,\cdot)\|^2_{L^2}\right)\, \| \vv(t,\cdot)\|^2_{L^2}  \leq 0.
\end{equation}

Here, we must study the second term on the left-hand side. Specifically, we have 
\begin{Lemma}\label{Lem-Tech-determining-modes-attractor} Let $\mathcal{K}_0>0$ be the constant given in (\ref{Regularity-Atrractor}). Let $n_0 \geq 1$ be sufficiently large such that its associated eigenvalue of the Stokes operator (\ref{Stokes}) satisfies (\ref{eq:lambda-condition-explicit-attractor}). 

\medskip
	
Then, for any $m \geq n_0$ and any $t\in \R$ we have  
\begin{equation}\label{eta0}
\begin{split}
0<\eta_0:= &\, \frac{\nu\, \lambda_{n_0}}{C} -  \frac{(C\, K'_2\,L)^2}{4\pi^2\, \nu}\, \big( 2\mathcal{R} +1 \big)^4 \, \mathcal{K}_0 \\
 \leq &\,  	\left( \frac{\nu\, \lambda_{m+1}}{C}-  \frac{(C\, K'_2\,L)^2}{4\pi^2\, \nu}\, \big( 2 \mathcal{R} +1 \big)^4 \,  \| \vec{\nabla} \otimes \vu_2(t,\cdot)\|^2_{L^2}\right). 
\end{split}
\end{equation}  
\end{Lemma}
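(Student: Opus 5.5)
The plan is to compare the two sides of (\ref{eta0}) term by term. One term will be controlled by monotonicity of the Stokes eigenvalues, the other by the uniform $H^1$-bound on the attractor from Theorem \ref{Th-Regularity-Atrtractor}. Positivity of $\eta_0$ will then be read off from the choice of $n_0$ in (\ref{eq:lambda-condition-explicit-attractor}).

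\emph{First step: the eigenvalue term.} Since $(\lambda_m)_{m\geq 1}$ is nondecreasing and $m\geq n_0$, we have $m+1>n_0$. Hence $\lambda_{m+1}\geq \lambda_{n_0}$, and so
\[
\frac{\nu\lambda_{m+1}}{C}\geq \frac{\nu\lambda_{n_0}}{C}.
\]

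\emph{Second step: the gradient term.} Because $\vu_2\in\mathcal{T}(\mathcal{A})$, estimate (\ref{Regularity-Atrractor}) gives, for every $t\in\R$,
\[
\|\vec{\nabla}\otimes\vu_2(t,\cdot)\|^2_{L^2}\leq \|\vu_2(t,\cdot)\|^2_{H^1}\leq \mathcal{K}_0.
\]
Multiplying by the nonnegative coefficient $\frac{(CK_2'L)^2}{4\pi^2\nu}(2\mathcal{R}+1)^4$ and subtracting, then adding the bound from the first step, yields the second inequality in (\ref{eta0}) for all $t\in\R$. This step also requires that $\|\vu_i(t,\cdot)\|_{L^2}\leq\mathcal{R}$, which is used in deriving (\ref{Estim-Der-vv-2}). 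This holds automatically on the attractor by (\ref{Unif-Control-L2-Intro}), so no ``without loss of generality'' reduction is needed here, unlike in Theorem \ref{Th:Determining-Modes}.

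\emph{Third step: positivity of $\eta_0$.} We need
\[
\lambda_{n_0}> \frac{C(CK_2'L)^2}{4\pi^2\nu^2}(2\mathcal{R}+1)^4\mathcal{K}_0 .
\]
Using the definition (\ref{R}) of $\mathcal{R}$, we have $2\mathcal{R}+1\lesssim \frac{\|\vf\|_{\dot H^{-1}}}{\nu}+1$, with constants depending only on $L$. Using (\ref{Estim-K2-Phi}), we have $K_2'\simeq\Phi(\alpha)$. Together these show that the right-hand side is $\lesssim \frac{\Phi^2(\alpha)}{\nu^2}\bigl(\frac{\|\vf\|_{\dot H^{-1}}}{\nu}+1\bigr)^4\mathcal{K}_0$. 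The main (minor) obstacle is precisely this bookkeeping: the implicit constant in ``$\gtrsim$'' in (\ref{eq:lambda-condition-explicit-attractor}) must be chosen large enough, strictly, to absorb the numerical constants $C$, $L$ and the constants in $K_2'\simeq\Phi(\alpha)$. Interpreting the hypothesis in that sense gives $\eta_0>0$, which completes the proof.
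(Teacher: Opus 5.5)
Your proposal is correct and follows essentially the same route as the paper: you bound $\|\vec{\nabla}\otimes\vu_2(t,\cdot)\|^2_{L^2}$ by $\mathcal{K}_0$ using Theorem~\ref{Th-Regularity-Atrtractor}, and you obtain positivity of $\eta_0$ from the choice of $n_0$ in (\ref{eq:lambda-condition-explicit-attractor}) after absorbing constants through (\ref{R}) and (\ref{Estim-K2-Phi}). Two points the paper leaves implicit are made explicit in your write-up: the monotonicity step $\lambda_{m+1}\geq\lambda_{n_0}$, and the fact that $\|\vu_i(t,\cdot)\|_{L^2}\leq\mathcal{R}$ holds automatically on the attractor.
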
	
\begin{proof} From estimate (\ref{Regularity-Atrractor}), obtained in Theorem \ref{Th-Regularity-Atrtractor}, for any $t\in \R$ we have 
	\begin{equation*}
	\| \vec{\nabla}\otimes \vu_2(t,\cdot)\|^2_{L^2} \leq \mathcal{K}_0.
	\end{equation*}
Then, we can write
\begin{equation*}
\frac{\nu\, \lambda_{m+1}}{C}-  \frac{(C\, K'_2\,L)^2}{4\pi^2\, \nu}\, \big( 2\mathcal{R} +1 \big)^4 \,  \| \vec{\nabla} \otimes \vu_2(t,\cdot)\|^2_{L^2} \geq  \frac{\nu\, \lambda_{m+1}}{C} -  \frac{(C\, K'_2\,L)^2}{4\pi^2\, \nu}\, \big( 2\mathcal{R} +1 \big)^4 \, \mathcal{K}_0.
\end{equation*}

Here, the expression on the right-hand side is positive as long as 
\begin{equation*}
\lambda_{m+1}>\frac{C(C\, K'_2\,L)^2}{4\pi^2\, \nu^2}\, \big( 2\mathcal{R} +1 \big)^4 \, \mathcal{K}_0,
\end{equation*}
and using again the definition of the radius $\mathcal{R}$ given in (\ref{R}), we find
\begin{equation*}
\lambda_{m+1}>\frac{C(C\, K'_2\,L)^2}{4\pi^2\, \nu^2}\, \left( \frac{2L}{\sqrt{2}\pi \nu}\| \vf \|_{\dot{H}^{-1}}+1 \right)^4 \, \mathcal{K}_0,
\end{equation*}
We then define $n_0\geq 1$ as the smallest parameter such that $\lambda_{m+1}:=\lambda_{n_0}$ satisfies the inequality above. As before, to simplify the notation, we omit all the constants which do not involve $\nu$ and $\| \vf \|_{\dot{H}^{-1}}$; and again we use the estimate (\ref{Estim-K2-Phi}). We thus have
\begin{equation*}
\lambda_{n_0} \gtrsim \frac{\Phi^2(\alpha)}{\nu^2} \left(\frac{\|\vf \|_{\dot{H}^{-1}}}{\nu}+1\right)^4\,  \mathcal{K}_0,
\end{equation*} 
which is the estimate (\ref{eq:lambda-condition-explicit-attractor}).  Lemma \ref{Lem-Tech-determining-modes-attractor} is proven. 
\end{proof}	

With estimate (\ref{eta0}) at our disposal, we return to (\ref{Estim-Der-vv-2-Corollary}). Hence, for any $m\geq n_0$ we obtain 
\begin{equation*}
\frac{d}{dt} \| \vv(t,\cdot)\|^2_{L^2}+ \eta_0\, \| \vv(t,\cdot)\|^2_{L^2} \leq 0.
\end{equation*}

Finally, let $t \in \R$ be fixed and let $t_0 \in \R$ such that $t \geq t_0$. 
From the  Gr\"onwall's lemma  we have 
\begin{equation*}
\| \vv(t,\cdot)\|^2_{L^2} \leq \| \vv(t_0, \cdot)\|^2_{L^2}\,  e^{-\eta_0 (t-t_0)} \leq \| \vv \|_{L^\infty_t L^2_x}\,  e^{-\eta_0 (t-t_0)}. 
\end{equation*}
Then, letting $t_0 \to -\infty$, as $\eta_0>0$ we obtain that $\| \vu(t,\cdot)\|^2_{L^2}=0$. As the time $t_0$ is arbitrary, we conclude (\ref{Conclusion-Determining-Modes-Attractor}). Theorem \ref{Th:Determining-modes-on-attractor} is proven.

\appendix
\section{An example of indicator function $A(\cdot)$}\label{Sec:Appendix-Function-A}
Assume, for instance, that the effective region in which the filtering must be applied is characterized by the ball in $\Rt$ defined by $|x|\leq R$, for some fixed $R>0$. For any $z \in \R$, define the function
\begin{equation*}
\Theta(z):=\begin{cases}\vspace{2mm}
1, & z\leq 0, \\ \vspace{2mm}
\frac{e^{-\frac{1}{1-z}}}{e^{-\frac{1}{z}}+e^{-\frac{1}{1-z}}}, & 0 <z <1, \\
0, & z \geq 1.
\end{cases}
\end{equation*}
Next, for a parameter $0<\beta <1$, we define, for any $x \in \Rt$,
\[ A_{R,\beta}(x)= \beta + (1-\beta) \,\Theta \left( \frac{|x|-R}{R} \right). \]
We then have $A_{R,\beta}(\cdot)\in \mathcal{C}^{\infty}(\Rt)$ and $\beta \leq A_{R,\beta}(\cdot)\leq 1$. In particular, $A_{R,\beta}(x)=1$ when $|x|\leq R$ and $A_{R,\beta}(x)=\beta$ when $|x|\geq 2R$. Moreover, for any integer $m\geq 1$ and any multi-index $|{\bf a}|=m$, we have
\[  \| \partial^{{\bf a}} A_{R,\beta}(\cdot)\|_{L^\infty} \leq C_m \frac{1-\beta}{R^m}, \]
for some constant $C_m>0$ depending on $m$. Therefore, for every integer $m\geq 1$, we conclude that
\[ A_{R,\beta}(\cdot) \in \mathcal{C}^{m,1}(\Rt).\]

\section{An explicit expression in terms of $\alpha$}\label{Sec:Estimate-K2-Phi} 
Here, we provide a brief proof of estimate (\ref{Estim-K2-Phi}). Recall that the constant $K'_2$ is introduced in estimate (\ref{Control-Filter-Diff-H2}) of Proposition \ref{Prop:H2-Filter}. As mentioned above, this proposition was proved in \cite[Proposition $4.4$]{CortezJarrin2026}, where the constant $K'_2$ (denoted there by $K_3$) is explicitly given by
\begin{equation*}
K'_2 \simeq \max \left( K_0 K_2 L_1, K_0 K_1L_2, K^{3/2}_0 L_3 \right),
\end{equation*}
where the relevant auxiliary quantities satisfy
\begin{equation*}
\begin{cases}
K_0 \simeq \ds{\frac{1}{\min(\alpha^2, 1)}}, & \text{given in expression $(28)$ of \cite{CortezJarrin2026}},\\
K_1 \simeq K_2  \simeq \max\left( K^{3/2}_0\, \alpha^2, K_0  \right), & \text{given in expressions $(32)$ and $(38)$ of \cite{CortezJarrin2026}}, \\
L_1\simeq L_2 \simeq L_3 \simeq \alpha^2, & \text{given in expressions $(31), (37)$ and $(44)$ of \cite{CortezJarrin2026}}.
\end{cases}
\end{equation*}
It is worth emphasizing that these expressions also depend on the remaining parameters, namely, the parameter $\beta$ appearing in (\ref{Conditions-a-existence-1}) and certain norms of the functions $A(\cdot)$ and $\varphi$ in the nonlinear filter equation (\ref{Filter-Equation-Intro}). Nevertheless, for simplicity, we explicitly indicate only their dependence on $\alpha$.

\medskip

Based on the expression for $K_0$ above, we distinguish between the cases $0<\alpha <1$ and $\alpha >1$.

\begin{itemize}
	\item Consider first the case $0<\alpha <1$. We have $K_0 \simeq \frac{1}{\alpha^2}$ and $K^{3/2}_0 \simeq \frac{1}{\alpha^3}$. Consequently, $K_1 \simeq K_2 \simeq \max\left( \frac{1}{\alpha}, \frac{1}{\alpha^2} \right)= \frac{1}{\alpha^2}$. Therefore, using the expression for $K'_2$ given above, we obtain
	\[K'_2 \simeq \max\left( \frac{1}{\alpha^2}\times \frac{1}{\alpha^2}\times \alpha^2,  \frac{1}{\alpha^2}\times \frac{1}{\alpha^2}\times \alpha^2, \frac{1}{\alpha^3}\times \alpha^2  \right) = \frac{1}{\alpha^2}. \]
	
	\item Consider now the case $\alpha >1$. We have $K_0 \simeq 1$, and hence $K^{3/2}_0 \simeq 1$. Consequently, $K_1 \simeq K_2 \simeq \max\left( \alpha^2, 1 \right)= \alpha^2$. We thus obtain
	\[ K'_2 \simeq \max\left( 1\times \alpha^2\times \alpha^2,  1\times \alpha^2\times \alpha^2, 1\times \alpha^2 \right)= \alpha^4. \]
\end{itemize}

Combining these two cases yields estimate (\ref{Estim-K2-Phi}), with the function $\Phi(\alpha)$ defined in (\ref{Phi}).

\medskip

 {\bf Statements and Declaration}. Data sharing does not apply to this article as no datasets were generated or analyzed during the current study. In addition, the authors declare that they have no conflicts of interest,
and all of them have equally contributed to this paper. 

\medskip 

\end{document}